\documentclass[12pt]{amsart}
\usepackage{comment}

\def\GL{\mathrm{GL}}
\def\SL{\mathrm{SL}}
\def\SU{\mathrm{SU}}
\def\OC{\mathcal{O}}
\def\CZ{\mathcal{Z}}

\def\bM{{\mathbf{M}}}
\def\II{\mathrm{i}}

\def\br{\operatorname{br}\nolimits}
\def\Br{\operatorname{Br}\nolimits}
\def\Ho{\operatorname{Ho}\nolimits}
\def\redu{\operatorname{red}\nolimits}
\def\St{\operatorname{St}\nolimits}
\def\Irr{\operatorname{Irr}\nolimits}
\def\Aut{\operatorname{Aut}\nolimits}
\def\bbchi{\boldsymbol{\chi}}
\def\bblambda{\boldsymbol{\lambda}}

\textwidth150mm
\oddsidemargin8mm
\evensidemargin8mm

\usepackage[latin1]{inputenc}
\usepackage[T1]{fontenc}
\usepackage{dsfont}
\usepackage{amsxtra}
\usepackage{amsmath}
\usepackage{amscd}
\usepackage{amssymb}
\usepackage{amsfonts}
\usepackage{mathrsfs}
\usepackage{amsthm} 
\usepackage{xcolor}
\usepackage{upgreek }
\usepackage{mathdots}
\usepackage{pstricks-add}
\usepackage{lscape}
\usepackage{multirow}
\usepackage{rotating}
\usepackage{xypic}
\usepackage{enumitem}
\usepackage[colorlinks=true,linkcolor=magenta,citecolor=blue]{hyperref}

\xyoption{all}

\theoremstyle{plain}
\newtheorem{theorem}{Theorem}[section]
\newtheorem{lem}[theorem]{Lemma}

\newtheorem{prop}[theorem]{Proposition}

\newtheorem{cor}[theorem]{Corollary}
\numberwithin{equation}{section}

\theoremstyle{definition}
\newtheorem{definition}[theorem]{Definition}

\theoremstyle{remark}
\newtheorem{rmk}[theorem]{Remark}


\def\bbC{\mathbb{C}}
\def\bbF{\mathbb{F}}
\def\bbG{\mathbb{G}}
\def\bbL{\mathbb{L}}
\def\bbM{\mathbb{M}}
\def\bbQ{\mathbb{Q}}
\def\bbR{\mathbb{R}}
\def\bbZ{\mathbb{Z}}

\def\bB{\mathbf{B}}
\def\bH{\mathbf{H}}
\def\bG{\mathbf{G}}
\def\bP{\mathbf{P}}
\def\bT{\mathbf{T}}
\def\bL{\mathbf{L}}
\def\bU{\mathbf{U}}
\def\bV{\mathbf{V}}

\def\rX{\mathrm{X}}
\def\rY{\mathrm{Y}}

\def\simto{\overset{\sim}\to}
\def\Hc{\mathrm{H}_c}
\def\H{\mathrm{H}}

\def\ad{\operatorname{ad}\nolimits}
\def\Hom{\operatorname{Hom}\nolimits}
\def\RHom{\operatorname{RHom}\nolimits}
\def\Deg{\operatorname{Deg}\nolimits}
\def\End{\operatorname{End}\nolimits}
\def\Res{\operatorname{Res}\nolimits}

\def\Ind{\operatorname{Ind}\nolimits}
\def\Ext{\operatorname{Ext}\nolimits}
\def\Uch{\mathrm{Uch}}
\def\mstab{\operatorname{-\mathsf{stab}}\nolimits}
\def\mmod{\operatorname{-\mathsf{mod}}\nolimits}
\def\mproj{\operatorname{-\mathsf{proj}}\nolimits}

\def\Rgc{\mathrm{R}\Gamma_c}
\def\Rg{\mathrm{R}\Gamma}
\def\tRgc{\widetilde{\mathrm{R}}\Gamma_c}

\def\trait{-\hskip-2mm - \hskip-2mm -}

\newcommand{\Chevie}{{\sf Chevie}{}}

\begin{document}

\title[Brauer trees of unipotent blocks]{Brauer trees of unipotent blocks}
\date{\today}

\author{David A.~Craven}
\address{School of Mathematics, University of Birmingham, Edgbaston, Birmingham B15 2TT, UK.}
\email{d.a.craven@bham.ac.uk}

\author{Olivier Dudas}
\address{Universit\'e Paris Diderot, UFR de Math\'ematiques,
B\^atiment Sophie Germain, 5 rue Thomas Mann, 75205 Paris CEDEX 13, France.}
\email{olivier.dudas@imj-prg.fr}

\author{Rapha\"el Rouquier}
\address{UCLA Mathematics Department, Los Angeles, CA 90095-1555, USA.}
\email{rouquier@math.ucla.edu}

\thanks{The first author is supported by a Royal Society Research Fellowship. 
The second author gratefully acknowledges financial support by the ANR, Project
No ANR-16-CE40-0010-01. The third author is partly supported by the NSF
(grant DMS-1161999) and by a grant from the Simons Foundation (\#376202)}
  
\begin{abstract}In this paper we complete the determination of the Brauer trees
of unipotent blocks (with cyclic defect groups) of finite groups of Lie type.
These trees were conjectured by the first author in \cite{Cra3}. 
As a consequence, the Brauer trees of principal $\ell$-blocks of finite groups
are known for $\ell>71$.
\end{abstract}

\maketitle
\setcounter{tocdepth}{3}
\tableofcontents

\section{Introduction}

A basic problem in the modular representation theory of finite groups is
to determine decomposition matrices. The
theory of blocks with cyclic defect groups that originated with Brauer
\cite{Bra} and was completed by Dade \cite{Dade}, encodes the Morita
equivalence class of a block in a planar embedded tree. Its vertices
correspond to ordinary irreducible representations, its edges to modular
irreducible representations, and the edges containing a given vertex
correspond to the composition factors of a modular reduction of the
ordinary irreducible representation.

The prospect of determining all Brauer trees associated to finite groups is
a fundamental challenge in modular representation theory.
In 1984, Feit \cite[Theorem 1.1]{Fe2} proved that, up to \emph{unfolding}
--- broadly speaking,
taking a graph consisting of several copies of a given Brauer tree and then
 identifying all exceptional vertices -- the collection of Brauer trees of all
finite groups coincides with that of the quasisimple groups.

\smallskip

For alternating groups and their double covers, the Brauer trees are known \cite{Mul}, and for all but the two largest sporadic groups all Brauer trees are known (see \cite{HiLu}  for most of the trees). The remaining quasisimple groups, indeed the `majority' of quasisimple groups, are groups of Lie type $G(q)$: if $\ell$ is a prime dividing $|G(q)|$ then either $\ell\nmid q$ or $\ell\mid q$ --- in the latter case, for there to be an $\ell$-block with cyclic defect group we must have that $G/Z(G)=\mathrm{PSL}_2(\ell)$ and the Brauer tree is a line.

\smallskip

Thus the major outstanding problem is to determine the Brauer trees of $\ell$-blocks of groups of Lie type when $\ell\nmid q$. 
Conjecturally, all such blocks are Morita
equivalent to unipotent blocks ("Jordan decomposition of blocks").
It is known that every block is Morita equivalent to an isolated block
of a possibly non-connected reductive group
\cite{BDR}, and the case of isolated blocks with cyclic defect is currently
under investigation by the first author and Radha Kessar.

\smallskip

Here we complete the determination of the Brauer trees of
\emph{unipotent blocks} of $G(q)$. We determine in particular the trees
occurring in principal blocks. Our main theorem is the following.

\begin{theorem}\label{mainthm} Let $G$ be a finite group of Lie type and let
$\ell$ be a prime distinct from the defining characteristic.
If $B$ is a unipotent $\ell$-block of $G$ with cyclic defect groups then the planar-embedded Brauer tree of $B$ is known. Furthermore, 
the labelling of the vertices by unipotent characters in terms of Lusztig's parametrization is known. \end{theorem}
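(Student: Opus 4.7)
The plan is to proceed type by type, leaning on the structural framework of Craven's conjecture \cite{Cra3}. By Feit's unfolding theorem recalled in the introduction, together with standard central isogeny and Clifford-theoretic reductions, it suffices to treat $G=\bG^F$ for $\bG$ a simple algebraic group over $\overline{\bbF}_q$. A unipotent $\ell$-block $B$ with cyclic defect is controlled by a $d$-cuspidal pair $(\bL,\lambda)$, where $d$ is the multiplicative order of $q$ modulo $\ell$ and $\bL$ is a $d$-split Levi subgroup whose Sylow $\Phi_d$-torus is cyclic. By Brou\'e--Malle--Michel theory, the non-exceptional vertices of the tree --- the unipotent characters in $B$ --- are in bijection, via Lusztig induction of $\lambda$, with the irreducible characters of the relative Weyl group $W_\bG(\bL,\lambda)$; the remaining ordinary characters of $B$ collapse onto the exceptional vertex.

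What remains is the shape of the tree, the labelling of its edges by simple modules, the character attached to the exceptional vertex, and the planar embedding. The principal tool is the $\ell$-adic cohomology of a Deligne--Lusztig variety $\rX(\mathbf{w})$ associated to a good $d$-regular element $\mathbf{w}$ of the braid monoid. Following the Brou\'e--Michel programme and its realisation by Bonnaf\'e--Rouquier, Dudas and others, this cohomology is expected to provide a two-sided tilting complex between $B$ and a Brauer correspondent, so that the eigenvalues of Frobenius on the unipotent isotypic components of $\Hc^*(\rX(\mathbf{w}),\qlb)$ dictate the planar embedding of the Brauer tree. Concretely, the strategy is to (i) constrain or explicitly compute these Frobenius eigenvalues using Lusztig's known formulae, (ii) invoke ordinary and modular Harish-Chandra theory to identify cuspidal simple modules in $B$, and (iii) exploit the cyclotomic Hecke algebra structure on $\End(\Hc^*(\rX(\mathbf{w}),\qlb))$ to read off the adjacencies between vertices.

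Operationally, one first imports the cases already settled in the literature --- classical groups via Fong--Srinivasan, Hiss--Kessar and Dudas--Rouquier, and exceptional groups of small rank or small $d$ --- and then attacks the residual cases, chiefly in $E_6$, $E_7$, $E_8$ and their twisted forms, with the leftover pieces of $F_4,\;{}^2F_4,\;{}^3D_4,\;G_2$, case by case in $d$. For each such case one picks a suitable regular $\mathbf{w}$ (often a $d$-regular power of a Coxeter element), extracts generic degrees and Lusztig's eigenvalues, identifies the Hecke algebra, and matches the resulting combinatorics against the tree predicted in \cite{Cra3}. The character attached to the exceptional vertex is determined via the Bonnaf\'e--Dat--Rouquier reduction to quasi-isolated blocks.

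The main obstacle will be the high-rank cases --- notably $E_8$ at certain values of $d$ --- where $\Hc^*(\rX(\mathbf{w}),\qlb)$ does not by itself separate all unipotent constituents, or where the relevant Frobenius eigenvalues resist direct calculation. There one must assemble several complementary techniques: $d$-Harish-Chandra restriction to Levi subgroups, Alvis--Curtis duality, the known cyclotomic Hecke algebra structure, and gluing arguments in the stable module category, in order to reduce the candidate trees to a single possibility compatible with \cite{Cra3}. A secondary but delicate difficulty is pinning down the Frobenius eigenvalue at the exceptional vertex, which encodes finer invariants of the source algebra of $B$ and is not visible from generic degrees alone.
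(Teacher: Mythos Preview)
Your proposal has the right high-level shape---reduce to simple groups, import the known cases, and attack the residual exceptional cases via Deligne--Lusztig cohomology and Frobenius eigenvalues---but the core mechanism you describe would not actually close the argument, for two linked reasons.

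First, you lean on the expectation that $\Rgc(\rX(\mathbf{w}),\qlb)$ for a $d$-regular $\mathbf{w}$ furnishes a tilting complex realising Brou\'e's derived equivalence, and that the Frobenius eigenvalues then ``dictate the planar embedding''. But that derived equivalence is precisely what is \emph{not} available as input: it is a consequence of the determination of the trees (via the perversity function of \cite{Cra3,ChRou}), not a tool one can invoke to obtain them. The paper works the other way round: it proves the trees directly, and the perverse equivalence is deduced as a corollary. Relatedly, knowing the eigenvalues on $\Hc^*(\rX(\mathbf{w}),K)$ alone does not determine adjacencies or the cyclic order at a vertex; one needs integral and mod-$\ell$ information.

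Second, and this is the main methodological point you miss, the paper's principal variety is the \emph{Coxeter} variety $\rX(c)$ and its smooth compactification $\overline{\rX}(c)$, not a $d$-regular variety. The Coxeter variety is used even when $d$ is not the Coxeter number, because it is small enough that one can control the torsion in $\Hc^*(\rX(c),\OC)$ (Propositions~\ref{pr:torsionCoxeter} and~\ref{xbartorsion}) and hence work with genuine perfect complexes over $\OC$. The key technical engine is Lemma~\ref{le:twononzerocoh} and Theorem~\ref{thm:coxeter}: a direct summand of $b\tRgc(\rX(c),\OC)$ cut out by a mod-$\ell$ Frobenius eigenvalue, with exactly two nonzero rational cohomology groups, is forced to be a very specific two-term-type complex of projective indecomposables, and this pins down both the missing cuspidal edges and their position in the cyclic order. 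Varieties for $d$-regular elements do enter, but only secondarily (e.g.\ \S\ref{se:E8d18}, \S\ref{se:E8d20}), and there the arguments are again about torsion control and composition factors of $\Hc^*(\cdot,k)$, combined with stable-category identities $\Omega^m k\simeq\cdots$, rather than any appeal to a cyclotomic Hecke algebra or a tilting complex. Your outline, working over $\qlb$ and invoking conjectural equivalences, does not supply the integral/modular control that actually drives the proof.
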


Theorem \ref{mainthm} has the following corollary.

\begin{cor}
Let $G$ be a finite group with cyclic Sylow $\ell$-subgroups.
If $\ell\neq 29,41,47,59,71$, then the (unparametrized)
Brauer tree of the principal $\ell$-block of $G$ is known.
\end{cor}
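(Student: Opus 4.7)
The plan is to reduce the determination of the principal Brauer tree of an arbitrary finite group $G$ with cyclic Sylow $\ell$-subgroup to that of a quasisimple group, and then to combine Theorem~\ref{mainthm} with the known results on alternating and sporadic groups.

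First I would perform the reduction to quasisimple groups. Inflation through $O_{\ell'}(G)$ induces a Morita equivalence on principal blocks, so I may assume $O_{\ell'}(G)=1$. An analysis of the generalized Fitting subgroup, together with Feit's unfolding theorem \cite{Fe2}, then shows that the unparametrized principal Brauer tree of $G$ is determined --- possibly up to an unfolding --- by the principal Brauer tree of a suitable quasisimple section with cyclic Sylow $\ell$-subgroup. Since unfolding only glues several copies of the underlying tree at the exceptional vertex, it does not change the unparametrized shape, so it is enough to know the principal Brauer trees of all quasisimple groups having cyclic Sylow $\ell$-subgroup.

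Second, I would split the argument according to the classification of finite simple groups. For a quasisimple group $H$ of Lie type in defining characteristic $\neq \ell$, the principal $\ell$-block is a unipotent block with cyclic defect groups, and its planar-embedded tree is now supplied by Theorem~\ref{mainthm}; the labelling by unipotent characters is irrelevant for the unparametrized statement. When $H$ is of Lie type in defining characteristic $\ell$, the cyclic Sylow hypothesis forces $H/Z(H)=\mathrm{PSL}_2(\ell)$, whence the tree is a line, as already noted in the introduction. For covers of alternating groups all trees are known by M\"uller \cite{Mul}. For sporadic groups and their covers, the vast majority of trees with cyclic defect have been determined in \cite{HiLu} and in subsequent work using ordinary-character constraints together with condensation.

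The main obstacle, and the sole source of the exceptional list $\{29,41,47,59,71\}$, is a handful of principal Brauer trees of the two largest sporadic groups --- the Monster and the Baby Monster --- at primes dividing their orders exactly once. For these specific pairs the usual combination of character-theoretic bounds, decomposition-matrix methods, and condensation has so far been insufficient to fix the tree uniquely. Setting those sporadic cases aside and assembling the contributions of the previous step for every other $(G,\ell)$ then yields the corollary.
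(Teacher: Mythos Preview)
Your outline follows the same route as the paper's proof: kill $O_{\ell'}(G)$, reduce to a simple normal subgroup using the structure of groups with cyclic Sylow $\ell$-subgroup, and then run through the classification, with the excluded primes coming from the Monster and Baby Monster. Two points of imprecision are worth flagging. First, the sentence ``unfolding \ldots\ does not change the unparametrized shape'' is false as written --- $\wedge^d T$ has $d$ times as many edges as $T$ --- though your intended meaning (that the tree of $G$ is explicitly reconstructed from that of the simple piece via the unfolding construction, hence ``known'' once the latter is) is correct. Second, invoking Feit's general unfolding theorem is slightly off-target: what you actually need is the concrete folding relationship of \S\ref{se:folding} for a normal subgroup $H\trianglelefteq G$ with $\ell'$-quotient, and for that the paper cites the structural result of Fong--Harris \cite{FoHa} that (after discarding the normal-Sylow case, where the tree is a star) such a $G$ with $O_{\ell'}(G)=1$ has a normal simple subgroup $H$ with $G/H$ an $\ell'$-subgroup of $\mathrm{Out}(H)$. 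With those adjustments your argument and the paper's coincide.
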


Note that a solution of the Jordan decomposition conjecture for isolated
blocks with cyclic defect would extend the previous corollary
to all blocks with cyclic defect groups of all finite groups
(for $\ell>71$ so that no sporadic groups are involved).

\bigskip
A basic method to determine decomposition matrices of
finite groups is to induce projective modules from proper subgroups.
In the case of modular representations of finite groups of Lie type in 
non-defining characteristic,
Harish-Chandra induction from standard Levi subgroups has similarly
been a very useful tool to produce projective modules. Here, we introduce a new method, based on the construction, via Deligne--Lusztig induction,
 of bounded complexes of projective
modules with few non-zero cohomology groups.
This is powerful enough to allow us to determine the decomposition matrices
of all unipotent blocks with cyclic defect groups of finite groups of Lie type.

\smallskip
In \cite{DR}, the second and third authors used Deligne--Lusztig
varieties associated to Coxeter elements to analyse representations modulo
$\ell$, where the order $d$ of $q$ modulo $\ell$ is the Coxeter number. Here,
we consider cases where that order is not the Coxeter number, but we use
nevertheless the geometry of Coxeter Deligne--Lusztig varieties, as they
are the best understood, and have certain
remarkable properties not shared by other Deligne--Lusztig varieties.

Our main result is the proof of the first author's conjecture \cite{Cra3},
in the case of blocks with cyclic defect groups. That conjecture is about
the existence of a perverse equivalence with a specific
perversity function. Using the
algorithm that determines the Brauer tree from the perversity function 
\cite{ChRou}, the first author had proposed conjectural Brauer trees and proved that his conjecture held in many cases. We complete here the proof of that conjecture.

\bigskip

\medskip

The methods we use for determining the Brauer trees are a combination of standard arguments and more recent methods developed in \cite{Du3,Du4,DR}. We start
with the subtrees corresponding to various Harish-Chandra series, giving a disjoint union of lines providing a first approximation of the tree. The difficulty lies in connecting those lines with edges labelled by cuspidal modules. Many possibilities can be ruled out by looking at the degrees of the characters and of some of their tensor products. These algebraic methods have proved to be efficient for determining most of the Brauer trees of unipotent blocks (see for instance \cite{HL,HLM}), but were not sufficient for groups of type $E_7$ and $E_8$. We overcome this problem by using the mod-$\ell$ cohomology of Deligne--Lusztig varieties and their smooth compactifications. This is done by analysing well-chosen Frobenius eigenspaces on the cohomology complexes of these varieties and extracting 
\begin{itemize}
 \item projective covers of cuspidal modules, giving the missing edges in the tree,
 \item Ext-spaces between simple modules, yielding the planar-embedded tree.
\end{itemize}
This strategy requires some control on the torsion part of the cohomology groups, and for that reason we must focus on small-dimensional Deligne--Lusztig varieties only (often associated with Coxeter elements). 

\smallskip
The simplest statement is obtained when the order of a Coxeter torus and
the order of proper Levi subgroups are
prime to $\ell$. In that case, we are able
to determine
part of the tree (Corollary \ref{co:Coxetertriv}). The most delicate part is
the last statement below. It involves the
planar embedding of the tree and unipotent representations 
corresponding to conjugate eigenvalues of the Frobenius. We show that

\begin{itemize}
\item there is a line $L$ starting with the trivial module $L_0=K$,
continuing with $r$(=$\mathbb{F}_q$-rank of the group)
principal series unipotent representations $L_1,\ldots,L_r$, the last of
which $L_r$ is the Steinberg representation $\mathrm{St}$.
\item $\mathrm{St}$ is connected to the
non-unipotent (usually exceptional) vertex by the edge corresponding to the
modular Steinberg module $\mathrm{St}_\ell$.
\item
If a vertex
not in $L$ is connected to $L$ by an edge, then it must be connected 
to the Steinberg representation or the non-unipotent vertex.
\item
The (irreducible) representation $V$ corresponding to the part of the
$r$-th cohomology group with compact support of the Coxeter Deligne-Lusztig
variety on which the Frobenius acts by an eigenvalue congruent to $q^r$
modulo $\ell$ is attached to $\mathrm{St}$ by an edge. That edge comes after
the edge connecting $\mathrm{St}$ to $L_{r-1}$ and before the edge connecting
$\mathrm{St}$ to the non-unipotent vertex, in the cyclic ordering of edges
around $\mathrm{St}$.
\end{itemize}
\begin{center}
 \begin{pspicture}(10,4.5)
  \psset{linewidth=1pt}

 \cnode[fillstyle=solid,fillcolor=black](0,2){5pt}{AA}
 \cnode(0,2){8pt}{A}
 \cnode(0.4,2.7){0pt}{A1}
 \cnode(0.4,1.3){0pt}{A2}
 \cnode(-0.6,2.7){0pt}{A3}
 \cnode(-0.6,1.3){0pt}{A4}
 \cnode(-0.9,2){0pt}{A5}
  \cnode[linestyle=none](1.5,2){8pt}{B}
 \cnode(2.1,2.7){0pt}{B1}
 \cnode(2.1,1.3){0pt}{B2}
 \cnode(0.9,2.7){0pt}{B3}
 \cnode(0.9,1.3){0pt}{B4}
 \cnode[linestyle=none](1.5,3.5){8pt}{BV}
 \cnode[linestyle=none](1.5,0.5){8pt}{BV2}
 \cnode(2.1,4.2){0pt}{V1}
 \cnode(0.9,4.2){0pt}{V2}
 \cnode(2.1,-0.2){0pt}{V3}
 \cnode(0.9,-0.2){0pt}{V4}
 \cnode(3,2){5pt}{C}
  \cnode(6.5,2){5pt}{C2}
 \cnode(8,2){5pt}{D}
 \cnode[linestyle=none](9.5,2){5pt}{E}

  \ncline{A}{B} \naput{$\mathrm{St}_\ell$} 
  \ncline{B}{C} \ncput[npos=-0.25]{$\mathrm{St}$}
  \ncline[linestyle=dashed]{C}{C2}
  \ncline{C2}{D}  
  \ncline{D}{E} \naput{$k$}   \ncput[npos=1.1]{$\vphantom{\Big(}1$} 
  \ncline{B}{BV}\ncput[npos=1.3]{$V$}
  \ncline{B}{BV2} \ncput[npos=1.3]{$V^*$}
  \ncline[linestyle=dashed]{BV}{V1}  
  \ncline[linestyle=dashed]{BV}{V2}  
  \ncline[linestyle=dashed]{BV2}{V3}  
  \ncline[linestyle=dashed]{BV2}{V4}  
  \ncline[linestyle=dashed]{A}{A1}  
  \ncline[linestyle=dashed]{A}{A2}  
  \ncline[linestyle=dashed]{A}{A3}  
  \ncline[linestyle=dashed]{A}{A4}
  \ncline[linestyle=dashed]{A}{A5}  
  \ncline[linestyle=dashed]{B}{B1}  
  \ncline[linestyle=dashed]{B}{B2}  
  \ncline[linestyle=dashed]{B}{B3}  
  \ncline[linestyle=dashed]{B}{B4}

\end{pspicture}
\end{center}

\bigskip

We now briefly describe the structure of the article. Section \ref{se:modular}
is devoted to general results on unipotent blocks of
modular representations of finite groups of
Lie type, using algebraic and geometrical methods. In Section \ref{se:unipcyclic},
we deal specifically with unipotent blocks with cyclic defect groups.
After recalling in \S\ref{se:blockscyclic} the basic theory of Brauer trees,
we consider in \S\ref{se:unipblocks} the local structure of the blocks.
In \S\ref{se:properties}, we establish general properties of the trees, and in particular we
relate properties of the complex of cohomology of Coxeter 
Deligne--Lusztig varieties with properties of the Brauer tree.
A key result is Lemma \ref{le:twononzerocoh} about certain
perfect complexes for blocks with cyclic defect groups with only two
non-zero rational cohomology groups.
In \S\ref{sec:determination} we complete the determination of the trees, which are collected in the appendix. The most complicated issues arise from
differentiating the cuspidal modules $E_8[\theta]$ and $E_8[\theta^2]$ when
$d=18$ (\S \ref{se:E8d18}) and ordering cuspidal edges around the Steinberg
vertex for $d=20$ (\S \ref{se:E8d20}).

\medskip

\noindent
{\bf Acknowledgements:} We thank C\'edric Bonnaf\'e, Frank L\"ubeck and Jean Michel for some useful discussions, and Gunter Malle for his comments on
a preliminary version of the manuscript.

\section{Notation}

Let $R$ be a commutative ring. Given two elements $a$ and $r$ of $R$ with $r$ prime, we denote by $a_r$ the largest power of $r$ that divides $a$.
If $M$ is an $R$-module and $R'$ is a commutative
$R$-algebra, we write $R'M=R'\otimes_R M$.

\smallskip
Let $\ell$ be a prime number, $\OC$ the ring of integers of a finite
extension $K$ of $\bbQ_\ell$ and $k$ its residue field. We assume that $K$ is
large enough so that the representations of finite groups considered 
are absolutely irreducible over $K$, and the Frobenius eigenvalues
on the cohomology groups over $K$ considered are in $K$.

Given a ring $A$, we denote by $A\mmod$ the category of finitely
generated $A$-modules, by $A\mproj$ the category of finitely generated
projective $A$-modules and by $\Irr(A)$ the set of isomorphism
classes of simple $A$-modules. When $A$ is a finite-dimensional
algebra over a field, we identify $K_0(A\mmod)$ with
$\bbZ\Irr(A)$ and we denote by $[M]$ the class of an $A$-module $M$.
Given two complexes $C$ and $C'$ of $A$-modules, we denote by 
$\Hom^\bullet_A(C,C')=\bigoplus_{i,j}\Hom_A(C^i,C^{\prime j})$
the total $\Hom$-complex.

\smallskip
Let $\Lambda$ be either $k$, $\OC$ or $K$ and let $A$ be a symmetric $\Lambda$-algebra:
$A$ is finitely generated and free as a $\Lambda$-module and
$A^*$ is isomorphic to $A$ as an $(A,A)$-bimodule.
An $A$-lattice is an $A$-module that is free of
finite rank as a $\Lambda$-module.

Given $M\in A\mmod$, we denote by $P_M$ a projective cover of $M$.
We denote by $\Omega(M)$ the kernel of
a surjective map $P_M\twoheadrightarrow M$ and we define inductively 
$\Omega^i(M)=\Omega(\Omega^{i-1}(M))$ for $i\ge 1$, where 
$\Omega^0(M)$ is a minimal submodule of $M$ such that $M/\Omega^0(M)$ is
projective. Note that $\Omega^i(M)$ is unique up to isomorphism.
When $M$ is an $A$-lattice, we define $\Omega^{-i}(M)$ as $(\Omega^i(M^*))^*$,
using the right $A$-module structure on $M^*=\Hom_\Lambda(M,\Lambda)$.

We denote by $\Ho^b(A)$ and $D^b(A)$ the homotopy and derived categories of bounded complexes of
finitely generated $A$-modules. Given a bounded complex of
finitely generated $A$-modules $C$, there is a complex $C^{\redu}$ of
$A$-modules, unique up to (non-unique) isomorphism, such 
that $C$ is homotopy equivalent to $C^{\redu}$ and $C^{\redu}$
has no non-zero direct summand that is homotopy equivalent to $0$.

Suppose that $\Lambda=k$. We denote by
$A\mstab$ the stable category of $A\mmod$, i.e., the
additive quotient by the full subcategory of finitely generated
projective $A$-modules. Note that the canonical functor $A\mmod\to
D^b(A)$ induces an equivalence from $A\mstab$ to the quotient of
$D^b(A)$ by the thick subcategory of perfect complexes of $A$-modules, making
$A\mstab$ into a triangulated category with translation functor~$\Omega^{-1}$.

\smallskip

Suppose that $\Lambda=\OC$. 
We denote by $d:K_0(KA)\to K_0(kA)$ the decomposition map. It is
characterized by the property $d([KM])=[kM]$ for an $A$-lattice $M$.

\smallskip
Let $G$ be a finite group and $A=KG$. We identify $\Irr(A)$ with the set
of $K$-valued irreducible characters of $G$.
Given $\chi\in\Irr(KG)$, we denote by $b_\chi$ the block idempotent
of $\OC G$ that is not in the kernel of $\chi$.
We put $e_G=\frac{1}{|G|}\sum_{g\in G}g$.

\smallskip
Let $Q$ be an $\ell$-subgroup of $G$. We denote by
$\Br_Q:\OC G\mmod\to kN_G(Q)\mmod$ the Brauer functor: $\Br_Q(M)$
is the image of $M^Q$ in the coinvariants $(kM)_Q := k\otimes_{\OC Q} M$.
We denote by $\br_Q:(\OC G)^Q\to kC_G(Q)$ the algebra morphism
that is the resriction of the linear map defined by $g\mapsto
\delta_{g\in C_G(Q)}g$, where $\delta_{g \in H}$ equals $1$ if
$g \in H$ and $0$ otherwise.

\section{Modular representations and geometry}
\label{se:modular}

\subsection{Deligne--Lusztig varieties}
\subsubsection{Unipotent blocks}
\label{se:unipotentblocks}

Let $\bG$ be a connected reductive algebraic group
defined over an algebraic closure
of a finite field of characteristic $p$, together with an endomorphism $F$,
a power of which is a Frobenius endomorphism. In other words, there exists a positive integer $\delta$ 
such that $F^\delta$ defines a split $\bbF_{q^\delta}$-structure on $\bG$ for a certain power $q^\delta$ of $p$, where $q \in \bbR_{>0}$. We will assume that $\delta$ is minimal for this property. Given an $F$-stable closed
subgroup $\bH$ of $\bG$, we will denote by $H$ the finite group of fixed points $\bH^F$. The group $G$ is a finite group of Lie type. We are interested in the modular representation theory of $G$ in non-defining characteristic, so that we shall always work under the assumption $\ell \neq p$. 

\smallskip
Let $\bT \subset \bB$ be a maximal torus contained in a Borel subgroup of
$\bG$, both of which are assumed to be $F$-stable. Let $W = N_\bG(\bT)/\bT$ be the Weyl group of $\bG$ and $S$ be the set of simple reflections of $W$
associated to $\bB$.
We denote by $r=r_G$ the $F$-semisimple rank of $(\bG,F)$, i.e., the
number of $F$-orbits on $S$.

\smallskip
Given $w \in W$, the \emph{Deligne--Lusztig variety} associated to $w$ is 
\[ \rX_\bG(w) = \rX(w) = \left\{ g\bB \in \bG /\bB \mid g^{-1} F(g) \in \bB w \bB\right\}.\]
It is a smooth
quasi-projective variety endowed with a left action of $G$ by left multiplication. 

Let $\Lambda$ be either $K$ or $k$. Recall that a simple $\Lambda G$-module is {\em unipotent} if it is a composition
factor of $\Hc^i(\rX(w),\Lambda)$ for some $w\in W$ and $i\ge 0$. We denote by
$\Uch(G)\subset\Irr(KG)$ the set of unipotent irreducible $KG$-modules (up
to isomorphism).

A \emph{unipotent block} of $\OC G$
is a block containing at least one unipotent character.

\smallskip
Given $\bP$ a parabolic subgroup of $\bG$ with unipotent radical $\bU$
and an $F$-stable Levi
complement $\bL$, we have a {\em Deligne--Lusztig variety}
$$\rY_\bG(\bL\subset\bP)=\{g\bU\in \bG/\bU \mid g^{-1}F(g)\in \bU\cdot F(\bU)\},$$
a variety with a left action of $G$ and a free right action of $L$ by multiplication.
The {\em Deligne--Lusztig induction} is defined by
\[R_{\bL\subset\bP}^{\bG}:\bbZ\Irr(KL)\to\bbZ\Irr(KG),
\ [M]\mapsto\sum_{i\ge 0}(-1)^i [\Hc^i(\rY_\bG(\bL\subset\bP))\otimes_{K\bL}
M].\]
We also write $R_L^G=R_{\bL\subset\bP}^{\bG}$.
We denote by ${^*R}_{\bL\subset\bP}^{\bG}:\bbZ\Irr(KG)\to\bbZ\Irr(KL)$
the adjoint map.
We have $R_{\bL\subset\bP}^{\bG}(\Uch(L))\subset\bbZ\Uch(G)$
and ${^*R}_{\bL\subset\bP}^{\bG}(\Uch(G))\subset\bbZ\Uch(L)$.

\medskip
Let $w\in W$ and let $h\in\bG$ such that $h^{-1}F(h)\bT=w$.
The maximal torus $\bL=h\bT h^{-1}$ is $F$-stable. It is contained in
the Borel subgroup $\bP=h\bB h^{-1}$ with unipotent radical $\bU$.
In that case, the map $g\bU \mapsto g\bU h = gh (h^{-1}\bU h)$
identifies $\rY(\bL\subset\bP)$ with the variety
$$\rY_\bG(w) = \rY(w) = \left\{ g\bV \in \bG /\bV \mid g^{-1} F(g) \in \bV \dot w \bV\right\}$$
where $\bV = h^{-1} \bU h$ is the unipotent radical of $\bB$ and
$\dot w = h^{-1}F(h) \in N_\bG(\bT)$. Furthermore, there is a morphism of varieties
$$\rY(w)\to \rX(w),\ g\bV\mapsto g\bB$$
corresponding to the quotient by $\bT^{wF} \simeq L$.

\subsubsection{Harish-Chandra induction and restriction}
Given an $F$-stable subset $I$ of $S$, we denote by $W_I$ the subgroup
of $W$ generated by $I$ and by $\bP_I$ and $\bL_I$ the standard parabolic
subgroup and standard Levi subgroup respectively of $\bG$ corresponding to $I$. 
In that case, the maps $R_{L_I}^{G}$ and ${^*R}_{L_I}^{G}$ are induced by the usual
Harish-Chandra induction and restriction functors.
A $\Lambda G$-module $V$ is {\em cuspidal} if ${^*R}_{L_I}^G(V)=0$
for all proper $F$-stable subsets $I$ of $S$.

\smallskip
The following result is due to Lusztig when $\bL$ is a torus
\cite[Corollary 2.19]{LuCBMS}. The same proof applies, using 
Mackey's formula for the Deligne--Lusztig restriction to a torus.

\begin{prop} 
\label{pr:inducedcuspidal}
Let $\bL$ be an $F$-stable Levi subgroup of $\bG$ and $\psi$
an irreducible character of $L$ such that
$(-1)^{r_G+r_L}R_L^G(\psi)$ is an irreducible character of $G$.

If $\psi$ is cuspidal and $\bL$ is not contained in a proper
$F$-stable parabolic subgroup of $\bG$, then 
$(-1)^{r_G+r_L}R_L^G(\psi)$ is cuspidal.
\end{prop}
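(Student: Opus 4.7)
The plan is to show that $\chi := (-1)^{r_G+r_L}R_L^G(\psi)$ has vanishing Harish-Chandra restriction to every proper standard Levi, that is, $^*R_{L_I}^G(\chi)=0$ for every proper $F$-stable subset $I\subsetneq S$. Since $\chi=\pm R_L^G(\psi)$, we are reduced to computing the composition $^*R_{\bL_I\subset\bP_I}^G\circ R_{\bL\subset\bP}^G(\psi)$ and showing that it vanishes.

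The main tool will be the Mackey formula for Deligne--Lusztig induction followed by Deligne--Lusztig restriction. Although this formula is delicate in general, it holds unconditionally whenever one of the two parabolics involved is $F$-stable; this is exactly our case, since $\bP_I$ is a standard parabolic. The formula then yields
\[ ^*R_{\bL_I\subset\bP_I}^G\circ R_{\bL\subset\bP}^G(\psi)\;=\;\sum_{x}R_{\bL_I\cap{}^x\bL}^{L_I}\bigl({}^*R_{\bL_I\cap{}^x\bL}^{{}^x\bL}({}^x\psi)\bigr), \]
with $x$ ranging over a suitable set of double coset representatives in $L_I\backslash\{x\in G\mid \bL_I\cap{}^x\bL\text{ contains a maximal torus of }\bG\}/L$.

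To finish, we invoke both hypotheses. Cuspidality of ${}^x\psi$ forces each inner term ${}^*R_{\bL_I\cap{}^x\bL}^{{}^x\bL}({}^x\psi)$ to vanish unless $\bL_I\cap{}^x\bL={}^x\bL$, i.e.\ unless ${}^x\bL\subseteq\bL_I$. For such an $x$ one would have $\bL\subseteq{}^{x^{-1}}\bL_I\subseteq{}^{x^{-1}}\bP_I$; since $x\in\bG^F$ and $\bP_I$ is $F$-stable, the parabolic ${}^{x^{-1}}\bP_I$ is also $F$-stable, and it is proper because $I\subsetneq S$. This contradicts the assumption that $\bL$ lies in no proper $F$-stable parabolic, so no such $x$ exists. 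Every term of the Mackey sum therefore vanishes, giving $^*R_{L_I}^G(\chi)=0$ for every proper $F$-stable $I$, which is exactly cuspidality of $\chi$.

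The only real obstacle is ensuring that Mackey is available in the form used; this is guaranteed precisely because $\bP_I$ is $F$-stable, the same framework in which Lusztig's torus case was proved, so that ``the same proof applies'' quite literally, as indicated before the statement.
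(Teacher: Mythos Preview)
Your overall strategy mirrors the paper's, and the logical structure is sound: once you have the appropriate Mackey decomposition, each term vanishes either by cuspidality of ${}^x\psi$ or by the hypothesis on~$\bL$. The gap is in your invocation of the Mackey formula itself.

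You assert that Mackey holds ``precisely because $\bP_I$ is $F$-stable'', calling this ``the same framework in which Lusztig's torus case was proved''. This conflates two distinct hypotheses. The Deligne--Lusztig result \cite[7.1]{DeLu83} establishes the Mackey formula when one of the two \emph{Levi subgroups is a maximal torus}, not when one of the parabolics is $F$-stable; these are independent conditions, and the second is not among the standard known cases (one Levi a torus; $q>2$ via Bonnaf\'e--Michel \cite{BoMi}; etc.). So the key step of your argument rests on an unproved instance of the Mackey formula.

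The paper avoids this by working with tori rather than Levi subgroups. Instead of testing cuspidality of $\chi$ via ${}^*R_{L_I}^G(\chi)=0$, it uses Lusztig's criterion \cite[Proposition~2.18]{LuCBMS}: an irreducible character is cuspidal if and only if ${}^*R_T^G(\chi)=0$ for every maximal torus $\bT$ contained in a proper $F$-stable parabolic. The Mackey formula for ${}^*R_T^G\circ R_L^G$ with $\bT$ a torus is then available unconditionally, and the argument runs exactly as yours: for each $x$ with $\bT\subset{}^x\bL$, the hypothesis on $\bL$ forces ${}^x\bL\cap\bP$ to be a proper $F$-stable parabolic of ${}^x\bL$ containing $\bT$, so ${}^*R_T^{{}^xL}({}^x\psi)=0$ by cuspidality of ${}^x\psi$ together with Lusztig's criterion applied inside ${}^x\bL$. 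Your proof becomes correct if you replace $\bL_I$ by such a torus $\bT$ and invoke \cite[Proposition~2.18]{LuCBMS} at both ends.
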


\begin{proof}
Let $\bT$ be an $F$-stable maximal torus contained in a proper $F$-stable parabolic
subgroup $\bP$ of $\bG$. The Mackey formula (see \cite[7.1]{DeLu83}) provides
a decomposition
$${}^*R_T^G R_L^G(\psi) = \frac{1}{|L|}\sum_{\begin{subarray}{c} x \in G \\
\bT \subset {}^x\bL \end{subarray}} {}^* R_{T}^{{}^xL} ({}^x\psi)$$
where ${}^x \psi := \psi \circ \mathrm{ad} x^{-1}$.
Let $x\in G$ with $\bT \subset {}^x\bL$. By assumption, ${}^x \bL \nsubseteq \bP$,
hence $\bT$ lies in the proper $F$-stable parabolic subgroup ${}^x \bL \cap \bP$ of
${}^x\bL$. Since $\psi$ is cuspidal, $\psi^x$ is a cuspidal character of 
${}^x \bL$, hence ${}^*R_{T}^{{}^xL} ({}^x\psi) = 0$ by 
\cite[Proposition 2.18]{LuCBMS}. It follows that 
${}^*R_T^G((-1)^{r_G+r_L}R_L^G(\psi))=0$, hence
$(-1)^{r_G+r_L}R_L^G(\psi)$ is cuspidal by \cite[Proposition 2.18]{LuCBMS}.
\end{proof}

%
%

\smallskip
Let $A=\OC Gb$ be a block of $\OC G$.
Let $\bP$ be an $F$-stable parabolic subgroup of $\bG$ with unipotent
radical $\bU$ and an $F$-stable Levi complement $\bL$.
Let $A'=\OC L b'$ be a block of $\OC L$.
We say that $A$ is {\em relatively Harish-Chandra $A'$-projective} if
the multiplication map
$b\OC Ge_Ub'\otimes_{\OC L}e_Ub'\OC G b\to \OC Gb$ is a split
surjection as a morphism of $(A,A)$-bimodules. This implies
in particular that any projective $A$-module is a direct summand
of the Harish-Chandra induction of a projective $A'$-module.

\smallskip
The first part of the following lemma follows from
\cite[Proposition 1.11]{DiMinc} (see \cite[Proposition 3.4.(b)]{BDR}
for the general case of a $p'$-solvable group), while the second part is immediate.

\begin{lem}
\label{le:Brauer}
Let $\bP$ be an $F$-stable parabolic subgroup of $\bG$ with unipotent
radical $\bU$ and an $F$-stable Levi complement $\bL$.
Let $Q$ be an $\ell$-subgroup of $L$. 

Then $\bP\cap C_\bG(Q)^\circ$ is a parabolic subgroup of
$C_\bG(Q)^\circ$ with unipotent radical $\bV=\bU\cap C_\bG(Q)$ and
Levi complement $\bL\cap C_\bG(Q)^\circ$.

Given $b$ and $b'$, block idempotents of $\OC G$ 
and $\OC L$ respectively, we have an isomorphism of
$(kC_G(Q),kC_L(Q))$-bimodules
$\Br_{\Delta Q}(b\OC G e_U b')\simeq \br_Q(b)k C_G(Q)e_V \br_Q(b')$.
\end{lem}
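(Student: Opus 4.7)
The first statement is cited from \cite[Proposition 1.11]{DiMinc}. The key input is that since $\ell\neq p$ every element of $Q$ is semisimple in $\bG$, so $C_\bG(Q)^\circ$ is a connected reductive subgroup of $\bG$; the intersection of the $F$-stable parabolic $\bP$ with $C_\bG(Q)^\circ$ then inherits the claimed Levi decomposition. In particular $V=\bV^F=C_U(Q)$, a fact to be fed into the second part.

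For the second statement, my plan is to reduce the bimodule Brauer functor $\Br_{\Delta Q}$ to the algebra morphism $\br_Q\colon(\OC G)^Q\to kC_G(Q)$. The preliminary observation is that for any $\Delta Q$-stable sub-$(\OC G,\OC L)$-bimodule $M\subseteq \OC G$, with $\Delta Q\subseteq G\times L$ acting by conjugation, the inclusion $M\hookrightarrow\OC G$ identifies $\Br_{\Delta Q}(M)$ with the image of $M\cap(\OC G)^Q$ under $\br_Q$ inside $kC_G(Q)$. This is standard: conjugation by $Q$ permutes the group-element basis of $\OC G$, non-$C_G(Q)$ orbits contribute $\ell$-divisible terms in the coinvariants that vanish in the Brauer quotient, while singleton orbits survive.

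Next I would verify that $b$, $b'$ and $e_U$ all lie in $(\OC G)^Q$. The block idempotent $b$ is central in $\OC G$ and hence $G$-fixed; $b'$ is central in $\OC L$ and $Q\subseteq L$; and $Q\subseteq L$ normalizes $\bU$, so conjugation by $Q$ permutes $U$ and fixes $e_U=|U|^{-1}\sum_{u\in U}u$. Since $\br_Q$ is a ring homomorphism on $(\OC G)^Q$,
\[\br_Q\bigl((b\OC G e_U b')^Q\bigr)=\br_Q(b)\cdot kC_G(Q)\cdot\br_Q(e_U)\cdot\br_Q(b').\]
The final local computation is
\[\br_Q(e_U)=\frac{1}{|U|}\sum_{u\in C_U(Q)}u=\frac{|V|}{|U|}\,e_V=\lambda\,e_V,\]
where $\lambda=[U:V]^{-1}$ is a power of $p$, hence a unit in $k^\times$ since $\ell\neq p$. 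Absorbing $\lambda$ into the two-sided submodule yields the claimed isomorphism.

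The main point to spell out carefully is the first identification of $\Br_{\Delta Q}$ on sub-bimodules of $\OC G$ with $\br_Q$ of the invariants; everything else is a short direct calculation. This identification is a standard feature of the Brauer functor on $p$-permutation modules, which is presumably why the authors describe the second part as \emph{immediate}.
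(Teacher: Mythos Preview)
Your proposal is correct and matches the paper's approach: the paper cites \cite[Proposition~1.11]{DiMinc} for the first part and declares the second part \emph{immediate}, and you supply precisely the standard details behind that word (Brauer functor on a direct summand of the permutation bimodule $\OC G$, together with the computation $\br_Q(e_U)=[U:V]^{-1}e_V$ with $[U:V]$ a power of $p$). One small remark: your preliminary observation is safest stated for direct summands of $\OC G$ rather than arbitrary $\Delta Q$-stable sub-bimodules, but since $b\OC G e_U b'$ is a direct summand (the idempotents $b$, $e_U$, $b'$ are $Q$-fixed and $e_U$ commutes with $\OC L$), this does not affect the argument.
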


Let $D$ be a defect group of $A$ and let $\bH=C_\bG^\circ(D)$.
Assume that $H=C_G(D)$. Let $\lambda$ be a character of $H$ that is trivial
on $Z(D)$ and such that
$\br_D(b)b_\lambda=b_\lambda$.


\smallskip
The following lemma is a variation on \cite[Proposition 4.2]{Li2}.

\begin{lem}
\label{le:relativeproj}
Let $\bP$ be an $F$-stable parabolic subgroup of $\bG$ with unipotent
radical $\bU$ and an $F$-stable Levi complement $\bL$.

Assume that $D\le L$ and let $\lambda'$ be a character
of $C_L(D)$ such that
$\langle {^*R}_{H\cap L}^H(\lambda),\lambda' \rangle\neq 0$ and such that $\lambda'$ is the lift to $C_L(D)$ of a defect zero character of $C_L(D)/Z(D)$.
Let $A' = \OC b'$ be the block of $\OC L$ of defect group $D$ such that $\br_D(b')b_{\lambda'}=b_{\lambda'}$.

Then the block $A$ is relatively Harish-Chandra $A'$-projective.
\end{lem}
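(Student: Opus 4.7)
The plan is to reduce the claim, via the Brauer functor $\Br_{\Delta D}$, to its analogue for the Brauer correspondent blocks $kHb_\lambda$ of $H=C_G(D)$ and $kC_L(D)b_{\lambda'}$ of $C_L(D)$, where the defect group $D$ is central and one has a much more rigid situation to work with.

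The $(A,A)$-bimodule $A=\OC G b$ is a $p$-permutation bimodule with vertex $\Delta D$. Since $D\le L$, the bimodule $b\OC G e_U b'$ (as a summand of $\OC G e_U$) is also of $p$-permutation type, with all vertices contained in $\Delta D$ up to conjugation; the same holds for $N := b\OC G e_U b' \otimes_{\OC L} e_U b'\OC G b$. By the standard criterion (Brou\'e) for split surjections of $p$-permutation bimodules with prescribed vertex, the multiplication map $\mu : N \to A$ is a split surjection of $(A,A)$-bimodules if and only if $\Br_{\Delta D}(\mu)$ is a split surjection of $(kHb_\lambda,kHb_\lambda)$-bimodules.

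Next I would compute the Brauer quotients. One has $\Br_{\Delta D}(A)=kHb_\lambda$ by definition of $\lambda$, while Lemma~\ref{le:Brauer} applied with $Q=D$ gives
\begin{align*}
\Br_{\Delta D}(b\OC G e_U b') &\cong b_\lambda\,kH\,e_V\,b_{\lambda'}, \\
\Br_{\Delta D}(e_U b'\OC G b) &\cong b_{\lambda'}\,e_V\,kH\,b_\lambda,
\end{align*}
where $\bV = \bU\cap\bH$, together with $\Br_{\Delta D}(\OC L b') = kC_L(D)b_{\lambda'}$. Using the compatibility of $\Br_{\Delta D}$ with the relevant tensor product (available because the bimodules are $p$-permutation with vertex controlled by $\Delta D$), one identifies
\begin{equation*}
\Br_{\Delta D}(N) \cong b_\lambda\,kH\,e_V\,b_{\lambda'} \otimes_{kC_L(D)b_{\lambda'}} b_{\lambda'}\,e_V\,kH\,b_\lambda,
\end{equation*}
and $\Br_{\Delta D}(\mu)$ becomes the corresponding multiplication. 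So it suffices to prove that $kHb_\lambda$ is relatively Harish-Chandra $kC_L(D)b_{\lambda'}$-projective for the parabolic $(\bL\cap\bH)\bV$ of $\bH$.

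For this last step, the crucial fact is that $\lambda$ and $\lambda'$ come from defect-zero characters of $H/Z(H)$ and $C_L(D)/Z(D)$ respectively (with $D\le Z(H)$ central in $H$), so that $kHb_\lambda$ and $kC_L(D)b_{\lambda'}$ are Morita equivalent to matrix algebras over the group algebras of their central defect groups. The hypothesis $\langle{^*R}_{H\cap L}^H(\lambda),\lambda'\rangle\neq 0$ says precisely that the simple $kC_L(D)b_{\lambda'}$-module occurs in the Harish-Chandra restriction of the simple $kHb_\lambda$-module; under the Morita equivalences one can exhibit a section of the local multiplication map by an explicit Frobenius-reciprocity computation. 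The main obstacle is the compatibility of $\Br_{\Delta D}$ with the tensor product over $\OC L b'$: because $D\le L$, Lemma~\ref{le:Brauer} ensures that $e_U$ lies in $(\OC G)^D$ and that the tensor structure descends cleanly to the Brauer quotient, but a careful bookkeeping of the underlying double coset decomposition is needed to make this precise.
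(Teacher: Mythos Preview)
Your overall strategy matches the paper's: reduce via $\Br_{\Delta D}$ to the local situation at $H=C_G(D)$ and $M=C_L(D)$, verify the analogous multiplication map there, then lift to a split surjection using a criterion for $p$-permutation bimodules (the paper cites \cite[Lemma A.1]{BDR}; the input needed is only \emph{surjectivity} after $\Br_{\Delta D}$, not split surjectivity as you state).

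The genuine gap is your displayed isomorphism
\[
\Br_{\Delta D}(N)\;\cong\; b_\lambda\,kH\,e_V\,b_{\lambda'}\ \otimes_{\,kC_L(D)b_{\lambda'}}\ b_{\lambda'}\,e_V\,kH\,b_\lambda .
\]
The Brauer construction does not commute with tensor products over $\OC L$ in general, and you correctly flag this as ``the main obstacle'' without resolving it. The paper never claims such an isomorphism. Instead it uses the commutative diagram
\[
\xymatrix{
kH\otimes_{kM}kH\ \ar@{^{(}->}[r]^-{\mathrm{can}} \ar[drr]_{\mathrm{mult}} &
(kG\otimes_{kL}kG)^{\Delta D}\ar@{->>}[r] &
\Br_{\Delta D}(kG\otimes_{kL}kG) \ar[d]^{\Br_{\Delta D}(\mathrm{mult})} \\
&& kH
}
\]
so that surjectivity of the \emph{local} multiplication $b_\lambda kH e_Vb_{\lambda'}\otimes_{kM}b_{\lambda'}e_VkH b_\lambda\to kH b_\lambda$ forces surjectivity of $\Br_{\Delta D}(\mathrm{mult})$, with no need to identify $\Br_{\Delta D}(N)$. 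This is exactly the ``careful bookkeeping'' you allude to, but the point is that only a map is required, not an isomorphism.

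For the local step the paper is also more direct than your Morita argument: since $\lambda'$ has defect $0$ in $M/Z(D)$, the hypothesis $\langle{}^*R_{H\cap L}^H(\lambda),\lambda'\rangle\neq 0$ immediately gives surjectivity of the multiplication map over $k(H/Z(D))$, and Nakayama's Lemma lifts this to $kH$.
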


\begin{proof}
Let $\bV$ be the unipotent radical of $\bH\cap\bP$ and let
$\bM=\bH\cap\bL$, a Levi complement of $\bV$ in $\bH\cap\bP$.
Note that $D\subset M=C_L(D)$.

Recall that $H = C_G(D)$. 
The condition $\langle {^*R}_{H\cap L}^H(\lambda),\lambda' \rangle\neq 0$
implies that the multiplication map
$$b_\lambda k(H/Z(D)) e_V b_{\lambda'}\otimes_{kM/Z(D)}
e_V b_{\lambda'}k(H/Z(D)) b_\lambda \to k(H/Z(D)) b_\lambda$$
is surjective. It follows from Nakayama's Lemma that the multiplication map
$$b_\lambda kH b_{\lambda'}\otimes_{kM}kH b_{\lambda'}b_\lambda  \to
kH b_\lambda$$
is also surjective.

Since $\br_D(e_U)=e_V$, the commutativity of the diagram 
$$\xymatrix{
kH\otimes_{kM}kH\ar@{^{(}->}[r]^-{\mathrm{can}}
\ar@/_3pc/[ddrrr]_{\mathrm{mult}} &
(kG\otimes_{kL}kG)^{\Delta D} \ar[rr]^-{\mathrm{mult}} 
\ar@{->>}[d]^{\mathrm{can}} &&
 (kG)^{\Delta D} \ar@{->>}[d]_{\mathrm{can}} \\
& \Br_{\Delta D}(kG\otimes_{kL}kG) \ar[rr]_-{\Br_{\Delta D}(\mathrm{mult})}
 && \Br_{\Delta D}(kG) \\
&&& kH \ar[u]^\sim \ar@/_3pc/[uu]_{\mathrm{can}}
}$$
together with Lemma \ref{le:Brauer} shows that the multiplication map induces a surjection
$$\Br_{\Delta D}(bkG e_U b'\otimes_{kL}b'e_U kGb)\twoheadrightarrow
\Br_{\Delta D}(bkG).$$
We deduce from \cite[Lemma A.1]{BDR} that the multiplication map
gives a split surjective morphism of $(\OC Gb,\OC Gb)$-bimodules
$b\OC G e_U b'\otimes_{\OC L}b'e_U \OC Gb\twoheadrightarrow b\OC G$.
\end{proof}

\subsubsection{Complex of cohomology and Frobenius action}
\label{subsubsec:complex}

Following \cite[Theorem 1.14]{DR}, given a variety $X$ defined over
$\bbF_{q^\delta}$ with the action of a finite group $H$, there is a
bounded complex $\tRgc(X,\OC)$ of 
$\OC(H \times \langle F^\delta \rangle)$-modules with
the following properties:
\begin{itemize}
\item $\tRgc(X,\OC)$ is unique up to isomorphism in the quotient of the homotopy category of complexes of $\OC(H \times \langle F^\delta \rangle)$-modules by the thick subcategory of complexes whose restriction to $\OC H$ is homotopic to $0$;
\item the terms of $\mathrm{Res}_{\OC H} \, \tRgc(X,\OC)$ are direct summands of finite direct sums of modules of the form $\OC(H/L)$, where $L$ is the stabilizer in $H$ of a point of $X$;
\item the image of $\tRgc(X,\OC)$ in the derived category of $\OC (H \times \langle F^\delta\rangle)$ is the usual complex $\Rgc(X,\OC)$.
\end{itemize}

\smallskip
Note that in \cite{DR} such a complex was constructed over $k$ instead of
$\OC$, but the same methods lead to a complex over $\OC$. Indeed,
note first that there is a bounded
complex of $\OC (H \times \langle F^\delta\rangle)$-modules $C$ constructed in \cite[\S 2.5.2]{Rou}, whose restriction to
$\OC H$ has terms that are direct summands of possibly infinite direct
sums of modules of the form $\OC(H/L)$, where $L$ is the stabilizer
in $L$ of a point of $X$. Furthermore, that restriction is homotopy
equivalent to a bounded complex whose terms are direct summands of 
finite direct
sums of modules of the form $\OC(H/L)$, where $L$ is the stabilizer
in $H$ of a point of $X$. One can then proceed as in \cite{DR} to
construct $\tRgc(X,\OC)$.

\medskip

Given $\lambda \in k^\times$, we denote by $L(\lambda)$ the inverse
image of $\lambda$ in $\OC$. Given an
$\OC \langle F^\delta \rangle$-module $M$ that is finitely generated as
an $\OC$-module, we denote by
$$M_{(\lambda)}=\{m\in M \mid \exists \lambda_1,\ldots,\lambda_N\in L(\lambda)
\text{ such that }
(F^\delta-\lambda_1)\cdots (F^\delta-\lambda_N)(m)=0\}$$
the `generalized $\lambda$-eigenspace mod $\ell$' of $F^\delta$.

\smallskip
 The image
of $\tRgc(X,k)_{(\lambda)}$ in $D^b(kH)$ will be denoted by
$\Rgc(X,k)_{(\lambda)}$ and we will refer to it as the generalized
$\lambda$-eigenspace of $F^\delta$ on the cohomology complex of $X$.

\medskip
When $\ell\nmid|\bT^{wF}|$, the stabilizers of points of $\rX(w)$
under the action of $G$ are $\ell'$-groups and the terms of the complex
of $\OC G$-modules $\tRgc(\rX(w),\OC)$ are projective.

\begin{lem}
\label{le:shiftFrob}
Let $w\in W$ such that $\bT^{wF}$ has cyclic Sylow $\ell$-subgroups.
Given $\zeta\in k^\times$, we have
$$\Rgc(\rX(w),k)_{(q^{-\delta}\zeta)}\simeq \Rgc(\rX(w),k)_{(\zeta)}[2]\quad
\text{in }kG\mstab.$$
\end{lem}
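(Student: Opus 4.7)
The plan is to construct a $\bbG_m$-bundle $Z$ over $\rX(w)$ whose $\OC G$-equivariant cohomology complex is perfect—hence vanishes in $kG\mstab$—and to extract the claimed isomorphism from the open–closed triangle that compares $Z$ with its ambient line bundle. The shift by $[2]$ and the Frobenius twist by $q^{-\delta}$ will come from the canonical identification $\tRgc(L,\OC)\simeq\tRgc(\rX(w),\OC)(-1)[-2]$ for a line bundle $L\to\rX(w)$.

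Let $H$ be the Hall $\ell'$-subgroup of the abelian group $\bT^{wF}$, so that $\bT^{wF}=H\times D$ with $D$ cyclic of $\ell$-power order, and set $\bar\rY=\rY(w)/H$. This is a $D$-principal bundle over $\rX(w)$. Fix an embedding $D\hookrightarrow\bbG_m$, which exists since $D$ is a cyclic $\ell$-group with $\ell\neq p$, and form
$$L=\mathbb{A}^1\times^D\bar\rY\quad\text{and}\quad Z=\bbG_m\times^D\bar\rY,$$
so that $L\to\rX(w)$ is a line bundle with zero section $\rX(w)$ and complementary open subset $Z$. The key geometric point to verify is that the $G$-stabilizer of any point of $Z$ is an $\ell'$-group: any $\ell$-element of $\mathrm{Stab}_G(g\bB)$ is of the form $g\tau g^{-1}$ for some $\tau$ in the image of $D\subset\bT^{wF}$, and its action on the $\bbG_m$-fibre of $Z\to\rX(w)$ over $g\bB$ is free translation by the image of $\tau$ in $D\subset\bbG_m$, so it fixes no point of $Z$ unless $\tau=1$. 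Consequently the terms of $\tRgc(Z,\OC)$ are projective $\OC G$-modules, and $\tRgc(Z,k)\simeq 0$ in $kG\mstab$.

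The localization triangle for the decomposition $L=Z\sqcup\rX(w)$ reads
$$\tRgc(Z,\OC)\longrightarrow\tRgc(L,\OC)\longrightarrow\tRgc(\rX(w),\OC)\xrightarrow{+1},$$
and since $L\to\rX(w)$ is an affine line bundle, one has $\tRgc(L,\OC)\simeq\tRgc(\rX(w),\OC)(-1)[-2]$, where $(-1)$ denotes the Tate twist (Frobenius eigenvalues multiplied by $q^\delta$). Reducing mod $\ell$ and passing to $kG\mstab$ kills $\tRgc(Z,k)$, so the connecting morphism becomes an isomorphism $\tRgc(\rX(w),k)(-1)[-2]\simeq\tRgc(\rX(w),k)$ in $kG\mstab$. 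Taking the generalized $\zeta$-eigenspace of $F^\delta$ on both sides gives precisely $\Rgc(\rX(w),k)_{(q^{-\delta}\zeta)}\simeq\Rgc(\rX(w),k)_{(\zeta)}[2]$, as claimed.

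The hard part will be the stabilizer computation. Identifying the $\ell$-part of the $G$-stabilizer of $g\bB\in\rX(w)$ with (a conjugate of) $D$ and then tracing this action through the quotient $\bbG_m\times^D\bar\rY$ requires careful bookkeeping with the freeness of the $\bT^{wF}$-action on $\rY(w)$ and with the fibres of $\bar\rY\to\rX(w)$. This is precisely where the hypothesis that the Sylow $\ell$-subgroup of $\bT^{wF}$ is cyclic enters: it is what allows the embedding $D\hookrightarrow\bbG_m$ that replaces the torsion $D$-action on $\bar\rY$ by a free $\bbG_m$-action on $Z$, thereby promoting an $\ell$-singular situation to an $\ell$-regular one at the cost of a Tate twist.
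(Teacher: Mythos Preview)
Your proof is correct and provides a nice geometric alternative to the paper's algebraic argument. The paper proceeds directly: since $\bT^{wF}$ has cyclic Sylow $\ell$-subgroups, there is a four-term exact sequence $0\to k\to k\bT^{wF}\to k\bT^{wF}\to k\to 0$ of $k\bT^{wF}$-modules, giving a triangle $k\to k[2]\to C\rightsquigarrow$ in $D^b(k\bT^{wF})$ with $C$ perfect; one then applies $\Rgc(\rY(w),k)\otimes^\bbL_{k\bT^{wF}}(-)$, using that $\Rgc(\rY(w),k)\otimes^\bbL_{k\bT^{wF}}k\simeq\Rgc(\rX(w),k)$ while $\Rgc(\rY(w),k)$ itself is perfect over $kG$ (free $G$-action on $\rY(w)$). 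Your approach geometrizes the same periodicity by building the $\bbG_m$-bundle $Z$ and the line bundle $L$: your perfect complex $\tRgc(Z,k)$ plays the role of the paper's tensored $C'$, and your open--closed triangle for $Z\subset L\supset\rX(w)$ is the geometric source of the paper's four-term sequence. The payoff of your route is that the Tate twist, and hence the factor $q^{-\delta}$, is completely transparent; the paper's route is shorter and avoids constructing auxiliary varieties.

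One point deserves more care than you give it. For the identification $\tRgc(L,\OC)\simeq\tRgc(\rX(w),\OC)(-1)[-2]$ and the passage to generalized eigenspaces to make sense, $L$ and $Z$ must carry compatible $F^\delta$-actions, i.e.\ the embedding $D\hookrightarrow\bbG_m$ must be $F^\delta$-equivariant. This holds, but not for the naive reason: the key is that $F^\delta$ acts on $\bT^{wF}\subset\bT$ by the $q^\delta$-power map (as $\bT$ is split over $\bbF_{q^\delta}$), and likewise on $\bbG_m$, so \emph{any} group homomorphism $D\to\bbG_m$ is automatically $F^\delta$-equivariant. In particular $F^\delta$ does not commute with the $D$-action on $\bar\rY$ but rather $q^\delta$-twists it, and this twist is exactly matched by the standard Frobenius on the $\mathbb{A}^1$-factor, so the quotients $L$ and $Z$ are genuinely defined over $\bbF_{q^\delta}$. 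You should make this explicit. Also, your phrasing ``any $\ell$-element of $\mathrm{Stab}_G(g\bB)$ is of the form $g\tau g^{-1}$'' is loose (here $g\in\bG$, not $G$); the clean statement is that freeness of the $G$-action on $\rY(w)$ yields an injection $\mathrm{Stab}_G(g\bB)\hookrightarrow\bT^{wF}$, after which your translation argument on the $\bbG_m$-fibre goes through verbatim by projecting to the $D$-component.
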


\begin{proof}
Recall (\S \ref{se:unipotentblocks}) that there is a variety $\rY(w)$ acted on by
$G = \bG^F$ on the left and
acted on freely by $\bT^{wF}$ on the right such that
$\rY(w)/\bT^{wF}\simeq\rX(w)$. Consider the automorphism $\varphi$ of
	$\bT^{wF}$ given by the action of $F^{-\delta}$.
We have a right action of $\bT^{wF}\rtimes \langle\varphi\rangle$ on
$\rY(w)$ where $\varphi$ acts as $F^\delta$.
We have
$\Rgc(\rY(w),k)\otimes^\bbL_{k\bT^{wF}}k\simeq \Rgc(\rX(w),k)$.

Let $t$ be a generator of the Sylow $\ell$-subgroup $D$ of $\bT^{wF}$ and
let $I=(t-1)\cdot kD$.
	We have $\varphi(t-1)=q^{-\delta}(t-1)\pmod{I^2}$, hence
there is an exact sequence 
of $k(D\rtimes\langle \varphi\rangle)$-modules
$$0\to \ker f \to kD\otimes k_{-q^\delta}\xrightarrow{f} kD\to k\to 0$$
where $k_{-q^\delta}$ is the one-dimensional module with trivial
	$D$-action and where $\varphi$ acts by multiplication by $q^{-\delta}$.
The kernel of $f$ is the socle $k((1+t+\cdots +t^{|D|-1})\otimes 1)$
	of $kD\otimes k_{q^{-\delta}}$. Since $\varphi$ acts on that line by
	multiplication by $q^{-\delta}$, the exact sequence above gives 
a $\varphi$-equivariant distinguished triangle 
	$k\to k_{q^{-\delta}}[2]\to C\rightsquigarrow$
in $D^b(k\bT^{wF})$, where $C$ is perfect.

Applying $\Rgc(\rY(w),k)\otimes^\bbL_{k\bT^{wF}}-$, we obtain
a distinguished triangle in $D^b(kG)$, equivariant for the action of
$F^\delta$
$$\Rgc(\rX(w),k)\to\Rgc(\rX(w),k)\otimes k_{q^{-\delta}}[2]\to
C'\rightsquigarrow,$$
where $C'$ is perfect. 
The lemma follows by taking generalized $q^{-\delta}\zeta$-eigenspaces.
\end{proof}

\subsubsection{Simple modules in the cohomology of Deligne--Lusztig varieties}
By definition, every simple unipotent $kG$-module occurs in the cohomology of some Deligne--Lusztig variety $\rX(w)$.
If $w$ is minimal for the Bruhat order, this module only occurs in middle
degree.
This will be an
important property to compute the cohomology of $\rX(w)$ over $\mathcal{O}$
from the cohomology over $K$. 
Let us now recall the precise result \cite[Propositions 8.10 and 8.12]{BR1}.
We adapt the result to the varieties $\rX(w)$.

\smallskip
Recall that there is a pairing $K_0(kG\mproj)\times K_0(kG\mmod)\to\bbZ$
defined by 
$$\langle [P];[M] \rangle = \dim_k \mathrm{Hom}_{kG}(P,M)$$
for $P \in kG\mproj$ and $M \in kG\mmod$.
The Cartan map $K_0(kG\mproj)\to K_0(kG\mmod)$ is injective and we
identify $K_0(kG\mproj)$ with its image. It is a submodule of finite index.
In other words, for any $f \in K_0(kG\mproj)$, there is a positive integer
$N$ such  that $Nf\in K_0(kG\mproj)$. Consequently the pairing above can
be extended to a pairing $K_0(kG\mmod)\times K_0(kG\mmod)\to\bbQ$.

\begin{prop}\label{prop:middledegree}
  Let $M$ be a simple unipotent $kG$-module and let $w \in W$.
The following properties are equivalent: 
  \begin{itemize}
    \item[(a)] $w$ is minimal such that
    $\RHom^\bullet_{kG}(\Rgc(\rX(w),k),M) \neq 0$;
    \item[(b)] $w$ is minimal such that
    $\RHom^\bullet_{kG}(M,\Rgc(\rX(w),k)) \neq 0$; 
    \item[(c)] $w$ is minimal such that
    $\langle [\Rgc(\rX(w),k)],[M]\rangle \neq 0$.
  \end{itemize}
Assume that $w$ is such a minimal element. We have
$\Hom_{kG}(M,\Hc^{\ell(w)}(\rX(w),k))\neq 0$. If
$\ell \nmid |\bT^{wF}|$, then 
$\Hom_{D^b(kG)}(\Rgc(\rX(w),k),M[-i])=
\Hom_{D^b(kG)}(M,\Rgc(\rX(w),k)[i])=0$ for $i\neq \ell(w)$.
\end{prop}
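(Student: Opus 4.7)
The approach is to adapt \cite[Propositions 8.10 and 8.12]{BR1}, which treat $\rY(w)$, to the quotient variety $\rX(w)$; the bridge is the isomorphism $\Rgc(\rX(w),k) \simeq \Rgc(\rY(w),k) \ol_{k\bT^{wF}} k$ in $D^b(kG)$.

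The easy direction $(c) \Rightarrow (a),(b)$ comes from the fact that $[\Rgc(\rX(w),k)]$ lies in $K_0(kG\mmod)^{vp}$, so the pairing in $(c)$ computes the Euler characteristic $\sum_i (-1)^i \dim_k \Hom_{D^b(kG)}(\Rgc(\rX(w),k),M[i])$; its non-vanishing forces the non-vanishing in $(a)$. Using the self-duality coming from Poincar\'e--Verdier duality on the smooth variety $\rX(w)$ together with the isomorphism $\rX(w) \simeq \rX(w^{-1})$, the same conclusion holds for $(b)$. Writing $S_a, S_b, S_c$ for the sets of $w$ satisfying the respective non-vanishing, we obtain $S_c \subseteq S_a \cap S_b$, so the task reduces to showing that any minimal element of $S_a$ or $S_b$ already lies in $S_c$.

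For the main step, fix $w$ minimal in $S_a$. The stratification $\overline{\rX(w)} = \bigsqcup_{w' \leq w} \rX(w')$ produces a spectral sequence whose $E_1$-terms involve $\Hc^\bullet(\rX(w'),k)$ for $w' \leq w$. By minimality of $w$, the contribution of every $w' < w$ to $\RHom^\bullet(-,M)$ vanishes, so only the open stratum $\rX(w)$ contributes. Combined with Hiss's theorem \cite[Theorem 3.1]{Hi}, which guarantees that $M$ appears somewhere in the cohomology of some $\rX(w)$, this yields $\Hom_{kG}(M,\Hc^{\ell(w)}(\rX(w),k)) \neq 0$ and, via the Euler characteristic formula of the first paragraph, $\langle [\Rgc(\rX(w),k)],[M]\rangle \neq 0$. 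This proves the equivalences and the first displayed claim.

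For the final statement, under the hypothesis $\ell \nmid |\bT^{wF}|$ the stabilisers of points on $\rX(w)$ are $\ell'$-groups, hence the terms of $\tRgc(\rX(w),k)$ are projective $kG$-modules and $\Rgc(\rX(w),k)$ is a perfect complex. Combined with the vanishing on strata $w' < w$, the argument of \cite[Proposition 8.12]{BR1} transfers to yield $\Hom_{D^b(kG)}(\Rgc(\rX(w),k),M[-i]) = \Hom_{D^b(kG)}(M,\Rgc(\rX(w),k)[i]) = 0$ for $i \neq \ell(w)$. The main obstacle is the bookkeeping for the stratification spectral sequence and the careful use of Poincar\'e--Verdier duality needed to unify $(a)$ and $(b)$; once those are in place, the adaptation from $\rY(w)$ to $\rX(w)$ is essentially formal.
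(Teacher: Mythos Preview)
Your reduction-to-\cite{BR1} instinct is right, but the execution has a real gap in the implication $(a)\Rightarrow(c)$. You fix $w$ minimal in $S_a$, invoke the stratification of $\overline{\rX}(w)$ to kill the boundary contributions, and then assert that ``combined with Hiss's theorem \ldots this yields $\Hom_{kG}(M,\Hc^{\ell(w)}(\rX(w),k))\neq 0$''. That step is unjustified: the stratification only tells you $\RHom^\bullet(\Rgc(\overline{\rX}(w),k),M)\simeq\RHom^\bullet(\Rgc(\rX(w),k),M)$, which you already knew was nonzero; it gives no concentration in a single degree, and Hiss's theorem says nothing about \emph{this} particular $w$ or about the degree $\ell(w)$. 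Without concentration, the Euler characteristic could vanish and you cannot conclude $(c)$. Poincar\'e duality on $\overline{\rX}(w)$ relates $(a)$ for $M$ to $(b)$ for $M^*$, not for $M$, so it does not close the loop either. Similarly, in the final paragraph you invoke ``the argument of \cite[Proposition~8.12]{BR1}'' for $\rX(w)$, but that result is proved for $\rY(w)$ and you supply no transfer mechanism.

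The paper's proof sidesteps all of this with one device you do not use: the principal block idempotent $b_w$ of $k\bT^{wF}$. Since every composition factor of $b_w k\bT^{wF}$ is trivial and $\bT^{wF}$ acts freely on $\rY(w)$, the complex $\Rgc(\rY(w),k)b_w$ is an iterated extension of $N=|\bT^{wF}|_\ell$ copies of $\Rgc(\rX(w),k)$ in $D^b(kG)$. This immediately gives $[\Rgc(\rY(w),k)b_w]=N\cdot[\Rgc(\rX(w),k)]$ (hence virtual projectivity), and a spectral-sequence argument on the filtration shows that the minimal degree with nonzero $\Hom_{D^b(kG)}(-,M[r])$ or $\Hom_{D^b(kG)}(M,-[r])$ is the same for $\Rgc(\rX(w),k)$ and for $\Rgc(\rY(w),k)b_w$. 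Unipotence of $M$ then lets one drop $b_w$, and \cite[Propositions~8.10 and 8.12]{BR1} apply verbatim to $\rY(w)$. Your formula $\Rgc(\rX(w),k)\simeq\Rgc(\rY(w),k)\ol_{k\bT^{wF}}k$ is the top layer of this filtration; by itself it does not give the two-way transfer of the non-vanishing conditions that the $b_w$ trick provides.
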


\begin{proof}
We use the variety $\rY(w)$ as in the proof of Lemma \ref{le:shiftFrob}.
Since the stabilizers for the action of $G$ on $\rY(w)$ are $p$-groups
(and hence $\ell'$-groups), the complex $\Rgc(\rY(w),k)$ is perfect
(see \S\ref{subsubsec:complex}) and therefore $[\Rgc(\rY(w),k)] \in  K_0(kG\mproj)$. 

Let  $\bT^{wF}_{\ell'}$ be the subgroup of elements of $\bT^{wF}$ of order
prime to $\ell$ and
$$b_w=\frac{1}{|\bT^{wF}_{\ell'}|}\sum_{t\in \bT^{wF}_{\ell'}}t$$
be the principal block idempotent of $k\bT^{wF}$.

All composition factors of $b_wk\bT^{wF}$ are trivial, hence
$\Rgc(\rY(w),k)b_w$ is an extension of $N=|\bT^{wF}|_\ell$ copies of
$\Rgc(\rX(w),k)$.
As a consequence, $[\Rgc(\rY(w),k)b_w]=N\cdot [\Rgc(\rX(w),k)]$.

We deduce that $\langle [\Rgc(\rX(w),k)],[M]\rangle \neq 0$ if and only if
$\langle [\Rgc(\rY(w),k)b_w],[M]\rangle \neq 0$.
It follows also that an integer $r$ is minimal such that 
$\Hom_{D^b(kG)}(M,\Rgc(\rX(w),k)[r])\neq 0$ if and only if it is
minimal such that $\Hom_{D^b(kG)}(M,\Rgc(\rY(w),k)b_w[r])\neq 0$.
It follows that $w$ is minimal such that
$\RHom^\bullet_{kG}(M,\Rgc(\rY(w),k)b_w)\neq 0$ if and only if
(b) holds. Similarly,
$w$ is minimal such that
$\RHom^\bullet_{kG}(\Rgc(\rY(w),k)b_w,M)\neq 0$ if and only if
(a) holds. 

Note also that the statements above with $\Rgc(\rY(w),k)b_w$ are
equivalent to the same statements with $\Rgc(\rY(w),k)$ since $M$ is
unipotent. The equivalence between (a), (b) and (c) follows now
from \cite[Proposition 8.12]{BR1}.

Suppose that $w$ is minimal with the equivalent properties (a), (b) and (c).
It follows from \cite[Proposition 8.10]{BR1} that
the cohomology of $\RHom^\bullet_{kG}(M,\Rgc(\rY(w),k)b_w)$ is concentrated in
degree $\ell(w)$. The last assertions of the lemma follow.
\end{proof}

Proposition \ref{prop:middledegree} shows that for a minimal $w$, if
$\ell\nmid|\bT^{wF}|$, then the complex of $kG$-modules
$\tRgc(\rX(w),k)^{\redu}$ is isomorphic to a bounded
complex of projective modules such that a
projective cover $P_M$ of $M$ appears only in degree $\ell(w)$ as a direct
summand of a term of this complex.

\subsection{Compactifications}\label{sec:compact}

Let $\underline{S}$ be a set together with a bijection $S\xrightarrow{\sim}
\underline{S},\ s \longmapsto \underline{s}$. Given
$\underline{s} \in \underline{S}$, we put $\bB \underline s \bB = \bB s \bB \cup \bB$. The \emph{generalized Deligne--Lusztig variety} associated to a sequence $(t_1, \ldots ,t_d)$ of elements of $S \cup \underline{S}$ is

\[ \rX(t_1, \ldots,t_d) \, = \, \left\{ (g_0 \bB, \ldots, g_d \bB) \in (\bG/\bB)^{d+1} \, \left| \, \begin{array}{l} g_i^{-1} g_{i+1} \in \bB t_i \bB \, \text{ for } \, i =0, \ldots d-1 \\  g_d^{-1} F(g_0) \in \bB t_d \bB \\\end{array}\right.\right\}.\]
If $w = s_1 \cdots s_d$ is a reduced expression of $w \in W$ then $\rX(s_1, \ldots, s_d)$ is isomorphic to $\rX(w)$ and $\rX(\underline{s}_1, \ldots, \underline{s}_d)$ is a smooth compactification of $\rX(w)$. It will be denoted by $\overline{\rX}(w)$ (even though it depends on the choice of a
reduced expression of $w$). 

\begin{rmk}\label{rmk:xbar}
  Proposition \ref{prop:middledegree} also holds for $\rX(w)$ replaced
by $\overline{\rX}(w)$ (and the assertions for $\rX(w)$ are equivalent
to the ones for $\overline{\rX}(w)$), with the assumption `$\ell\nmid |\bT^{wF}|$' replaced by 
`$\ell\nmid |\bT^{vF}|$ for all $v\le w$' for the last statement.
This follows from the fact that 
  $$\RHom^\bullet_{kG}(\Rgc({\overline\rX}(w),k),M) \simeq
 \RHom^\bullet_{kG}(\Rgc(\rX(w),k),M)$$
  whenever $\RHom^\bullet_{kG}(\Rgc(\rX(v),k),M) = 0$ for all $v<w$. 
\end{rmk}

\smallskip

The cohomology of $\overline{\rX}(w)$ over $K$ is
known \cite{DMR}. We provide here some partial information in the
modular setting. Recall that two elements $w,w'\in W$ are
$F$-conjugate if there exists $v\in W$ such that $w'=v^{-1}wF(v)$.

\begin{prop}\label{xbarxbar}Let $w,w' \in W$. If $\ell\nmid |\bT^{vF}|$
for all $v\leq w$ or for all $v\leq w'$, then 
$\Hc^*(\overline{\rX}(w) \times_{G} \overline{\rX}(w'),\OC)$ is torsion-free.
\end{prop}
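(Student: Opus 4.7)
By symmetry I may assume $\ell\nmid|\bT^{vF}|$ for all $v\le w$. The stratification of $\overline{\rX}(w)=\rX(\underline s_1,\dots,\underline s_d)$ by locally closed pieces indexed by sub-expressions $v\le w$ of $s_1\cdots s_d$ shows that every $G$-stabilizer of a point of $\overline{\rX}(w)$ is contained in some $\bT^{vF}$, hence is an $\ell'$-group. By the second property of the tilde-complex construction recalled just before Lemma~\ref{le:shiftFrob}, the complex $P:=\tRgc(\overline{\rX}(w),\OC)$ is then a bounded complex of finitely generated projective $\OC G$-modules --- each term is a summand of some $\OC(G/L)$ with $|L|$ invertible in $\OC$, so projective by induction from the trivial module.

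Next, form
\[
C := P \otimes_{\OC G} \tRgc(\overline{\rX}(w'),\OC),
\]
viewing $P$ as a complex of right $\OC G$-modules via $g\mapsto g^{-1}$. Since $P$ has projective terms, the underived tensor product agrees with $P\otimes^{\bbL}_{\OC G}\tRgc(\overline{\rX}(w'),\OC)$, and the standard Künneth identification for the diagonal $G$-action on $\overline{\rX}(w)\times\overline{\rX}(w')$ realises $C$ as a representative of $\Rgc(\overline{\rX}(w)\times_G\overline{\rX}(w'),\OC)$ in $D^b(\OC)$. Each term of $C$ is an $\OC$-summand of some $P^i\otimes_{\OC G}\OC(G/L')$, with $L'$ a point-stabilizer on $\overline{\rX}(w')$; since $P^i$ is projective over $\OC G$ and therefore free over the DVR $\OC$, and a finitely generated summand of a free $\OC$-module is again free, every term of $C$ is finitely generated free over $\OC$. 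Consequently $H^*(C)=\Hc^*(\overline{\rX}(w)\times_G\overline{\rX}(w'),\OC)$ is torsion-free if and only if
\[
\sum_i\dim_K H^i(K\otimes_{\OC}C)=\sum_i\dim_k H^i(k\otimes_{\OC}C),
\]
and these sums are the total $K$- and $k$-Betti numbers of the fiber product, obtained by running the same Künneth identification with coefficients in $K$ and in $k$.

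The main obstacle is establishing this equality of total Betti numbers. My approach would be a two-step one. First, show that $\Hc^*(\overline{\rX}(w),\OC)$ itself is torsion-free under the hypothesis, by inducting along the stratification via the associated attaching triangles and reducing to torsion-freeness of $\Hc^*(\rX(v),\OC)$ for each $v\le w$; these in turn can be controlled by Proposition~\ref{prop:middledegree} together with the $\ell'$-stabilizer property, which makes $\tRgc(\rX(v),\OC)$ perfect over $\OC G$ and pins down in which cohomological degrees simple unipotent modules can occur. Second, given torsion-freeness of $H^*(P)$, the perfect complex $P$ is compatible with base change along $\OC\to\Lambda$ for $\Lambda\in\{K,k\}$, so the cohomology of $\Lambda\otimes_{\OC}P\otimes_{\OC G}\tRgc(\overline{\rX}(w'),\OC)$ fits into a Künneth-style spectral sequence whose $E_2$-page is visibly independent of $\Lambda$; this forces the $K$- and $k$-Betti numbers of the fiber product to match, completing the argument.
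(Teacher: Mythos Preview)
Your reduction up to the Betti-number comparison is sound: under the hypothesis the stabilizers on $\overline{\rX}(w)$ are $\ell'$-groups, so $P=\tRgc(\overline{\rX}(w),\OC)$ is perfect over $\OC G$, and the identification of $P\otimes_{\OC G}\tRgc(\overline{\rX}(w'),\OC)$ with $\Rgc(\overline{\rX}(w)\times_G\overline{\rX}(w'),\OC)$ is legitimate. The gap is in your two-step plan for the comparison itself.

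Step one asks for torsion-freeness of $\Hc^*(\overline{\rX}(w),\OC)$. Your inductive reduction lands on torsion-freeness of $\Hc^*(\rX(v),\OC)$ for all $v\le w$, but this is \emph{not} known in general and is flagged in the paper as an open question (see the remark closing \S\ref{secoxeter}: ``one can ask whether the property $\ell\nmid|\bT^{wF}|$ forces the cohomology to be torsion-free''). Proposition~\ref{prop:middledegree} only controls where a given simple module can appear for \emph{minimal} $w$; it does not rule out torsion for arbitrary $v$. So this step assumes what is essentially the hard content.

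Step two is also problematic. Even granting torsion-freeness of $\H^*(P)$, the K\"unneth/hyperTor spectral sequence for $P_\Lambda\otimes_{\Lambda G}Q_\Lambda$ has $E_2$-terms of the shape $\mathrm{Tor}_*^{\Lambda G}\bigl(\H^*(P_\Lambda),\H^*(Q_\Lambda)\bigr)$, and these are \emph{not} independent of $\Lambda$: the algebra $KG$ is semisimple while $kG$ is not, and you have no hypothesis at all on $w'$, so $\H^*(Q_\Lambda)=\Hc^*(\overline{\rX}(w'),\Lambda)$ can genuinely change between $K$ and $k$. The claim that the $E_2$-page is ``visibly independent of $\Lambda$'' does not hold.

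The paper's argument avoids both obstacles by invoking Lusztig's decomposition of $\overline{\rX}(w)\times\overline{\rX}(w')$ into $G$-stable locally closed pieces $Z_{\mathbf a}$ with the property that each $G\backslash Z_{\mathbf a}$ has the integral cohomology of an affine space. The finite group $\mathcal T$ that mediates this is simultaneously a $\bT^{vF}$ for some $v\le w$ and a $\bT^{v'F}$ for some $v'\le w'$, so either hypothesis makes it an $\ell'$-group and the computation goes through stratum by stratum. This geometric input is what your approach is missing; without it the problem does not reduce to separate control of the two factors.
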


\begin{proof}
Given $w,w' \in W$, Lusztig defined in \cite{Lu4} a decomposition of $\overline{\rX}(w) \times \overline{\rX}(w')$ as a disjoint union of
locally closed subvarieties $Z_\mathbf{a}$ stable under the diagonal action of $G$. The quotient by $G$ of each of these varieties has the same cohomology as an affine space. More precisely, given $\mathbf{a}$, there exists:

\begin{itemize}
\item a finite group $\mathcal{T}$, isomorphic to $\bT^{vF}$ for some
$v \leq w$ and to $\bT^{v'F}$ for some $v' \leq w'$ ($F$-conjugate to $v$),
and a quasi-projective variety $Z_0$ acted on by  $G \times \mathcal{T}$,
where $\mathcal{T}$ acts freely, together with a $G$-equivariant isomorphism
$Z_0 / \mathcal{T} \xrightarrow{\sim} Z_\mathbf{a}$;

\item a quasi-projective variety $Z_1$ acted on freely by $G$ and $\mathcal{T}$, such that \linebreak $\Rgc(G\backslash Z_1,\mathcal{O})[2\dim Z_1] \simeq \mathcal{O}$;

\item a $(G\times \mathcal{T})$-equivariant quasi-isomorphism
 $$\Rgc(Z_0,\mathcal{O})[2\dim Z_\mathbf{a}] \xrightarrow{\sim}
\Rgc(Z_1,\mathcal{O}) [2\dim Z_1].$$
\end{itemize}
From these properties and \cite[Lemma 3.2]{BR1} we deduce that if $\mathcal{T}$ is an $\ell'$-group then 
\begin{align*}\Rgc(G\backslash Z_\mathbf{a},\mathcal{O})&\simeq \Rgc(G \backslash Z_0, \mathcal{O}) \, \otimes_{\mathcal{O T}} \, \mathcal{O}
\\&\simeq\Rgc(G \backslash Z_1, \mathcal{O}) \, \otimes_{\mathcal{O T}} \, \mathcal{O} [2 \dim Z_1-2\dim Z_\mathbf{a}]
\\&\simeq\mathcal{O}[-2\dim Z_\mathbf{a}].\end{align*}

As a consequence, the cohomology groups of $\overline{\rX}(w) \times_{G} \overline{\rX}(w')$ are the direct sums of the cohomology groups  of the varieties $G\backslash Z_\mathbf{a}$ and the proposition follows.
\end{proof}

\begin{prop}\label{xbartorsion}
Let $I$ be an $F$-stable
 subset of $S$ such that $\ell \nmid |L_I|$. If $M$ is a simple
$kG$-module such that ${^*R}_{L_I}^G(M)\not=0$, then
$M$ is not a composition factor in the torsion of $\Hc^*(\overline{\rX}(w),\mathcal{O})$ for any $w\in W$.
\end{prop}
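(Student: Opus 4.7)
The plan is to assume that $M$ is a composition factor of the torsion of $\Hc^j(\overline{\rX}(w),\OC)$ for some $j$ and derive a contradiction. Since $\overline{\rX}(w)=\bigsqcup_{v\leq w}\rX(v)$ stratifies into Deligne--Lusztig varieties, every composition factor of $\Hc^*(\overline{\rX}(w),k)$ is unipotent by definition, so $M$ must be unipotent. The hypothesis ${}^*R_{L_I}^G M\neq 0$ then supplies, via biadjunction, a unipotent simple $kL_I$-module $N$ with $M$ in the head of $R_{L_I}^G N$; the banality $\ell\nmid|L_I|$ makes $kL_I$ semisimple and lifts $N$ uniquely to a projective simple $\OC L_I$-lattice $\tilde N$, so that $R_{L_I}^G\tilde N$ is a projective $\OC G$-module of which the $\OC G$-projective cover $\tilde P_M$ of $M$ is a direct summand. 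Using the adjunction $\Hom_{\OC G}(R_{L_I}^G\tilde N,-)\cong\Hom_{\OC L_I}(\tilde N,{}^*R_{L_I}^G(-))$ together with the identification ${}^*R_{L_I}^G\Hc^j(\overline{\rX}(w),\OC)\cong\Hc^j(U_I\backslash\overline{\rX}(w),\OC)$ (which follows from ${}^*R_{L_I}^G(-)=(-)^{U_I}$ and $\ell\nmid|U_I|$), we reduce the proposition to showing that the $\chi_N$-isotypic $\OC L_I$-component of $\Hc^*(U_I\backslash\overline{\rX}(w),\OC)$ is $\OC$-torsion-free, where $\chi_N$ denotes the character of $K\tilde N$.

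To secure this torsion-freeness we will apply Proposition~\ref{xbarxbar} with an auxiliary $w'=v\in W_I$ chosen so that the dual unipotent character $\chi_N^*$ of $L_I$ occurs in $\Hc^*(\overline{\rX}_{L_I}(v),K)$. By Deligne--Lusztig theory, $\chi_N^*$ is a constituent of $\sum_i(-1)^i[\Hc^i(\rX_{L_I}(v_0),K)]$ for some $v_0\in W_I$; taking $v_0$ minimal in Bruhat order with this property, the long exact sequence associated to the stratification $\overline{\rX}_{L_I}(v_0)=\rX_{L_I}(v_0)\sqcup\bigsqcup_{v'<v_0}\rX_{L_I}(v')$ ensures, by minimality, that $\chi_N^*$ is not cancelled by contributions from the boundary and therefore survives in $\Hc^*(\overline{\rX}_{L_I}(v_0),K)$. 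Setting $v=v_0$, the condition $v'\leq v$ forces $v'\in W_I$ and hence $\bT^{v'F}\subseteq L_I$, so $\ell\nmid|L_I|$ yields $\ell\nmid|\bT^{v'F}|$; Proposition~\ref{xbarxbar} then gives that $\Hc^*(\overline{\rX}(w)\times_G\overline{\rX}(v),\OC)$ is $\OC$-torsion-free.

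We then exploit the parabolic structure. For $v\in W_I$, every $B\underline{s_i}B$ appearing in a reduced expression of $v$ is contained in $P_I$, so the cellular description of $\overline{\rX}(v)$ yields the identification $\overline{\rX}_G(v)\cong G\times^{P_I}\overline{\rX}_{L_I}(v)$ with $U_I$ acting trivially on the fiber. Consequently,
\[\overline{\rX}(w)\times_G\overline{\rX}(v)\cong L_I\backslash\bigl((U_I\backslash\overline{\rX}(w))\times\overline{\rX}_{L_I}(v)\bigr).\]
Since $\ell\nmid|L_I|$, this $L_I$-quotient becomes $L_I$-invariants on $\Hc^*$; combining with the K\"unneth formula---applicable because $\Hc^*(\overline{\rX}_{L_I}(v),\OC)$ is $\OC$-torsion-free in the $\ell$-banal case for $L_I$---and decomposing by $L_I$-isotypic components, we obtain
\[\Hc^*(\overline{\rX}(w)\times_G\overline{\rX}(v),\OC)\cong\bigoplus_\chi A_\chi\otimes_\OC B_{\chi^*},\]
where $A_\chi$ and $B_\chi$ denote the $\chi$-multiplicity spaces in $\Hc^*(U_I\backslash\overline{\rX}(w),\OC)$ and $\Hc^*(\overline{\rX}_{L_I}(v),\OC)$ respectively. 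The torsion-freeness of the left-hand side forces each summand $A_\chi\otimes B_{\chi^*}$ to be $\OC$-torsion-free, and since the choice of $v$ makes $B_{\chi_N^*}$ a non-zero $\OC$-free lattice, $A_{\chi_N}$ is necessarily $\OC$-torsion-free, which completes the reduction.

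The hardest steps will be the geometric identification $\overline{\rX}_G(v)\cong G\times^{P_I}\overline{\rX}_{L_I}(v)$ for $v\in W_I$ and the accompanying torsion-freeness of $\Hc^*(\overline{\rX}_{L_I}(v),\OC)$ in the $\ell$-banal case: both boil down to a careful analysis of the cells $\rX(\underline{t_1},\ldots,\underline{t_d})$ of the smooth compactification and of the parabolic action on each stratum, but neither step introduces essentially new difficulties beyond those already addressed in the proof of Proposition~\ref{xbarxbar}.
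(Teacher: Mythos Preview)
Your strategy is the same as the paper's: find $v\in W_I$ so that the simple $kL_I$-module $N^*$ (equivalently, its character $\chi_N^*$) occurs in the cohomology of $\overline{\rX}_{L_I}(v)$, then feed $\overline{\rX}(v)\times_G\overline{\rX}(w)$ into Proposition~\ref{xbarxbar}. The parabolic identification $\Rgc(\overline{\rX}_G(v),\OC)\simeq R_{L_I}^G\Rgc(\overline{\rX}_{\bL_I}(v),\OC)$ you invoke is exactly what the paper uses, and your adjunction reduction to the $\chi_N$-isotypic piece of $\Hc^*(U_I\backslash\overline{\rX}(w),\OC)$ is a correct reformulation.

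However, your K\"unneth step has a genuine gap. You assert that $\Hc^*(\overline{\rX}_{L_I}(v),\OC)$ is torsion-free ``in the $\ell$-banal case for $L_I$'', but this is precisely the content of the first bullet of Remark~\ref{rmk:torsion}, which the paper \emph{derives from} Proposition~\ref{xbartorsion}. Perfectness of $\Rgc(\overline{\rX}_{L_I}(v),\OC)$ over $\OC L_I$ is not enough: over a maximal order a perfect complex can have torsion cohomology (think of $\OC\xrightarrow{\ell}\OC$). So as written your argument is circular, and without that torsion-freeness the full K\"unneth decomposition $\bigoplus_\chi A_\chi\otimes B_{\chi^*}$ is not available.

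The fix is already implicit in your choice of $v=v_0$ minimal, but you do not exploit it. By Proposition~\ref{prop:middledegree} and Remark~\ref{rmk:xbar} (applied inside $L_I$, where all tori are $\ell'$), the simple module $N^*$ occurs in $\Hc^i(\overline{\rX}_{L_I}(v),k)$ only for $i=\ell(v)$. Cutting by the central idempotent $e_{\chi_N^*}$ of $\OC L_I$ then shows that $e_{\chi_N^*}\Rgc(\overline{\rX}_{L_I}(v),\OC)$ has cohomology concentrated in a single degree, hence is a direct sum of copies of the projective lattice $\tilde N^*$ placed in degree $\ell(v)$. This single isotypic piece is therefore torsion-free and splits off as a direct summand of $\Rgc(\overline{\rX}_{L_I}(v),\OC)$; applying $R_{L_I}^G$ yields $P_{M^*}$ as a direct summand of $\Rgc(\overline{\rX}_G(v),\OC)$. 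Now Proposition~\ref{xbarxbar} gives directly that $\Hom_{\OC G}(P_M,\Rgc(\overline{\rX}(w),\OC))$ is a shift of a direct summand of $\Rgc(\overline{\rX}(v)\times_G\overline{\rX}(w),\OC)$, hence has torsion-free cohomology. This is exactly the paper's proof, and it bypasses the global K\"unneth decomposition entirely: you only ever need the one isotypic component.
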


\begin{proof} 
Let $V$ be a simple $kL_I$-module such that $\Hom_{kL_I}(V,
{^*R}_{L_I}^G(M))\not=0$.
Let $v \in W_I$ be minimal such that 
$V^*$ occurs as a composition factor, or equivalently as a direct
summand, of $\Hc^*(\rX_{\bL_I}(v),k)$. By Remark \ref{rmk:xbar}, it follows that
$V^*$ occurs only in $\Hc^{\ell(v)}(\overline{\rX}_{\bL_I}(v),k)$. 
Since $\mathcal{O}L_I\mmod$ is a hereditary category, it follows that
there is a projective
$\OC L_I$-module $V'$ such that $V'\otimes_{\OC}k\simeq V^*$ and
$V'$  is a direct summand of $\Rgc(\overline{\rX}_{\bL_I}(v),\mathcal{O})$.

Since a projective cover $P_{M^*}$ of $M^*$ occurs as a direct summand of
$R_{L_I}^G(V^*)$, we deduce that it occurs as a direct summand of
$R_{L_I}^G(\Rgc(\overline{\rX}_{\bL_I}(v),\mathcal{O}))\simeq
\Rgc(\overline{\rX}(v),\mathcal{O})$.
It follows from the K\"unneth formula that the complex 
$P_M^*\otimes_{\mathcal{O}G}\Rgc(\overline{\rX}(w),\mathcal{O})$ is 
a direct summand of the complex $\Rgc(\overline{\rX}(v) \times_G \overline{\rX}(w),\OC)$.
By Proposition \ref{xbarxbar} applied to 
$\overline{\rX}(v) \times_G \overline{\rX}(w)$, we deduce that the cohomology
of $P_M^*\otimes_{\mathcal{O}G}\Rgc(\overline{\rX}(w),\mathcal{O})$ is
torsion-free, and hence $M$ does not appear as a composition factor of the
torsion of $\Hc^*(\overline{\rX}(w),\mathcal{O})$.
\end{proof}

\begin{rmk}\label{rmk:torsion}
Note two particular cases of the previous proposition:
\begin{itemize}
\item if $G$ is an $\ell'$-group (\emph{i.e.} if $\ell \nmid |G|$) then so is every subgroup,
therefore $\Hc^*(\overline{\rX}(w),\OC)$ is torsion-free;
\item if $\ell \nmid |L_I|$ for all $F$-stable $I\subsetneq S$,
then the torsion in $\Hc^*(\overline{\rX}(w),\OC)$ is cuspidal.
\end{itemize}
\end{rmk}

\begin{lem}
\label{le:vanishingonminimal}
Let $\Lambda$ be one of $k$, $\OC$ and $K$.
Let $J$ be a subset of $W$ such that if $w\in J$ and $w'<w$, then $w'\in J$
and such that given $w\in W$ and $s\in S$ with $l(sw)>l(w)$ and $l(wF(s))>l(w)$,
then $sw\in J$ if and only if $wF(s)\in J$.

Let $\CZ$ be a thick subcategory of $D^b(\Lambda G)$ such that
$\Rgc(\rX(v),\Lambda)\in\CZ$ for
all elements $v\in J$ that are of minimal length in their $F$-conjugacy
class.

We have $\Rgc(\rX(v),\Lambda)\in\CZ$ for all $v\in J$ and
$\Rgc(\overline{\rX}(w),\Lambda)\in\CZ$ for all $w\in J$.
\end{lem}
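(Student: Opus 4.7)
The plan is to prove the first statement by induction on $l(v)$, using the cyclic-shift theory of Geck--Kim--Pfeiffer (extended to twisted conjugacy classes by X.~He) to reduce to minimum-length representatives in $F$-conjugacy classes. The second statement will then follow from the first via the standard stratification of the compactification $\overline{\rX}(w)$ by locally closed strata isomorphic to $\rX(v)$ with $v \leq w$.

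For the first assertion, if $v \in J$ is of minimum length in its $F$-conjugacy class, the conclusion holds by hypothesis. Otherwise, by Geck--Pfeiffer--He one may pick $s \in S$ with $l(svF(s)) \leq l(v)$ chosen so that the shift brings $v$ closer to a minimum-length element of the class. If $l(svF(s)) = l(v) - 2$ (equivalently $s$ is a left descent of $v$ and $F(s)$ a right descent of $sv$), the standard Deligne--Lusztig distinguished triangle in $D^b(\Lambda G)$ whose three terms are shifts of $\Rgc(\rX(v),\Lambda)$, $\Rgc(\rX(sv),\Lambda)$ and $\Rgc(\rX(svF(s)),\Lambda)$, combined with the fact that $sv$ and $svF(s)$ are both strictly below $v$ in Bruhat order (being subwords of the reduced expression $v = s\cdot (svF(s))\cdot F(s)$) and hence in $J$ by (i) and in $\CZ$ by induction, yields $\Rgc(\rX(v),\Lambda) \in \CZ$ by thickness of $\CZ$. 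If instead $l(svF(s)) = l(v)$, write $v = s u$ with $l(u) = l(v) - 1$ and $v' := svF(s) = u F(s)$, of length $l(u) + 1 = l(v)$. The braid-type $G$-equivariant isomorphism $\rX(s \cdot u) \simeq \rX(u \cdot F(s))$ (which holds whenever both products are reduced) gives $\Rgc(\rX(v),\Lambda) \simeq \Rgc(\rX(v'),\Lambda)$, while hypothesis (ii) applied to $u$ and $s$ (noting $l(su), l(uF(s))$ both exceed $l(u)$) ensures $v' \in J$. Iterating, which terminates by the Geck--Pfeiffer--He algorithm, reduces to the length-decreasing case or to a minimum-length~$v$.

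For the second assertion, fix a reduced expression $w = s_1 \cdots s_d$. The compactification $\overline{\rX}(w) = \rX(\underline{s}_1,\ldots,\underline{s}_d)$ admits the standard stratification by locally closed subvarieties indexed by subsets $I \subseteq \{1,\ldots,d\}$, each $G$-equivariantly isomorphic to $\rX(v_I)$ where $v_I$ is the product (in order) of the $s_i$ for $i \in I$, a subword of a reduced expression of $w$ and hence satisfying $v_I \leq w$. The resulting open--closed long exact sequences iteratively assemble $\Rgc(\overline{\rX}(w),\Lambda)$ from shifts of the $\Rgc(\rX(v_I),\Lambda)$; each $v_I$ lies in $J$ by (i), each complex lies in $\CZ$ by the first assertion, and thickness concludes.

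The main obstacle is invoking the Deligne--Lusztig distinguished triangle and its length-preserving braid-isomorphism companion in the right precise form under the stated length conditions, together with verifying that conditions (i) and (ii) on $J$ are exactly what is needed to keep the cyclic-shift induction inside $J$ throughout; condition (ii) is designed precisely to handle the length-preserving shifts that the Geck--Pfeiffer--He reduction may require before a genuine length drop becomes available.
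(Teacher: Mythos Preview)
Your proof is correct and follows essentially the same route as the paper's: Geck--Pfeiffer--He reduction to minimal-length $F$-conjugates via cyclic shifts, handling length-preserving shifts by the \'etale-site isomorphism $\rX(su)\simeq\rX(uF(s))$ and length-decreasing shifts by the distinguished triangle from \cite[Proposition~3.2.10]{DMR}, then deducing the compactification statement from the stratification of $\overline{\rX}(w)$ by open pieces $\rX(v)$ with $v\le w$. The only cosmetic difference is that the paper's triangle has $\Rgc(\rX(sv'),\Lambda)$ (i.e.\ $\rX(vF(s))$ in your notation) appearing twice with shifts $[-1]$ and $[-2]$ rather than $\rX(sv)$, but since both elements lie strictly below $v$ this does not affect the argument.
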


\begin{proof}
Consider $s\in S$ and $v,v'\in W$ with $v=sv'F(s)$ and $v\neq v'$.

Assume that $\ell(v)=\ell(v')$, and furthermore that $\ell(sv)<\ell(v)$. We have
$v=sv''$ where $\ell(v)=\ell(v'')+1$ and $v'=v''F(s)$. 
The $G$-varieties $\rX(v)$ and $\rX(v')$ have the same \'etale site,
hence isomorphic complexes of cohomology \cite[Theorem 1.6]{DeLu83}.
If $\ell(sv)>\ell(v)$, then $\ell(vF(s))<\ell(v)$ \cite[Lemma 7.2]{Hu}
and $v=v''F(s)$ with $\ell(v)=\ell(v'')+1$ and $v'=sv''$. We conclude
as above.

Assume now that $\ell(v)=\ell(v')+2$. It follows
from
\cite[Proposition 3.2.10]{DMR} that there is a distinguished triangle
$$\Rgc(\rX(sv'),\Lambda)[-2]\oplus \Rgc(\rX(sv'),\Lambda)[-1]\to
\Rgc(\rX(v),\Lambda) \to \Rgc(\rX(v'),\Lambda)[-2]\rightsquigarrow.$$
So, if $\Rgc(\rX(v'),\Lambda)\in\CZ$ and $\Rgc(\rX(sv'),\Lambda)\in\CZ$,
then $\Rgc(\rX(v),\Lambda)\in\CZ$.

By \cite{GePf0,GePfKi}, any element $v\in W$ 
can be reduced to an element of minimal length in its $F$-conjugacy class by
applying one of the transformations $v\to v'$ above. Note that if $v\in J$,
then $v'\in J$. The lemma follows from the discussion above.
\end{proof}

\subsection{Steinberg representation}\label{sect:ggraev}

We denote by $\bU$ the unipotent radical of the Borel subgroup $\bB$. Let $\psi$ be a \emph{regular} character of $U$ (see \cite[\S 2.1]{BR2}
), $e_\psi$ be the corresponding central idempotent in $\mathcal{O} U$ and
$\Gamma_\psi = \mathrm{Ind}_U^G (e_\psi \mathcal{O} U)$ be the 
Gelfand-Graev module attached to $\psi$. It is a projective $\OC G$-module.
Since $K\Gamma_\psi$ has only one unipotent constituent (namely
the Steinberg character, which we denote by $\mathrm{St}$), the projection of $\Gamma_\psi$
onto the sum of unipotent blocks is indecomposable and does
not depend on $\psi$. Indeed, it is proved in the proof of 
\cite[Theorem 3.2]{Hi} that any projective module in a unipotent
block has a unipotent constituent in its character (this does
not use the connectedness of the center of $\bG$).
Consequently, $\Gamma_\psi$ has a unique unipotent simple quotient
$\mathrm{St}_\ell$. It is called the {\em modular Steinberg representation}.
It is cuspidal if $\ell\nmid |L_I|$ for all $F$-stable $I\subsetneq S$
\cite[Theorem 4.2]{GeHiMa94}.

\medskip
Statement (i) of the proposition below is a result of \cite{Du1}.

\begin{prop}\label{gelfandgraevxbar}Let $t_1, \ldots, t_d$ be elements of $S \cup \underline{S}$.
\begin{itemize}
\item[$\mathrm{(i)}$]
 If $t_i\in S$ for all $i$, then
 $\Hom_{\mathcal{O}G}^\bullet \big(\Gamma_\psi, \Rgc(\rX(t_1,\ldots,
t_d),\mathcal{O})\big) \, \simeq \,
  \mathcal{O}[-\ell(w)]$
in $D^b(\mathcal{O})$, and hence
$\mathrm{St}_\ell$ does not occur as a composition factor of
$\Hc^i(\rX(t_1,\ldots,t_d),k)$ for $i\neq \ell(w)$.
\item[$\mathrm{(ii)}$]
If $t_i{\not\in}S$ for some $i$, then
 $\Hom^\bullet_{\mathcal{O}G} \big(\Gamma_\psi, \Rgc(\rX(t_1,\ldots,t_d),\mathcal{O})\big)$ is acyclic, and hence
$\mathrm{St}_\ell$ does not occur as a composition factor of
$\Hc^*(\rX(t_1,\ldots,t_d),k)$.
\item[$\mathrm{(iii)}$] $\St_\ell$ does not occur as a composition factor of the torsion
part of $\Hc^*(\rX(t_1,\ldots,t_d),\OC)$.
\end{itemize}
\end{prop}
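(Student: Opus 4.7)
The plan is to take (i) as input from \cite{Du1}, deduce (ii) by induction on the number $n$ of indices $i$ with $t_i \in \underline{S}$ using an open--closed stratification, and obtain (iii) as a formal consequence of (i) and (ii) together with the projectivity of $\Gamma_\psi$.

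For (ii), fix an index $i$ with $t_i = \underline{s}$. The decomposition $\bB \underline{s} \bB = \bB s \bB \cup \bB$ induces a $G$-equivariant open--closed stratification of $\rX(t_1,\ldots,\underline{s},\ldots,t_d)$: the open stratum replaces $\underline{s}$ by $s$, while the closed stratum, obtained by collapsing $g_i = g_{i+1}$, is isomorphic to $\rX(t_1,\ldots,\widehat{t_i},\ldots,t_d)$ and has codimension $1$. Since $\Gamma_\psi$ is projective, $\Hom^\bullet_{\OC G}(\Gamma_\psi,-)$ is exact, and applying it to the associated distinguished triangle reduces the acyclicity in the middle to the two outer terms. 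When $n \geq 2$, both outer terms still carry $n-1$ entries in $\underline{S}$ and are acyclic by induction. When $n=1$, the two outer Hom-complexes compute $\OC[-\ell(w)]$ and $\OC[-\ell(w)+1]$ by (i), so acyclicity amounts to showing that the connecting morphism $\OC \to \OC$ is a unit. I would check this by identifying the connecting morphism as a residue along the boundary divisor and computing it using the $\bB$-equivariant structure of $\Gamma_\psi$; alternatively, one may first establish acyclicity over $K$ using the known decomposition of $\Hc^*(\overline{\rX}(w),K)$ from \cite{DMR} (where the ordinary Steinberg character is excluded on a partial compactification), and then lift to $\OC$ via torsion-freeness of $\Hc^*$ paired against the projective module $\Gamma_\psi$.

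For (iii), projectivity of $\Gamma_\psi$ yields
$$H^i \Hom^\bullet_{\OC G}\bigl(\Gamma_\psi, \Rgc(\rX(t_1,\ldots,t_d),\OC)\bigr) \simeq \Hom_{\OC G}\bigl(\Gamma_\psi, \Hc^i(\rX(t_1,\ldots,t_d),\OC)\bigr),$$
and by (i) and (ii) each such group is either $\OC$ or $0$, hence torsion-free as an $\OC$-module. If $\St_\ell$ were a composition factor of the torsion submodule $T$ of some $\Hc^i(\rX(t_1,\ldots,t_d),\OC)$, then since the unipotent-block component of $\Gamma_\psi$ is the $\OC G$-projective cover of $\St_\ell$, one would construct a non-zero map $\Gamma_\psi \to \Hc^i$ with image contained in $T$. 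Such a map is annihilated by $\ell^N$ for $N \gg 0$ and so represents a non-zero torsion element in the Hom group above, contradicting torsion-freeness.

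The main obstacle is the base case $n=1$ of (ii): showing that the connecting morphism $\OC \to \OC$ is a unit requires genuine geometric or character-theoretic input rather than a formal argument; everything else --- the induction step of (ii) and part (iii) --- is essentially formal once (i) and the base case are in hand.
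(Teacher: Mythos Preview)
Your argument for (iii) is correct and matches the paper. For (ii) you use the same tool as the paper --- the open--closed distinguished triangle --- but you induct in the opposite direction, and this is where the gap lies.

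You induct on the number $n$ of barred entries, with base case $n=1$. As you yourself note, this base case forces you to show that a specific connecting map $\OC\to\OC$ is a \emph{unit}, and neither of your proposed fixes closes the gap. Your second suggestion (prove acyclicity over $K$ via \cite{DMR}, then lift) only shows that the connecting map is non-zero, hence the cohomology of the middle term is the torsion module $\OC/(a)$ for some $a\neq 0$; your appeal to ``torsion-freeness of $\Hc^*$ paired against $\Gamma_\psi$'' is precisely what is in question. Your first suggestion (compute the residue directly) is not obviously feasible and would in any case require substantial new input.

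The paper avoids this entirely by inducting on the number of \emph{unbarred} entries, so that the base case is the sequence with \emph{all} $t_i\in\underline{S}$. In that case $\rX(\underline{t_1},\ldots,\underline{t_d})$ is smooth and projective, so Poincar\'e duality applies. Combining the self-duality of $\Gamma_\psi$ with the degree bound $[0,d]$ coming from (i) and the open--closed filtration, one sees over $k$ that $\Hom^\bullet_{kG}(k\Gamma_\psi,\Rgc(\overline{\rX},k))$ has cohomology concentrated in the single degree $d$; hence over $\OC$ the cohomology is free and concentrated in degree $d$; and then \cite[Proposition~3.3.15]{DMR} gives vanishing over $K$, which forces the free module to be zero. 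The inductive step is then exactly your triangle, run the other way.

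So the missing idea is not a computation but a reorientation: start from the fully compactified case, where smoothness and properness give the extra symmetry needed to pin down the degree, and induct towards the partially compactified cases.
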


\begin{proof} (i) follows from \cite{Du1} when
$t_1\cdots t_d$ is reduced, and the general case follows by
changing $\bG$ and $F$ as in \cite[Proposition 2.3.3]{DMR}.

Assume now that $t_i \in \underline{S}$ for all $i$.
 Using the decomposition of $\rX(t_1,\ldots,t_d)$ into
Deligne--Lusztig varieties associated to sequences of elements of $S$, we
deduce from the first part of the proposition that  the cohomology of 
$\Hom_{kG}^\bullet \big(k\Gamma_\psi, \Rgc(\rX(t_1,\ldots,t_d),k)\big)$ is zero outside degrees $0,\ldots,d$. Since  $\rX(t_1,\ldots,t_d)$ is a smooth projective variety and 
$(\Gamma_\psi)^* = \Gamma_{\psi^*}$, we deduce that the cohomology is also zero outside
the degrees $d,\ldots,2d$ and therefore it is concentrated in degree $d$.
As a consequence, the cohomology of
$\Hom_{\mathcal{O}G}^\bullet \big(\Gamma_\psi, \Rgc(\rX(t_1,\ldots,t_d),\mathcal{O})\big)$ is free over $\OC$ and concentrated in degree $d$.
By \cite[Proposition 3.3.15]{DMR}, we have
$\Hom_{KG}^\bullet \big(K\Gamma_\psi, \Rgc(\rX(t_1,\ldots,t_d),K)\big)=0$, and hence
$\Hom_{\mathcal{O}G}^\bullet \big(\mathcal{O}\Gamma_\psi, \Rgc(\rX(t_1,\ldots,t_d),\mathcal{O})\big)=0$.
 
\smallskip

(ii) follows now by induction on the number of $i$ such that $t_i$ is in $S$: if one of the $t_i$ is in $S$, say $t_1$, we use the distinguished triangle 
\[ \Rgc(\rX(t_1,t_2,\ldots,t_d),\mathcal{O}) \longrightarrow \Rgc(\rX(\underline{t_1},t_2,\ldots,t_d),
  \mathcal{O}) \longrightarrow \Rgc(\rX(t_2,\ldots,t_d),\mathcal{O}) \rightsquigarrow\]
and use induction. Note that the assumption that one of the $t_i$ is in $\underline{S}$ ensures that we never reach $\rX(1) = G/B$. 

\smallskip
Note finally that (iii) follows from (i) and (ii).
\end{proof}

\begin{prop}
\label{pr:solomontits}
If $\ell\nmid |L_I|$ for all $F$-stable $I\subsetneq S$,
then $K\otimes_{\OC}\Omega^r\OC\simeq\mathrm{St}$.
\end{prop}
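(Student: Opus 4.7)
The plan is to apply the Solomon--Tits theorem for the spherical building of $(\bG,F)$ to realize the Steinberg lattice as the $r$-th syzygy of the trivial module $\OC$, and then use irreducibility of the ordinary Steinberg representation to kill any projective discrepancy.

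Let $\Delta$ be the Tits building of $(\bG,F)$, a simplicial complex of dimension $r-1$ whose $i$-simplices are the $G$-conjugates of standard proper $F$-stable parabolic subgroups $\bP_I$ with $|I/F|=r-1-i$. The reduced augmented simplicial chain complex of $\Delta$ over $\OC$ takes the form
\[0\longrightarrow C_{r-1}\longrightarrow\cdots\longrightarrow C_0\longrightarrow\OC\longrightarrow 0,\qquad C_i=\bigoplus_{\substack{I\subsetneq S,\ F\text{-stable}\\|I/F|=r-1-i}}\OC[G/P_I].\]
By Solomon--Tits, the reduced homology of $\Delta$ with $\OC$-coefficients is $\OC$-free and concentrated in degree $r-1$, where it is the Steinberg lattice, whose extension of scalars to $K$ is the ordinary Steinberg representation $\St$. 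We thus obtain an exact sequence of $\OC G$-lattices
\[0\longrightarrow \St\longrightarrow C_{r-1}\longrightarrow\cdots\longrightarrow C_0\longrightarrow\OC\longrightarrow 0.\]

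Under the hypothesis $\ell\nmid|L_I|$ for every $F$-stable $I\subsetneq S$, the equality $|P_I|=|U_I|\cdot|L_I|$ together with the fact that $\bU_I^F$ is a $p$-group gives $\ell\nmid|P_I|$, so each permutation module $\OC[G/P_I]$ appearing in the $C_i$ is a projective $\OC G$-module. The above sequence is therefore a projective resolution of $\OC$ of length $r$, and iterating Schanuel's lemma yields
\[\St\simeq\Omega^r\OC\oplus Q\]
for some projective $\OC G$-module $Q$.

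It remains to show $Q=0$. Tensoring with $K$ gives $\St\simeq K\Omega^r\OC\oplus KQ$ as $KG$-modules, with $\St$ irreducible. Because $\OC G$ is symmetric and $\ell\mid|G|$, the trivial module has infinite projective dimension, so $\Omega^r\OC$ is a nonzero $\OC$-lattice and hence $K\Omega^r\OC\ne 0$. Irreducibility of $\St$ therefore forces $KQ=0$; since projective $\OC$-modules are free, $Q=0$, and $K\otimes_\OC\Omega^r\OC\simeq\St$. The only nontrivial input is the Solomon--Tits step — constructing the resolution so that only $F$-stable parabolics appear and knowing that the top homology is the $\OC$-free Steinberg lattice — and this is classical; the remainder is formal syzygy manipulation.
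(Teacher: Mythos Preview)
Your proof is correct and follows essentially the same route as the paper: both invoke the Solomon--Tits resolution of $\OC$ by permutation modules $\OC[G/P_I]$ (equivalently $R_{L_I}^G(\OC)$), observe that these are projective for $I\subsetneq S$ under the hypothesis, and identify the top kernel with $\Omega^r\OC$. You are slightly more explicit than the paper in handling the projective discrepancy $Q$ via irreducibility of $\St$ and nonvanishing of $\Omega^r\OC$, whereas the paper simply asserts $M\simeq\Omega^r\OC$; your extra care is justified but the argument is the same in substance.
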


\begin{proof}
Given $i\in\{1,\ldots,r\}$, let $M_i=\bigoplus_I R_{L_I}^G \circ {}^*R_{L_I}^G(\OC)$, where $I$ runs over $F$-stable subsets of $S$ such that
$|I/F|=i$. By the Solomon--Tits Theorem \cite[Theorem 66.33]{CuRei}, there is an exact sequence of $\OC G$-modules
\[0\to M\to M^0\to\cdots\to M^r\to 0,\]
where $KM\simeq\mathrm{St}$.

By assumption, $M^i$ is projective for $i\neq r$, while $M^r=\OC$.
We deduce that $M\simeq\Omega^r\OC$.
\end{proof}

\subsection{Coxeter orbits}
Let $s_1,\ldots,s_r$ be a set of representatives of $F$-orbits of simple reflections.
The product $c= s_1 \cdots s_r$ is a \emph{Coxeter element} of $(W,F)$. Throughout this section and \S\ref{secoxeter},
we will assume that $\ell \nmid |\bT^{cF}|$, and hence
$\tRgc(\rX(c),\OC)$ is a bounded complex of finitely generated projective
$\OC G$-modules.

\smallskip
If $v \in W$ satisfies $\ell(v) < \ell(c)$ then $v$ lies in a proper 
$F$-stable parabolic subgroup, 
forcing $\Hom_{kG}^\bullet(\Rgc(\rX(v),k),M)$ to be zero for every cuspidal module $kG$-module $M$. 
Therefore Proposition \ref{prop:middledegree} has the following corollary for Coxeter elements.

\begin{cor}\label{cor:middledegree}
Let $c$ be a Coxeter element and $M$ be a cuspidal $kG$-module. If $\ell \nmid |\bT^{cF}|$, then the cohomology of $\RHom_{kG}^\bullet(\Rgc(\rX(c),k),M)$ and
of $\RHom^\bullet_{kG}(M, \Rgc(\rX(c),k))$ vanishes outside degree $r$.
\end{cor}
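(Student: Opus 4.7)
The plan is to apply Proposition \ref{prop:middledegree} with $w = c$, reducing the corollary to showing that $c$ is minimal in Bruhat order among $w \in W$ for which $\RHom^\bullet_{kG}(\Rgc(\rX(w),k),M)$ is nonzero (and symmetrically for the second RHom). If this RHom already vanishes at $c$, the statement is trivial, so I may assume it does not.

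The first ingredient is combinatorial: if $v < c$ in Bruhat order, then $\ell(v) < \ell(c) = r$, and the support $I_v \subseteq S$ of any reduced expression for $v$ satisfies $|I_v| \le \ell(v) < r$. Since $r$ is the number of $F$-orbits on $S$, the $F$-saturation of $I_v$ meets strictly fewer than $r$ orbits, so $v$ lies in $W_I$ for some proper $F$-stable $I \subsetneq S$. The second ingredient is the standard compatibility of Deligne--Lusztig varieties with Harish-Chandra induction, which for such $v \in W_I$ provides an isomorphism
\[
\Rgc(\rX_\bG(v),k) \simeq R_{L_I}^G\bigl(\Rgc(\rX_{\bL_I}(v),k)\bigr)
\]
in $D^b(kG)$. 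Combined with the derived adjunction between $R_{L_I}^G$ and ${}^*R_{L_I}^G$, this yields
\[
\RHom^\bullet_{kG}\bigl(\Rgc(\rX_\bG(v),k),M\bigr) \simeq \RHom^\bullet_{kL_I}\bigl(\Rgc(\rX_{\bL_I}(v),k), {}^*R_{L_I}^G(M)\bigr),
\]
which vanishes because $M$ is cuspidal, so ${}^*R_{L_I}^G(M) = 0$. The other direction is symmetric.

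Combining these two ingredients shows that $c$ satisfies the minimality condition (a) (and hence also (b)) of Proposition \ref{prop:middledegree}. Since $\ell \nmid |\bT^{cF}|$ by hypothesis, the concluding part of that proposition then gives concentration of cohomology in degree $\ell(c) = r$, as required. The main technical caveat is invoking the Harish-Chandra compatibility for $\Rgc$ at the level of complexes rather than merely Grothendieck groups, together with the corresponding adjunction in $D^b$; both facts are classical but should be referenced explicitly in a full write-up.
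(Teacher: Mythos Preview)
Your proof is correct and follows essentially the same approach as the paper: the paper notes just before the corollary that any $v$ with $\ell(v)<\ell(c)$ lies in a proper $F$-stable parabolic subgroup, forcing $\Hom_{kG}^\bullet(\Rgc(\rX(v),k),M)=0$ for cuspidal $M$, and then invokes Proposition~\ref{prop:middledegree}. You have simply spelled out more explicitly both the combinatorial reason $v$ lies in a proper $W_I$ and the Harish-Chandra/adjunction argument behind the vanishing.
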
 

\begin{lem}\label{lem:onedegree} Assume that $\ell \nmid |\bT^{cF}|$. Let $C$ be a direct summand of
$\tRgc(\rX(c),\mathcal{O})$ in $\Ho^b(\mathcal{O}G\mmod)$ such that
  \begin{itemize}
    \item[$\mathrm{(i)}$] the torsion part of $\H^*(C)$ is cuspidal, and
    \item[$\mathrm{(ii)}$] $\H^i(KC)  =0$ for $i\neq r$.
  \end{itemize}
  Then $\H^r(C)$ is a projective $\OC G$-module and $\H^i(C)=0$ for $i\neq r$.
\end{lem}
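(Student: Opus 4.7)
The complex $C$ is a bounded complex of projective $\OC G$-modules, since $\tRgc(\rX(c),\OC)$ has projective terms under the hypothesis $\ell\nmid|\bT^{cF}|$. I may assume $C=C^{\redu}$, as contractible summands contribute nothing to cohomology. The key input is Corollary \ref{cor:middledegree} applied to $kC$, which is a direct summand of $\Rgc(\rX(c),k)$: for every cuspidal simple $kG$-module $M$, the cohomology of both $\RHom^\bullet_{kG}(kC,M)$ and $\RHom^\bullet_{kG}(M,kC)$ is concentrated in a single degree. I transfer this to $\OC G$ via two identifications. Since $C$ is a bounded complex of projectives and $M$ is killed by $\ell$, $\RHom^\bullet_{\OC G}(C,M)\simeq\RHom^\bullet_{kG}(kC,M)$. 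For the other direction, the resolution $0\to\OC G\xrightarrow{\ell}\OC G\to kG\to 0$ gives $\RHom_{\OC G}(kG,C)\simeq kC[-1]$, whence $\RHom^\bullet_{\OC G}(M,C)\simeq\RHom^\bullet_{kG}(M,kC)[-1]$. Thus $\Ext^\bullet_{\OC G}(C,M)$ and $\Ext^\bullet_{\OC G}(M,C)$ are concentrated in single degrees that differ by exactly $1$.

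To bound the support of $\H^*(C)$, let $r_{\max}$ be the largest $i$ with $\H^i(C)\neq 0$, and suppose $r_{\max}>r$. By (ii), $\H^{r_{\max}}(C)$ is $\OC$-torsion, and by (i) cuspidal; hence it admits a cuspidal simple $kG$-quotient $M$. The truncation map $C\to\H^{r_{\max}}(C)[-r_{\max}]$ composed with the projection to $M[-r_{\max}]$ is nonzero on $\H^{r_{\max}}$, hence represents a nonzero class in $\Ext^\bullet_{\OC G}(C,M)$ at a degree distinct from the concentration degree (since $r_{\max}\neq r$), a contradiction. So $r_{\max}\leq r$. Dually, if $r_{\min}<r$ is the smallest degree of nonzero cohomology, then $\H^{r_{\min}}(C)$ is cuspidal torsion and its $kG$-socle contains a cuspidal simple $M\hookrightarrow\H^{r_{\min}}(C)$; this gives a nonzero morphism $M[-r_{\min}]\to C$ in $D^b(\OC G)$, and the corresponding class in $\Ext^\bullet_{\OC G}(M,C)$ sits in a degree distinct from the concentration degree (which differs from the previous one by $1$). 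Hence $\H^i(C)=0$ for $i\neq r$.

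Next, $\H^r(C)$ is torsion-free: if it had nonzero (cuspidal) torsion $T$, its $kG$-socle would contain a cuspidal simple $M\hookrightarrow T\hookrightarrow\H^r(C)$; as $C\simeq\H^r(C)[-r]$ in $D^b$, this yields a nonzero class in $\Ext^\bullet_{\OC G}(M,C)$ at the degree determined by $r$, which is off from the concentration degree by $1$, a contradiction. Writing $L=\H^r(C)$, universal coefficients together with $\H^{r+1}(C)=0$ and the torsion-freeness of $L$ give that $kC$ has cohomology concentrated in degree $r$, equal to $kL$. Thus $kC$ is a bounded projective resolution of $kL[-r]$ over $kG$, so $kL$ has finite projective dimension over $kG$. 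Since $kG$ is a symmetric, hence self-injective, algebra, finite projective dimension forces $kL$ to be projective. Finally, $L$ being a lattice with $kL$ projective is itself projective over $\OC G$ by the standard lifting of projectives along $\OC G\twoheadrightarrow kG$.

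The main obstacle is bookkeeping the one-unit gap, induced by the shift $[-1]$ in $\RHom_{\OC G}(kG,C)\simeq kC[-1]$, between the concentration degrees of $\Ext^\bullet_{\OC G}(C,M)$ and $\Ext^\bullet_{\OC G}(M,C)$: this gap is precisely what makes the lower bound on $r_{\min}$ and the torsion-freeness of $\H^r(C)$ hold without collapsing the upper-bound argument for $r_{\max}$.
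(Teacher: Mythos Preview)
Your proof is correct, but it takes a longer route than the paper's. The paper exploits a geometric fact you do not use: since $\rX(c)$ has dimension $r=\ell(c)$, the reduced complex $C$ has terms only in degrees $r,\ldots,2r$. This makes the lower bound $\H^{<r}(C)=0$ automatic, and once you know $\H^{>r}(C)=0$ (your $r_{\max}$ argument, which is exactly the paper's), reducedness forces all higher terms to split off as contractible summands, leaving $C=C^r[-r]$; projectivity of $\H^r(C)=C^r$ is then immediate. So the paper needs only the $\RHom(kC,M)$ half of Corollary~\ref{cor:middledegree} and no torsion-freeness or universal-coefficient argument.

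Your approach instead earns the lower bound from the $\RHom(M,kC)$ half of the corollary, tracking the shift $\RHom_{\OC G}(M,C)\simeq\RHom_{kG}(M,kC)[-1]$; you then separately prove torsion-freeness of $\H^r(C)$ and deduce projectivity via finite projective dimension over the self-injective algebra $kG$. This is sound and has the mild advantage of not invoking the specific degree range of the Coxeter cohomology complex, but it is noticeably more intricate. If you want to streamline, simply observe at the outset that the terms of $C^{\redu}$ sit in degrees $\ge r$: the $r_{\min}$ step, the torsion-freeness step, and the passage through $kG$-self-injectivity then all become unnecessary.
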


\begin{proof} Since $r= \ell(c)$, the complex $C$ can be chosen, up to 
isomorphism in $\Ho^b(\mathcal{O}G\mmod)$,
to be a complex with projective terms in degrees $r,\ldots,2r$ and zero terms
outside those degrees.
Let $i$ be maximal such that $\H^i(C) \neq 0$ (or equivalently such that $\H^i(kC) \neq 0$). There is a non-zero map $kC \longrightarrow \H^i(kC)[-i]$ 
in $D^b(kG)$. From Corollary \ref{cor:middledegree} and the assumption (i) we deduce that $i=r$. It follows that the cohomology
of $C$ is concentrated in degree $r$. Since $C$ is a bounded complex
of projective modules, it follows that $\H^r(C)$ is projective.
\end{proof}

\begin{prop}
\label{pr:torsionCoxeter}
Let $I\subset S$ be an $F$-stable subset and let $c_I$ be a Coxeter element
of $W_I$. 
	\begin{itemize}
		\item[$\mathrm{(i)}$] If $\ell\nmid |L_I|$, then $\H^*_c(\rX(c_I),\OC)$ is torsion-free.
		\item[$\mathrm{(ii)}$] If $\H^*_c(\rX(c_I),\OC)$ is torsion-free, then the torsion of
		$\H^*_c(\rX(c),\OC)$ is killed by~${^*R}_{L_I}^G$.
	\end{itemize}
\end{prop}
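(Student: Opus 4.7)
The plan is to reduce (i) to an intrinsic statement for $\bL_I$, and to deduce (ii) from (i) via a Mackey-type analysis of how ${}^*R_{L_I}^G$ acts on the cohomology complex of $\rX(c)$.

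For part (i), my first step is to observe that since $c_I\in W_I$, the map $g\bB\mapsto g\bP_I$ gives a $G$-equivariant fibration $\rX_\bG(c_I)\to G/P_I$ with fibers isomorphic to $\rX_{\bL_I}(c_I)$. This identifies, as $\OC G$-modules,
\[\Hc^*(\rX_\bG(c_I),\OC)\simeq\Ind_{P_I}^G\Hc^*(\rX_{\bL_I}(c_I),\OC).\]
Since Harish-Chandra induction (tensoring over $\OC P_I$ with the $\OC P_I$-free module $\OC G$) preserves $\OC$-torsion-freeness, it suffices to show $\Hc^*(\rX_{\bL_I}(c_I),\OC)$ is torsion-free. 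The hypothesis $\ell\nmid|L_I|$ implies that all point stabilizers in $L_I$ are $\ell'$-groups, so $\tRgc(\rX_{\bL_I}(c_I),\OC)$ is a bounded complex of finitely generated projective $\OC L_I$-modules, and the decomposition map $K_0(KL_I)\to K_0(kL_I)$ is a bijection matching simples. The plan is then to match the $K$- and $k$-cohomology degree by degree: applying Proposition \ref{prop:middledegree} to the subwords of $c_I$ inside $\bL_I$ together with the known structure of the cohomology of Coxeter Deligne--Lusztig varieties, each simple $kL_I$-module appears in a single prescribed degree matching the degree in which the corresponding unipotent character appears in $\Hc^*(\rX_{\bL_I}(c_I),K)$. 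Thus $\dim_K\Hc^i(\rX_{\bL_I}(c_I),K)=\dim_k\Hc^i(\rX_{\bL_I}(c_I),k)$ for every $i$, and the universal coefficient theorem then forces the $\OC$-torsion to vanish.

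For part (ii), the plan is to compute ${}^*R_{L_I}^G\Rgc(\rX_\bG(c),\OC)$ via a Mackey-type decomposition. Using the presentation $\rX_\bG(c)=\rY_\bG(c)/\bT^{cF}$ from \S\ref{se:unipotentblocks}, one has $\Rgc(\rX_\bG(c),\OC)\simeq\Rgc(\rY_\bG(c),\OC)\otimes^\bbL_{\OC\bT^{cF}}\OC$, which expresses the cohomology complex of $\rX_\bG(c)$ as a Deligne--Lusztig induction from the torus $\bT^{cF}$. The Mackey formula, applied at the level of cohomology complexes, then decomposes ${}^*R_{L_I}^G$ of this induction as a sum indexed by the $(W_I,\langle cF\rangle)$-double cosets in $W$ of Deligne--Lusztig complexes for tori inside $\bL_I$. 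For a Coxeter element $c$ of $W$, the surviving summands correspond to Coxeter elements $c_I'$ of $W_I$ (up to $L_I$-conjugacy), each contribution being (a twist of) $\Rgc(\rX_{\bL_I}(c_I'),\OC)$. By hypothesis, each such $\Hc^*(\rX_{\bL_I}(c_I'),\OC)$ is torsion-free, so ${}^*R_{L_I}^G\Rgc(\rX_\bG(c),\OC)$ has torsion-free cohomology, and consequently ${}^*R_{L_I}^G$ annihilates the torsion of $\Hc^*(\rX(c),\OC)$.

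The main obstacle is the precise Mackey-type decomposition at the integral level in (ii): one must verify that, for a Coxeter element $c$, the only non-zero contributions to ${}^*R_{L_I}^G$ come from Deligne--Lusztig complexes at Coxeter elements of $W_I$, and that $\OC$-torsion-freeness propagates cleanly through the decomposition. Part (i) is essentially formal once the parabolic fibration is set up, with the delicate step being the degree-by-degree dimension match via Proposition \ref{prop:middledegree} and the known structure of Coxeter cohomology.
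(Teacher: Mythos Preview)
Your argument for (i) is essentially the paper's: both reduce to the statement for $\bL_I$ via the fibration $\rX_\bG(c_I)\to G/P_I$ and then invoke the torsion-freeness of Coxeter cohomology when the group is an $\ell'$-group (the paper cites \cite[Corollary 3.3]{Du4}; your degree-by-degree dimension match is a sketch of one way to prove that result).

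For (ii), however, your route has a real gap. You propose to use a Mackey decomposition of ${}^*R_{L_I}^G\Rgc(\rX(c),\OC)$ at the level of cohomology complexes, and then to check that the only surviving pieces are complexes of Coxeter type for $\bL_I$. Neither step is available off the shelf: integral Mackey-type decompositions for the Deligne--Lusztig complexes are delicate (what is proved in the literature is typically at the level of virtual characters, or in very special situations), and your assertion that ``for a Coxeter element $c$ the surviving summands correspond to Coxeter elements of $W_I$'' is not justified by the double-coset combinatorics alone. Even granting a derived Mackey formula, you would still need a geometric input specific to Coxeter elements to control the pieces, and that input is exactly what the paper uses instead.

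The paper's proof of (ii) bypasses Mackey entirely and is much shorter: Harish-Chandra restriction ${}^*R_{L_I}^G$ on cohomology is computed by passing to the quotient variety $U_I\backslash\rX(c)$, and Lusztig proved (\cite[Corollary 2.10]{Lu}) the explicit isomorphism
\[
U_I\backslash\rX(c)\;\simeq\;(\bbG_m)^{\,r-r_I}\times \rX_{\bL_I}(c_I).
\]
Since $\Hc^*(\bbG_m,\OC)$ is torsion-free, K\"unneth and the hypothesis on $\Hc^*(\rX_{\bL_I}(c_I),\OC)$ give torsion-freeness of $\Hc^*(U_I\backslash\rX(c),\OC)$, i.e.\ of ${}^*R_{L_I}^G\Hc^*(\rX(c),\OC)$. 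This is the Coxeter-specific geometric fact doing all the work; your Mackey plan would need to rediscover it in disguise.
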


\begin{proof}
The first statement follows from \cite[Corollary 3.3]{Du4} using $\H^*_c(\rX(c_I),\OC) = R_{L_I}^G(\H^*_c(\rX_{\bL_I}(c_I),\OC))$. 

The image by ${^*R}_{L_I}^G$ of the torsion of $\H^*_c(\rX(c),\OC)$ is the torsion
of $\H^*_c(U_I\setminus \rX(c),\OC)$.
By \cite[Corollary 2.10]{Lu}, the variety $U_I\setminus \rX(c)$ is isomorphic
to $(\bG_m)^{r-r_I}\times \rX_{\bL_I}(c_I)$. The second statement follows.
\end{proof}

\subsection{Generic theory}
\label{se:generictheory}
We recall here constructions of \cite{BrMa,BMM,Spets}, the representation
theory part being based on Lusztig's theory.

\subsubsection{Reflection data}
Let $K=\bbQ(q)$ and
$V=K\otimes_\bbZ Y$, where $Y$ is the cocharacter group of $\bT$. We
denote by $\varphi$ the finite order automorphism of $V$ induced by the action
of $q^{-1}F$.

We denote by $|(W,\varphi)|=x^N\prod_{i=1}^{\dim V}(x^{d_j}-\zeta_j)$ the
{\em polynomial order} of $(W,\varphi)$. Here, $N$ is the number of
reflections of $W$ and we have fixed a
decomposition into a direct sum of $(\bG_m\times\langle\varphi\rangle)$-stable
lines
$L_1\oplus\cdots\oplus L_{\dim V}$ of the tangent
space at $0$ of $V/W$, so that
$d_j$ is the weight of the action of $\bG_m$ on $L_j$ and $\zeta_j$ is
the eigenvalue of $\varphi$ on~$L_j$.

\smallskip
Recall that there is some combinatorial data
associated with $W$ (viewed as a reflection group on $V$) and $\varphi$:
\begin{itemize}
\item a finite set $\Uch(W,\varphi)$;
\item a map $\Deg:\Uch(W,\varphi)\to\bbQ[x]$.
\end{itemize}
We endow $\bbZ\Uch(W,\varphi)$ with a symmetric bilinear form
making $\Uch(W,\varphi)$ an orthonormal basis.

In addition, given $W'$ a parabolic subgroup of $W$ and $w\in W$ such that
$\ad(w)\varphi(W')=W'$, there is a linear map
 $R_{W',\ad(w)\varphi}^{W,\varphi}:
\bbZ\Uch(W',\ad(w)\varphi)\to\bbZ\Uch(W,\varphi)$.

We will denote by ${^*R}_{W',\ad(w)\varphi}^{W,\varphi}$ the adjoint map
to $R_{W',\ad(w)\varphi}^{W,\varphi}$.

\medskip
The data associated with $W$ and $\varphi$ depends only on the
class of $\varphi$ in $\GL(V)/W$. The corresponding pair
$\bbG=(W,W\varphi)$ is called a {\em reflection datum}.

\smallskip
A pair $\bbL=(W',\ad(w)\varphi)$ as above is called a {\em Levi
subdatum} of $(W,\varphi)$. We put $W_\bbL=W'$.

\medskip
There is a bijection 
$$\Uch(\bbG)\xrightarrow{\sim}\Uch(G),\ \bbchi\mapsto \bbchi_q$$
such that
$\Deg(\bbchi)(q)=\bbchi_q(1)$. 

\smallskip
There is a bijection from the set of $W$-conjugacy classes of Levi subdata of
$\bbG$ to the set of $G$-conjugacy classes of $F$-stable Levi subgroups
of $\bG$.

Those bijections have the property that given $\bL$ an $F$-stable
Levi subgroup of $\bG$ with associated Levi subdatum $\bbL=(W',\ad(w)\varphi)$,
we have
$(R_\bbL^\bbG(\bbchi))_q=R_L^G(\bbchi_q)$ for all 
$\bbchi\in\Uch(W,\varphi)$ (assuming $q>2$ if $(\bG,F)$ has a component
of type ${^2E}_6$, $E_7$ or $E_8$, in order for the Mackey formula to be known
to hold \cite{BoMi}).

\subsubsection{$d$-Harish-Chandra theory}
Let $\Phi$ be a cyclotomic polynomial over $K$, i.e., a prime
divisor of $X^n-1$ in $K[X]$ for some $n\ge 1$. Let $V'$ be a subspace
of $V$ and let $w\in W$ such that $w\varphi$ stabilizes $V'$ and
the characteristic polynomial of
$w\varphi$ acting on $V'$ is a power of $\Phi$. Let $W'=C_W(V')$.
Then $(W',\ad(w)\varphi)$ is called a {\em $\Phi$-split
Levi subdatum} of $(W,\varphi)$.

An element $\bbchi\in\Uch(W,\varphi)$ is {\em $\Phi$-cuspidal} if
${^*R}_\bbL^\bbG(\bbchi)=0$ for all proper
$\Phi$-split Levi subdata $\bbL$ of $\bbG$ (when $\bG$ is semisimple,
this is equivalent to the requirement that $\Deg(\bbchi)_\Phi=|\bbG|_\Phi$).

\smallskip

A pair $(\bbL,\bblambda)$ is a \emph{$\Phi$-cuspidal pair} of $\bbG$ if 
$\bbL = (W',\ad(w)\varphi)$ is a $\Phi$-split Levi subdata of $\bbG$ and
$\bblambda \in \Uch(W',\ad(w)\varphi)$ is $\Phi$-cuspidal.
Given such a pair $(\bbL,\bblambda)$, we denote by
$\Uch(\bbG,(\bbL,\bblambda))$ the set of $\bbchi\in\Uch(\bbG)$ such that
$\langle R_{\bbL}^{\bbG}(\bblambda),\bbchi\rangle\neq 0$.
We denote by $W_\bbG(\bbL,\bblambda)=N_W(W_\bbL)/W_\bbL$
the relative Weyl group.

\smallskip

The $\Phi$-Harish-Chandra theory states that:
\begin{itemize}
\item $\Uch(\bbG)$ is the disjoint union of the sets $\Uch(\bbG,(\bbL,\bblambda))$,
where $(\bbL,\bblambda)$ runs over $W$-conjugacy classes of
$\Phi$-cuspidal pairs;
\item there is an isometry 
$$I_{(\bbL,\bblambda)}^\bbG:\bbZ\Irr(W_\bbG(\bbL,\bblambda))\xrightarrow{\sim}
\bbZ\Uch(\bbG,(\bbL,\bblambda));$$
\item those isometries have the property that
$R_\bbM^\bbG I_{(\bbL,\bblambda)}^\bbM=I_{(\bbL,\bblambda)}^\bbG
\Ind_{W_\bbM(\bbL,\bblambda)}^{W_\bbG(\bbL,\bblambda)}$ for all
$\Phi$-split Levi subdata $\bbM$ of $\bbG$ containing $\bbL$.
\end{itemize}

\noindent The sets $\Uch(\bbG,(\bbL,\bblambda))$ are called the 
{\em $\Phi$-blocks} of $\bbG$. 
The {\em defect} of the $\Phi$-block $ \Uch(\bbG,(\bbL,\bblambda))$
is the integer $i\ge 0$ such that the common value of $\Deg(\bbchi)_\Phi$
for $\bbchi\in \Uch(\bbG,(\bbL,\bblambda))$ is $\Phi^i$
(see \S\ref{se:CaEn} for the relation with unipotent $\ell$-blocks).

\section{Unipotent blocks with cyclic defect groups}
\label{se:unipcyclic}

\subsection{Blocks with cyclic defect groups}
\label{se:blockscyclic}
We recall some basic facts on blocks with cyclic defect groups (cf.\ \cite{Feitbook} and \cite{Fe2} for the folding).

\subsubsection{Brauer trees and folding}

\begin{definition}
A {\em Brauer tree} is a planar tree $T$ with at least one edge
together with a positive integer $m$ (the `multiplicity') and,
if $m\ge 2$, the data of a vertex $v_x$, the `exceptional vertex'.
\end{definition}

Note that the data of an isomorphism class of planar trees is the same as
the data of a tree together with a cyclic ordering of the vertices
containing a given vertex.

\smallskip
Let $d>1$ be a divisor of $m$. We define a new Brauer tree $\wedge^d T$.
It has a vertex $\tilde{v}_x$, and the oriented graph $(\wedge^d T)\setminus\{
\tilde{v}_x\}$ is the disjoint union $(T\setminus\{v_x\})\times
\bbZ/d$ of $d$ copies of $T\setminus\{v_x\}$.
Let $l_1,\ldots,l_r$ be the edges of $T$ containing $v_x$, in the cyclic
ordering. The edges of the tree $\wedge^d T$ containing $\tilde{v}_x$ are,
in the cyclic ordering, $(l_1,0),\ldots,(l_1,d-1),(l_2,0),\ldots,(l_2,d-1),
\ldots,(l_r,0),\ldots,(l_r,d-1)$. Finally, for every $i\in\bbZ/d$,
we have an embedding of oriented trees of $T$ in $\wedge^d T$ given on edges by
$l\mapsto (l,i)$, on non-exceptional vertices by $v\mapsto (v,i)$ and
finally $v_x\mapsto \tilde{v}_x$. The multiplicity of $\wedge^d T$ is
$m/d$. When $m{\not=}d$, the exceptional vertex of $\wedge^d T$ is 
$\tilde{v}_x$.

There is an automorphism $\sigma$ of $\wedge^d T$ given by
$\sigma(\tilde{v}_x)=\tilde{v}_x$ and $\sigma(v,i)=(v,i+1)$ for
$v\in T\setminus\{v_x\}$. Let $X$ be the group of automorphisms of
$\wedge^d T$ generated by $\sigma$. There is an isomorphism of
planar trees
$$\kappa:(\wedge^d T)/X\xrightarrow{\sim}T,\ \tilde{v}_x\mapsto v_x,\
X\cdot(v,i)\mapsto v \text{ for }v\in T\setminus\{v_x\}.$$

\smallskip
In particular the Brauer tree $T'=\wedge^d T$ together with the
automorphism group $X$ determine $T$.

\begin{rmk}
\label{re:planarlift}
Given another planar embedding $T'$ of $\Lambda^dT$ compatible
with the automorphism $\sigma$ above and such that $\kappa$ induces an
isomorphism of planar trees
$T'/X\xrightarrow{\sim}T$, then there is an isomorphism of planar trees
$T'\xrightarrow{\sim}\Lambda^d T$ compatible with $\sigma$.
\end{rmk}

\subsubsection{Brauer tree of a block with cyclic defect}
\label{se:Brauertreesblocks}
Let $H$ be a finite group and $A=b\OC H$ be a block of $\OC H$. Let $D$ be a defect group of $A$ and let
$b_D$ be a block idempotent of the Brauer correspondent of $b$ in $\OC N_H(D)$. We
assume $D$ is cyclic and non-trivial.
Let $E=N_H(D,b_D)/C_H(D)$, a cyclic subgroup of $\Aut(D)$
of order $e$ dividing $\ell-1$.

When $e=1$, the block $A$ is Morita equivalent to $\OC D$. We will be
discussing Brauer trees only when $e>1$, an assumption we make for the
remainder of \S\ref{se:blockscyclic}.

\smallskip
We define a Brauer tree $T$ associated to $A$. We put $m=(|D|-1)/e$.
An irreducible character $\chi$ of $KA$ is called {\em non-exceptional}
if $d(\chi){\not=}d(\chi')$ for all
$\chi'\in\Irr(KA)$ for $\chi'{\not=}\chi$ (here $d$ is the decomposition map).
When $m>1$, we denote by $\chi_x$ the sum of the exceptional irreducible
characters
of $KA$ (those that are not non-exceptional). We define the set of vertices of $T$ as the union of the non-exceptional
characters together, when $m>1$, with an exceptional vertex corresponding to $\chi_x$. The set of edges is
defined to be $\Irr(kA)$. An edge $\phi$ has vertices
$\chi$ and $\chi'$ if $\chi+\chi'$ is the character of the projective cover of
the simple $A$-module with Brauer character $\phi$.
Note that the tree $T$ has $e$ edges.

The cyclic ordering of the edges containing a given vertex is defined
as follows: the edge $\phi_2$ comes immediately after the edge $\phi_1$ if
$\Ext^1_A(L_1,L_2){\not=}0$, where $L_i$ is the simple $A$-module
with Brauer character $\phi_i$.


\smallskip
Recall that
the full subgraph of $T$ with vertices the real-valued non-exceptional
irreducible characters and the exceptional vertex if $m>1$ is
a line (the `real stem' of the tree).
There is an embedding of the tree $T$ in $\bbC$ where 
the intersection of $T$ with the real line is the real stem and taking duals
of irreducible characters corresponds to reflection with respect to the
real line.


\subsubsection{Folding}\label{se:folding}
Let $H'$ be a finite group containing $H$ as a normal subgroup and let $b'$ be a block idempotent of $\OC H'$ such that
$bb'\not=0$. We put $A'=b'\OC H'$ and we denote by $T'$ the Brauer tree
of $A'$, with multiplicity $m'$.
We assume $D$ is a defect group of $b'$. Let $b'_{D}$ be the
block idempotent of $\OC N_{H'}(D)$ that
is the Brauer correspondent of $b'$ and let $E'=N_{H'}(D,b'_D)/C_{H'}(D)$,
an $\ell'$-subgroup of $\Aut(D)$. Note that $E$ is a subgroup of $E'$.
Let $H'_b$ be the stabilizer of $b$ in $H'$.
We have $H'_b=HN_{H'}(D,b'_D)$ and there is a Morita equivalence between
$b\OC H'_b$ and $b'\OC H'$ induced by the bimodule $b\OC H'b'$.

Suppose that $E'\neq E$, i.e., $m'\neq m$, since $[E':E]=m/m'$.
 The group $X$ of $1$-dimensional characters of $E'/E\simeq H'_b/H$ acts on
$\Irr(KA')$ and on $\Irr(kA')$ and this induces an action on $T'$, the Brauer
tree associated to $A'$.

\medskip
The result below is a consequence of \cite[proof of Lemma 4.3]{Fe2}
(the planar embedding part follows from Remark \ref{re:planarlift}).

\begin{prop}
There is an isomorphism of Brauer trees $\wedge^d T\xrightarrow{\sim}T'$
such that $(\chi,i)$ maps to a lift of $\chi$, for $\chi$ a
non-exceptional vertex.
\end{prop}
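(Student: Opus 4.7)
The strategy is to combine the Morita equivalence between $b\OC H'_b$ and $A'=b'\OC H'$ with Clifford theory for the cyclic quotient $H'_b/H$, and then read off the Brauer tree $T'$ from the Brauer tree $T$ by tracking how characters and simple modules split or extend.

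First I would reduce to the case $H'_b=H'$. The bimodule $b\OC H'b'$ gives a Morita equivalence between $b\OC H'_b$ and $A'$, and Morita equivalence preserves decomposition matrices, the Ext-quiver, and hence the (planar) Brauer tree together with its exceptional vertex and multiplicity. Moreover the action of $X=\widehat{E'/E}\cong\widehat{H'_b/H}$ on $\Irr(KA')$ and $\Irr(kA')$ corresponds, under this equivalence, to the tensoring action of $X$ on $b\OC H'_b\mmod$ by inflation from $H'_b/H$. So I may assume from the outset that $H\triangleleft H'$ with $H'/H$ cyclic of order $d=m/m'$ and $bb'\neq 0$, $D$ a common defect group.

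Next, I would use Clifford theory for the abelian quotient $H'/H$ of order $d$. For a non-exceptional irreducible character $\chi$ of $A$, I would show that $\chi$ extends to $d$ distinct irreducible characters of $A'$, forming a single free $X$-orbit: this is because non-exceptional $\chi$ occur with multiplicity one in a PIM, so their $H'$-stabilizer must be all of $H'$, forcing $d$ extensions by Gallagher's theorem. The same argument, applied to projective indecomposable $\OC A'$-modules and their reductions, shows that every simple $kA$-module extends to exactly $d$ simple $kA'$-modules which form a free $X$-orbit of edges in $T'$. Counting $e'=de$ edges this way accounts for all edges of $T'$. On the exceptional side, the $m=e\cdot(|D|-1)/(e\cdot\text{something})$\ldots more precisely the $(|D|-1)/e$ exceptional characters of $A$ reorganize, under induction and restriction between $H$ and $H'$, into the $(|D|-1)/e'=m'$ exceptional characters of $A'$, with each exceptional character of $A'$ restricting to $d$ exceptional characters of $A$; this gives a single exceptional vertex $\tilde v_x$ of $T'$ mapping to $v_x$ under $X$-quotient. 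At this point the bijection of underlying graphs $T'/X\xrightarrow{\sim}T$ is clear, matching the combinatorial construction of $\wedge^d T$.

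To upgrade this to an isomorphism of planar trees $\wedge^d T\xrightarrow{\sim}T'$ I would use that the cyclic ordering at a vertex is governed by $\Ext^1$ between simple modules, together with Green's description of the heart of a projective cover in a cyclic-defect block. For a non-exceptional vertex $v$ of $T$ corresponding to $\chi$, the $d$ lifts of $v$ in $T'$ each inherit the local cyclic ordering of $T$ via the $X$-action, because $X$ commutes with $\Omega$ on $kA'\mstab$. At the exceptional vertex $\tilde v_x$, the cyclic ordering is determined by the $\Omega$-periodic module around $\tilde v_x$ of period $e'=de$; I would use that this periodic module is, after applying the Brauer functor and extending to $H'$, obtained from the analogous period-$e$ module for $A$ by a $d$-fold covering, which imposes that the edges above $l_i$ appear in a single block of $d$ consecutive positions in the cyclic ordering, exactly as in the definition of $\wedge^d T$. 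Finally, Remark \ref{re:planarlift} turns the equivariant planar tree identification $T'/X\xrightarrow{\sim}T$ into the desired planar isomorphism $\wedge^d T\xrightarrow{\sim}T'$.

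The main obstacle is this last step: guaranteeing that the $d$ lifts of each edge $l_i$ at $v_x$ appear \emph{consecutively} in the cyclic ordering around $\tilde v_x$, rather than interleaved with lifts of other edges. Equivalently, one must check that the $X$-action on the star of $\tilde v_x$ in $T'$ is a rotation by $e$ positions (the number of edges of $T$) rather than by some smaller amount. This should follow from tracking the Heller translate $\Omega$ together with the $X$-equivariance and the fact that, locally at $\tilde v_x$, Green's walk around the tree is compatible with the covering $\wedge^d T\to T$.
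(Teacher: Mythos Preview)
The paper offers no argument here: it simply records that the abstract (non-planar) tree isomorphism is a consequence of Feit \cite[proof of Lemma~4.3]{Fe2}, and that the planar part then follows from Remark~\ref{re:planarlift}. Your proposal is therefore far more detailed than what the paper provides; the Clifford-theoretic outline you sketch is essentially how Feit's argument runs.

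Two points on your sketch. First, the reason you give for $H'$-stability of a non-exceptional $\chi$ (``because non-exceptional $\chi$ occur with multiplicity one in a PIM'') is not sufficient as stated: nothing prevents an outer automorphism from swapping two non-exceptional characters together with their PIMs. The genuine input is on the local side --- the action of $E'/E$ on the Brauer-correspondent block is free on non-trivial simple modules --- and this is where Feit's lemma does its work. You should either cite it or isolate this step clearly rather than relying on a multiplicity-one heuristic.

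Second, the ``main obstacle'' you flag at the end is precisely what Remark~\ref{re:planarlift} is designed to absorb, not an additional hypothesis you must verify by hand. What that remark requires is only that (i) $X$ acts on $T'$ by planar automorphisms --- automatic, since tensoring by a one-dimensional character is an auto-equivalence of $kA'\mmod$ and hence preserves the $\Ext^1$-defined cyclic orderings --- and (ii) the induced planar tree $T'/X$ is $T$, which follows by comparing $\Ext^1_{A'}$ with $\Ext^1_A$ via Eckmann--Shapiro along the $\ell'$-extension $H\lhd H'$. Once (i) and (ii) hold, the remark hands you the planar isomorphism $\wedge^d T\xrightarrow{\sim}T'$ regardless of how the $d$ lifts of each $l_i$ are distributed around $\tilde v_x$; no direct analysis of Green's walk or the $\Omega$-period at the exceptional vertex is needed. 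Your plan is correct in outline, but you have located the remaining work in the wrong place.
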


The previous proposition shows that the data of $T'$ and
of the action of $X$ on $T$ determine the tree $T$ (up to parametrization).

%
%
%

\subsection{Structure of unipotent blocks with cyclic defect groups}
\label{se:unipblocks}
We assume in \S\ref{se:unipblocks} that the simple factors of $[\bG,\bG]$ are
$F$-stable. Note that every finite reductive group can be realized as
$\bG^F$ for such a $\bG$.

From now on, we assume that $\ell$ is an odd prime.

\subsubsection{Centre}
We show here that a Brauer tree of a unipotent block of a
finite reductive group (in non-describing characteristic) is 
isomorphic to one coming from a
simple simply connected algebraic group.

\begin{lem}
\label{le:centre}
Assume that $\bG$ is simple and simply connected.
Let $A$ be a unipotent block of $k\bG^F$ whose image
in $k(\bG^F/(Z(\bG)^F)_\ell)$ has cyclic defect. Then, $A$ has
cyclic defect and $Z(\bG)^F_\ell=1$.
\end{lem}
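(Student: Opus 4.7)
The plan is to reduce to showing $Z := (Z(\bG)^F)_\ell = 1$, since then the defect group $D$ of $A$ coincides with $D/Z$, which is cyclic by hypothesis, and both conclusions follow simultaneously.

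First I would observe that $Z$ is cyclic. By inspection of the classification of centres of simply connected simple groups, $Z(\bG)$ is cyclic in every type except $D_n$ with $n$ even, where $Z(\bG)\cong(\bbZ/2)^2$; the latter has trivial $\ell$-part for odd $\ell$. So the possibilities for $Z\neq 1$ are restricted to type $A_n$ with $\ell\mid\gcd(n+1,q\mp 1)$, and $E_6$ or ${}^2E_6$ with $\ell=3$ and $3\mid q\mp 1$. Since $Z\le Z(G)$ is a central $\ell$-subgroup, it lies in every defect group $D$ of $A$, and the defect group of the image block $\bar A\subseteq k(G/Z)$ is $D/Z$, cyclic by hypothesis. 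Therefore $D$ is abelian of rank at most two (generated by the cyclic central subgroup $Z$ with cyclic quotient).

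Next I would argue by contradiction, assuming $Z\neq 1$. The key input is the $d$-Harish-Chandra parametrisation of unipotent $\ell$-blocks of $G=\bG^F$, with $d$ the order of $q$ modulo $\ell$: each such block is associated with a conjugacy class of $d$-cuspidal pairs $(\bL,\lambda)$, and its defect group is essentially a Sylow $\ell$-subgroup of $(Z^\circ(\bL))^F$, combined with $Z$. In the Cabanes--Enguehard regime, i.e.\ when $\ell$ does not divide the cocentre $|Z(\bG)^F|$, this description is clean; but the present hypothesis $Z\neq 1$ takes us outside that regime, and typically distinct $d$-cuspidal pairs coalesce into a single $\ell$-block, with correspondingly larger defect groups.

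The crucial structural claim is then: in each of the three families above, every unipotent $\ell$-block of $\bG^F$ has a defect group $D$ such that $D/Z$ has rank at least two, contradicting the hypothesis that $D/Z$ is cyclic. This is established case-by-case, using the classification of cuspidal unipotent characters of Levi subgroups together with refined block-structure results beyond Cabanes--Enguehard. The contradiction forces $Z=1$, whence $D=D/Z$ is cyclic. The main obstacle is precisely this structural verification: one needs either a general theorem controlling defect groups of unipotent blocks in the bad-centre regime, or a careful case-by-case treatment of the types above, paying attention to the coalescence of $d$-cuspidal pairs into single $\ell$-blocks.
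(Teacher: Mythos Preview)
Your reduction to $Z(\bG)^F_\ell=1$ and your case enumeration (types $A$, $E_6$, ${}^2E_6$) are correct and match the paper's. The observation that $D$ is abelian of rank at most $2$ is also correct, though the paper does not use it. However, your proposal stops exactly where the argument has content: you correctly identify ``the main obstacle'' as the case-by-case verification, but you do not carry it out, and your suggested method---bounding the rank of $D/Z$ via defect-group computations in the bad-centre regime---is harder than what is actually needed.

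The paper's proof handles the verification quite differently and more directly, without computing defect groups at all. For type $A$ ($\SL_n$ or $\SU_n$ with $\ell\mid\gcd(n,q\mp 1)$), it invokes the fact that the \emph{only} unipotent block is the principal one \cite[Theorem~13]{CaEn93}; hence the image block in $kH$ is principal, and cyclic defect would force $H$ to have cyclic Sylow $\ell$-subgroups, which is easily ruled out. For $E_6(q)$ with $\ell=3\mid q-1$, the principal block is excluded the same way; the unique non-principal unipotent block contains exactly three unipotent characters (the $D_4$ Harish--Chandra series), trivial on the centre and hence descending to $H$. Since any $3$-block with cyclic defect has inertial index dividing $\ell-1=2$, one checks directly that the three degrees are pairwise distinct and no one equals the sum of the other two, which is incompatible with a Brauer tree having at most two edges. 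The ${}^2E_6$ case follows by replacing $q$ with $-q$. So the paper's argument is a short combination of a known block-classification fact and an elementary degree computation; your route via defect-group ranks would in principle work but requires machinery you have not supplied.
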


\begin{proof}
Since $\ell$ is odd, it divides $|Z(\bG)^F|$ only in the following
cases \cite[Corollary 24.13]{MaTe}:
\begin{itemize}
\item $(\bG,F)=\SL_n(q)$, $n\ge 2$ and $\ell\mid(n,q-1)$;
\item $(\bG,F)=\SU_n(q)$, $n\ge 3$ and $\ell\mid(n,q+1)$;
\item $(\bG,F)=E_6(q)$ and $\ell\mid(3,q-1)$;
\item $(\bG,F)={^2E}_6(q)$ and $\ell\mid(3,q+1)$.
\end{itemize}

Let $H=\bG^F/(Z(\bG)^F)_\ell$. Suppose that the image of $A$ in $kH$ has non-trivial defect groups.

Assume that $(\bG,F)=\SL_n(q)$, $n\ge 2$ and $\ell \mid (n,q-1)$
or $(\bG,F)=\SU_n(q)$, $n\ge 3$ and $\ell \mid (n,q+1)$. In those cases,
the only unipotent block $A$ is the principal block \cite[Theorem 13]{CaEn93},
so $H$ has cyclic Sylow $\ell$-subgroups: this is impossible.

Assume that $(\bG,F)=E_6(q)$ and $\ell\mid(3,q-1)$. Note that $A$ cannot be the principal
block, as $H$ does not have cyclic Sylow $3$-subgroups. There is
a unique non-principal unipotent block $b$, and its unipotent characters
are the ones in the 
Harish-Chandra series with Levi subgroup $\bL$ of type $D_4$
\cite[``Donn\'ees cuspidales 7,8,9'', p.352--353]{En00}. Those three
unipotent characters are trivial on $Z(\bG)^F$. It is easily seen that there
is no equality between their degrees nor is the sum of two degrees equal
the third one. As a consequence, they cannot belong to a
block of $kH$ with cyclic defect and inertial index at most $2$.

The same method (replacing $q$ by $-q$) shows also that $b$ cannot have
cyclic defect when $(\bG,F)={^2E}_6(q)$ and $\ell\mid(3,q+1)$.
\end{proof}

Let $H$ be a finite simple group of Lie type. Then there is 
a simple simply connected reductive algebraic group $\bG$ endowed
with an isogeny $F$ such that $H\simeq \bG^F/Z(\bG)^F$, unless
$H$ is the Tits group, $(\bG,F)={^2}F_4(2)$
and we have $H=[\bG^F/Z(\bG)^F,\bG^F/Z(\bG)^F]$,
a subgroup of index $2$ of $\bG^F/Z(\bG)^F$.

The previous lemma shows that if the image in $kH$ of a unipotent $\ell$-block of $k\bG^F$ has cyclic defect groups, then the block of $k\bG^F$ already has cyclic defect groups. By folding (\S\ref{se:folding}), the Brauer tree of a unipotent block of $\OC \bG^F$ determines the Brauer tree of the corresponding block of $\OC H$.

\begin{prop}
\label{pr:centre}
Let $A$ be a unipotent block of $\OC\bG^F$ with cyclic defect group $D$.
We have $C_\bG^\circ(x)=C_\bG^\circ(D)$ and
$C_G(x)=C_\bG^\circ(x)^F$ for all non trivial elements $x\in D$.
Furthermore, one of the two following statements hold:
\begin{itemize}
\item $D$ is the Sylow $\ell$-subgroup of $Z^\circ(\bG)^F$ and
there is a finite subgroup $H$ of $G$ containing $[\bG,\bG]^F$
such that $G=D\times H$;
\item $|Z(\bG)^F|_\ell=1$, $D\neq 1$ and
$A$ is Morita equivalent to a unipotent block 
of a simple factor of $\bG/Z(\bG)$
with cyclic defect groups isomorphic to $D$.
\end{itemize}
In particular, $Z(\bG)^F/Z^\circ(\bG)^F$ is an $\ell'$-group.
\end{prop}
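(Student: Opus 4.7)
The strategy is to exploit the hypothesis that the simple components $\mathbf{G}_1,\ldots,\mathbf{G}_n$ of $[\mathbf{G},\mathbf{G}]$ are $F$-stable, so that $\mathbf{G}$ is the central product $Z^\circ(\mathbf{G})\cdot\mathbf{G}_1\cdots\mathbf{G}_n$, and to reduce each of the three assertions to the simple simply connected case already treated in Lemma~\ref{le:centre}. The claims are not independent: the final statement on $Z(\mathbf{G})^F/Z^\circ(\mathbf{G})^F$ will underlie the centraliser equality $C_G(x)=C_\mathbf{G}^\circ(x)^F$, because for $\ell$ odd the component group $C_\mathbf{G}(x)/C_\mathbf{G}^\circ(x)$ of an $\ell$-element $x\in Z(\mathbf{L})$ is a subquotient of the component group of the ambient centre (Steinberg). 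I would accordingly prove the $\ell'$-statement first by lifting through the simply connected cover $\mathbf{G}_{1,\mathrm{sc}}\times\cdots\times\mathbf{G}_{n,\mathrm{sc}}$ of $[\mathbf{G},\mathbf{G}]$ and applying Lemma~\ref{le:centre} to each simple factor whose image contributes non-trivially to the defect of $A$.

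For the centraliser equality $C_\mathbf{G}^\circ(x)=C_\mathbf{G}^\circ(D)$ for every $1\neq x\in D$, I would invoke the Cabanes--Enguehard description of the defect group of a unipotent $\ell$-block: $D$ sits as the Sylow $\ell$-subgroup of $\mathbf{S}^F$ for a $\Phi_d$-torus $\mathbf{S}$, where $d$ is the order of $q$ modulo $\ell$, and $\mathbf{L}=C_\mathbf{G}^\circ(\mathbf{S})=C_\mathbf{G}^\circ(D)$ is the $e$-split Levi carrying the cuspidal pair of $A$. Cyclicity of $D$ forces $\mathbf{S}^F$ to have cyclic Sylow $\ell$-subgroup, and any non-trivial $\ell$-element of $D$ then topologically generates the same torus $\mathbf{S}$ up to $\ell'$-contributions, yielding $C_\mathbf{G}^\circ(x)=C_\mathbf{G}^\circ(\mathbf{S})=C_\mathbf{G}^\circ(D)$. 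Combined with the $\ell'$-statement for the component group, this also gives $C_G(x)=C_\mathbf{G}^\circ(x)^F$.

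For the dichotomy, I would project $D$ via the central isogeny $Z^\circ(\mathbf{G})\times\mathbf{G}_{1,\mathrm{sc}}\times\cdots\times\mathbf{G}_{n,\mathrm{sc}}\to\mathbf{G}$. Because $D$ is cyclic of $\ell$-power order and the simple factors contribute independently to the Sylow structure, at most one of the non-central projections can be non-trivial. If every projection onto a factor $\mathbf{G}_{i,\mathrm{sc}}$ is trivial, then $D\leq Z^\circ(\mathbf{G})^F$; maximality of $D$ forces it to be the full Sylow $\ell$-subgroup there, and the direct product decomposition $G=D\times H$ is obtained by a Lang--Steinberg argument realising $D$ as a direct factor of the $F$-fixed points of the central torus. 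If instead exactly one projection, say onto $\mathbf{G}_j$, is non-trivial, then Lemma~\ref{le:centre} applied to the simply connected cover of $\mathbf{G}_j$ forces $|Z(\mathbf{G}_j)^F|_\ell=1$, so $D$ descends faithfully to the simple quotient $\mathbf{G}_j/Z(\mathbf{G}_j)$, and folding (\S\ref{se:folding}) produces the required Morita equivalence with a unipotent block of this simple group. The delicate step will be the splitting in case~(a): checking that the Sylow $\ell$-subgroup of $Z^\circ(\mathbf{G})^F$ really is a direct factor of $G$ requires carefully combining the $\ell'$-complement in $Z^\circ(\mathbf{G})^F$ with $[\mathbf{G},\mathbf{G}]^F$ across the central isogeny, using that the index of their product in $G$ is exactly $|D|$.
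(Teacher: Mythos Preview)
Your reduction to the simply connected cover and use of Lemma~\ref{le:centre} to get the $\ell'$-statement for $Z(\mathbf{G})^F/Z^\circ(\mathbf{G})^F$ is essentially what the paper does. But the remaining two pieces diverge substantially, and your approach to the centraliser equality has a genuine gap.

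For $C_\mathbf{G}^\circ(x)=C_\mathbf{G}^\circ(D)$, you want to invoke the Cabanes--Enguehard description of $D$ as the Sylow $\ell$-subgroup of $\mathbf{S}^F$ for a $\Phi_d$-torus $\mathbf{S}$. This is circular at this point in the paper: that description is only known from \cite{CaEn94} under the assumption that $\ell$ is good, and the extension to bad $\ell$ (the Theorem in \S\ref{se:CaEn}) is proved \emph{after} and \emph{using} Proposition~\ref{pr:centre}. Even granting the $\Phi_d$-description, the phrase ``topologically generates the same torus $\mathbf{S}$'' has no content over a finite field; you would still need an honest argument that $C_\mathbf{G}^\circ(x)=C_\mathbf{G}(\mathbf{S})$ for every non-trivial $\ell$-element of $\mathbf{S}^F$, which is exactly the sort of thing that requires $\ell$-goodness hypotheses you are trying to avoid. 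The paper sidesteps this entirely with a bootstrap: $\br_x(b)$ is a unipotent block of $C_G(x)$ with defect $D$ (Brou\'e--Michel), and since $x\in Z(C_\mathbf{G}^\circ(x))^F$ has $\ell$-power order, the dichotomy already established (applied to $C_\mathbf{G}^\circ(x)$) forces $D\subset Z^\circ(C_\mathbf{G}^\circ(x))^F$, whence $C_\mathbf{G}^\circ(x)\subset C_\mathbf{G}^\circ(D)$.

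For the dichotomy, the paper does not project $D$ along the isogeny; it quotes \cite[Theorem~17.7]{CaEn} to get $A\simeq\OC Z(\mathbf{G})^F_\ell\otimes A'$ with $A'$ a unipotent block of $\mathbf{G}_{\mathrm{ad}}^F$, and then uses the tensor decomposition $A'=A_1\otimes\cdots\otimes A_r$ over the simple factors of $\mathbf{G}_{\mathrm{ad}}$. Cyclicity of $D\simeq Z(\mathbf{G})^F_\ell\times D'$ forces either $D'=1$ (first bullet) or $Z(\mathbf{G})^F_\ell=1$ and exactly one $A_i$ with non-trivial defect (second bullet). The Morita equivalence comes from tensoring with defect-zero blocks, not from folding: folding in \S\ref{se:folding} is about Brauer trees under normal embeddings and plays no role here.
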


\begin{proof}
Let $\bH$ be a simple factor of $[\bG,\bG]$.
Consider a simply connected cover 
$\bH_{\mathrm{sc}}$ of $\bH$. The restriction of unipotent characters in $A$
to $H$ and then to $\bH_{\mathrm{sc}}^F$
are sums of unipotent characters, and the blocks that contain them
have a defect group that is cyclic modulo $Z(\bH_{\mathrm{sc}})^F$.
It follows from Lemma \ref{le:centre} that $\ell\nmid Z(\bH_{\mathrm{sc}})^F$, and therefore $\ell\nmid Z(\bG_{\mathrm{sc}})^F$, where
$\bG_{\mathrm{sc}}$ is a simply connected cover of $[\bG,\bG]$.
Note that as a consequence, both $(Z(\bG)/Z^\circ(\bG))^F$ and
$(Z(\bG^*)/Z^\circ(\bG^*))^F$ are $\ell'$-groups, where $\bG^*$ is
a Langlands dual of $\bG$.

Let $\bG_{\mathrm{ad}}=\bG/Z(\bG)$.
By \cite[Theorem 17.7]{CaEn}, we have 
$A\simeq \OC Z(\bG)^F_\ell\otimes A'$, where
$A'$ is the unipotent block of $G_{\mathrm{ad}}$ containing the
unipotent characters of $A$. Also, $D\simeq Z(\bG)^F_\ell\times D'$,
where $D'$ is a defect group of $A'$. So, if $\ell$ divides
$|Z(\bG)^F|$, then $\ell$ divides $|Z^\circ(\bG)^F|$, $D'=1$ and
$D$ is the Sylow $\ell$-subgroup of $Z(\bG)^F$. Otherwise,
consider a decomposition
$\bG_{\mathrm{ad}}=\bG_1\times\cdots\times\bG_r$ where the $\bG_i$ are simple
and $F$-stable factors. There is a corresponding decomposition
$A'=A_1\otimes\cdots\otimes A_r$ where $A_i$ is a unipotent
block of $G_i$. So, there is a unique $i$ such that $A_i$ does not have
trivial defect groups, and $A$ is Morita equivalent to $A_i$.

\smallskip
Let us now prove the first statement of the proposition. 
We have $C_\bG^\circ(x)^F=C_G(x)$  by
\cite[Proposition 13.16]{CaEn}.
The block idempotent $\mathrm{br}_x(b)$ gives a (nilpotent) block of
$\OC C_G(x)$
with defect group $D$.
By \cite[Theorem 3.2]{BrMi}, this is a unipotent block.
We deduce from the other part of the proposition
that $D\subset Z(C_\bG^\circ(x))^F$, and hence $C_\bG^\circ(x)=C_\bG^\circ(D)$.
\end{proof}


\subsubsection{Local subgroups and characters}
\label{se:localsub}
Let $A$ be a unipotent block of $\OC G$ with a non-trivial
cyclic defect group $D$.
Let $(D,b_D)$ be a maximal $b$-subpair as in \S\ref{se:Brauertreesblocks} and
let $E=N_G(D,b_D)/C_G(D)$. Recall that we assume that $\ell$ is odd.

Let $Q$ be the subgroup of order $\ell$ of $D$ and
let $\bL=C_\bG^\circ(Q)$.

\begin{theorem}
\label{th:structureblocks}
\begin{itemize}
\item $\bL=C_\bG^\circ(D)$ is a Levi subgroup of $\bG$.
\item $D$ is the Sylow $\ell$-subgroup of $Z^\circ(\bL)^F$ and $L=D\times H$ for some subgroup $H$ of $L$ containing $[\bL,\bL]^F$.
\item There is a (unique) unipotent character $\lambda$ of $L$ such that $R_L^G(\lambda)=\sum_{\chi\in\Uch(KA)}\varepsilon_\chi \chi$ for some
$\varepsilon_\chi\in\{\pm 1\}$.
\item We have $|\Uch(KA)|=|E|$ and $\Irr(KA)$ is the disjoint union of $\Uch(KA)$ and of $\{(-1)^{r_G+r_L}R_L^G(\lambda\otimes\xi)\}_{\xi\in(\Irr(KD)\setminus\{1\})/E}$.
\item If $|E|\neq |D|-1$, then $\Uch(KA)$ is the set of
non-exceptional characters of $A$.
\end{itemize}
\end{theorem}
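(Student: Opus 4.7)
The plan is to prove the five statements in order, leveraging Proposition~\ref{pr:centre} for the local structure and the $d$-Harish-Chandra theory of \S\ref{se:generictheory} for the character-theoretic parts.

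For (1), I would apply Proposition~\ref{pr:centre} to a generator $x$ of $Q$, obtaining $C_\bG^\circ(x)=C_\bG^\circ(D)$. The inclusion $Q\subset D$ gives $C_\bG^\circ(Q)\supset C_\bG^\circ(D)$, while $C_\bG^\circ(Q)\subset C_\bG^\circ(x)=C_\bG^\circ(D)$ gives the reverse, so $\bL=C_\bG^\circ(Q)=C_\bG^\circ(D)$. That $\bL$ is a Levi of $\bG$ follows from embedding the finite abelian semisimple $\ell$-group $Q$ in an $F$-stable maximal torus (using $\ell\neq p$), whose $Q$-centralizer is reductive of maximal rank. For (2), I would apply Proposition~\ref{pr:centre} to $(\bL,F)$ together with the Brauer correspondent $b_D$ of $b$, which by Brauer's First Main Theorem and \cite[Theorem 3.2]{BrMi} is a unipotent block of $\OC L$ with cyclic defect $D$. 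Since $D\subset Z(L)$ from (1), the second alternative of Proposition~\ref{pr:centre} is excluded, so $D=Z^\circ(\bL)^F_\ell$ and $L=D\times H$ for some $H\supset[\bL,\bL]^F$.

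For (3), under $L=D\times H$ the block $b_D$ corresponds to a block $b'$ of $\OC H$; all the defect of $b_D$ lies in $\OC D$, so $b'$ has defect zero and contains a unique irreducible character $\lambda$, necessarily unipotent. Then $R_L^G(\lambda)\in\bbZ\Uch(G)$, and taking $\Phi$ the cyclotomic polynomial with a primitive $\ell$-th root of unity as root, $(\bbL,\bblambda)$ is a $\Phi$-cuspidal pair; the isometry $I_{(\bbL,\bblambda)}^\bbG$ from \S\ref{se:generictheory} gives $R_L^G(\lambda)=\sum_{\chi\in\Uch(\bbG,(\bbL,\bblambda))}\varepsilon_\chi\chi$. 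For (4), the same isometry yields a bijection between $\Uch(\bbG,(\bbL,\bblambda))$ and $\Irr(W_\bbG(\bbL,\bblambda))$; for cyclic $D$ the relative Weyl group is cyclic of order $|E|$ (via the identification of $W_\bbG(\bbL,\bblambda)$ with $N_G(D,b_D)/C_G(D)$), so $|\Uch(KA)|=|E|$. A block with cyclic defect has $|E|+(|D|-1)/|E|$ irreducible characters, leaving $(|D|-1)/|E|$ to identify. For non-trivial $\xi\in\Irr(KD)$, the character $\lambda\otimes\xi$ is cuspidal on $L$ (non-trivial central character on $D\subset Z^\circ(\bL)^F$), and $\bL$ is not contained in a proper $F$-stable parabolic subgroup (otherwise $D$ would not exhaust the Sylow $\ell$-part of the centre of the overlying Levi). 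Proposition~\ref{pr:inducedcuspidal} then yields irreducibility of $(-1)^{r_G+r_L}R_L^G(\lambda\otimes\xi)$; moreover the action of $E$ on $\Irr(KD)\setminus\{1\}$ is free (since $E\subset\Aut(D)$ has order dividing $\ell-1$, so any nontrivial $e\in E$ satisfies $e-1\in\bbZ_\ell^\times$), producing exactly $(|D|-1)/|E|$ orbits.

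For (5), observe that for two non-trivial $\xi,\xi'\in\Irr(KD)$ the difference $R_L^G(\lambda\otimes\xi)-R_L^G(\lambda\otimes\xi')$ is killed by the decomposition map $d$, since $\xi(\xi')^{-1}$ is a linear character of the $\ell$-group $D$ whose Brauer lift is trivial; hence these $(|D|-1)/|E|$ characters share a common class in $K_0(kG)$ and form an exceptional family. The unipotent characters in $A$ have pairwise distinct images under $d$ via the perfect isometry between $A$ and its Brauer correspondent (they correspond to distinct edges of the Brauer tree). When $|E|\neq|D|-1$, i.e.\ $m=(|D|-1)/|E|>1$, the tree has a genuine exceptional vertex with $m$ characters, which must therefore be the $R_L^G(\lambda\otimes\xi)$ with $\xi\neq 1$; the unipotent characters are thus exactly the non-exceptional ones. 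The main obstacle is (4): matching $|W_\bbG(\bbL,\bblambda)|$ with $|E|$ and verifying both the cuspidality of $\lambda\otimes\xi$ and the non-parabolicity of $\bL$ needed to apply Proposition~\ref{pr:inducedcuspidal}; the other items reduce cleanly to Proposition~\ref{pr:centre} and defect-zero block theory.
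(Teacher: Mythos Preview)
Your argument for (4) contains a critical misreading: Proposition~\ref{pr:inducedcuspidal} does \emph{not} yield irreducibility. It takes irreducibility of $(-1)^{r_G+r_L}R_L^G(\psi)$ as a hypothesis and concludes cuspidality; you have the implication reversed. Moreover, the hypotheses you invoke for it fail in general. For the $\Phi_9$-blocks of $E_8(q)$ one has $[\bL,\bL]^F\simeq A_2(q)$ and $\lambda\in\{\phi_3,\phi_{21},\phi_{1^3}\}$, none of which is cuspidal; nor is $\lambda\otimes\xi$, since a non-trivial central character on $D$ does not kill Harish-Chandra restriction to a proper standard Levi of $\bL$. The paper obtains irreducibility of $(-1)^{r_G+r_L}R_L^G(\lambda\otimes\xi)$, and its dependence only on the $E$-orbit of $\xi$, from \cite[Theorem~13.25]{DiMi} instead.

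Your route to (3) via the $\Phi$-Harish-Chandra isometry is a reasonable alternative, but it presupposes that $(\bbL,\bblambda)$ is a $\Phi_d$-cuspidal pair (with $d$ the order of $q$ modulo $\ell$; your ``primitive $\ell$-th root'' is not the right $\Phi$). That is exactly the content of the Genericity theorem in \S\ref{se:CaEn}, established there via \cite{CaEn94,En00}, so you would be importing it out of logical order; the identification $|W_\bbG(\bbL,\bblambda)|=|E|$ that you need in (4) likewise lies beyond what has been set up at this point. The paper avoids all of this by arguing directly with the decomposition map and Brauer-tree combinatorics: once the $\chi_\xi$ ($\xi\neq 1$) are known to be non-unipotent and to share a common image under $d$, they are the exceptional characters (when $m>1$); hence $\Uch(KA)$ lies among the non-exceptional characters, on which $d$ is injective. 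The unique preimage there of $d(\chi_{\mathrm{exc}})$ is the bipartite $\pm 1$-sum over all non-exceptional vertices, and since $(-1)^{r_G+r_L}R_L^G(\lambda)\in\bbZ\Uch(KA)$ has this image, one reads off simultaneously that $\Uch(KA)$ is the full set of non-exceptional characters, $|\Uch(KA)|=|E|$, and every $\varepsilon_\chi=\pm 1$.
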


\begin{proof}
Let $A'$ be the block of $\OC L$ corresponding to $A$. This is
a unipotent block with defect group $D$.
By Proposition \ref{pr:centre}, we have $Q\le Z^\circ(\bL)\not=1$, hence
$\bL$ is a Levi subgroup of $\bG$, since it is the centralizer
of the torus $Z^\circ(\bL)$. Also, 
$D$ is the Sylow $\ell$-subgroup of $Z^\circ(\bL)^F$ and
$L=D\times H$ for some subgroup $H$ of $L$ containing $[\bL,\bL]^F$.

There is a (unique) unipotent irreducible representation $\lambda$ in 
$\Irr(KA')$
and $\Irr(KA')=\{\lambda\otimes\xi\}_{\xi\in\Irr(KD)}$.

Let $\xi\in\Irr(KD)\setminus\{1\}$. The character 
$\chi_\xi=(-1)^{r_G+r_L}R_L^G(\lambda\otimes\xi)$ is 
irreducible and it depends only on $\Ind_D^{D\rtimes E}\xi$ \cite[Theorem 13.25]{DiMi}.
Furthermore, $\chi_\xi=\chi_{\xi'}$ implies $\xi'\in E\cdot\xi$.

Assume that $|E|\neq |D|-1$. There are at least two
$E$-orbits on the set of non-trivial characters of $D$, so the $\chi_\xi$
for $\xi\in(\Irr(KD)\setminus\{1\})/E$ are exceptional
characters. Since $A$ and $A'$ have the same number of exceptional
characters, we have found all exceptional characters of $A$.

Let $\chi_1=(-1)^{r_G+r_L}R_L^G(\lambda)$. We have $d(\chi_1)=d(\chi_\xi)$
for any $\xi\in\Irr(KD)\setminus\{1\}$. There are integers $n_\chi\in\bbZ$
such that $\chi_1=\sum_{\psi\in\Uch(KA)}n_\psi\psi$.
The restriction of the decomposition map to $\bbZ\Uch(KA)$ is injective,
since we have removed exceptional characters (if $|E|\neq |D|-1$, otherwise
one character) from $\Irr(KA)$.
It follows that $\chi_1$ is the unique linear combination of 
unipotent characters of $A$ such that $d(\chi_1)=d(\xi)$ for some
$\xi\in\Irr(D)\setminus\{1\}$. On the other hand,
this unique solution satisfies $n_\psi=\pm 1$ and
the number of unipotent characters in $A'$ is $|E|$.
%
%
%
%
%
\end{proof}

\begin{rmk}
Choose a bijection $\Irr(KE)\xrightarrow{\sim}\Uch(KA'),\ \phi\mapsto\chi_\phi$.
Define 
$I:\bbZ\Irr(KD\rtimes E)\xrightarrow{\sim}\bbZ\Irr(KA')$ by
$I(\Ind_D^{D\rtimes E}\xi)=R_L^G(\xi)$ if $\xi\in\Irr(KD)\setminus\{1\}$ and
$I(\phi)=\varepsilon_{\chi_\phi}\chi_\phi$ for $\phi\in\Irr(KE)$.
The proof of Theorem \ref{th:structureblocks} above shows that $I$ is an
isotypy, with local isometries $I_x:\bbZ\Irr(KD)\xrightarrow{\sim}
\bbZ\Irr(KA'),\ \xi\mapsto\lambda\otimes\xi$ for $x\in D\setminus\{1\}$.
\end{rmk}

\subsubsection{Genericity}
\label{se:CaEn}
We assume in \S\ref{se:CaEn} that $F$ is a Frobenius endomorphism.
Let $A$ be a unipotent block of $\OC G$ with a non-trivial
cyclic defect group $D$ and let $\bL=C_\bG^\circ(D)$.

Let $d$ be the order of $q$ modulo $\ell$. Note that $\ell$ divides $\Phi_{e}(q)$ if and only if $e = d\ell^j$ for some $j \geq 0$.

Brou\'e--Michel \cite{BMi1} and Cabanes--Enguehard \cite{CaEn94} showed that under a mild additional assumption
on $\ell$ (for quasisimple groups not of type $A$, $\ell$ good is enough), unipotent characters in $\ell$-blocks with abelian defect groups are
$\Phi_d$-blocks. 
We show below that this results holds for 
$\ell$-blocks with cyclic defect groups without assumptions on $\ell$.
Using the knowledge of generic degrees,
the unipotent $\Phi_d$-blocks with defect $1$ for simple $\bG$ can
be easily determined, using for example \Chevie{} \cite{Mi15}.

\begin{theorem}
With the notations of \S\ref{se:localsub}, we have the following
assertions:
\begin{itemize}
\item $\bL$ is a $\Phi_d$-split Levi subgroup of $\bG$;
\item $D$ has order $|\Phi_d(q)|_\ell$;
\item $\lambda=\bblambda_q$ for a unipotent $\Phi_d$-cuspidal character $\bblambda$ of $\bbL$ and there is a bijection
$\Uch(\bbG,(\bbL,\bblambda))\xrightarrow{\sim}\Uch(KA)$ given by $\bbchi\mapsto \bbchi_q$;
\item the $\Phi_d$-block $\Uch(\bbG,(\bbL,\bblambda))$ has defect $1$;
\item if $\ell$ is a bad prime for $\bG$ or $\ell=3$ and $(\bG,F)$ has type ${^3D}_4$, then we are in one of the cases listed in Table \ref{ta:bad}.
\end{itemize}
\end{theorem}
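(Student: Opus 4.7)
The plan is to leverage the local structure from Proposition \ref{pr:centre} and Theorem \ref{th:structureblocks} to identify $\bL$ as a $\Phi_d$-split Levi subdatum, and then to match the data with generic $\Phi_d$-Harish-Chandra theory; the bad-prime cases require a separate case-by-case analysis.

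I would start from the fact, proved in Proposition \ref{pr:centre}, that $\bL = C_\bG^\circ(D)$ and that $D$ is the Sylow $\ell$-subgroup of the torus $Z^\circ(\bL)^F$. Writing the polynomial order of $(Z^\circ(\bL),F)$ as a product of cyclotomic polynomials evaluated at $q$, and using that for odd $\ell$ one has $\ell \mid \Phi_e(q)$ iff $e \in d\ell^{\bbZ_{\geq 0}}$, the cyclicity of $D$ forces the $\Phi_d$-multiplicity in $Z^\circ(\bL)$ to be exactly $1$ and all higher $\Phi_{d\ell^j}$-multiplicities to vanish. Indeed, multiplicity $\geq 2$ at $\Phi_d$ would give $\ell$-rank at least $2$, while the appearance of $\Phi_{d\ell^j}$ with $j \geq 1$ would contribute an independent cyclic $\ell$-factor. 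This yields $|D| = |\Phi_d(q)|_\ell$, and identifies the $\Phi_d$-isotypic component of the cocharacter lattice of $Z^\circ(\bL)$ with a Sylow $\Phi_d$-torus of $\bG$, which in the language of \S\ref{se:generictheory} says exactly that the Levi subdatum $\bbL$ associated to $\bL$ is $\Phi_d$-split.

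Next, let $\bblambda \in \Uch(\bbL)$ correspond to $\lambda$ under the bijection $\Uch(\bbL) \simto \Uch(L)$, so $\lambda = \bblambda_q$. The identity $R_L^G(\lambda) = \sum_{\chi \in \Uch(KA)} \varepsilon_\chi \chi$ from Theorem \ref{th:structureblocks}, combined with the generic compatibility $(R_\bbL^\bbG(\bblambda))_q = R_L^G(\bblambda_q)$, shows that $\bbchi \mapsto \bbchi_q$ maps $\Uch(\bbG,(\bbL,\bblambda))$ into $\Uch(KA)$; a dimension count using $|\Uch(KA)|=|E|$ and the relative Weyl group of $(\bbL,\bblambda)$ upgrades this to a bijection. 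The $\Phi_d$-cuspidality of $\bblambda$ is then forced by the very construction of $\bL$: a proper $\Phi_d$-split Levi subdatum $\bbM \subsetneq \bbL$ with ${^*R}_\bbM^\bbL(\bblambda) \neq 0$ would provide an $F$-stable Levi subgroup $\bM \subsetneq \bL$ whose connected centre still contains $D$, contradicting $\bL = C_\bG^\circ(D)$. The defect statement then follows mechanically: by $\Phi_d$-cuspidality $\Deg(\bblambda)_{\Phi_d} = |\bbL|_{\Phi_d}$, hence $\Deg(\bbchi)_{\Phi_d} = |\bbG|_{\Phi_d}/\Phi_d(x)$ for $\bbchi \in \Uch(\bbG,(\bbL,\bblambda))$, i.e.\ the $\Phi_d$-block has defect $1$.

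When $\ell$ is good, the arguments above are essentially a streamlining, under the cyclic-defect hypothesis, of the Brou\'e--Michel \cite{BMi1} and Cabanes--Enguehard \cite{CaEn94} theorems, which already give the coincidence between unipotent $\ell$-blocks and $\Phi_d$-blocks. For bad primes, and for the anomalous $\ell = 3$, $(\bG,F)$ of type ${^3D}_4$ case, those results do not apply verbatim, because distinct $\Phi_d$-blocks of unipotent characters can merge into a single $\ell$-block. The plan here is to proceed by case inspection: for each quasisimple type and each bad $\ell$, use the tabulated generic unipotent degrees (through \Chevie{} \cite{Mi15}) together with the explicit $\ell$-local structure of centralizers to determine when a unipotent $\ell$-block with cyclic defect actually occurs, and to read off whether the resulting Brauer-tree data still matches a single $\Phi_d$-block. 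The main obstacle is ensuring that this enumeration is exhaustive and that no further merging phenomena have been missed, so that every exceptional pair $(\bG,F,\ell)$ falls into Table \ref{ta:bad}.
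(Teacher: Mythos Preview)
Your approach differs substantially from the paper's. The paper's proof is essentially a two-line citation: after reducing to $\bG$ simple and simply connected via Proposition \ref{pr:centre}, it invokes \cite{CaEn94} wholesale for good primes (and $\ell \neq 3$ in type ${^3D}_4$), and for the remaining cases appeals to Enguehard's classification \cite[Th\'eor\`eme~A]{En00} of unipotent $\ell$-blocks at bad primes, reading off from the tables in \cite[\S3.2]{En00} which $d$-cuspidal pairs with $\ell$-central defect have cyclic defect groups. No direct structural argument is attempted.

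Your self-contained route is a reasonable ambition, and the argument that $Z^\circ(\bL)$ has $\Phi_d$-rank exactly one is sound. But the $\Phi_d$-cuspidality step has a genuine gap. You claim that a proper $\Phi_d$-split Levi $\bM \subsetneq \bL$ with $D \subset Z^\circ(\bM)$ contradicts $\bL = C_\bG^\circ(D)$; however, $D \subset Z^\circ(\bM)$ only yields $\bM \subset C_\bG^\circ(D) = \bL$, which is \emph{consistent} with $\bM \subsetneq \bL$, not contradictory. What you would need is that $[\bL,\bL]$ has trivial $\Phi_d$-part, so that $\bL$ admits no proper $\Phi_d$-split Levi at all; but Theorem \ref{th:structureblocks} does not assert that $H$ is an $\ell'$-group (only that the relevant block of $L/D$ has defect zero), so this is not available. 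The ``dimension count'' promoting the inclusion to a bijection likewise presupposes $|W_\bbG(\bbL,\bblambda)| = |E|$, which is part of the content of \cite{CaEn94} rather than a formal consequence of what you have set up. Finally, for bad primes your enumeration plan is exactly right in spirit, but the concrete input making it finite and checkable is \cite{En00}; without that reference the case analysis is open-ended, and in particular verifying that no $\Phi_d$-block merging occurs is not something \Chevie{} alone will certify.
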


\begin{proof}
By Proposition \ref{pr:centre}, we can assume that $\bG$ is simple and simply
connected. When $\ell$ is good and different from $\ell=3$ for type
${^3D}_4$, the theorem is \cite{CaEn94}.

Otherwise, the result follows from
\cite[Th\'eor\`eme A]{En00},
by going through the list of $d$-cuspidal pairs with
$\ell$-central defect and checking if the defect groups given in
\cite[\S3.2]{En00} are cyclic.
We list the unipotent blocks with cyclic defect for $\ell$ bad in Table \ref{ta:bad}, following
\cite[\S 3.2]{En00}. Note that in \cite[p.358, No 29]{En00}, 
`$E_7[\pm\xi]$' should be replaced by `$\phi_{512,11},\phi_{512,12}$', as in \cite[Table 1, Cases 42, 43]{BMM}.
\end{proof}

\begin{table}
$$\begin{array}{|c|c|c|c|}
\cline{1-4} 
(\bG,F) & \ell & d & (\bL,\lambda) \\
\cline{1-4} 
E_6(q)         & 3   & 2 & (A_5(q)\cdot(q-1),\phi_{321}) \\
{^2E}_6(q)     & 3   & 1 & ({^2A}_5(q)\cdot (q+1),\phi_{321}) \\
E_8(q)         & 3,5 & 1 & (E_7(q)\cdot(q-1),E_7[\pm\II]) \\
E_8(q)         & 3,5 & 2 & (E_7(q)\cdot(q+1),\phi_{512,11}\text{ or }
\phi_{512,12}) \\
\cline{1-4} 
\end{array}$$
\caption{Unipotent blocks with cyclic defect for $\ell$ bad}
\label{ta:bad}
\end{table}

Brou\'e \cite{Brperfect} conjectured that there is a parabolic subgroup $\bP$
with an $F$-stable Levi complement $\bL$ such that
$b\Rgc(\rY_\bG(\bL\subset\bP),\OC)$
induces a derived equivalence between $A$ and the corresponding
block of $\OC N_G(D,b_D)$.
In \cite{ChRou}, it is conjectured that such an equivalence should
be perverse. It is further shown there how the Brauer tree of $A$ could then
be combinatorially constructed from the perversity function.
The perversity function can be encoded in the data of 
a function $\pi:\Uch(KA)\to\bbZ$ that describes the (conjecturally)
unique $i$ such that $V\in\Uch(KA)$ occurs in
$\Hc^i(\rY_\bG(\bL\subset\bP),K)$.

In \cite{Cra3}, the first author gave a conjectural description $\gamma$
of the function $\pi$, depending  on $\Phi_d$ and not on $\ell$
(this is defined for $\Phi_d$-blocks
with arbitrary defect). Using this function, and the combinatorial
procedure to recover a Brauer tree from a perversity function, \cite{Cra3}
associates a {\em generic Brauer tree}
to a $\Phi_d$-block of defect $1$.
This is a planar-embedded tree, together with an exceptional vertex (but no multiplicity) and
the non-exceptional vertices are parametrized by the unipotent characters
in the given $\Phi_d$-block.
The Brauer tree of $A$ is conjectured in \cite{Cra3} to be
obtained from the generic Brauer tree,
by associating the appropriate multiplicity if it is greater than $1$, and turning the
exceptional vertex into a non-exceptional one if the multiplicity is $1$. The trees we construct in this paper in \S\ref{setrees} match the generic trees constructed in \cite{Cra3}, and hence we prove the following theorem.

\begin{theorem}
\label{th:maintheorem}
Let $A$ be a unipotent $\ell$-block with cyclic defect of
$G$. Then the unipotent characters of $KA$ form a unipotent $\Phi_d$-block
and the Brauer tree of $A$ is obtained from the generic
Brauer tree of that $\Phi_d$-block.
\end{theorem}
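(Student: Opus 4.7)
The first assertion already follows from the preceding genericity theorem, so the plan focuses on identifying the Brauer tree of $A$ with the generic Brauer tree $\gamma$ constructed from Craven's conjectural perversity function. The first reduction I would carry out uses Proposition \ref{pr:centre}: the block $A$ is either nilpotent (so $e=1$, and there is nothing to prove beyond matching the trivial tree) or it is Morita equivalent to a unipotent block of a simple factor of $\bG/Z(\bG)$. So I may assume $\bG$ is simple and simply connected. A further reduction via folding (\S\ref{se:folding}) allows one to pass between $\bG^F$ and the associated simple group $H$, so the essential task is to determine, for each $(\bG,F)$ of simple simply connected type and each cyclic-defect unipotent $\Phi_d$-block, the planar-embedded Brauer tree with its parametrization by $\Uch(\bbG,(\bbL,\bblambda))$.

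The next step is a case division by type. For classical groups (types $A$, $B$, $C$, $D$ in their various twisted forms) and for small-rank exceptional groups, the trees and their parametrizations are already known from the literature cited in the introduction (Mueller, Hiss--L\"ubeck, and related work), and it suffices to verify in each case that the tree obtained agrees with the one produced from $\gamma$ by the algorithm of \cite{ChRou}. For the remaining quasisimple types, namely $E_7(q)$, $E_8(q)$ and the twisted versions, a first approximation of the tree is obtained as follows. Theorem \ref{th:structureblocks} identifies the unipotent and non-unipotent vertices and the cyclic ordering around the exceptional vertex (via the Morita/isotypy with $\OC(D\rtimes E)$). Each Harish-Chandra series occurring in $A$ contributes a subtree which is a line with endpoints one cuspidal module and one unipotent vertex; computing the decomposition numbers of the Harish-Chandra series and comparing degrees gives several of the internal edges. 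This yields a disjoint union of lines, the ``known subtrees''.

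The central technical step is to connect these lines by cuspidal edges and to pin down the planar embedding. For this I would apply the Coxeter Deligne--Lusztig machinery developed in \S\ref{sec:compact} and \S3.4. Fix a Coxeter element $c$ with $\ell\nmid|\bT^{cF}|$. By Corollary \ref{cor:middledegree} and Lemma \ref{lem:onedegree}, a direct summand of $\tRgc(\rX(c),\OC)$ whose rational cohomology is concentrated in middle degree and whose torsion is cuspidal has cohomology concentrated in a single degree and projective there; combined with Proposition \ref{pr:torsionCoxeter} and Propositions \ref{xbarxbar}--\ref{xbartorsion} (used to control torsion on $\overline{\rX}(w)$), this supplies projective covers $P_M$ for the cuspidal simple modules $M$ appearing in $A$. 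The Brauer characters of these $P_M$ are exactly the missing cuspidal edges. To recover the cyclic ordering of edges around each vertex, I would decompose $\Rgc(\rX(c),k)$ by generalized Frobenius eigenspaces using Lemma \ref{le:shiftFrob}, and read off the required $\Ext^1$-spaces between simple modules from the induced triangles in $kG\mstab$; Lemma \ref{le:vanishingonminimal} lets one transfer this eigenspace information between $\rX(c)$ and compactifications $\overline{\rX}(w)$ when convenient, and Proposition \ref{gelfandgraevxbar} together with Proposition \ref{pr:solomontits} locates the edge adjacent to the Steinberg vertex.

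The hard part, as flagged in the introduction, is when these numerical and geometric arguments still leave genuine ambiguity: for $E_8$ at $d=18$ the two cuspidal characters $E_8[\theta]$ and $E_8[\theta^2]$ are algebraically indistinguishable and must be separated by a careful analysis of which Frobenius eigenspace on $\Rgc(\rX(c),k)$ produces which edge, and for $E_8$ at $d=20$ the cyclic ordering of the cuspidal edges around the Steinberg vertex can only be fixed via the ``two non-zero rational cohomology groups'' technology (Lemma \ref{le:twononzerocoh}) applied to suitably chosen perfect complexes. Once those two ambiguities are resolved, comparing the resulting tree with the output of the algorithm of \cite{ChRou} applied to the function $\gamma$ of \cite{Cra3} completes the proof; the rest of the paper carries out this program type by type, with the tables of trees collected in the appendix.
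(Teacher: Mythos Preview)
Your sketch is essentially the paper's own strategy: reduce to simple simply connected groups via Proposition~\ref{pr:centre}, invoke the literature for classical and small exceptional types, build the Harish-Chandra branches as a first approximation, and then use the Coxeter Deligne--Lusztig machinery (Theorem~\ref{thm:coxeter}, Lemma~\ref{le:twononzerocoh}, Propositions~\ref{xbarxbar}--\ref{xbartorsion}) to attach the cuspidal edges and fix the planar embedding, with the $E_8$ cases $d=18,20$ requiring the most delicate analysis. A few attributions are slightly off---Theorem~\ref{th:structureblocks} does \emph{not} give the cyclic ordering around the non-unipotent vertex (that is part of what must be determined), and the Harish-Chandra branches are lines whose \emph{vertices} are unipotent characters in a fixed series, not lines ending at ``one cuspidal module''---but these are cosmetic and the overall plan matches the paper.
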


Since the trees constructed in \S\ref{setrees} match the conjectural
trees in \cite{Cra3} that would result from a perverse equivalence between
$D\rtimes E$ and $A$, we get the following corollary.

\begin{cor}
There is a perverse derived equivalence between $A$ and $D\rtimes E$
with perversity function $\gamma$.
\end{cor}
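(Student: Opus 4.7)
The plan is to deduce the corollary directly from Theorem \ref{th:maintheorem} together with the general framework of \cite{ChRou} relating Brauer trees and perverse equivalences. First, I would recall that, for a block with cyclic defect groups, the Brauer correspondent block in $\OC N_G(D,b_D)$ is Morita equivalent to $\OC(D\rtimes E)$. Consequently, constructing a perverse derived equivalence between $A$ and $\OC(D\rtimes E)$ with perversity function $\gamma$ is the same as constructing one between $A$ and its Brauer correspondent.

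Next, I would invoke the combinatorial machinery of \cite{ChRou}: given any function $\pi:\Uch(KA)\to\bbZ$ whose values are compatible with the general constraints imposed by a perverse equivalence, the algorithm of \cite{ChRou} produces a planar-embedded Brauer tree $T(\pi)$, and conversely, any perverse derived equivalence between a cyclic-defect block with Brauer tree $T$ and its Brauer correspondent determines such a function $\pi$ so that $T=T(\pi)$. Moreover, for any admissible $\pi$ there exists a perverse equivalence realizing $T(\pi)$ (this is part of the existence/uniqueness statement for perverse equivalences in the cyclic defect setting).

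With these two ingredients in place, the corollary is immediate: Theorem \ref{th:maintheorem} asserts precisely that the Brauer tree of $A$ coincides with the tree $T(\gamma)$ produced from the perversity function $\gamma$ defined in \cite{Cra3}. Hence the perverse equivalence constructed by the \cite{ChRou} algorithm from the datum $\gamma$ is a perverse derived equivalence between $A$ and $\OC(D\rtimes E)$ with perversity function $\gamma$, as claimed.

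The only subtlety here, and what would require care in a fully detailed write-up, is to verify that $\gamma$ satisfies the admissibility conditions of \cite{ChRou} (so that the algorithm applies and produces a well-defined perverse equivalence). However, this admissibility is built into the very definition of $\gamma$ in \cite{Cra3}, since $\gamma$ was designed so that applying the \cite{ChRou} algorithm yields the generic Brauer tree; so there is no real obstacle once Theorem \ref{th:maintheorem} is in hand.
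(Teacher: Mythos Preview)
Your proposal is correct and matches the paper's approach: the corollary is stated in the paper without an explicit proof, being an immediate consequence of Theorem~\ref{th:maintheorem} together with the machinery of \cite{ChRou} (which constructs a perverse equivalence from an admissible perversity function and shows that the resulting Brauer tree is the one produced by the combinatorial algorithm) and the definition of $\gamma$ in \cite{Cra3}. Your outline spells out precisely this deduction.
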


\subsubsection{Determination of the trees}

Let us now discuss the known Brauer trees.
The Brauer trees for classical groups were determined by Fong and
Srinivasan \cite{FS1,FS2}. The Brauer trees for the following exceptional
groups are known:  Burkhart \cite{Bur} for ${}^2B_2$,
Shamash \cite{Sh1} for $G_2$, Geck \cite{Ge} for ${}^3D_4$,
Hiss \cite{Hi90} for ${}^2G_2$ and ${}^2 F_4$,
Hiss--L\"ubeck \cite{HL} for $F_4$ and ${}^2E_6$ (building on earlier
work on $F_4$ by Wings \cite{Wi}) and Hiss--L\"ubeck--Malle \cite{HLM} 
for $E_6$.

More recently, the second and third authors determined in \cite{DR} the
Brauer trees of the principal $\Phi_h$-block of $E_7$ and $E_8$ for $h$
the Coxeter number, using new geometric methods which are also at the heart of
this paper. Also, the first author determined in \cite{Cra3} the
Brauer trees of several unipotent blocks with cyclic defect of 
$E_7$ and $E_8$.

We determine the remaining unknown trees. They
correspond to certain unipotent blocks of
${}^2E_6$ (cf.\ Remark \ref{re:2E6}), $E_7$ (\S \ref{sec:E7}) and $E_8$
(\S \ref{sec:E8}). We list in Table \ref{tab:unknown} the group $G$, the order
$d$ of $q$ modulo $\ell$ and the $d$-cuspidal pair (when the block is not
principal) associated to each of these blocks. We also indicate the type of
the minimal proper standard $F$-stable Levi subgroups $\bL_I$
with $\ell\mid |L_I|$.

\begin{table}
$$\begin{array}{|c|c|c|c|} \hline
\vphantom{\Big(} G & d & ([\bL,\bL],\lambda) & \ell \mid |L_I| \\\hline
{}^2 E_6 \vphantom{\Big(} & 12 &  & \\\hline
E_7 \vphantom{\big(^i}& 9 & & E_6 \\
\vphantom{\big(}& 10 & ({}^2A_2(q),\phi_{21}) & D_6\\
\vphantom{\big(_i}& 14 & & \\\hline
\vphantom{\big(^i}E_8 & 9 & (A_2(q), \phi_3) & E_6 \\
 &  & (A_2(q), \phi_{21}) &E_6 \\
 &  & (A_2(q), \phi_{1^3}) &E_6 \\
\vphantom{\big(} & 12 & ({}^3D_4(q),{}^3D_4[1]) & E_6, D_7\\
\vphantom{\big(} & 15 & &\\
\vphantom{\big(} & 18 & ({}^2 A_2(q),\phi_{21}) & E_7 \\
\vphantom{\big(} & 20 & & \\
\vphantom{\big(_i} & 24 & & \\\hline \end{array}$$
\caption{Blocks with unknown Brauer tree}
\label{tab:unknown}
\end{table}


\smallskip
Let us note that the Brauer trees of other blocks of exceptional groups were determined up to choices of fields of character values in each block. Using Lusztig's parametrization of unipotent characters we can remove this ambiguity by choosing appropriate
roots of unity in $\overline{\mathbb{Q}}_\ell$ with respect to $q$. 

\begin{cor}\label{corallfinitegroups} 
Let $G$ be a finite group with cyclic Sylow $\ell$-subgroups.
If $\ell\neq 29,41,47,59,71$, then the (unparametrized)
Brauer tree of the principal $\ell$-block of $G$ is known.
\end{cor}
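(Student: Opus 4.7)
The plan is to reduce to quasisimple groups via Feit's unfolding theorem, and then to work through the families of quasisimple groups given by the classification, applying Theorem \ref{th:maintheorem} to handle the groups of Lie type, which is the only family where new information is being contributed.

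First I would invoke Feit's theorem \cite[Theorem 1.1]{Fe2}: up to unfolding, the Brauer trees of arbitrary finite groups coincide with those of quasisimple groups. Since folding and unfolding are explicit combinatorial operations on planar trees (\S\ref{se:folding}), it suffices to determine the principal-block Brauer trees for every quasisimple group $H$ having cyclic Sylow $\ell$-subgroups. I would reduce further to simple groups by observing that if $Z(H)$ has order prime to $\ell$, then the principal block of $H$ and of $H/Z(H)$ have the same Brauer tree, while if $\ell\mid |Z(H)|$, the cyclicity forces $H$ to be very restricted (a cyclic central extension) and the tree is either an edge or obtained from the quotient by folding.

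Next I would go through the classification. For cyclic $H$ the tree is a single edge. For alternating groups and their double covers, Müller's work \cite{Mul} gives all Brauer trees. For the sporadic groups and their covers, \cite{HiLu} and subsequent work determine all Brauer trees except those of the Baby Monster and the Monster; the primes for which these two groups have cyclic (and non-trivial) Sylow subgroup are exactly $29, 41, 47, 59, 71$, which explains the list of excluded primes in the statement. For groups of Lie type $G(q)$ with $\ell\mid q$, the only way to get cyclic defect in the principal block is $G/Z(G)\cong\mathrm{PSL}_2(\ell)$, whose tree is a line (as recalled in the introduction).

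The remaining case is the principal $\ell$-block of a quasisimple group of Lie type $G(q)$ with $\ell\nmid q$. Here the principal block is automatically unipotent, so Theorem \ref{th:maintheorem} determines its planar-embedded Brauer tree together with the Lusztig labels of its vertices. Forgetting the parametrization yields the unparametrized Brauer tree. Combining all of the above via unfolding finishes the proof. The only real obstacle is bookkeeping: verifying that the primes for which the Baby Monster and the Monster have non-cyclic Sylow subgroups have all already been treated elsewhere, so that the list $\{29,41,47,59,71\}$ is exactly the set of exceptional primes, and ensuring that the reductions through central quotients and folding do not lose information about the tree.
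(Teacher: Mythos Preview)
Your overall strategy is correct and closely parallels the paper's, but the reduction step is organized differently. The paper does not invoke Feit's unfolding theorem. Instead it reduces directly: replace $G$ by $G/O_{\ell'}(G)$; if the Sylow $\ell$-subgroup is normal the tree is a star; otherwise, by the classification (via \cite[\S5]{FoHa}), $G$ has a normal simple subgroup $H$ with $G/H$ an $\ell'$-subgroup of $\mathrm{Out}(H)$, so the principal-block tree of $G$ is obtained from that of $H$ by the explicit folding of \S\ref{se:folding}. This is more direct than routing through Feit's theorem, which concerns arbitrary blocks and only tells you that your tree unfolds from \emph{some} block of \emph{some} quasisimple group; to actually ``know'' the tree of a given $G$ you still need the structural identification that \cite{FoHa} provides. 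Your approach would ultimately rely on the same structural analysis hidden inside Feit's proof, so the paper's route is both shorter and more transparent for principal blocks.

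One factual slip: the excluded primes $29,41,47,59,71$ are \emph{not} characterized as the primes for which $B$ or $M$ have cyclic non-trivial Sylow $\ell$-subgroups. The Monster, for instance, also has cyclic Sylow subgroups for $\ell=17,19,23,31$, but those principal-block trees are known (cf.\ \cite{HiLu,CHLM}). The excluded primes are precisely those for which the principal-block trees of $B$ or $M$ remain undetermined. This does not affect the validity of your argument, but the bookkeeping you flag at the end would need to be done more carefully.
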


\begin{proof}
Let $G$ be a finite group with a non-trivial cyclic Sylow $\ell$-subgroup.
Since the principal block of $G$ is isomorphic to that of $G/O_{\ell'}(G)$,
we can assume that $O_{\ell'}(G)=1$. If $G$ has a normal Sylow
$\ell$-subgroup, then the Brauer tree is a star. So, we assume $G$ does not
have a normal Sylow $\ell$-subgroup.
It follows from the classification of finite simple groups \cite[\S 5]{FoHa} that $G$ has a normal simple
subgroup $H$ with $G/H$ an $\ell'$-subgroup of $\mathrm{Out}(H)$.

If $H$ is an alternating group, the Brauer trees are foldings of those of symmetric groups, which are lines as all characters are real. If $H$ is a sporadic group, then the Brauer tree of the principal block of $H$ is known under the assumptions of $\ell$, \cite{HiLu,CHLM}.

Assume now $H$ is a finite group of Lie type. If $\ell$ is the defining
characteristic, then $H=\mathrm{PSL}_2(\bbF_\ell)$ and the Brauer tree of the
principal block is well known. Otherwise, the Brauer tree is
known by Theorem \ref{th:maintheorem}.
\end{proof}

\subsection{Properties of the trees}
\label{se:properties}
We assume here that $\bG$ is simple and we denote by $A$ a unipotent block
with cyclic defect group $D$ of $\OC G$. Let $E=N_G(D,b_D)/C_G(D)$,
where $(D,b_D)$ is a maximal $A$-subpair. We assume $|E|>1$.
We denote by $T$ the Brauer tree of $A$. Recall
(Theorem \ref{th:structureblocks}) that its $|E|$ unipotent
vertices are non-exceptional.
We define the {\em non-unipotent vertex} of $T$ to be the one corresponding
to the sum of the non-unipotent characters in $KA$. It is exceptional
if $|E|\neq |D|-1$.

\subsubsection{Harish-Chandra branches}

Let $I$ be an $F$-stable subset of $S$ and $X$ be a cuspidal simple
unipotent $KL_I$-module with central $\ell$-defect, i.e.,
such that $(\dim X)_\ell=[L_I:Z(L_I)]_\ell$.
Since the centre acts trivially on simple unipotent modules, the 
$\ell$-block $b_I$ of $L_I$ containing $X$ has central defect group,
and $X$ is the unique unipotent simple module in $b_I$.
This yields the following three facts.
\begin{itemize}
 \item[(a)] There exists a unique  (up to isomorphism) $\OC L_I$-lattice
$\widetilde X$ such that $X\simeq K\widetilde X$. The $kL_I$-module 
$k\widetilde X$ is irreducible.
\item[(b)] $X$ is the unique unipotent module that lifts $k\widetilde X$.  In particular $N_G(L_I,X) = N_G(L_I,k\widetilde X)$.
 \item[(c)] If $P$ is a projective cover of $\widetilde X$, then 
 $K\mathrm{ker}(P\twoheadrightarrow\widetilde X)$ has only non-unipotent
 constituents, therefore $R_{L_I}^G(X)$ and $K\mathrm{ker}(R_{L_I}^G(P)
 \twoheadrightarrow R_{L_I}^G(\widetilde X))$ have no irreducible constituents
 in common.
\end{itemize}
Under the properties (a) and (b), Geck showed in \cite[2.6.9]{Ge90} that
the endomorphism algebra $\mathrm{End}_{\OC G}(R_{L_I}^G(\widetilde X))$
is reduction-stable, i.e. 
$$ k\mathrm{End}_{\OC G}(R_{L_I}^G(\widetilde X)) \simeq 
\mathrm{End}_{kG}(R_{L_I}^G(k\widetilde X)).$$
Property (c) was used by Dipper (see \cite[4.10]{Di90}) to show that 
the decomposition matrix of $\mathrm{End}_{\OC G}(R_{L_I}^G(\widetilde X))$
embeds in the decomposition matrix of $b$. 

\smallskip

It follows from \cite{Ge92}
that the full subgraph of $T$ whose vertices are in the Harish-Chandra series
defined by $(L_I,X)$ is a union of lines. Note that \cite{Ge92} proves
a corresponding result for blocks of Hecke algebras at roots of unity,
in characteristic $0$. The fact that the tree does not change when reducing
modulo $\ell$ follows from the following two facts:
\begin{itemize}
\item a symmetric algebra over
a discrete valuation ring that is an (indecomposable) Brauer tree algebra
over the field of fractions and over the residue field has the same
Brauer tree over those two fields;
\item the blocks of the Hecke algebra
$\mathrm{End}_{\OC G}(R_{L_I}^G(\widetilde X))$
correspond to blocks of the Hecke algebra in characteristic
$0$ for a suitable specialization at roots of unity.
\end{itemize}

Each such line in $T$ is called a Harish-Chandra branch.
In particular, the {\em principal series part} of $T$ is the full subgraph
whose vertices are in the Harish-Chandra series of the trivial representation
of a quasi-split torus.

%
%
%
%
%
%
%
%
%

\begin{prop}
\label{pr:cuspedges}
Let $N$ be an edge of $T$ and let $V_1$ and $V_2$ be its vertices.
Let $I$ be a minimal $F$-stable subset
of $S$ such that ${^*R}_{L_I}^G(N)\not=0$.

If $\ell\nmid |L_I|$, then given $i\in\{1,2\}$, the $F$-stable subset
$I$ is also minimal with respect to the
property that ${^*R}_{L_I}^G(V_i)\not=0$.
\end{prop}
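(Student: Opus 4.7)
The plan is to show that the characters corresponding to $V_1$ and $V_2$ all lie in the Harish-Chandra series with cuspidal pair $(L_I, \widehat{X})$, where $X$ is a cuspidal simple $kL_I$-module in the cuspidal support of $N$ and $\widehat{X} = [K\widetilde X]$ is the cuspidal $KL_I$-character of the unique lift $\widetilde X$ of $X$ to an $\OC L_I$-lattice (the lift exists because $\ell\nmid|L_I|$ forces $kL_I$ to be semisimple). Once membership in this common series is established, the minimality claim for both vertices follows from a rank obstruction on Levi subgroups together with Frobenius reciprocity.

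First, I would analyse ${^*R}_{L_I}^G(P_N)$, where $P_N$ is the $\OC G$-projective cover of $N$. The functor ${^*R}_{L_I}^G$ is exact and biadjoint to the exact functor $R_{L_I}^G$, so it preserves projectives; hence ${^*R}_{L_I}^G(P_N)$ is a projective $\OC L_I$-lattice, and its reduction mod~$\ell$ is a direct sum of simple $kL_I$-modules. For any simple summand $Y$, Frobenius reciprocity yields $N\in\mathrm{head}(R_{L_I}^G(Y))$. Using the transitivity ${^*R}_{L_J}^{L_I}\circ {^*R}_{L_I}^G = {^*R}_{L_J}^G$ for $J\subseteq I$ and the uniqueness up to $G$-conjugacy of the cuspidal support of $N$, the module $Y$ must itself be cuspidal in $kL_I$ with $(L_I,Y)$ a $G$-conjugate of $(L_I,X)$.

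Next, I would show that $P_N$ is a direct summand of a finite direct sum of copies of $R_{L_I}^G(\widetilde X)$. Since $X$ appears in ${^*R}_{L_I}^G(kP_N)$, the lift $\widetilde X$ is a direct summand of ${^*R}_{L_I}^G(P_N)$, and Frobenius reciprocity provides a non-zero map $P_N \to R_{L_I}^G(\widetilde X)$. The module $R_{L_I}^G(\widetilde X)$ is projective over $\OC G$ (as $R_{L_I}^G$ preserves projectives and $\widetilde X$ is projective over the semisimple-over-$k$ algebra $\OC L_I$), so the indecomposable $P_N$ is a direct summand. Extending scalars to $K$, every irreducible constituent of $[KP_N] = \chi_{V_1} + \chi_{V_2}$ lies in the $(L_I,\widehat X)$-Harish-Chandra series; here $\chi_{V_i}$ denotes an irreducible character when $V_i$ is non-exceptional, and the sum of all exceptional characters otherwise.

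Finally, the minimality claim follows. For any irreducible constituent $\chi$ of $\chi_{V_i}$, the cuspidal support of $\chi$ is $G$-conjugate to $(L_I,\widehat X)$. Given $J\subsetneq I$, the $F$-semisimple rank of $L_J$ is strictly smaller than that of $L_I$, so no $G$-conjugate of $L_I$ lies in $L_J$; the Mackey formula, combined with cuspidality of $\widehat X$ in each proper intersection ${}^g L_I \cap L_J \subsetneq {}^g L_I$, then yields ${^*R}_{L_J}^G(\chi) = 0$, whence ${^*R}_{L_J}^G(V_i)=0$ upon summing over constituents. The non-vanishing ${^*R}_{L_I}^G(V_i)\neq 0$ follows from membership in the $(L_I,\widehat X)$-series via Frobenius reciprocity. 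The main delicate point is the uniform handling of the exceptional vertex: whenever it is one of $V_1, V_2$, the \emph{full} sum of exceptional characters must be a constituent of $[KP_N]$, so all of them lie in the single Harish-Chandra series identified above.
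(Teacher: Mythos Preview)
Your overall strategy matches the paper's: exhibit $P_N$ as a direct summand of $R_{L_I}^G(\widetilde X)$ for a cuspidal $\OC L_I$-lattice $\widetilde X$, so that the constituents $V_1,V_2$ of $KP_N$ lie in the Harish-Chandra series of $(L_I,K\widetilde X)$, whence the minimality claim follows by standard Harish-Chandra theory. However, your justification of the summand step has a genuine gap.

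You write: ``Frobenius reciprocity provides a non-zero map $P_N \to R_{L_I}^G(\widetilde X)$. The module $R_{L_I}^G(\widetilde X)$ is projective over $\OC G$\ldots so the indecomposable $P_N$ is a direct summand.'' This implication is false: a nonzero map from an indecomposable projective to a projective module need not split. For adjacent edges $N,N'$ in the Brauer tree one has $\Hom_A(P_N,P_{N'})\neq 0$, yet $P_N$ is not a summand of $P_{N'}$. What is needed is a \emph{surjection} $R_{L_I}^G(k\widetilde X)\twoheadrightarrow N$; projectivity of $R_{L_I}^G(\widetilde X)$ then forces $P_N$ to split off.

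The fix, which is exactly the paper's argument, is to work with $N$ rather than $P_N$. Since ${^*R}_{L_I}^G(N)\neq 0$ and $kL_I$ is semisimple, choose a simple summand $X$ of ${^*R}_{L_I}^G(N)$; minimality of $I$ together with transitivity forces $X$ to be cuspidal. The adjunction $\Hom_{kL_I}(X,{^*R}_{L_I}^G(N))\simeq\Hom_{kG}(R_{L_I}^G(X),N)$ then gives a nonzero, hence surjective, map $R_{L_I}^G(X)\to N$. Lifting $X$ to $\widetilde X$ and using that $R_{L_I}^G(\widetilde X)$ is projective yields $P_N\mid R_{L_I}^G(\widetilde X)$. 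Your detour through ${^*R}_{L_I}^G(P_N)$ is unnecessary, and the claim there that every simple summand $Y$ of ${^*R}_{L_I}^G(kP_N)$ has $N$ in the head of $R_{L_I}^G(Y)$ is likewise unjustified, since $kP_N$ has composition factors other than $N$ whose cuspidal supports may differ.
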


\begin{proof}
Let $M$ be an $\OC L_I$-lattice such that $KM$ is simple and
$N$ is a quotient of $R_{L_I}^G(M)$. Note that $M$ is projective, hence it
follows by Harish-Chandra theory that $KM$ is cuspidal.
Since $R_{L_I}^G(M)$ is projective,
it follows that $V_1$ and $V_2$ are direct summands of $KR_{L_I}^G(M)$.
The proposition follows by Harish-Chandra theory.
\end{proof}

\begin{cor}
\label{co:HCedges}
Suppose that $\ell\nmid |L_I|$ for all $F$-stable $I\subsetneq S$. 
Then the edges that are not in a Harish-Chandra branch are cuspidal.
\end{cor}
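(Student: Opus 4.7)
The plan is to argue by contraposition: assume $N$ is an edge of $T$ that is not cuspidal, and show that $N$ must sit inside some Harish-Chandra branch. Since $N$ is not cuspidal, there exists an $F$-stable $I \subsetneq S$ with ${}^*R_{L_I}^G(N) \neq 0$; choose such an $I$ of minimal size. By the standing hypothesis, $\ell \nmid |L_I|$, so Proposition \ref{pr:cuspedges} applies and tells us that both vertices $V_1$ and $V_2$ of $N$ satisfy the same minimality: $I$ is minimal among $F$-stable subsets of $S$ such that ${}^*R_{L_I}^G(V_i) \neq 0$ for $i=1,2$.

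Next, I would produce the required cuspidal lift, essentially following the construction in the proof of Proposition \ref{pr:cuspedges}. Pick a simple $kL_I$-summand $Y$ of ${}^*R_{L_I}^G(N)$; since Harish-Chandra restriction preserves unipotency, $Y$ is unipotent, and by the minimality of $I$ it is cuspidal. Because $\ell \nmid |L_I|$, the algebra $kL_I$ is semisimple, so $Y$ is projective and lifts uniquely to a projective $\OC L_I$-lattice $M$ with $KM$ a simple cuspidal unipotent $KL_I$-module. The condition $\ell \nmid |L_I|$ also forces $(\dim KM)_\ell = 1 = [L_I : Z(L_I)]_\ell$, so $KM$ has central $\ell$-defect; hence $(L_I, KM)$ defines a Harish-Chandra branch in the sense of the preceding discussion.

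To finish, since $M$ is projective, the Harish-Chandra induction $R_{L_I}^G(M)$ is a projective $\OC G$-module of which $N$ is a quotient; so the projective cover $P_N$ is a direct summand of $R_{L_I}^G(M)$, and the character $V_1 + V_2 = [K P_N]$ is a summand of $R_{L_I}^G(KM)$. Both $V_1$ and $V_2$ therefore lie in the Harish-Chandra series attached to $(L_I, KM)$, and the edge $N$ itself lies in the corresponding Harish-Chandra branch, contradicting the assumption. There is no serious obstacle here: the argument is a direct combination of Proposition \ref{pr:cuspedges} with the automatic central-$\ell$-defect property of cuspidal unipotent $KL_I$-modules when $\ell \nmid |L_I|$. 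The only point requiring a bit of care is to notice that both endpoints of $N$ land in the \emph{same} Harish-Chandra series, which is what Proposition \ref{pr:cuspedges} secures via the common minimal $I$.
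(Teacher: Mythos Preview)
Your proof is correct and is essentially the argument the paper intends: the corollary is stated immediately after Proposition~\ref{pr:cuspedges} with no separate proof, and your write-up simply unpacks the implicit deduction, namely that the cuspidal unipotent $KL_I$-module $KM$ produced in the proof of Proposition~\ref{pr:cuspedges} automatically has central $\ell$-defect when $\ell\nmid |L_I|$, so $(L_I,KM)$ defines a Harish-Chandra branch containing both endpoints of $N$. One small point you leave implicit but which is harmless: neither endpoint can be the non-unipotent vertex, since $R_{L_I}^G(KM)$ has only unipotent constituents.
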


The following result is a weak form of \cite[Theorem 3.5]{Hi}.

\begin{prop}
If $\St$ is a vertex of $T$, then the edge corresponding to $\St_\ell$
connects $\St$ and the non-unipotent vertex.
\end{prop}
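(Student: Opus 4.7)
The plan is to identify both endpoints of the edge of $T$ labelled by $\St_\ell$ by computing the character of its projective cover $P_{\St_\ell}$ explicitly. The main tool is the Gelfand--Graev module $\Gamma_\psi$ introduced in \S\ref{sect:ggraev}: by \cite[Theorem 3.2]{Hi} its projection onto the sum of unipotent blocks is an indecomposable projective module with unique simple quotient $\St_\ell$, and the unipotent part of $K\Gamma_\psi$ is the ordinary Steinberg character $\St$, appearing with multiplicity one.

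First I would observe that the hypothesis $\St\in\Irr(KA)$ forces $\St_\ell$ to lie in $A$: since $\St$ is the only unipotent constituent of $K\Gamma_\psi$, the projection $e_A\Gamma_\psi$ is non-zero and hence, by indecomposability, coincides with $P_{\St_\ell}$. In particular the edge labelled $\St_\ell$ exists in $T$. Next I would invoke the Brauer tree formula recalled in \S\ref{se:Brauertreesblocks}: the character $[KP_{\St_\ell}]=[Ke_A\Gamma_\psi]$ equals $\chi_1+\chi_2$, where $\chi_1,\chi_2$ are the characters attached to the two vertices of the edge $\St_\ell$ (a non-exceptional vertex contributing its irreducible character, the exceptional vertex contributing $\chi_x=\sum_{\mathrm{exceptional}}\chi$). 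Taking the unipotent part of this identity, and using Theorem \ref{th:structureblocks}, which asserts that every unipotent character of $A$ is non-exceptional, forces one of the $\chi_i$, say $\chi_1$, to equal $\St$, and hence $\chi_2$ to be a non-zero sum of non-unipotent characters of $A$.

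To conclude I would identify $\chi_2$ with the non-unipotent vertex through a short case analysis using Theorem \ref{th:structureblocks}. If $|E|=|D|-1$, there is a unique non-unipotent character of $A$; it is non-exceptional and is by definition the non-unipotent vertex, so $\chi_2$ must equal it. If $|E|\neq|D|-1$, the non-unipotent characters of $A$ coincide with the exceptional characters, so the non-unipotent vertex is the exceptional vertex; since $\chi_2$ is non-unipotent it cannot be a non-exceptional vertex character (which would be unipotent) and must therefore equal $\chi_x$. I do not anticipate any real obstacle here: the argument is essentially bookkeeping once Hiss's theorem and Theorem \ref{th:structureblocks} are on the table, the only delicate point being the clean separation of $[KP_{\St_\ell}]$ into its unipotent and non-unipotent parts, which is immediate from the structural description of $K\Gamma_\psi$.
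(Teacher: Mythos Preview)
Your proposal is correct and follows essentially the same approach as the paper: the paper's proof is the two-sentence observation that $b\Gamma_\psi$ is the projective cover of $\St_\ell$ and that $\St$ is its unique unipotent constituent over $K$, leaving the remaining bookkeeping (which you spell out via Theorem~\ref{th:structureblocks} and the case split on $|E|=|D|-1$) to the reader. Your version is simply a more detailed unpacking of the same argument.
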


\begin{proof}
Recall that $b\Gamma_\psi$ is the projective cover of $\St_\ell$. Since
$\St$ is the unique unipotent component of $K\Gamma_\psi$,
the proposition follows.
\end{proof}

\begin{prop}
\label{pr:principalline}
Assume that $A$ is the principal block and
 $\ell\nmid |L_I|$ for any $F$-stable $I\subsetneq S$. Let $L$ be the
full subgraph of $T$ whose vertices are at distance at most $r$ from $1$.
Then $L$ is a line whose leaves are $1$ and $\St$.
\end{prop}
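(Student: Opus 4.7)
The plan is to prove three facts: (i) the vertex $1$ is a leaf of $T$, adjacent only to the edge labelled by the trivial simple $k$; (ii) for any cuspidal simple $kG$-module $X$ in $kA$ and any $0\le i\le r-1$, $\Ext^i_{kG}(k,X)=0$; and (iii) the distance $d(1,\St)$ in $T$ equals $r$. From (i) and (ii), any vertex in $L$ is forced onto the principal Harish-Chandra branch $L_{\mathrm{HC}}$; combined with (iii), $L$ is exactly the length-$r$ subpath of $L_{\mathrm{HC}}$ from $1$ to $\St$, a line with the claimed leaves.

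For (i), the trivial character $1$ appears in $[KP_X]$ iff $\Hom_{\OC G}(P_X,\OC)\neq 0$, and by projectivity of $P_X$ the rank of this $\Hom$ equals $\dim\Hom_{kG}(kX,k)$, which vanishes unless $X\cong k$. For (ii), I reduce the Solomon--Tits exact sequence of Proposition \ref{pr:solomontits} modulo $\ell$ to obtain
\[ 0\to \Omega^r(k)\to kM^0\to kM^1\to\cdots\to kM^{r-1}\to k\to 0, \]
where each $kM^j$ ($j<r$) is a direct sum of Harish-Chandra induced modules $R_{L_I}^G(k)$ for proper $F$-stable $I\subsetneq S$, and is therefore projective under the hypothesis $\ell\nmid|L_I|$. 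Computing $\Ext^i(k,X)$ through this partial resolution and applying Frobenius reciprocity, each term becomes a sum of $\Hom_{kL_I}(k,{}^*R_{L_I}^G(X))$, vanishing because $X$ is cuspidal and $I$ is proper.

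For (iii), I argue both inequalities. The upper bound follows from Proposition \ref{pr:solomontits}: $\Omega^r(\OC)$ is an $\OC A$-lattice with character $\St$, and the classical Green walk on indecomposable lattices of the Brauer tree block $\OC A$ moves the associated tree-vertex by at most one edge per Heller shift, so $\St$ is reachable from $1$ in at most $r$ edge-steps and $d(1,\St)\le r$. For the lower bound, $\St_\ell$ is cuspidal by the hypothesis $\ell\nmid|L_I|$ for proper $I$ (see \S\ref{sect:ggraev}), and the edge $\St_\ell$ in $T$ joins $\St$ to the non-unipotent (exceptional) vertex. By (ii) applied to $X=\St_\ell$, the edge-hop distance in $T$ from $k$ to $\St_\ell$, which equals the smaller of $d(1,\St)$ and the distance from $1$ to the non-unipotent vertex, is at least $r$; combined with $d(1,\St)\le r$, this forces $d(1,\St)=r$.

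Finally, consider any $v\in L$ with $d(1,v)=n\le r$ and the unique tree-path $1=v_0-e_1-v_1-\cdots-e_n-v_n=v$. By (i), $e_1=k$. If any $e_j$ were cuspidal, applying (ii) to $X=e_j$ would yield edge-hop distance from $k$ to $e_j$ at least $r$, contradicting the sub-path of $j-1\le n-1<r$ hops. Hence every $e_j$ lies in a Harish-Chandra branch (Corollary \ref{co:HCedges}); since the path begins at the principal-branch vertex $1$, all $e_j$ belong to the principal branch $L_{\mathrm{HC}}$, which is a line by \cite{Ge92}. Thus $v\in L_{\mathrm{HC}}$, so $L\subseteq L_{\mathrm{HC}}$; combined with (iii), $L$ is the length-$r$ subpath from $1$ to $\St$ on $L_{\mathrm{HC}}$, a line with leaves $1$ and $\St$. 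The main obstacle is making the Green walk assertion in the upper bound of (iii) fully precise in the presence of an exceptional vertex of nontrivial multiplicity, but classical Brauer tree algebra theory suffices for the weak bound $d(1,\St)\le r$ actually used.
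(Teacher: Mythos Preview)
There is a genuine gap: you repeatedly pass from ``$\Ext^i_{kG}(k,X)=0$ for all $i<r$'' to ``the edge $X$ lies at edge-hop distance $\ge r$ from $k$'', both in the lower bound of (iii) and in the final paragraph. This inference is invalid for Brauer tree algebras. If $k$ and $X$ share a vertex $W$ of degree $\ge 3$ with cyclic order $k,e',X,\ldots$ at $W$, then $\Ext^1(k,X)=0$ (only $e'$ is the head of $\Omega k$) even though the edge-hop distance is $1$; in general the minimal $i$ with $\Ext^i(k,X)\neq 0$ is governed by the Green walk and can greatly exceed the combinatorial tree distance. Since a cuspidal branch at some $v_{j-1}$ on the path from $1$ is precisely the configuration you are trying to exclude, the argument is circular.

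The paper proceeds differently. It first cites the Geck--Pfeiffer tables \cite[Appendix~F]{GePf} to get that the principal block of the Hecke algebra $\End_{\OC G}(R_T^G(\OC))$ is a line with $r+1$ vertices and leaves $1,\St$; this already embeds a length-$r$ principal-series line $L$ in $T$ and gives $d(1,\St)=r$ for free. Then ``Proposition~\ref{pr:solomontits} and duality'' means: $K\Omega^r\OC\simeq\St$ and, by self-duality of $\St$, also $K\Omega^{-r}\OC\simeq\St$, so the Green walks from the leaf $1$ in both directions reach $\St$ in exactly $r=d(1,\St)$ steps and hence follow $L$ with no detour. At each interior vertex $W_i$ of $L$ the forward walk takes the edge cyclically \emph{after} the incoming one while the backward walk takes the edge cyclically \emph{before}; both landing on $e_i$ forces $e_{i-1},e_i$ to be the only edges at $W_i$, so $\deg_T(W_i)=2$ and every vertex within distance $r$ of $1$ lies on $L$.

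Your ingredient (ii) can in fact replace the appeal to the tables: it shows that the head of $\Omega^ik$ and the socle of $\Omega^{-i}k$ are non-cuspidal for $i<r$, hence (via Corollary~\ref{co:HCedges}) both Green walks stay on the principal-series line for $r$ steps, and then the bidirectional argument above gives $\deg_T(W_i)=2$; the degenerate case where the line is shorter than $r$ is excluded because $T$ contains the non-unipotent vertex, which is not principal series. But the repair requires running the two-sided Green walk explicitly, not the edge-hop distance bound you invoke.
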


\begin{proof}
The tables in \cite[Appendix F]{GePf} show that the Brauer tree of the
principal
block of the Hecke algebra $\End_{\OC G}(R_T^G(\OC))$ is a line with
$r+1$ vertices, with leaves corresponding
to the trivial and sign characters. So, $T$ has a full subgraph $L$
that is a line with $r+1$ vertices and with leaves $1$ and $\mathrm{St}$.
Using Proposition \ref{pr:solomontits} and duality, we deduce that
all vertices at distance at most $r$ from $1$ are in $L$.
\end{proof}

\subsubsection{Real stem}

We fix a square root of $q^\delta$ in $K$ (specific choices
will be made in Section \ref{sec:determination}).
Let $V$ be a unipotent irreducible $KG$-module.
Let $w\in W$ such that
$V$ occurs in $\Hc^i(\rX(w),K)$. The eigenvalues of $F^\delta$ on
the $V$-isotypic component of $\Hc^i(\rX(w),K)$ are of the form 
$\lambda_V q^{\delta j}$ where $\lambda_V$ is a root of unity (depending
only on $V$, not on $w$ nor $i$), for some $j\in\frac{1}{2}\bbZ$.
Note that $\lambda_{V^*}=\lambda_V^{-1}$.

So, the vertices of the real stem of $T$ consist of the non-unipotent
vertex and the unipotent vertices corresponding to the $V$ such that
$\lambda_V=\pm 1$. For classical groups, all unipotent characters have this property, and are real valued, and for exceptional groups, the unipotent characters with this property are principal-series characters and $D_4$-series characters, which are real-valued by \cite[Proposition 5.6]{Ge03}, and cuspidal characters $G[\pm 1]$, which are rational-valued by \cite[Table 1]{Ge03}.


\subsubsection{Exceptional vertex}\label{sec:cuspidalexc}
Recall from Theorem \ref{th:structureblocks} that the $\ell$-block $A$ is
attached to a cuspidal pair $(\mathbf{L},\lambda)$.  A non-unipotent character 
in $A$ is obtained by Deligne--Lusztig induction
from an irreducible non-unipotent character of $L$. We give here a condition for 
that character to be cuspidal.

\begin{prop}
\label{pr:exceptionalcuspidal}
Assume that $\lambda$ is cuspidal and $\bL$ is not contained in any proper
$F$-stable parabolic subgroup of $\bG$. Let $\bP'$ be a proper $F$-stable parabolic subgroup of $\bG$ with unipotent
radical $\bU'$ and an $F$-stable Levi complement $\bL'$. The
$(\OC G,\OC L')$-bimodule $b\OC Ge_{U'}$ is projective and its restriction
to $\OC G$ is a direct sum of projective 
indecomposable $A$-modules corresponding to edges that do not
contain the non-unipotent vertex. 

In particular, the non-unipotent characters of $A$ are cuspidal.
\end{prop}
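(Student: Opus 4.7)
The plan is to prove this in two stages: first the ``in particular'' claim about cuspidality of the non-unipotent characters of $A$, then use this to identify which projective indecomposable summands of $b\OC G e_{U'}$ can occur.

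For the cuspidality claim, I would combine Theorem \ref{th:structureblocks} with Proposition \ref{pr:inducedcuspidal}. By Theorem \ref{th:structureblocks}, every non-unipotent character in $\Irr(KA)$ has the form $(-1)^{r_G+r_L}R_L^G(\lambda\otimes\xi)$ for some non-trivial $\xi\in\Irr(KD)$, and $L=D\times H$ with $D\subset Z^\circ(\bL)^F$ and $\lambda$ trivial on $D$ (so $\lambda\otimes\xi$ makes sense as a character of $L$). Any proper $F$-stable Levi $\bM$ of $\bL$ contains $Z^\circ(\bL)\supset D$, so $M=D\times(M\cap H)$ and Harish-Chandra restriction factors as ${}^*R_M^L(\lambda\otimes\xi)=\xi\otimes{}^*R_{M\cap H}^H(\lambda)$, which vanishes because $\lambda$ is cuspidal. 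Thus $\lambda\otimes\xi$ is cuspidal, and combined with the hypothesis that $\bL$ is not contained in any proper $F$-stable parabolic of $\bG$, Proposition \ref{pr:inducedcuspidal} then yields cuspidality of each non-unipotent character of $A$.

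For the main claim, projectivity of $\OC G e_{U'}$ as a $(\OC G,\OC L')$-bimodule is standard in modular Harish-Chandra theory: since $\ell\neq p$, the idempotent $e_{U'}$ is central in $\OC P'$ with $\OC P' e_{U'}\simeq \OC L'$ as bimodules, so $\OC G e_{U'}\simeq \OC G\otimes_{\OC P'}\OC L'$ is induced from a projective bimodule. Multiplication by the block idempotent $b$ preserves this, and in particular the restriction of $b\OC G e_{U'}$ to $\OC G$ is a direct sum of projective indecomposable $A$-modules. To identify which ones, I would use the fact that an irreducible $\chi\in\Irr(KA)$ appears in the character of $K(b\OC G e_{U'})$ if and only if $V_\chi^{U'}\neq 0$, equivalently, ${}^*R_{L'}^G(\chi)\neq 0$. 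By the first step the non-unipotent characters of $A$ are cuspidal, hence annihilated by ${}^*R_{L'}^G$. Since the character of a projective indecomposable $P_M$ of $A$ equals the sum of the two vertex characters of its edge in $T$ (with the non-unipotent vertex standing for the sum of the non-unipotent irreducibles of $A$), $P_M$ can occur as a summand only when both endpoints of its edge are unipotent vertices --- precisely the claimed restriction.

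The main technical point is the bookkeeping for cuspidality of $\lambda\otimes\xi$; the direct-product structure $L=D\times H$ from Theorem \ref{th:structureblocks} makes this routine. The remaining ingredients --- Proposition \ref{pr:inducedcuspidal} and the standard bimodule projectivity of the Harish-Chandra induction bimodule --- are essentially formal, so no geometric input (in particular no Deligne--Lusztig cohomology) is required for this proposition.
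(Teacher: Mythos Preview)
Your treatment of the cuspidality of the non-unipotent characters is correct and matches the paper's approach: both reduce to showing $\lambda\otimes\xi$ is cuspidal (you via the decomposition $L=D\times H$, the paper via $\Res^L_{[\bL,\bL]^F}(\lambda\otimes\xi)=\Res^L_{[\bL,\bL]^F}(\lambda)$) and then invoke Proposition~\ref{pr:inducedcuspidal}. Your identification of which projective indecomposables can occur as summands of the restriction of $b\OC G e_{U'}$ to $\OC G$ is also fine, since that restriction is automatically projective.

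The genuine gap is your claim that projectivity of $b\OC G e_{U'}$ \emph{as an $(\OC G,\OC L')$-bimodule} is standard. It is not: viewed as an $\OC(G\times L')$-module, $\OC G e_{U'}$ is the permutation module on $G/U'$, and the stabiliser of the base point is isomorphic to $P'$. Hence it is projective if and only if $\ell\nmid|L'|$, which is precisely what fails in the applications (e.g.\ $\bL'=\bL_I$ of type $E_7$ for $d=18$). Your intermediate claim that $\OC L'$ is projective as an $(\OC P',\OC L')$-bimodule has the same defect. The paper instead runs a Green-vertex argument: for the order-$\ell$ subgroup $Q\le D$ and any $g\in G$ with $Q^g\le L'$, Lemma~\ref{le:Brauer} gives $\Br_{\Delta_g Q}(b\OC G e_{U'})\simeq b_\lambda kL\, e_V$ with $V=gU'g^{-1}\cap L$; the hypotheses that $\lambda$ is cuspidal and that $\bL$ lies in no proper $F$-stable parabolic force $\bL\cap g\bP'g^{-1}$ to be a proper parabolic of $\bL$, hence $b_\lambda kL\, e_V=0$. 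Since every non-projective indecomposable bimodule summand would have a vertex containing some such $\Delta_g Q$, this forces bimodule projectivity. Note that this step genuinely uses both hypotheses of the proposition, whereas your proposed argument uses neither --- a signal that something is missing.
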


\begin{proof}
Let $Q$ be the subgroup of order $\ell$ of $D$ and let $g\in G$ such that
$Q^g\le L$. Let $\Delta_g Q=\{(x,g^{-1}xg)|x\in Q\}$.
We have $\Br_{\Delta_g Q}(b\OC Ge_{U'})\simeq 
\Br_{\Delta Q}(b\OC G  e_{gU'g^{-1}})=b_\lambda kL e_V$
where $V=gU'g^{-1}\cap L$ (Lemma \ref{le:Brauer}).
 By assumption, $\lambda$ is cuspidal and $\bP'\cap\bL$ is a proper $F$-stable
parabolic subgroup of $\bL$, hence $b_\lambda kL e_V=0$,
hence $\Br_{\Delta_g Q}(b\OC Ge_{U'})=0$. Since the
$((\OC G)\otimes (\OC L')^{\mathrm{opp}})$-module $b\OC G$ is
a direct sum of indecomposable modules with vertices trivial or
containing $\Delta_g Q$ for some $g\in G$, we deduce that
that the $(\OC G,\OC L_I)$-bimodule $b\OC Ge_{U'}$ is projective.

Let $\xi\in\Irr(KD)\setminus\{1\}$. Since $\Res_{[\bL,\bL]^F}^L(\lambda\otimes\xi)=
\Res_{[\bL,\bL]^F}^L(\lambda)$, it follows that $\lambda\otimes\xi$
is cuspidal. Theorem \ref{th:structureblocks} shows that every non-unipotent
character of $b$ is of the form
$(-1)^{r_G+r_L}(R_L^G(\lambda\otimes\xi))$ for some $\xi\in\Irr(KD)\setminus\{1\}$.
Proposition \ref{pr:inducedcuspidal} shows that such a character is cuspidal.
\end{proof}

%

\noindent
The assumptions of Proposition \ref{pr:exceptionalcuspidal}
are satisfied in the following cases:
\begin{itemize}
 \item $\bL = \bT$ contains a Sylow $\Phi_d$-torus of $G$ and $d$ is not a reflection degree of a proper
 parabolic subgroup of $W$ (e.g. $G=E_7(q)$ and $d=14$ or
$G=E_8(q)$ and $d \in\{15,20,24\}$). In that case
 the trivial character of $L$ is cuspidal, and no proper $F$-stable
parabolic subgroup of $\bG$ can contain a Sylow $\Phi_d$-torus.
 
 \item $G=E_8(q)$, $d=12$ and $([\bL,\bL]^F,\lambda)=(^3D_4(q),{}^3D_4[1])$ or 
$d=18$ and $([\bL,\bL]^F,\lambda)=({}^2 A_2(q),\phi_{21})$.
\end{itemize}

\begin{lem}
\label{le:minwforexc}
Let $w\in W$ and let $M$ be a simple $A$-module corresponding to an edge
containing $\chi_{\mathrm{exc}}$.

If $w$ has minimal length such that
$\RHom^\bullet_{kG}(\Rgc(\rX(w),k),M) \neq 0$, then
$\ell \mid |\bT^{wF}|$.

If $\ell \nmid |\bT^{vF}|$ for all $v\le w$, then 
$\RHom^\bullet_{kG}(\Rgc(\rX(\overline{w}),k),M)=0$.
\end{lem}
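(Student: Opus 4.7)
The plan is to prove the first assertion by contradiction via Proposition~\ref{prop:middledegree} and then derive the second from the first by combining Remark~\ref{rmk:xbar} with Bruhat induction.

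For the first assertion, I would assume $\ell\nmid|\bT^{wF}|$. Since $M$ lies in the unipotent block $A$, it is a unipotent $kG$-module (every simple in a unipotent block appears as a composition factor of $\Hc^*(\rX(v),k)$ for some $v$), so Proposition~\ref{prop:middledegree} applies: the complex $C:=\tRgc(\rX(w),\OC)\cdot b$ is a bounded complex of projective $A$-modules, and $P_M$ appears as a direct summand of $C^{\redu}$ only in degree $\ell(w)$, with positive multiplicity. On the other hand, $K\H^*(C)\simeq\Hc^*(\rX(w),K)\cdot b$ realizes, up to signs, the block projection of the Deligne--Lusztig virtual module $R_{\bT_w}^{\bG}(1)$, so its class in $K_0(KA)$ is a $\bbZ$-linear combination of unipotent characters only. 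Writing $[C]=\sum_{N}e_{N}[KP_N]$ with $e_{N}=\sum_{i}(-1)^{i}n_{N,i}$ and using that, by the Brauer-tree shape of the decomposition matrix, $[KP_N]$ contains any exceptional character $\psi_j$ if and only if $N$ lies in the set $\mathcal{E}$ of edges at the exceptional vertex, one obtains the single balance relation $\sum_{N\in\mathcal{E}}e_{N}=0$. Since $P_M$ sits in a single degree, $e_{M}\neq 0$, so some other $N\in\mathcal{E}$ must contribute. The contradiction is then extracted using the structural rigidity of perfect complexes over the cyclic-defect block $A$ (of the kind established earlier in \S\ref{se:properties}): such a configuration forces $P_M$ to appear in more than one degree, contradicting the concentration supplied by Proposition~\ref{prop:middledegree}.

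For the second assertion, under the hypothesis $\ell\nmid|\bT^{vF}|$ for all $v\leq w$, it suffices, by Remark~\ref{rmk:xbar}, to establish $\RHom^\bullet_{kG}(\Rgc(\rX(v),k),M)=0$ for every $v\leq w$, as the Remark will then provide
\[
\RHom^{\bullet}_{kG}(\Rgc(\overline{\rX}(w),k),M)\simeq\RHom^{\bullet}_{kG}(\Rgc(\rX(w),k),M)
\]
with vanishing right-hand side. I would prove this vanishing by strong induction on $\ell(v)$ inside the Bruhat interval $[1,w]$: at a would-be minimal-length counterexample $v_0\leq w$, the induction hypothesis gives vanishing of $\RHom^\bullet_{kG}(\Rgc(\rX(v'),k),M)$ for every $v'<v_0$ in Bruhat order, and the contradiction argument of the first assertion --- which relies only on this below-$v_0$ vanishing and the torus assumption at $v_0$ (both of which are supplied by the hypothesis on $[1,w]$) --- applies verbatim to $v_0$, forcing $\ell\mid|\bT^{v_0F}|$ and contradicting the assumption.

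The principal obstacle is the contradiction step in the first assertion: the single character-matching relation $\sum_{N\in\mathcal{E}}e_{N}=0$ does not by itself preclude $e_{M}\neq 0$ from being offset by contributions of other projectives $P_N$ adjacent to the exceptional vertex. Resolving this requires the finer analysis of perfect complexes over a Brauer-tree algebra whose rational cohomology avoids the exceptional characters, where both the single-degree concentration of $P_M$ and the Bruhat-local minimality of $w$ are the essential inputs that rule out such cancellation.
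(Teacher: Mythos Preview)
Your approach is the paper's approach, and you correctly reach the balance relation $\sum_{N\in\mathcal{E}}e_{N}=0$ with $e_M>0$. The gap you flag in your final paragraph is real, but your proposed resolution is off target. The relation does \emph{not} force $P_M$ into a second degree --- Proposition~\ref{prop:middledegree} has already pinned $P_M$ to degree $\ell(w)$ and nothing disturbs this. What it forces is $e_{\nu}<0$ for some \emph{other} edge $\nu\in\mathcal{E}$, so that $P_N$ (for $N$ the simple module at $\nu$) occurs in the minimal representative of $b\tRgc(\rX(w),k)$ in a degree different from $\ell(w)$. Applying the concentration clause of Proposition~\ref{prop:middledegree} now to $N$ (still using $\ell\nmid|\bT^{wF}|$) shows that $w$ is not Bruhat-minimal for $N$, so there exists $v<w$ with $\RHom^\bullet_{kG}(\Rgc(\rX(v),k),N)\neq 0$. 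That is the paper's step; there is no ``structural rigidity'' beyond this second application of Proposition~\ref{prop:middledegree} to the new module $N$.

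Because the contradiction involves a module $N\neq M$, this has a knock-on effect on your induction for the second assertion: your inductive hypothesis supplies vanishing of $\RHom^\bullet_{kG}(\Rgc(\rX(v'),k),M)$ for $v'<v_0$, but what the argument actually needs at $v_0$ is vanishing for $N$. The fix is to run the descent over \emph{all} edges at $\chi_{\mathrm{exc}}$ simultaneously rather than for the fixed $M$: among all pairs $(M',v')$ with $M'\in\mathcal{E}$, $v'\le w$ and $\RHom^\bullet_{kG}(\Rgc(\rX(v'),k),M')\neq 0$, choose one with $\ell(v')$ minimal; the step above then produces $(N,v)$ with $N\in\mathcal{E}$ and $v<v'\le w$, contradicting minimality. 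Under the hypothesis $\ell\nmid|\bT^{vF}|$ for all $v\le w$ this descent is available at every stage, and that is precisely what makes the second assertion go through.
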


\begin{proof}
Let $M$ be as in the lemma and $w$ be minimal such that
$\RHom^\bullet_{kG}(\Rgc(\rX(w),k),M) \neq 0$. Assume that
$\ell\nmid|\bT^{wF}|$. We have
$(-1)^{\ell(w)}[b\Rgc(\rX(w),k)]=\sum_{\eta}a_\eta [P_\eta]$, where
$\eta$ runs over the edges of $T$ and $a_\eta\in\bbZ$. By
Proposition \ref{prop:middledegree} we
have $a_\mu>0$ where $\mu$ is the edge corresponding to $M$.
Since $\chi_{\mathrm{exc}}$ does not occur in $[\Rgc(\rX(w),K)]$, it follows
that there is an edge $\nu$ containing $\chi_{\mathrm{exc}}$ such that
$a_{\nu}<0$. Let $N$ be the simple $A$-module corresponding to $\nu$.
The complex $\RHom^\bullet_{kG}(\Rgc(\rX(w),k),N)$ has non-zero cohomology
in a degree other than $-\ell(w)$, hence there is $v<w$ such that
$\RHom^\bullet_{kG}(\Rgc(\rX(v),k),N) \neq 0$ by 
Proposition \ref{prop:middledegree}, a contradiction. The lemma follows.
\end{proof}

\subsubsection{In the stable category}
\label{se:stable}

Assume in \S\ref{se:stable} that $\delta=1$ and
$\bL$ is a maximal torus of $\bG$. This is a $\Phi_d$-torus.
Let $w\in W$ be a $d$-regular element.
The next result follows from \cite[Corollary 2.11 and its proof]{DR}.

\begin{prop}
\label{pr:OmegaDL}
Let $m\in\{0,\ldots,d-1\}$. The complex $\Rgc(\rX(w),k)_{(q^{ m})}$ is
isomorphic in $kG\mstab$ to $\Omega^{2m}k$.
\end{prop}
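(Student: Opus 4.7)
My plan is to derive the proposition from the recursion supplied by Lemma \ref{le:shiftFrob}, reducing it to a single base case handled by the argument in \cite[Corollary 2.11]{DR}.

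First I check that Lemma \ref{le:shiftFrob} applies. By Theorem \ref{th:structureblocks}, the cyclic defect group $D$ is the Sylow $\ell$-subgroup of $Z^\circ(\bL)^F = T$, so $\bT^{wF}$ has cyclic Sylow $\ell$-subgroups. Taking $\delta=1$ and $\zeta = q^{m+1}$ in that lemma gives
$$\Rgc(\rX(w),k)_{(q^m)} \simeq \Rgc(\rX(w),k)_{(q^{m+1})}[2] \qquad \text{in } kG\mstab.$$
Since the translation functor on $kG\mstab$ is $\Omega^{-1}$, we have $[2] = \Omega^{-2}$, so equivalently
$$\Rgc(\rX(w),k)_{(q^{m+1})} \simeq \Omega^{2}\,\Rgc(\rX(w),k)_{(q^m)}.$$
By induction on $m$, it suffices to establish the base case $\Rgc(\rX(w),k)_{(1)} \simeq k$ in $kG\mstab$.

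For the base case, I would follow the strategy of \cite[Corollary 2.11]{DR}. Over $K$, Lusztig's explicit description of the cohomology of the Deligne--Lusztig variety attached to a $d$-regular element shows that the generalized $1$-eigenspace of $F$ on $\Rgc(\rX(w),K)$ is concentrated in degree $0$, where it is the trivial module. To transfer this to the modular setting I work with the covering variety $\rY(w)$: the $G$-stabilizers of its points are $\ell'$-subgroups, so $\tRgc(\rY(w),\OC)$ has projective terms over $\OC G$, and the relation $[\Rgc(\rY(w),k) b_w] = N\cdot [\Rgc(\rX(w),k)]$ in $K_0(kG\mmod)^{vp}$ used in the proof of Proposition \ref{prop:middledegree} (with $b_w$ the principal block idempotent of $k\bT^{wF}$ and $N = |\bT^{wF}|_\ell$) transfers the $K$-theoretic computation to the cohomology of $\rX(w)$. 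Combining this with the concentration statement of Proposition \ref{prop:middledegree} and the injectivity of the Cartan map on the principal block identifies $\Rgc(\rX(w),k)_{(1)}$ with $k$ in $kG\mstab$.

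The main obstacle is this base case: the recursive step from Lemma \ref{le:shiftFrob} is purely formal, while ruling out additional non-trivial stable summands in $\Rgc(\rX(w),k)_{(1)}$ requires genuine geometric input, namely the detailed description of $\Hc^*(\rX(w),\OC)_{(1)}$ from \cite{DR} that exploits the specific structure of Deligne--Lusztig varieties attached to $d$-regular elements.
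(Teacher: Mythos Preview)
Your overall approach matches the paper's: the paper simply cites \cite[Corollary 2.11 and its proof]{DR}, and the recursion you set up via Lemma \ref{le:shiftFrob} is precisely the mechanism used in \cite{DR} to pass between eigenvalues, so you are reconstructing that argument rather than giving an alternative.

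However, your sketch of the base case contains errors. First, the generalized $1$-eigenspace of $F$ on $\Hc^*(\rX(w),K)$ is not concentrated in degree $0$: the variety $\rX(w)$ has dimension $\ell(w)>0$, so its compactly supported cohomology vanishes in degrees below $\ell(w)$. Second, the concentration statement of Proposition \ref{prop:middledegree} that you invoke requires $\ell \nmid |\bT^{wF}|$, but in the setting of \S\ref{se:stable} the torus $\bT^{wF}$ contains the defect group $D$, so this hypothesis fails and that part of Proposition \ref{prop:middledegree} is unavailable. The actual argument in \cite{DR} does not proceed by locating the trivial module in a single cohomological degree; rather, it shows that the complex $b\Rgc(\rY(w),\OC)$ induces a Rickard (hence stable) equivalence between the block $A$ and the Brauer correspondent $\OC(D\rtimes E)$, and the identification of the $(1)$-eigenspace with $k$ in $kG\mstab$ is read off from this equivalence. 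You are right that this is where the genuine geometric input lies, but your description of that input is inaccurate.
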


\begin{rmk}
\label{re:OmegaDLbar}
If $\bT^{vF}$ is an $\ell'$-group for all $v<w$, then Proposition
\ref{pr:OmegaDL} holds with $\rX(w)$ replaced by $\overline{\rX}(w)$.
\end{rmk}

\subsubsection{Coxeter orbits\label{secoxeter}}

The following lemma holds for general symmetric $\OC$-algebras
$A$ such that $kA$ is a Brauer tree algebra.

\begin{lem}
\label{le:twononzerocoh}
Let $C$ be a bounded complex of finitely generated projective $A$-modules.
Assume that $T$ has a subtree of the form
$$\xymatrix{V_t \ar@{-}[r]^{S_{t-1}} & V_{t-1} \ar@{--}[rr] &&
V_2 \ar@{-}[r]^{S_1} & V_1 \ar@{-}[r]^{S_0} & V_0}$$
all of whose vertices are non-exceptional, and:
\begin{itemize}
\item[\emph{(i)}] $K\H^i(C)=0$ for $i\not\in\{0,-t\}$, $\H^i(kC)=0$ for $i<-t$
 and $K\H^0(C)\simeq V_0$;
\item[\emph{(ii)}] given an edge $M$ of $T$, given an integer $i<t$
and given a map $f\in\Hom_{D^b(A)}(C,M[i])$, the induced map
from the torsion part of $\H^{-i}(C)$ to $M$ vanishes;
\item[\emph{(iii)}] letting $M$ be an edge of $T$
that contains $V_i$, and assuming that $M$ is strictly between $S_{i-1}$ and $S_i$ in
the cyclic ordering of edges at $V_i$ (for $0<i\le t-1$) or $M\neq S_0$ (for
$i=0$), then $\Hom_{D^b(A)}(C,M[j])=0$ for $j\in\{i,i+1\}\cap\{0,\ldots,t-1\}$;
\item[\emph{(iv)}] $S_i$ is not a composition factor of the torsion part
of $\H^{-i+1}(C)$ for $1\le i\le t-1$.
\end{itemize}
Then 
$C$ is homotopy equivalent to
$$0\to P\to P_{S_{t-1}}\xrightarrow{\delta_{t-1}}P_{S_{t-2}}\to\cdots
\xrightarrow{\delta_1}P_{S_0}\to 0$$
where $P$ is a projective $A$-module in degree $-t$ with
$KP\simeq K\H^{-t}(C)\oplus V_t$ and $\Hom_A(P_{S_i},P_{S_{i-1}})=
\OC\delta_i$.
Furthermore, given $i\in\{0,\ldots, t-2\}$,
the composition factors of the torsion part of
$\H^{-i}(C)$ correspond to the edges strictly between $S_i$ and $S_{i+1}$
in the cyclic ordering of edges at $V_i$.

If $V=K\H^{-t}(C)$ is simple and distinct from $V_{t-1}$, then there is an edge
$S_t$ between $V$ and $V_t$ and $P\simeq P_{S_t}$.
Furthermore, the composition factors of the torsion part of
$\H^{-t+1}(C)$ correspond to the edges strictly between $S_{t-1}$ and $S_t$
in the cyclic ordering of edges at $V_t$.
\end{lem}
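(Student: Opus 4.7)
The plan is to prove the lemma by induction on $t$, peeling off the rightmost projective $P_{S_0}$ at each step and applying the inductive hypothesis to a shifted fiber. The base case $t=1$ is handled directly by the arguments of Step~1 and Step~2 below.

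For the inductive step I would first establish that $\Hom_{D^b(A)}(C,M[0])=0$ for every simple $kA$-module $M\neq S_0$. For $M$ an edge at $V_0$ distinct from $S_0$ this is exactly hypothesis (iii) at $i=0,\ j=0$. For $M$ not incident to $V_0$, after a suitable truncation of $C$ to non-positive degrees (justified using condition (ii) to control possible positive-degree torsion), such a morphism is represented by a map $\H^0(C)\to M$; hypothesis (ii) then forces it to factor through the torsion-free quotient $L=\H^0(C)/T$, but $L$ is an $\OC$-lattice with $KL\simeq V_0$ and, since $V_0$ is non-exceptional in the Brauer tree, $kL$ is uniserial with composition factors only among the edges at $V_0$, so any map to $M$ vanishes. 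Passing to a minimal representative of $C$ in $\Ho^b(A\mproj)$, the multiplicity of $P_S$ in $C^0$ equals $\dim_k\Hom_{D^b(A)}(C,S[0])$, forcing $C^0\simeq P_{S_0}^{(m_0)}$. Applying (ii) also to $M=S_0$, every map $\H^0(C)\to S_0$ factors through $L$; since $kL$ is uniserial its top contains $S_0$ with multiplicity at most $1$, while $K\H^0(C)\neq 0$ forces $m_0\geq 1$, whence $m_0=1$. This produces a canonical chain map $g:C\to P_{S_0}[0]$ whose induced map on $K\H^0$ is the inclusion $V_0\hookrightarrow KP_{S_0}\simeq V_0\oplus V_1$.

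Setting $C''$ to be the fiber of $g$ shifted so that $\H^0(C'')=\mathrm{coker}(g_*)$, the long exact sequence of cohomology attached to the triangle $C''[1]\to C\xrightarrow{g} P_{S_0}[0]\rightsquigarrow$ gives $K\H^0(C'')\simeq V_1$ and $K\H^{-(t-1)}(C'')\simeq K\H^{-t}(C)$, with vanishing elsewhere; this realizes condition (i) for $C''$ with parameter $t-1$, new base vertex $V_1$ and edges $S_{t-1},\ldots,S_1$. Conditions (ii), (iii) and (iv) for $C''$ are inherited from those of $C$ via the long exact sequence of $\Hom$-groups: since $\Hom_{D^b(A)}(P_{S_0},M[j])$ vanishes for $j\neq 0$ and equals $k$ at $j=0$ precisely when $M=S_0$, each vanishing $\Hom_{D^b(A)}(C,M[j+1])=0$ translates directly into $\Hom_{D^b(A)}(C'',M[j])=0$ in the required range, with a symmetric argument handling the torsion statements. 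The inductive hypothesis applied to $C''$ yields a resolution $0\to P\to P_{S_{t-1}}\to\cdots\to P_{S_1}\to 0$, which combined with $g$ via the triangle assembles into the desired resolution $0\to P\to P_{S_{t-1}}\to\cdots\to P_{S_0}\to 0$ of $C$. The identity $\Hom_A(P_{S_i},P_{S_{i-1}})=\OC\delta_i$ is automatic, as in a Brauer tree block two distinct edges sharing a non-exceptional common vertex give a $\Hom$-space that is free of rank $1$ over $\OC$.

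For the composition factors of the torsion part of $\H^{-i}(C)$, I would read the cohomology off the resolution as $\ker(\delta_i)/\operatorname{im}(\delta_{i+1})$ and use the Loewy structure of the relevant indecomposable projective at its shared vertex: the sector at that vertex is uniserial with composition factors the other edges at the vertex in cyclic order, and $\operatorname{im}(\delta_i)$ truncates this sector precisely at the $S_i$-level (accounting for the generic $V_{i+1}$ piece), leaving the edges strictly between the two path edges at the shared vertex as the torsion. For the supplementary assertion, when $V=K\H^{-t}(C)$ is simple and $V\neq V_{t-1}$, the construction yields $P$ projective with $KP\simeq V\oplus V_t$; in a Brauer tree block an indecomposable projective with such a Cartan class can exist only if $T$ contains an edge $S_t$ joining $V_t$ and $V$, whence $P\simeq P_{S_t}$ and the torsion of $\H^{-t+1}(C)$ follows from the same Loewy analysis at $V_t$. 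I anticipate the main obstacle to be the careful verification of condition (iii) for $C''$, which requires delicate bookkeeping of the cyclic orderings at the various $V_i$ under the cohomological shift, together with handling possible positive-degree torsion of $C$ via the truncation argument supported by condition (ii).
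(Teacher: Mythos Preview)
Your inductive strategy on $t$ is different from the paper's: rather than peeling off $P_{S_0}$ and passing to a fiber, the paper works directly in the minimal representative and determines the terms $C^{-i}$ one at a time. At each step it uses the explicit Loewy structure of $k\ker\delta_i$ inside the Brauer tree projective $kP_{S_i}$: the composition factors of $(k\ker\delta_i)/N$ (with $N$ the image of $kP_{S_{i+1}}\to kP_{S_i}$) are exactly the edges covered by (iii), so (iii) excludes those $P_M$ from $C^{-i-1}$, then (iv) is used to show the remaining summands map trivially to $C^{-i}$, and finally (ii) kills them.

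Your approach has a real gap, not merely bookkeeping, precisely at the point you flag as the main obstacle. To apply the inductive hypothesis to $C''$ you need condition (iii) for $C''$ at the new base vertex $V_1$, and this includes the case $M=S_0$ (an edge at $V_1$ with $M\neq S_1$). Via the long exact sequence $\Hom(C'',S_0[j])\simeq\Hom(C,S_0[j+1])$ for $j\ge 0$, you therefore need $\Hom_{D^b(A)}(C,S_0[1])=0$ (and $\Hom_{D^b(A)}(C,S_0[2])=0$ when $t\ge 3$). But in the minimal representative these dimensions are exactly the multiplicities of $P_{S_0}$ in $C^{-1}$ and $C^{-2}$, which is part of the \emph{conclusion} you are trying to prove, not a hypothesis. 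Condition (ii) only tells you that the induced map $\H^{-1}(C)\to S_0$ vanishes; since $C^1=0$ but $C^0\neq 0$, a nonzero element of $\Hom(C,S_0[1])$ need not factor through $\H^{-1}(C)$, so (ii) does not force it to be zero. Condition (iii) for $C$ never applies to $M=S_0$ at any index. Thus the hypotheses (i)--(iv) for $C$ do not translate into the full (iii) for $C''$, and the induction does not close.

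The paper sidesteps this because its degree-by-degree argument never needs an abstract vanishing of $\Hom(C,S_0[1])$: instead, at the step $i=0$ it analyses which summands $P_M$ of $C^{-1}$ can map nontrivially into $k\ker\delta_0$, eliminates the offending $M$'s via (iii), arranges the rest into a complement $Q$ with $d^{-1}|_Q=0$, and then uses (ii) (together with $K\H^{-1}(C)=0$) to conclude $Q=0$. If you want to rescue the inductive scheme, you would have to incorporate an analogous direct argument for $P_{S_0}\notin C^{-1},C^{-2}$ before passing to $C''$; but that argument is essentially the content of the paper's step, so the induction on $t$ does not buy any simplification.
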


\begin{proof}
We can assume that $C$ has no non-zero direct summand homotopic to zero.
Since $\H^{<-t}(kC)=0$, it follows that $C^{<-t}=0$.
Let $m$ be maximal such that $C^m\neq 0$. Suppose that $m>0$. By (i),
$\H^m(C)$ is a non-zero torsion $A$-module. Let $M$ be a simple quotient
of $\H^m(C)$. Assumption (ii) gives a contradiction.
We deduce that $m=0$.

By (iii), any simple quotient of $\H^0(C)_\mathrm{free} =
\H^0(C)/\H^0(C)_\mathrm{tor}$ is isomorphic to $S_0$. Moreover,
since $K\H^0(C)_\mathrm{free} = V_0$ and $S_0$ occurs only
once in any $\ell$-reduction of $V_0$, there exists a surjective
map $P_{S_0} \twoheadrightarrow  \H^0(C)_\mathrm{free}$.
It follows that 
there is an isomorphism $P_{S_0}\oplus Q\xrightarrow{\sim}C^0$
such that the composite map $KQ\to KC^0\to K\H^0(C)$ vanishes.
Let $N$ be the image of $P_{S_0}$ in
$\H^0(C)$. Suppose that there is a simple quotient $M$ of $\H^0(C)$ vanishing on $N$
(\emph{i.e.} such that $N$ is in the kernel of the quotient map $\H^0(C)\twoheadrightarrow M$).
Then $M$ is a quotient of the torsion part of $\H^0(C)$ and the composite map
$Q \rightarrow \H^0(C) \rightarrow M$ is non-zero.  We deduce that this map
induces a non-zero map from the torsion part of $\H^0(C)$ to $M$, which 
contradicts (ii). Consequently the retriction of $C \twoheadrightarrow \H^0(C)$
to $P_{S_0}$ is surjective and $Q = 0$ by minimality of $C$. 

\smallskip
Given $1\le i\le t-1$, fix $\delta_i:P_{S_i}\to P_{S_{i-1}}$ such that
$\Hom_{A}(P_{S_i},P_{S_{i-1}})=\OC \delta_i$. We put $\delta_0=0:P_{S_0}\to 0$.
We prove by induction on $i\in\{0,\ldots,t-1\}$ that
$0\to C^{-i}\to C^{-i+1}\to\cdots$ is
isomorphic
to the complex $0\to P_{S_i}\xrightarrow{\delta_i}P_{S_{i-1}}\to\cdots \to
P_{S_1} \xrightarrow{\delta_1}P_{S_0}\to 0$, where $P_{S_0}$ is in degree
$0$.
This holds for $i=0$ and we assume now this holds for some $i\le t-2$.
We have $\dim\Hom_{kA}(kP_{S_{i+1}},kP_{S_i})=1$ and we denote by
$N$ the image of a non-zero map $P_{S_{i+1}}\to P_{S_i}$. It is
contained in $k\ker\delta_i$.
Let $M$ be a composition factor of
$(k\ker\delta_i)/N$.
If $i=0$, then the edge corresponding to $M$ contains $V_0$ and
$M{\not\simeq}S_0$ or
it contains $V_1$ and is strictly between $S_0$ and $S_1$ in
the cyclic ordering of edges at $V_1$.
If $i>0$, then the edge corresponding to $M$ contains $V_i$ and is
strictly between $S_{i-1}$ and $S_i$ in
the cyclic ordering of edges at $V_i$ or it contains $V_{i+1}$ and
is strictly between $S_i$ and $S_{i+1}$ in
the cyclic ordering of edges at $V_{i+1}$.
By (iii), $P_M$ is not a direct summand
of $C^{-i-1}$. It follows from (iv) that there is an isomorphism
$P_{S_{i+1}}\oplus Q\xrightarrow{\sim}C^{-i-1}$ such that the composition
$Q\to C^{-i-1}\to C^{-i}$ vanishes. Let $M$ be a simple quotient of $Q$.
By minimality of $C$, $M$ occurs as a quotient of $H^{-i-1}(C)$, which is torsion by (i).
So (ii) gives a contradiction.
We deduce that $C^{-i-1}\simeq P_{S_{i+1}}$ and the differential $kC^{-i-1}\to
kC^{-i}$ is not zero. This shows that the induction statement holds
for $i+1$.

We deduce that $C$ is isomorphic to
$$0\to P\to
P_{S_{t-1}}\xrightarrow{\delta_{t-1}}P_{S_{t-2}}\to\cdots\to
P_{S_1}\xrightarrow{\delta_1} P_{S_0}\to 0$$
for some projective $A$-module $P$ in degree $-t$. We have
$[KP]=(-1)^t[KC]+[KP_{S_{t-1}}]-[KP_{S_{t-2}}]+\cdots+(-1)^t[KP_{S_0}]=
[K\H^{-t}(C)]+[V_t]$, hence $KP\simeq K\H^{-t}(C)\oplus V_t$.

If $V=K\H^{-t}(C)$ is simple, then $KP\simeq V\oplus V_t$, hence
$P\simeq P_{S_t}$ where $S_t$ is the edge containing $V$ and $V_t$. The last
statement follows from the fact that the differential $kP\to kP_{S_{t-1}}$
is non-zero.
\end{proof}

The following theorem deals with direct summands of
$\tRgc(\rX(c),\mathcal{O})$ that have exactly two non-zero cohomology groups
over $K$. Extra assumptions on the block are needed here.

\begin{theorem}\label{thm:coxeter}
Assume that $\ell \nmid |\bT^{cF}|$. Let $C$ be a direct summand of
$b\tRgc(\rX(c),\mathcal{O})$ in $\Ho^b(\mathcal{O}G\mmod)$. Suppose that there
are $r'\ge r$ and $t>0$ such that
  \begin{itemize}
    \item[\emph{(i)}] the torsion part in $\H^*(C)$ is cuspidal,
    \item[\emph{(ii)}] $\H^i(KC) =0$ for $i{\not\in}\{r',r'+t\}$ and
$V_0=\H^{r'+t}(KC)$ and $V'=\H^{r'}(KC)$ are simple $KG$-modules, and
    \item[\emph{(iii)}] $T$ has a subgraph 
with non-exceptional vertices and non-cuspidal edges
$$\xymatrix{V_t \ar@{-}[r]^{S_{t-1}} & V_{t-1} \ar@{--}[rr] &&
V_2 \ar@{-}[r]^{S_1} & V_1 \ar@{-}[r]^{S_0} & V_0}$$
such that 
$\xymatrix{V_{t-1} \ar@{--}[rr] &&
V_2 \ar@{-}[r]^{S_1} & V_1 \ar@{-}[r]^{S_0} & V_0}$
is a connected component
of the subgraph of $T$ obtained by removing the edge $S_{t-1}$ and
all cuspidal edges.
\end{itemize}
\noindent Then:
\begin{itemize}
\item there is an edge $S_t$ between $V_t$ and $V'$ and
$C$ is homotopy equivalent to
$$ C'=
0 \longrightarrow P_{S_t}
\longrightarrow P_{S_{t-1}} \longrightarrow \cdots \longrightarrow P_{S_0} \longrightarrow 0$$
with $P_{S_t}$ in degree $r'$;
\item the complex $C'$ is, up to isomorphism,
the unique complex such that the differential
$P_{S_i}\to P_{S_{i-1}}$
generates the $\OC$-module
$\Hom(P_{S_i},P_{S_{i-1}})$ for $1\le i\le~t$;
\item the composition factors of the torsion part of $\H^{r'+t-i}(C)$
correspond to the edges strictly between $S_i$ and $S_{i+1}$ in the cyclic
ordering 
of edges at $V_{i+1}$ (for $0\le i\le t-1$). In particular, the edges between
$S_{t-1}$ and $S_t$ around $V_t$ are also cuspidal.
\end{itemize}
\end{theorem}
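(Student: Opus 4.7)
The plan is to apply Lemma \ref{le:twononzerocoh} to the shifted complex $D = C[r'+t]$ in the block $A = \OC G b$. After this shift, the rational cohomology of $D$ is $V_0$ in degree $0$ and $V'$ in degree $-t$, matching the setup of the lemma, and the conclusions on $C$ then translate directly to those of the lemma applied to $D$.

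The first step is to normalize $C$ so that its projective terms lie in degrees $[r,2r]$, as in the proof of Lemma \ref{lem:onedegree}, and to check hypothesis (i) of the lemma, namely $\H^i(kD) = 0$ for $i<-t$. Using the universal-coefficient long exact sequence together with hypothesis (i) of the theorem, the module $\H^i(kC)$ has only cuspidal composition factors whenever $i\notin\{r',r'+t\}$ (its contributions come from the cuspidal torsion of $\H^i(C)$ and the $\ell$-torsion of $\H^{i+1}(C)$). Taking the maximal degree $i>r'+t$ (respectively minimal degree $i<r'$) with $\H^i(kC)\ne 0$ and a cuspidal simple quotient (resp.\ submodule) $M$, the truncation triangles produce a nonzero map $kC\to M[-i]$ (resp.\ $M\to kC[i]$); Corollary \ref{cor:middledegree} then forces the shift to equal $r$, contradicting $i>r'+t\ge r$ (resp., after absorbing any residual cuspidal cohomology in degrees $[r,r'-1]$ using indecomposability of the summand, giving $i\ge r'$). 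This confines the cohomology of $kD$ to degrees in $[-t,0]$.

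The remaining hypotheses of Lemma \ref{le:twononzerocoh} are established using hypothesis (iii) of the theorem. Removing $S_{t-1}$ together with all cuspidal edges from $T$ leaves the line $V_0\text{---}V_{t-1}$ as a connected component, so every edge of $T$ incident to some $V_i$ ($0\le i\le t-1$) that is not $S_{i-1}$ or $S_i$ (respectively, not $S_0$ when $i=0$) must be cuspidal. For such a cuspidal edge $M$, Corollary \ref{cor:middledegree} gives $\Hom_{D^b(A)}(C,M[j])=0$ for $j\ne r$; translating through the shift by $r'+t$ and using $r\le r'$ produces hypothesis (iii) of the lemma. Hypothesis (ii) follows from the same corollary when $M$ is cuspidal, while for non-cuspidal simple $M$ there is no nonzero map from the (cuspidal) torsion of $\H^*(C)$ to $M$, so the induced map vanishes. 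Hypothesis (iv) is immediate: each $S_i$ lies on the non-cuspidal line and so cannot appear as a composition factor of the cuspidal torsion.

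Applying Lemma \ref{le:twononzerocoh} then yields the homotopy equivalence $C\simeq (P\to P_{S_{t-1}}\to\cdots\to P_{S_0})$ with generator differentials and $KP\simeq V'\oplus V_t$, together with uniqueness and the description of the composition factors of the torsion parts of $\H^{r'+t-i}(C)$ for $0\le i\le t-2$. To upgrade $P$ to $P_{S_t}$ for an edge $S_t$ between $V_t$ and $V'$ and to extract the torsion description at $i=t-1$, I would invoke the final assertion of Lemma \ref{le:twononzerocoh}, which requires $V'\ne V_{t-1}$. This last step is, in my view, the main technical obstacle: ruling out $V'=V_{t-1}$ will likely require exploiting either the Harish-Chandra series of $V'$ (since $V'$ arises as the bottom rational cohomology of a Coxeter Deligne--Lusztig summand and is thus constrained beyond the generic placement of a vertex on the tree) or an indecomposability argument for the summand $C$ itself.
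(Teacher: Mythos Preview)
Your approach is exactly the paper's: shift by $r'+t$ and apply Lemma \ref{le:twononzerocoh}, checking its four hypotheses via Corollary \ref{cor:middledegree} together with assumption (iii) of the theorem. The paper's own proof is just as brief and does not explicitly address either of the points you flag. The condition $V'\ne V_{t-1}$ that you call the main obstacle is never a genuine issue: in every application $V'$ lies in a different Harish-Chandra series from the branch $V_0,\ldots,V_{t-1}$ (typically $V'$ is cuspidal while the line is a principal-series or $D_4$-series branch), so the last clause of Lemma \ref{le:twononzerocoh} applies directly. Likewise the vanishing $\H^i(kC)=0$ for $r\le i<r'$, where your ``indecomposability'' sketch is not really an argument (indecomposability of $C$ is not assumed), is harmless in practice: in the applications one always has either $r'=r$, so that $C^{\redu}$ already sits in degrees $\ge r'$, or the block has no cuspidal simple modules at all, so that the cuspidal torsion of $\H^*(C)$ vanishes outright.
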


\begin{proof}
We apply Lemma \ref{le:twononzerocoh} to $C[r'+t]$.
Assumptions (i), (ii) and (iv) of the lemma follow from the assumptions of the
theorem. By Corollary \ref{cor:middledegree}, we have
$\Hom_{D^b(A)}(C,M[i])=0$ for $i>r$ and $M$ cuspidal. If $M$ is simple
non-cuspidal and not in $\{S_0,\ldots,S_{t-1}\}$, then $M$ does not
occur as a composition factor of $\H^i(kC)$ for $i>r'$. This shows
that Assumption (iii) of the lemma holds.
The theorem follows.
\end{proof}

Assumption (iii) in Theorem \ref{thm:coxeter} may look rather difficult to check if only part of 
the tree is known. However, it will be satisfied for most of the Brauer trees 
we will consider, thanks to the following proposition.

\begin{prop}\label{prop:assumptionsforcoxeter}
Let $V$ be a simple unipotent $KA$-module.
Assume that
\begin{itemize}
\item $\ell \nmid |\bT^{cF}|$,
\item
$V$ is a leaf of $T$,  i.e. $V$ remains irreducible after $\ell$-reduction,
\item the Harish-Chandra branch of $V$ has at least $t$ edges, and
\item $\ell \nmid |L_I|$ for all $F$-stable subsets $I\subsetneq S$.
\end{itemize}
Then assumptions (i) and (iii) in Theorem \ref{thm:coxeter} are satisfied with
$C = b\tRgc(\rX(c),\mathcal{O})$ and $V_t,\ldots,V_0 = V$ being the
Harish-Chandra branch ending at the leadf $V$.
\end{prop}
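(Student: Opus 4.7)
The plan is to verify hypotheses (i) and (iii) of Theorem \ref{thm:coxeter} directly; both arguments are essentially assemblies of results already established earlier in the paper.

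For (i), I would apply Proposition \ref{pr:torsionCoxeter} twice. Fix a proper $F$-stable subset $I \subsetneq S$: the assumption $\ell \nmid |L_I|$ combined with part (i) of that proposition gives that $\Hc^*(\rX(c_I),\OC)$ is torsion-free, and then part (ii) yields that ${}^*R_{L_I}^G$ annihilates the torsion of $\Hc^*(\rX(c),\OC)$. A simple $kG$-module is cuspidal iff it is killed by every proper Harish-Chandra restriction, so every composition factor of the torsion of $\Hc^*(\rX(c),\OC)$ is cuspidal; since $C$ is a direct summand of $b\tRgc(\rX(c),\OC)$ in $\Ho^b(\OC G\mmod)$, the same is true of the torsion of $\H^*(C)$.

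For (iii), let $(L_I, X)$ denote the Harish-Chandra source of $V = V_0$. By Geck's theorem recalled in \S\ref{se:properties}, the full subgraph of $T$ supported on characters in the HC series $(L_I,X)$ is a line. Since $V$ is a leaf of $T$, it must be an endpoint of this line, so the assumption that the Harish-Chandra branch has at least $t$ edges produces the required subgraph $V_0, V_1, \ldots, V_t$ joined by edges $S_0, \ldots, S_{t-1}$ all contained in this line. Each $V_j$ is unipotent, hence non-exceptional by Theorem \ref{th:structureblocks}, and each $S_i$ lies in the HC series $(L_I, X)$, hence is non-cuspidal.

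The core remaining step is to show that every non-cuspidal edge $N$ of $T$ incident to some $V_i$ with $0 \leq i \leq t-1$ already lies on the HC line, forcing $N \in \{S_{i-1}, S_i\}$ and hence giving the connected-component condition. Let $I' \subsetneq S$ be minimal $F$-stable with ${}^*R_{L_{I'}}^G(N) \neq 0$; it is proper since $N$ is non-cuspidal, so $\ell \nmid |L_{I'}|$ and Proposition \ref{pr:cuspedges} applies. Its proof exhibits an $\OC L_{I'}$-lattice $M$ with $KM$ cuspidal, such that $N$ is a quotient of $R_{L_{I'}}^G(M)$ and $V_i$ is a constituent of $KR_{L_{I'}}^G(M)$; consequently $N$ and $V_i$ share the Harish-Chandra source $(L_{I'}, KM)$, and since $V_i$ lies in a unique HC series this source must be $(L_I, X)$ up to conjugacy. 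Thus $N$ is an edge of the HC line, as required. The main obstacle is exactly this last step: Proposition \ref{pr:cuspedges} as stated only matches the minimal Levi for $N$ and its vertices, and one has to inspect its proof to upgrade the conclusion to the cuspidal character as well; once that is in hand, the connected-component claim follows formally, and Theorem \ref{thm:coxeter} becomes applicable.
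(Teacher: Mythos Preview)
Your argument is correct and follows the same route as the paper, only spelled out in more detail: the paper's proof is the two-line citation ``Assumption (i) is satisfied by Proposition~\ref{pr:torsionCoxeter}, while assumption (iii) is satisfied by Corollary~\ref{co:HCedges}.'' Your treatment of (iii) is precisely the content of Corollary~\ref{co:HCedges} unwound, and your observation that one must look inside the proof of Proposition~\ref{pr:cuspedges} to match the cuspidal character (not just the Levi) is exactly the implicit step behind that corollary.
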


\begin{proof}
Assumption (i) is satisfied by Proposition \ref{pr:torsionCoxeter},
while assumption (iii) is satisfied by Corollary \ref{co:HCedges}.
\end{proof}

\begin{cor}\label{co:Coxetertriv}
Let $b$ be the block idempotent of the principal block of $\mathcal{O}G$. 
Assume that $\ell \nmid |\bT^{cF}|$ and
$\ell \nmid |L_I|$ for all $F$-stable subsets $I\subsetneq S$.

Let $T'$ be the full subgraph of $T$ with vertices at distance at most $r+1$
of the trivial character.

\begin{itemize}
\item
The real stem of $T'$ is a line with leaves $1$ and the non-unipotent vertex
\item the edge $\St_\ell$ has vertices $\St$ and the non-unipotent vertex
\item any
non-real vertex of $T'$ is connected to $\St$ by an edge
\item $V=K\Hc^r(\rX(c),\mathcal{O})_{(q^r)}$ is a non-real
simple $KGb$-module
and the edge connecting $V$ and
$\St$ comes between the one connecting $\St$ to a unipotent vertex
and $\St_\ell$ in the cyclic ordering of edges at $\St$.
\end{itemize}

\begin{center}
 \begin{pspicture}(10,4.5)
  \psset{linewidth=1pt}

 \cnode[fillstyle=solid,fillcolor=black](0,2){5pt}{AA}
 \cnode(0,2){8pt}{A}
 \cnode(0.4,2.7){0pt}{A1}
 \cnode(0.4,1.3){0pt}{A2}
 \cnode(-0.6,2.7){0pt}{A3}
 \cnode(-0.6,1.3){0pt}{A4}
 \cnode(-0.9,2){0pt}{A5}
  \cnode[linestyle=none](1.5,2){8pt}{B}
 \cnode(2.1,2.7){0pt}{B1}
 \cnode(2.1,1.3){0pt}{B2}
 \cnode(0.9,2.7){0pt}{B3}
 \cnode(0.9,1.3){0pt}{B4}
 \cnode[linestyle=none](1.5,3.5){8pt}{BV}
 \cnode[linestyle=none](1.5,0.5){8pt}{BV2}
 \cnode(2.1,4.2){0pt}{V1}
 \cnode(0.9,4.2){0pt}{V2}
 \cnode(2.1,-0.2){0pt}{V3}
 \cnode(0.9,-0.2){0pt}{V4}
 \cnode(3,2){5pt}{C}
  \cnode(6.5,2){5pt}{C2}
 \cnode(8,2){5pt}{D}
 \cnode[linestyle=none](9.5,2){5pt}{E}

  \ncline{A}{B} \naput{$\mathrm{St}_\ell$} 
  \ncline{B}{C} \ncput[npos=-0.25]{$\mathrm{St}$}
  \ncline[linestyle=dashed]{C}{C2}
  \ncline{C2}{D}  
  \ncline{D}{E} \naput{$k$}   \ncput[npos=1.1]{$\vphantom{\Big(}1$} 
  \ncline{B}{BV}\ncput[npos=1.3]{$V$}
  \ncline{B}{BV2} \ncput[npos=1.3]{$V^*$}
  \ncline[linestyle=dashed]{BV}{V1}  
  \ncline[linestyle=dashed]{BV}{V2}  
  \ncline[linestyle=dashed]{BV2}{V3}  
  \ncline[linestyle=dashed]{BV2}{V4}  
  \ncline[linestyle=dashed]{A}{A1}  
  \ncline[linestyle=dashed]{A}{A2}  
  \ncline[linestyle=dashed]{A}{A3}  
  \ncline[linestyle=dashed]{A}{A4}
  \ncline[linestyle=dashed]{A}{A5}  
  \ncline[linestyle=dashed]{B}{B1}  
  \ncline[linestyle=dashed]{B}{B2}  
  \ncline[linestyle=dashed]{B}{B3}  
  \ncline[linestyle=dashed]{B}{B4}

\end{pspicture}
\end{center}
\end{cor}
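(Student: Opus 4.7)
The plan is to combine Proposition \ref{pr:principalline}, Theorem \ref{thm:coxeter} (via Proposition \ref{prop:assumptionsforcoxeter}) and Proposition \ref{gelfandgraevxbar}(iii) into a direct argument. First I would set up the real stem. By Proposition \ref{pr:principalline}, the full subgraph $L$ of $T$ on the vertices at distance at most $r$ from $1$ is a line $1=V_0-V_1-\cdots-V_r=\St$, and the proposition immediately preceding Proposition \ref{pr:principalline} places $\St$ at one end of the cuspidal edge $\St_\ell$ whose other endpoint is the non-unipotent vertex. A tree-theoretic observation then shows $1$ must be a leaf of $T$ (otherwise its extra neighbour would be at distance $1\le r$ from $1$ and hence already in $L$), and for $0<i<r$ any neighbour of $V_i$ is at distance $i\pm 1\le r$ from $1$ and therefore lies in $L$. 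Consequently only $\St$ can have neighbours outside $L$, so every vertex of $T'$ outside $L$ is adjacent to $\St$.

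Next I would apply Theorem \ref{thm:coxeter} to the direct summand $C=b\tRgc(\rX(c),\OC)_{(q^r)}$ with parameters $r'=r$ and $t=r$, taking $L$ as the distinguished subgraph with $V_0=1$ and $V_t=\St$. The hypothesis on the cohomology of $KC$ is given, while the remaining hypotheses (i) and (iii) are supplied by Proposition \ref{prop:assumptionsforcoxeter} applied to the leaf $V_0=1$: the trivial character is a leaf of $T$ that stays irreducible modulo $\ell$, its Harish-Chandra branch is $L$ and has $r=t$ edges, and the conditions $\ell\nmid|\bT^{cF}|$ and $\ell\nmid|L_I|$ for all proper $F$-stable $I$ are part of the corollary's assumptions. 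Theorem \ref{thm:coxeter} then produces an edge $S_t$ connecting $\St$ to a simple unipotent module $V$, so $V$ is a vertex of $T'$ adjacent to $\St$.

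To pin down the cyclic position of $S_t$ at $\St$, I would use the final assertion of Theorem \ref{thm:coxeter}: the composition factors of the torsion of $\H^{r+1}(C)$ correspond exactly to the edges strictly between $S_{t-1}$ (the edge $\St-V_{t-1}$, to a unipotent vertex) and $S_t$ (the edge $\St-V$) in the cyclic order at $\St$. By Proposition \ref{gelfandgraevxbar}(iii), $\St_\ell$ does not appear in the torsion of $\Hc^*(\rX(c),\OC)$, and hence not in that of $\H^{r+1}(C)$; therefore $\St_\ell$ is not strictly between $S_{t-1}$ and $S_t$, which is precisely the statement that $S_t$ lies between $S_{t-1}$ and $\St_\ell$ in the cyclic order at $\St$. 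For the reality assertion, I would argue that the Frobenius eigenvalues on the $V$-isotypic component of $\Hc^r(\rX(c),K)$ take the form $\lambda_V q^{\delta j}$ with $\lambda_V$ a root of unity depending only on $V$, so the congruence $\lambda_V q^{\delta j}\equiv q^r\pmod\ell$ forced by $V$ sitting in the $q^r$-eigenspace in the bottom degree $r=\ell(c)$ is incompatible with $\lambda_V=\pm 1$, ruling out $V$ from the real stem.

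The most delicate step is the reality argument: one must exploit both the eigenvalue congruence and the specific bottom-degree position to exclude $\lambda_V=\pm 1$. The same Frobenius-eigenvalue mechanism then also rules out additional real unipotent vertices at distance $r+1$ from $1$, which is what ensures that the real stem of $T'$ is indeed only the line from $1$ through $\St$ to the non-unipotent vertex, with no extra real branches at $\St$. This last verification---the uniqueness of the real stem described in the picture---will be the main obstacle, as it requires examining the contributions of the other generalized $F^\delta$-eigenspaces on $\Rgc(\rX(c),\OC)$ beyond the distinguished one producing $V$.
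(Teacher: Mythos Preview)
Your approach is essentially the one the paper indicates (combining Proposition~\ref{pr:principalline}, Theorem~\ref{thm:coxeter} via Proposition~\ref{prop:assumptionsforcoxeter}, and Proposition~\ref{gelfandgraevxbar}(iii)), and your first two paragraphs as well as the cyclic-ordering argument using Proposition~\ref{gelfandgraevxbar}(iii) are correct.

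The weak spot is your treatment of reality. The eigenvalue argument you sketch is not a proof: the congruence $\lambda_V q^{\delta j}\equiv q^r\pmod\ell$ is perfectly compatible with $\lambda_V=\pm 1$ for suitable $j$, so nothing in what you wrote excludes it. You also flag the ``uniqueness of the real stem'' as the main obstacle and propose analysing further eigenspaces, but none of that is needed. The reality statements follow purely from the tree structure you already established. Namely, the real stem of $T$ is a line (\S\ref{se:Brauertreesblocks}), and $\St$ lies on it with the two neighbours $V_{r-1}$ (a principal-series character, hence real with $\lambda=1$) and the non-unipotent vertex. A vertex on a line has at most two neighbours on that line, so every other neighbour of $\St$ in $T$ is non-real. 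Since you have already shown that every vertex of $T'\setminus L$ is adjacent to $\St$, this gives at once: (a) the real vertices of $T'$ are exactly $L\cup\{\text{non-unipotent vertex}\}$, a line with leaves $1$ and the non-unipotent vertex; (b) every non-real vertex of $T'$ is adjacent to $\St$; and (c) $V$ is non-real, since $V$ is a neighbour of $\St$ distinct from $V_{r-1}$ (as $V$ occurs in $\Hc^r(\rX(c),K)$, hence is cuspidal or $\St$, while $V_{r-1}$ is a non-Steinberg principal-series character) and distinct from the non-unipotent vertex (as $V$ is unipotent). Replace your eigenvalue paragraph with this tree-theoretic argument and the proof is complete.
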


\begin{proof}
The description of Frobenius eigenvalues on the cohomogy of $\rX(c)$
in \cite[(7.3)]{Lu} shows that
$K\Hc^i(\rX(c),\mathcal{O})_{(q^r)}=0$ for $i\not\in\{r,2r\}$ and
$V=K\Hc^r(\rX(c),\mathcal{O})_{(q^r)}$ is simple, under our assumptions
on $\ell$.
The result follows now from 
Theorem \ref{thm:coxeter}, Corollary \ref{prop:assumptionsforcoxeter}
and Propositions \ref{pr:principalline} and \ref{gelfandgraevxbar}(iii).
\end{proof}

By Remark \ref{rmk:torsion}, the previous results have a counterpart for
the compactification.

\begin{prop}\label{prop:resultsxbar}
Lemma \ref{lem:onedegree}, Theorem \ref{thm:coxeter}, Proposition
\ref{prop:assumptionsforcoxeter} and Corollary \ref{co:Coxetertriv}
hold with $\overline{\rX}(c)$ instead of $\rX(c)$ if we replace the assumption
$\ell \nmid |\bT^{cF}|$ by $\ell \nmid |\bT^{vF}|$ for all $v \leq c$.
\end{prop}

\begin{rmk}
We have $|\bT^{cF}| = (q+1)(q^6-q^3+1) = \Phi_2(q) \Phi_{18}(q)$ for $\bG$
simple of type $E_7(q)$, and $|\bT^{cF}| = q^8 + q^7 - q^5 - q^4 - q^3 + q + 1
= \Phi_{30}(q)$ for $\bG$ simple of type $E_8(q)$. In particular, when $\ell$ is good and $d\notin \{2,h\}$, the condition $\ell \nmid |\bT^{cF}|$ will always be satisfied for $E_7(q)$ and $E_8(q)$.
\end{rmk}

\smallskip

\begin{rmk}
We can easily read off the cohomology of a complex $C$ as in
Theorem \ref{thm:coxeter} from the Brauer tree. As a consequence of
Theorem \ref{th:maintheorem}, one can check
that the cohomology of $C$ is concentrated in degrees $r$ and $r+t$ (and irreducible in degree $r+t$), and is torsion-free. 
Other calculations in \S\ref{setrees} give a strong evidence that the cohomology of a variety associated to a Coxeter element is always torsion-free. By \cite{BR2} this holds for groups of type $A$. Such a statement does not hold for more general Deligne--Lusztig varieties: if
$\Hc^{2\ell(w)-1}(\rX(w),K) =0$ and $\ell$ divides $|\bT^{wF}|$, then
$\Hc^{2\ell(w)-1}(\rX(w),k) = \H^1(\rX(w),k)^*$ is non-zero since the
connected Galois covering $\rY(\dot w) \twoheadrightarrow \rX(w)$ yields
non-trivial connected abelian $\ell$-coverings. Therefore by the universal
coefficient theorem, $\Hc^{2\ell(w)-1}(\rX(w),\mathcal{O})$ is a torsion
module. However, one can ask whether the property $\ell \nmid |\bT^{wF}|$
forces the cohomology to be torsion-free (see also Proposition
\ref{xbartorsion}). 
\end{rmk}

\subsection{Summary of the algebraic methods\label{sec:arguments}}
We summarize here some facts and arguments about Brauer trees that we shall use throughout \S\ref{setrees}. We consider a unipotent block
with a cyclic defect group and non-trivial automizer. We also assume that
the block is real (this is the case for all the unipotent blocks we will
consider).
\renewcommand{\descriptionlabel}[1]{\hspace{\labelsep}{\bf#1}}
\begin{description}[leftmargin=7mm]

  \item[(Parity)] The distance between two unipotent vertices is even if and
only if their degree are congruent modulo $\ell$.

  \item[(Real stem)] The collection of unipotent vertices $V$ with
$\lambda_V=\pm 1$, together with the non-unipotent vertex,
form a subgraph of the Brauer tree in the shape of a line, called the \emph{real stem}. Taking duals of characters corresponds to a reflection of the tree in
the real stem.

  \item[(Hecke)] The union of the full subgraphs of $T$ obtained by considering
unipotent characters in a given Harish-Chandra series is a collection of lines,
which is known.

\item[(Degree)] The dimension of the simple module corresponding to an edge is the alternating sum of the degrees of the vertices in a minimal path from the edge to a leaf. This dimension is a positive integer, and this can be used to show that certain configurations are not possible. Broadly speaking, the effect of this condition is to force the degrees of the unipotent characters, as polynomials in $q$, to increase towards the non-unipotent node.

\item[(Steinberg)] The vertices of the edge $\St_\ell$ are $\St$ and the
non-unipotent vertex. 
If the proper standard Levi subgroups of $G$ are $\ell'$-groups,
then the full subgraph of $T$
whose vertices are at distance at most $r$ from $1$ is a line whose leaves are
$1$ and $\St$ and the edge $\St_\ell$ is cuspidal.
\end{description}

Our strategy is to first study the `mod-$\ell$ generalized
eigenspaces' of $F$ on
the complex of cohomology of a Coxeter Deligne--Lusztig variety (or its
compactification), for those
eigenvalues corresponding to unipotent
cuspidal $KG$-modules. This gives information about
the location of the corresponding vertex with respect to the real stem.

A second step is required if there are cuspidal unipotent
$KG$-modules in the block
that do not occur in the cohomology of a Coxeter Deligne--Lusztig variety.
In that case, we consider the eigenspaces in the complex of cohomology
of a Deligne--Lusztig variety associated to a $d$-regular element, which is
minimal for the property that this module occurs in the cohomology.

\section{Determination of the trees\label{setrees}}
\label{sec:determination}

We now determine the Brauer trees of the blocks from Table \ref{tab:unknown}.
The edges corresponding
to cuspidal simple modules will be drawn as double lines.

\smallskip

Throughout this section, $A$ denotes a block of $\mathcal{O}G$ with cyclic
defect and $b$ is the corresponding block idempotent.

\smallskip

We shall start with the case of exceptional groups of type $E_7$
and $E_8$, for which $\delta = 1$. If $G$ is a standard Levi of a
simple group of type $E_8$, it follows from Lusztig's classification
that a cuspidal unipotent character $\rho$ of $G$ is 
uniquely determined by the eigenvalue of $F$ on the $\rho$-isotypic 
part of the cohomology of the various Deligne--Lusztig varieties. 
Following the convention in  \Chevie{} \cite{Mi15}, we will denote by $G[\alpha]$ 
a cuspidal simple unipotent $KG$-module such that the eigenvalues
of $F$ in the $G[\alpha]$-isotypic component of $\Hc^*(\rX(w),K)$ are
in $q^{\frac{1}{2} \bbZ} \alpha$ for any $w\in W$, with the exception
of the cuspidal unipotent character of $D_4(q)$ which will be denoted
by $D_4$ and not $D_4[-1]$. The choice of a square root of $q$ is actually 
only  needed when considering the two cuspidal characters of $E_7(q)$.
The roots of unity $\alpha$ which occur has always order $6$ or less. 

\smallskip

For the $\Phi_d$-blocks we will study it will be enough to consider the
following situations.
\begin{itemize}
 \item If $3\mid d$ (resp.  $4\mid d$, $5\mid d$), we denote by $\theta$ (resp. 
   $\mathrm{i}$, $\eta$)
    the unique third (resp. fourth, fifth) root of unity in $\OC$ whose image in $k$
    is $q^{d/3}$ (resp. $q^{d/4}$, $q^{d/5}$). The corresponding cuspidal characters are
    $E_6[\pm \theta]$, $E_6[\pm \theta^2]$, $E_8[\pm \theta]$ and $E_8[\pm \theta^2]$
    (resp. $E_8[\pm \mathrm{i}]$, $E_8[\eta^j]$ for $j =1,\ldots, 4$). 
 \item If $d=2e$ with $e$ odd, we fix a square root $\sqrt q$ of $q$ in 
   $\OC^\times$ and we denote by $\mathrm{i}$ the unique fourth root of unity in $\OC$
   whose image in $k$ is $(\sqrt q)^{e}$. The corresponding cuspidal characters are
   $E_7[\pm \mathrm{i}]$. 
\end{itemize}

\subsection{Groups of type $E_7$\label{sec:E7}}

For groups of type $E_7$, we need to consider the principal $\Phi_d$-blocks for $d=9, 14$ and the $\Phi_{10}$-block corresponding to the $d$-cuspidal pair $({}^2A_2(q).(q^5+1),\phi_{21})$.

\subsubsection{$d=14$} \label{se:E7d14}

In that case, the proper Levi subgroups of $G$ are $\ell'$-groups.
Let us determine the Brauer tree of the principal $\Phi_{14}$-block of
$E_7(q)$. Using (Hecke), (Degree) and (Steinberg) arguments, we obtain
the real stem as
shown in Figure \ref{14E7} (see the Appendix). The difficult part is to
locate the two complex conjugate cuspidal unipotent characters.
Let $C = b\tRgc(\rX(c),\mathcal{O})_{(-1)}$ be the generalized
`$-1\pmod\ell$-eigenspace' of $F$.
By \cite[Table 7.3]{Lu}, we have
$$KC \, \simeq \, (E_7[\II])[-7] \oplus K[-14],$$
where $E_7[\II]$ is defined as the unipotent cuspidal $KG$-module that 
appears with an eigenvalue of $F$ congruent to $-1$ modulo $\ell$
in $\Hc^7(\rX(c),K)$.

Corollary \ref{co:Coxetertriv} shows
 that $E_7[\II]$ is connected to $\mathrm{St}$ and that
it is the first edge coming after the edge $S_6$ in the cyclic ordering
of edges containing $\St$. This completes the determination of the tree.

\smallskip
Let us describe more explicitely the minimal representative of the complex
$C$.
 Let $k= S_0, S_1, \ldots, S_6$ be the non-cuspidal modules forming the path from the characters $1$ (the character of the trivial $KG$-module $K$) to $\mathrm{St}$ in the tree (see Figure \ref{E7d14}).

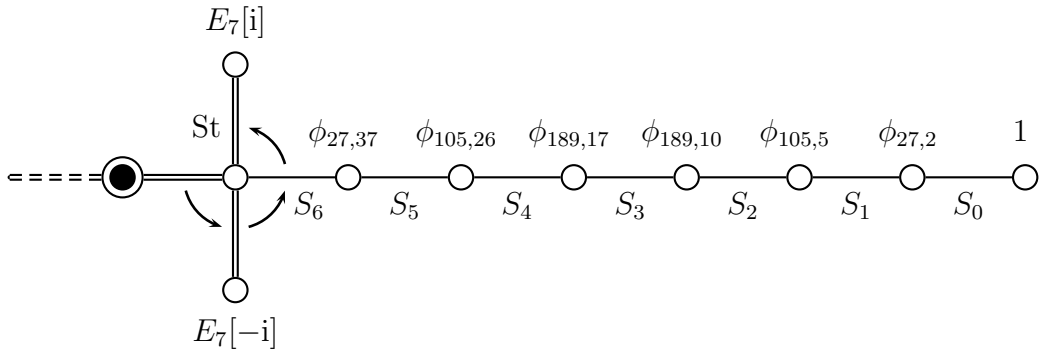
\begin{figure}[h] 
\begin{center}
\begin{pspicture}(11,4.4)

  \cnode[fillstyle=solid,fillcolor=black](0,2.2){5pt}{A2}
    \cnode(0,2.2){8pt}{A}
  \cnode(1.5,2.2){5pt}{B}
  \cnode(3,2.2){5pt}{C}
  \cnode(4.5,2.2){5pt}{D}
  \cnode(6,2.2){5pt}{E}
  \cnode(7.5,2.2){5pt}{F}
  \cnode(9,2.2){5pt}{G}
  \cnode(10.5,2.2){5pt}{H}
  \cnode(12,2.2){5pt}{K}  
  \cnode(1.5,0.7){5pt}{I}
  \cnode(1.5,3.7){5pt}{J}
  \cnode(-1.5,2.2){0pt}{L}

  \ncline[nodesep=0pt,doubleline=true]{A}{B}\naput[npos=0.8]{$\vphantom{\Big(} \mathrm{St}$}
    \ncline[nodesep=0pt]{B}{C}\naput[npos=1.1]{$\vphantom{\Big(}\phi_{27,37}$}\nbput[npos=0.7]{$S_6$}
  \ncline[nodesep=0pt]{C}{D}\naput[npos=1.1]{$\vphantom{\Big(}  \phi_{105,26}$}\nbput{$S_5$}
  \ncline[nodesep=0pt]{D}{E}\naput[npos=1.1]{$\vphantom{\Big(}\phi_{189,17}$}\nbput{$S_4$}
  \ncline[nodesep=0pt]{E}{F}\naput[npos=1.1]{$\vphantom{\Big(} \phi_{189,10}$}\nbput{$S_3$}
  \ncline[nodesep=0pt]{F}{G}\naput[npos=1.1]{$\vphantom{\Big(} \phi_{105,5}$}\nbput{$S_2$}
  \ncline[nodesep=0pt]{G}{H}\naput[npos=1.1]{$\vphantom{\Big(} \phi_{27,2}$}\nbput{$S_1$}
  \ncline[nodesep=0pt,doubleline=true]{B}{I}\ncput[npos=1.65]{$\vphantom{\Big(} E_7[-\II] $}
  \ncline[nodesep=0pt,doubleline=true]{B}{J}\ncput[npos=1.65]{$\vphantom{\Big(} E_7[\II] $}
  \ncline[nodesep=0pt]{H}{K}\naput[npos=1.1]{$\vphantom{\Big(} 1 $}\nbput{$S_0$}
  \ncline[linestyle=dashed,nodesep=0pt,doubleline=true]{A}{L}
  \psellipticarc[linewidth=1pt]{->}(1.5,2.2)(0.7,0.7){15}{75}
  \psellipticarc[linewidth=1pt]{->}(1.5,2.2)(0.7,0.7){195}{255}
  \psellipticarc[linewidth=1pt]{->}(1.5,2.2)(0.7,0.7){285}{345}

\end{pspicture}
\end{center}
\caption{Right-hand side of the Brauer tree of the principal $\Phi_{14}$-block of $E_7(q)$} 
\label{E7d14}
\end{figure}

\smallskip

The complex $b\tRgc(\rX(c),k)_{(-1)}^{\redu}$ is given as follows:
$$ 0 \longrightarrow \begin{array}{c} \textcolor{violet}{E_7[\II]} \\ \textcolor{violet}{\St_\ell} \\ \textcolor{violet}{E_7[-\II]} \\ \textcolor{violet}{S_6} \\ \fbox{$E_7[\II]$} \end{array} \longrightarrow \begin{array}{c} \textcolor{purple}{S_6} \\ \textcolor{violet}{E_7[\II]} \hphantom{AAAA} \\ \textcolor{violet}{\St_\ell} \qquad \textcolor{purple}{S_5} \\ \textcolor{violet}{E_7[-\II]} \hphantom{AAAA} \\ \textcolor{violet}{S_6} \end{array}  \hskip -4mm \longrightarrow \begin{array}{c} \textcolor{violet}{S_5} \\\textcolor{purple}{S_6} \quad \textcolor{violet}{S_4}  \\ \textcolor{purple}{S_5} \end{array} \longrightarrow \begin{array}{c} \textcolor{purple}{S_4} \\ \textcolor{violet}{S_5} \quad \textcolor{purple}{S_3}  \\ \textcolor{violet}{S_4} \end{array}  \longrightarrow \cdots \longrightarrow  \begin{array}{c} \fbox{$k$} \\ \textcolor{violet}{S_1} \\  \textcolor{violet}{k} \end{array} \longrightarrow 0.$$

\begin{rmk}
\label{re:2E6}
 This argument applies to many other trees, especially to those associated to the principal $\Phi_d$-block when $d$ is the largest degree of $W$ distinct from the Coxeter number
(in that case the assumptions on $\ell$ in Proposition \ref{prop:assumptionsforcoxeter} are satisfied). This shows for example that the Brauer tree of the principal $\Phi_{12}$-block of ${}^2 E_6(q)$ given in \cite{HL} is valid without any restriction on $q$. It is also worth mentioning that it gives not only the planar embedding but also the labelling of the vertices with respect to Lusztig's classification of unipotent characters (in terms of eigenvalues of Frobenius). In the previous example $\mathrm{Ext}_{kG}^1(E_7[\II],\St_\ell) \neq 0$ whereas $\mathrm{Ext}_{kG}^1(E_7[-\II],\St_\ell) = 0$. 
\end{rmk}

\subsubsection{$d=9$}
It follows from Lemma \ref{le:relativeproj} that $A$
is Harish-Chandra
projective relatively to the principal block of $E_6(q)$, hence
$A$ has no cuspidal simple modules.

The real stem gives most of the Brauer tree of the principal $\ell$-block (see Figure \ref{9E7}). It remains to locate the pairs of
complex conjugate characters $\{E_6[\theta]_{\varepsilon},E_6[\theta^2]_{\varepsilon}\}$ and 
 $\{E_6[\theta]_{1},E_6[\theta^2]_{1}\}$.
To this end we use the homological information contained in the cohomology of the Coxeter variety $\rX(c)$. 
Let $I$ be a proper subset of $S$. If $\bL_I$ is not a group of type $E_6$, then $L_I$ is an $\ell'$-group and the cohomology of $\rX(c_I)$ is torsion-free by \cite[Proposition 3.1]{Du4}. This remains true when $\bL_I$ has type $E_6$.

\begin{lem}\label{notorsionE6d9} If $q$ has order $9$ modulo $\ell$, the cohomology of the Coxeter variety in a simple group of type $E_6$ is torsion-free.
\end{lem}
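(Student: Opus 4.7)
The plan is to reduce the torsion of $\Hc^*(\rX(c),\OC)$ to cuspidal composition factors, and then eliminate the candidates one by one using an eigenspace analysis. First I would check that $\ell\nmid|L_I|$ for every proper $F$-stable $I\subsetneq S$: the reflection degrees of $W(E_6)$ are $\{2,5,6,8,9,12\}$, no proper parabolic subsystem of $E_6$ contains $9$ among its degrees, and since $d=9$ forces $\ell\ge 19$, any other cyclotomic $\Phi_e$ with $\ell\mid\Phi_e(q)$ has $e=9\ell^j$ with $\deg\Phi_e>6=\mathrm{rank}(\bG)$ for $j\ge 1$, and so cannot divide $|L_I|$ either. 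Proposition~\ref{pr:torsionCoxeter}(ii) then gives that the torsion of $\Hc^*(\rX(c),\OC)$ is annihilated by every proper Harish--Chandra restriction and is therefore cuspidal. Under $d=9$, the only unipotent $\ell$-block of $G$ with positive defect is the principal $\Phi_9$-block (\S\ref{se:CaEn}); its cuspidal simple $kG$-modules are $\St_\ell$ (excluded from the torsion by Proposition~\ref{gelfandgraevxbar}(iii)) and the $\ell$-reductions $\overline{E_6[\theta]}$, $\overline{E_6[\theta^2]}$ of the two cuspidal unipotent characters, which remain simple since each is a leaf of the Brauer tree (its $\Phi_9$-Harish--Chandra series being a singleton).

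Since $|\bT^{cF}|=\Phi_3(q)\Phi_{12}(q)$ is coprime to $\ell$, the complex $C:=\tRgc(\rX(c),\OC)$ consists of projective $\OC G$-modules and decomposes as $bC=\bigoplus_\lambda bC_{(\lambda)}$ along mod-$\ell$ generalized $F$-eigenspaces, where $b$ is the principal-block idempotent. From Lusztig's explicit description \cite[Table~7.3]{Lu}, every unipotent character $\chi$ of the $\Phi_9$-block occurs in a single cohomological degree $i_\chi\in[r,2r]$ with $F$-eigenvalue of the form $\lambda_\chi q^{m(\chi)}$, where $\lambda_\chi\in\{1,\theta,\theta^2\}$ and $m(\chi)\in\{0,\ldots,r\}$. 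Two such eigenvalues agree modulo $\ell$ only when $\lambda_\chi=\lambda_{\chi'}$ and $9\mid m(\chi)-m(\chi')$; since $r<9$ this forces $m(\chi)=m(\chi')$, and hence $i_\chi=i_{\chi'}$. Each $\H^*(KbC_{(\lambda)})$ is therefore concentrated in a single degree $i_\lambda$, and for the two cuspidal cases $\lambda\in\{\theta q^3,\theta^2 q^3\}$ we have $i_\lambda=r$; Lemma~\ref{lem:onedegree} then shows that the corresponding eigenspaces have projective cohomology in degree $r$ and are hence torsion-free.

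The main obstacle is the remaining eigenspaces, where $i_\lambda>r$ and Lemma~\ref{lem:onedegree} does not apply as stated. I would dispose of them by extending its proof: combining both directions of Corollary~\ref{cor:middledegree} (forbidding cuspidal quotients of $\H^{>i_\lambda}(bC_{(\lambda)})$ and cuspidal submodules of $\H^{<i_\lambda}(bC_{(\lambda)})$) with the Frobenius shift of Lemma~\ref{le:shiftFrob}, which pins the possible $F$-eigenvalues of $\overline{E_6[\theta]}$, $\overline{E_6[\theta^2]}$ on any appearance in $\Hc^*(\rX(c),k)$ to the set $\{\theta q^3,\theta^2 q^3\}$ modulo $\ell$. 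Any cuspidal composition factor in a $bC_{(\lambda)}$ with $\lambda$ outside that set would therefore contradict the eigenspace classification, so no residual cuspidal torsion survives. Summing over $\lambda$ yields the torsion-freeness of $\Hc^*(\rX(c),\OC)$.
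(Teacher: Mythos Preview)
Your reduction to cuspidal torsion and the block decomposition are fine, but the central claim that each mod-$\ell$ eigenspace $bC_{(\lambda)}$ has $K$-cohomology concentrated in a single degree is false, and this is exactly the case that carries the content of the lemma. Since $\theta$ is chosen with $\theta\equiv q^3\pmod\ell$, the Frobenius eigenvalue $\theta q^3$ of $E_6[\theta]$ (which sits in degree $6$) coincides modulo $\ell$ with $q^6$, the eigenvalue of the trivial character (which sits in degree $12$). Your assertion that equal mod-$\ell$ eigenvalues force $\lambda_\chi=\lambda_{\chi'}$ fails precisely because the third roots of unity are powers of $q$ modulo $\ell$. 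Thus $K\H^*(bC_{(q^6)})\simeq E_6[\theta][-6]\oplus K[-12]$ lives in two degrees, Lemma~\ref{lem:onedegree} does not apply, and your ``main obstacle'' paragraph does not cover this case either, since you are assuming a single degree $i_\lambda$. The paper treats this eigenspace separately using Corollary~\ref{co:Coxetertriv}, which is tailored to the two-degree situation with the trivial character in top degree.

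Your fallback argument via Lemma~\ref{le:shiftFrob} is also not sound as written. That lemma gives an isomorphism between eigenspaces in the \emph{stable} category; it does not tell you which simple composition factors occur in the cohomology of a given eigenspace, so it cannot ``pin'' $\overline{E_6[\theta^j]}$ to the eigenvalues $\theta^j q^3$. For the eigenspaces $\lambda\in\{q,\ldots,q^5\}$ the paper instead splits $C_\lambda$ as $b_\lambda C_\lambda\oplus(1-b_\lambda)C_\lambda$, where $b_\lambda$ is the defect-zero block of the unique (non-cuspidal) principal-series constituent: the first summand has no cuspidal composition factors and hence no torsion, while the second has vanishing $K$-cohomology, so Lemma~\ref{lem:onedegree} applies directly to it.
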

\begin{proof} Denote by $\rX$ the Coxeter variety of $E_6(q)$. By 
Proposition \ref{pr:torsionCoxeter}, the torsion of $\Hc^*(\rX,\OC)$
is cuspidal. Let $\lambda\in k^\times$ and let $C_\lambda=
R\Gamma_c(\rX,\OC)_{(\lambda)}$.

Assume that $\lambda\not\in\{1,q^6\}$. The
cohomology of $\H^*(KC_\lambda)$ is
an irreducible module $V$ corresponding to a block idempotent 
$b_\lambda$ of defect zero.

If $V$ is cuspidal, then it occurs in degree $6$ in $\H^*(KC_\lambda)$,
hence $\H^*(C_\lambda)$ is torsion-free by Lemma \ref{lem:onedegree}.
If $V$ is not cuspidal, then $\H^*(b_\lambda C_\lambda)$ has no torsion.
On the other hand, $\H^*((1-b_\lambda)C_\lambda)$ is torsion and cuspidal,
hence $0$ by Lemma \ref{lem:onedegree}.

\smallskip

Assume now that $\lambda=1$. We have $\H^6(KC_1)=\St\oplus E_6[\theta^2]$ and
$\H^i(KC_1)=0$ for $i\neq 6$, so
$\H^*(C_1)$ is torsion-free by Lemma \ref{lem:onedegree} (so,
$\mathrm{St}+E_6[\theta^2]$ is a projective character of
$E_6(q)$, as was shown in \cite{HLM}).

\smallskip

Assume finally that $\lambda=q^6$. We have $\H^6(KC_\lambda)=E_6[\theta]$,
$\H^{12}(KC_\lambda)=1$ and $\H^i(KC_\lambda)=0$ for $i\not\in\{6,12\}$.
Corollary \ref{co:Coxetertriv} shows that $\H^*(C_\lambda)$ is torsion-free.
\end{proof}

From this lemma together with Proposition \ref{pr:torsionCoxeter},
 we deduce that the torsion of $\Hc^*(\rX(c),\mathcal{O})$ is cuspidal,
hence the principal block part of $\Hc^*(\rX(c),\mathcal{O})$ is
torsion-free.
In particular, the complexes $D_\lambda=b\Rgc(\rX(c),\OC)_{(\lambda)}$ for
$\lambda\in\{q^6,q^7\}$ have no torsion in their cohomology. We have
$$\begin{aligned}
  KD_{q^6} \, \simeq &\ E_6[\theta]_\varepsilon[-7] \oplus
\phi_{7,1}[-13], \\
  KD_{q^7} \, \simeq &\ E_6[\theta]_1[-8] \oplus K[-14]. \\
\end{aligned}$$

Theorem \ref{thm:coxeter} gives the planar-embedded Brauer tree as shown
 in Figure \ref{9E7}.

\subsubsection{$d=10$}
For the $\Phi_{10}$-block the situation is similar: there is a unique proper
$F$-stable subset $I$ of $S$ such that $L_I$ is not an $\ell'$-group. This
Levi subgroup $\bL_I$ has type $D_6$. Since the Coxeter number of $D_6$ is
$10$, \cite[Theorem]{Du4} asserts that $\Hc^*(\rX_{\bL_I}(c_I),\OC)$ is
torsion-free.
Let $E_7[\II]$ be the unipotent cuspidal $KG$-module that appears
with eigenvalue congruent to $q^6$ modulo $\ell$ in $\Hc^7(\rX(c),K)$.
Theorem \ref{thm:coxeter} applied to $C = b \tRgc(\rX(c),k)_{(q^6)}$
gives the planar-embedded Brauer tree as shown
 in Figure \ref{10E7}. 

\subsection{Groups of type $E_8$\label{sec:E8}}

The blocks we need to consider are:
\begin{itemize}

\item the three $\Phi_{9}$-blocks associated to the $d$-cuspidal pairs $(A_2.(q^6+q^3+1), \phi)$ for $\phi = \phi_3, \phi_{21}$ and $\phi_{1^3}$;

\item the $\Phi_{12}$-block associated to the $d$-cuspidal pair $(^3D_4(q).(q^4+q^2+1),{}^3D_4[1])$;

\item the $\Phi_{18}$-block associated to the $d$-cuspidal pair $({}^2 A_2(q).(q^6-q^3+1),\phi_{21})$;

\item the principal $\Phi_d$-blocks for $d= 15$, $20$ and $24$.
In those cases, the proper Levi subgroups of $G$ are $\ell'$-groups.
\end{itemize}

\subsubsection{$d=9$}
There are three unipotent blocks with non-trivial cyclic defect.
The real stem is given by Figure \ref{9E7}, where we have given the
correspondence with vertices of the $E_7$ tree. For each
of the three trees, there are two pairs of complex conjugate characters that need to be located, namely:
\begin{itemize}
  \item[(1)] $\{E_6[\theta]_{\phi_{1,0}},E_6[\theta^2]_{\phi_{1,0}}\}$ and  $\{E_6[\theta]_{\phi_{1,3}''},E_6[\theta^2]_{\phi_{1,3}''}\}$ for the block $b_1$ associated to the $d$-cuspidal pair $(A_2, \phi_3)$;
  
  \item[(2)] $\{E_6[\theta]_{\phi_{2,1}},E_6[\theta^2]_{\phi_{2,1}}\}$ and   $\{E_6[\theta]_{\phi_{2,2}},E_6[\theta^2]_{\phi_{2,2}}\}$ for the block $b_2$ associated to   the $d$-cuspidal pair $(A_2, \phi_{21})$;
  
  \item[(3)] $\{E_6[\theta]_{\phi_{1,6}},E_6[\theta^2]_{\phi_{1,6}}\}$ and   $\{E_6[\theta]_{\phi_{1,3}'},E_6[\theta^2]_{\phi_{1,3}'}\}$ for the block $b_3$ associated to the $d$-cuspidal pair  $(A_2, \phi_{1^3})$.
\end{itemize}
To this end we again use the cohomology of the Coxeter variety $\rX(c)$, which we first show
to be torsion-free on each block $b_i$.
Lemma \ref{le:relativeproj} shows that all three unipotent blocks
are Harish-Chandra projective relative to the principal block of $E_6(q)$.
By Lemma \ref{notorsionE6d9}, the cohomology of the Coxeter variety of $E_6$ is torsion-free. Therefore by Proposition \ref{pr:torsionCoxeter}
the cohomology of $\rX(c)$, cut by the sum of the $b_i$, is torsion-free.

\smallskip

We can now use the same argument as for the principal $\Phi_9$-block of $E_7$:
Theorem \ref{thm:coxeter} shows that the part of the tree to the right of the non-unipotent
node in Figure \ref{9E7} is correct.
We consider the standard Levi subgroup
$L_I$ of semisimple type $E_7$ and we use the Harish-Chandra induction of the
isomorphism $E_6[\theta]_1 \simeq \Omega^7 \OC$ in $\OC L_I$-$\mathrm{mod}$.
 It gives 
$$E_6[\theta]_1 \oplus E_6[\theta]_{\phi_{2,1}}  \oplus 
E_6[\theta]_{\phi_{2,2}} \oplus E_6[\theta]_{\phi_{1,3}'}  \simeq 
\Omega^7 (\OC \oplus \phi_{8,1} \oplus \phi_{35,2} \oplus \phi_{112,3})$$
in $\OC G$-$\mathrm{stab}$. By cutting by each $b_i$ and using the information above (on the part of the tree to the right of the non-unipotent node) we get $E_6[\theta]_{\phi_{2,2}} \simeq \Omega^7 \phi_{35,2}$ and $E_6[\theta]_{\phi_{1,3}'} \simeq \Omega^7 \phi_{112,3}$. 
The same procedure starting with the isomorphism $E_6[\theta]_{\varepsilon} \simeq \Omega^7 \phi_{7,1}$ yields $E_6[\theta]_{\phi_{1,3}''} \simeq \Omega^7 \phi_{160,7}$. This completes the determination of
the three planar-embedded trees.

\smallskip

Note that even though each of these three blocks is Morita equivalent to the principal
$\Phi_9$-block of $E_7$, the Harish-Chandra induction functor (cut by each block)
does not induce that equivalence. 

\subsubsection{$d=12$}
The real stem is as given in Figure \ref{12E8},
therefore knowing the tree amounts to locating the cuspidal character
$E_8[-\theta^2]$.

Let $C = b\tRgc(\rX(c),\mathcal{O})_{(q^6)}$.
The non-cuspidal simple $A$-modules are in the principal series, hence they
cannot occur in the torsion of $\Hc^*(\rX(c),\OC)$. It follows that
Assumption (i) of Theorem \ref{thm:coxeter} is satisfied. Assumption (iii)
follows from the knowledge of the real stem of the tree.
Finally, Assumption (ii) follows from the decomposition
$$ b\Hc^*(\rX(c),K)_{(q^6)} \simeq E_8[-\theta^2] [-8] \oplus \phi_{28,8}[-14].$$ 
Theorem \ref{thm:coxeter} shows that Figure \ref{12E8} gives
the correct planar-embedded Brauer tree.

\subsubsection{$d=18$}
\label{se:E8d18}
The real stem is as in Figure \ref{18E8}. 

\smallskip
$\bullet\ $Step 1: position of $E_8[-\theta^2]$.

The only proper standard $F$-stable
Levi subgroup $\bL_I$ with $\ell \mid |L_I|$ has type $E_7$.
It follows from Proposition \ref{pr:exceptionalcuspidal} that
$bR_{L_I}^G(M)$ is projective for any $M\in\OC L_I\mmod$.
Since $18$ is the Coxeter number of $E_7$, \cite[\S 4.3]{Du4} shows
that the cohomology of the perfect complex
$b\Rgc(\rX(c_I),\OC)$ is torsion-free. It follows from
Proposition \ref{pr:torsionCoxeter} that the torsion of 
$b\Hc^*(\rX(c),\mathcal{O})$ is cuspidal.
Since
$$bK\Hc^*(\rX(c),K)_{(q^7)} = (E_8[-\theta^2])[-8] \oplus \phi_{8,1}[-15],$$
Proposition \ref{prop:assumptionsforcoxeter} and Theorem \ref{thm:coxeter} show
that there is an edge
between $\phi_{35,74}$ and $E_8[-\theta^2]$, and that edge comes
between the edges $\phi_{35,74} \trait \phi_{300,44}$ and
$\phi_{35,74} \trait \phi_{8,91}$ in the cyclic ordering of edges around $\phi_{35,74}$.

\smallskip
$\bullet\ $Step 2: $E_8[\theta]$ is connected to the non-unipotent node.

From the Brauer tree of the principal $\Phi_{18}$-block given in \cite{Cra3}, we know that $\Omega^{12} k$ lifts to an $\mathcal{O}G$-lattice of character $E_6[\theta]_{1}$. Now, if $E_8[\theta]$ is not connected to the non-unipotent node, then $\Omega^{12} \phi_{8,1}$  lifts to an $\mathcal{O}G$-lattice  of character
$D_{4,\phi_{1,12}''}$ or $\phi_{8,91}$ 
 depending on whether $E_8[\theta]$ is connected to the $D_4$-series or the principal series. 
Since the degree of $\phi_{8,1} \otimes E_6[\theta]_1$ is smaller than that of
$\phi_{8,91}$ and of $D_{4,\phi_{1,12}''}$, we obtain a contradiction.
This proves that $E_8[\theta]$ and $E_8[\theta^2]$ are connected to the
non-unipotent node, and we obtain the planar-embedded Brauer tree up to swapping
these two characters (see Figure \ref{18E8}).

\smallskip
$\bullet\ $Step 3: description of $b\Rgc(\overline{\rX}(c),\mathcal{O})_{(q)}$.

Let $C = b\tRgc(\overline{\rX}(c),\mathcal{O})_{(q)}^{\redu}$.
Its cohomology over $K$ is given by
\[ KC \simeq (\phi_{8,1}^{\oplus7}\oplus\phi_{35,2}^{\oplus14}\oplus\phi_{300,8}^{\oplus10}\oplus\phi_{840,13}^{\oplus4})[-2] \oplus (E_8[-\theta])[-8].\]
Corollary \ref{cor:middledegree} (or rather its analogue for compactifications,
which holds since $\ell \nmid \bT^{vF}$ for all $v \leq c$, see also 
Remark \ref{rmk:xbar}) shows
that the terms of $C$ are projective and do not
involve the projective cover of a cuspidal module, except possibly in degree
$8$. The character of $KC$ shows that only the projective cover of
$E_8[-\theta]$ can occur, and it occurs once in degree $8$. In addition, the
torsion of the cohomology of $C$ must be cuspidal by Proposition
\ref{xbartorsion} (there are no modules lying in an $E_7$-series in $b$). 
Let $i<2$ be minimal such that $\H^i(kC)\neq 0$. Then $\H^i(kC)$ is cuspidal
and $(kC)^i$ contains an injective hull of $\H^i(kC)$, a contradiction. So,
$\H^i(kC)=0$ for $i<2$.
Let $P_0,\ldots, P_7$ be the projective indecomposable modules lying in the principal series of $A$, with $[P_0] = \phi_{8,1} + \phi_{35,2}$ and 
$\Hom(P_i,P_{i+1})\neq 0$ for $0\le i<7$, so that
$[P_7] = \phi_{35,74}+\phi_{8,91}$. 
It follows from Lemma \ref{le:twononzerocoh} that
\begin{equation}
\label{eq:xbar18}
C\simeq 0 \to P\to P_2 \to \cdots \to P_6 \to P_{E_8[-\theta]} \to 0
\end{equation}
where $P \simeq P_0^{\oplus 7}\oplus P_1^{\oplus 7}\oplus P_2^{\oplus 4}$ is
in degree $2$.

\smallskip
$\bullet\ $Step 4: the torsion part of 
$b\Rgc(\overline{\rX}(w),\mathcal{O})_{(q)}$ is cuspidal.

Let $w\in W$ be the unique (up to conjugation) element of minimal length for
which $E_8[\theta]$ occurs in $\Hc^*(\rX(w))$. Here $\ell(w) =14$.
Let us consider $R =  b\Rgc(\overline{\rX}(w),\mathcal{O})_{(q)}$.
Using the trace formula (see \cite[Corollaire 3.3.8]{DMR}), we find that
$$KR \simeq (\phi_{8,1}^{\oplus4}\oplus\phi_{35,2}^{\oplus6}\oplus
\phi_{300,8}^{\oplus3}\oplus\phi_{840,13})[-2] \oplus (E_8[\theta^2])[-14].$$
By Proposition
\ref{xbartorsion}, the torsion-part of the cohomology in $R$ is either
cuspidal or in an $E_7$-series. Since there are no modules in $E_7$-series
in $A$, we deduce that the torsion part is cuspidal. In particular, if $j = 4,5,6$ then 
$\Hom^\bullet_{kG}(P_j,kR)\simeq 0$, and if $j = 0,\ldots,3$ the cohomology of 
$\Hom^\bullet_{kG}(P_j,kR)$ vanishes outside degree $2$. Note that
$\Hom^\bullet_{kG}(P_j,kR)\simeq P_j\otimes_{kG}kR$ where $P_j$ is viewed
as a right $kG$-module via the anti-automorphism $g\mapsto g^{-1}$ of $G$, since
$P_j$ is self-dual.

\smallskip
$\bullet\ $Step 5: $E_8[-\theta^2]$ is not a composition factor of $\H^*(kR)$.

Let $C'$ be the cone of 
the canonical map $P_{E_8[-\theta]}[-8] \to kC$. By
\eqref{eq:xbar18} it is homotopic to a complex involving only projective
modules in the principal series. Tensoring by $kR$ gives a distinguished
triangle
$$ P_{E_8[-\theta]}[-8] \otimes_{kG} kR \to kC \otimes_{kG} kR \to C' \otimes_{kG} kR \rightsquigarrow.$$ 
From the explicit representative of $C'$ and step 4 above we know that the cohomology of $C' \otimes_{kG} kR$ vanishes outside the degrees $4$, $5$ and $6$. 
Proposition \ref{xbarxbar} shows that the cohomology of $kC \otimes_{kG} kR$
vanishes outside degree $4$. The previous distinguished triangle shows that
the cohomology of $P_{E_8[-\theta]} \otimes_{kG} kR$ vanishes outside the
degrees $-4,\ldots,-1$. Since $\H^i(kR) = 0$ for $i < 0$ this proves that
$E_8[-\theta^2]$ is not a composition factor of $\H^*(kR)$.

\smallskip
$\bullet\ $Step 6: $E_8[-\theta]$ is not a composition factor of $\H^i(kR)$ for $i \neq 6,7,8$. 

The same method as in Step 5
with $D= b\Rgc(\overline{\rX}(c),\OC)_{(q^{7})}\simeq C^*[-16]$ and $D' =
\mathrm{Cone}(kD \to P_{E_8[-\theta^2]}[-8])$ yields a distinguished triangle
$$ kD \otimes_{kG} kR \to P_{E_8[-\theta^2]}[-8] \otimes_{kG} kR \to
D' \otimes_{kG} kR \rightsquigarrow.$$ 
Here $kD \otimes_{kG} R$ has non-zero cohomology only in degree $16$ by
Proposition \ref{xbarxbar}. As for $D' \otimes_{kG} kR$, its cohomology
vanishes outside the degrees $14$, $15$ and $16$, and we deduce from the distinguished triangle that $E_8[-\theta]$ is not a composition factor of $\H^i(kR)$ for $i \neq 6,7,8$.

 If $v<w$ and
$\ell\mid |\bT^{vF}|$, then $v$ is conjugate to $c_I$, the Coxeter element
of type $E_7$. Since $b\Rgc(\rX(c_I),\OC)$ is perfect, it
follows from Lemma \ref{le:vanishingonminimal} that 
$b\Rg_c(\rX(v),\OC)$ is perfect for all $v<w$.

\smallskip
$\bullet\ $Step 7: given $v<w$, the complex $b\Rgc(\rX(v),k)$ is
quasi-isomorphic to a bounded complex of projective modules whose 
indecomposable summands correspond to edges that do not contain the non-unipotent
vertex.

Consider $v<w$. If $v$ is not conjugate to a Coxeter element $c_I$ of
$E_7$, then $\ell\nmid |\bT^{vF}|$ and $b\Rgc(\rX(v),k)$ is perfect
and quasi-isomorphic to a bounded complex of projective modules whose 
indecomposable summands correspond to edges that do not contain the non-unipotent
vertex by Lemma \ref{le:minwforexc}.
If $v=c_I$, the perfectness has been shown in Step 1 and the second
part holds, because the edges that contain the
non-unipotent vertex are cuspidal. We deduce that the statement of Step 7
holds for all $v<w$ by Lemma \ref{le:vanishingonminimal}.

\smallskip
$\bullet\ $Step 8: $\H^{>14}(R)=0$.

Steps 4, 5 and 6 show that the composition factors of $\H^i(kR)$ for $i > 14$
are
cuspidal modules $M$ corresponding to an edge containing the non-unipotent vertex.
Let $M$ be a simple module corresponding to an edge containing the 
non-unipotent vertex.
Step 7 shows that the canonical map $\Rg(\overline{\rX}(w),k)\to \Rg(\rX(w),k)$
induces an isomorphism
$$ \RHom^\bullet_{kG}(\Rg(\rX(w),k),M) \simeq
\RHom^\bullet_{kG}(\Rg(\overline{\rX}(w),k),M).$$
Let $M_0 = \H^{i_0}(kR)$ be the non-zero cohomology group of $kR$ of largest
degree. We have $\mathrm{Hom}_{D^b(kG)}(\Rg(\overline{\rX}(w),k),M_0[-i_0]) \neq 0$. Since $\Rg(\rX(w),k)$ has a representative with terms in degrees $0,\ldots,\ell(w)=14$, we deduce from the previous isomorphism that $i_0 \leq 14$.

\smallskip
$\bullet\ $Step 9: $E_8[\theta]$ and $E_8[\theta^2]$ do not
occur as composition factors of the torsion part of $\H^*(R)$ and
$E_8[\theta^2]$ is a direct summand of $\H^{14}(kR)$.

Step 7 shows that $E_8[\theta]$ and $E_8[\theta^2]$ are not
composition factors of $\Hc^*(\rX(v),k)$ for $v<w$.
It follows that if $M$ is any of the simple modules $E_8[\theta]$
or $E_8[\theta^2]$, then the canonical map
$\Hc^*(\rX(w),k)\to\Hc^*(\overline{\rX}(w),k)$ induces an isomorphism
\begin{equation}\label{eq:step9}
\Hom_{kG}\big(P_M, \Hc^i(\rX(w),k)\big) \simto \Hom_{kG}\big(P_M,\Hc^i(\overline{\rX}(w),k)\big).
\end{equation}
Since $\Hc^i(\rX(w),k)=0$ for $i<14$, we deduce that  $E_8[\theta]$ and 
$E_8[\theta^2]$ do not occur as composition factors of
$\H_c^i(\overline{\rX}(w),k)$ for $i<14$. By Poincar\'e duality and 
the isomorphism \eqref{eq:step9}, it follows that $E_8[\theta]$ and $E_8[\theta^2]$ cannot
occur as composition factors of $\Hc^i(\rX(w),k)$ for $i>14$. On the
other hand, $E_8[\theta]$ does not occur in $[K\Rgc(\rX(w),\OC)_{(q)}]$
(nor does $\chi_{\mathrm{exc}}$), hence 
$E_8[\theta]$ does not occur as a composition factor of
$\Hc^{14}(\rX(w),k)_{(q)}$ or $\Hc^{14}(\overline{\rX}(w),k)_{(q)}$. Similarly,
$E_8[\theta^2]$ occurs with multiplicity $1$ as a composition factor of
$\Hc^{14}(\rX(w),k)_{(q)}$ and of $\Hc^{14}(\overline{\rX}(w),k)_{(q)}$.
Proposition \ref{prop:middledegree} shows that $E_8[\theta^2]$ is actually
a submodule of $\Hc^{14}(\rX(w),k)_{(q)}$ and hence of
$\Hc^{14}(\overline{\rX}(w),k)_{(q)}$ by \eqref{eq:step9}. Since 
$K\Hc^{14}(\overline{\rX}(w),K)_{(q)}=E_8[\theta^2]$, it follows that
$E_8[\theta^2]$ is a quotient of $\Hc^{14}(\overline{\rX}(w),k)_{(q)}$, hence
it is a direct summand.

\smallskip
$\bullet\ $Step 10: $(E_8[\theta^2])[-14]$ is a direct summand of
$kR$ in $D^b(kG)$.

Let $Z$ be the cone of the canonical map $\Rgc(\rX(w),k)_{(q)}\to
\Rgc(\overline{\rX}(w),k)_{(q)}$. Step 7 shows that $Z$ can be chosen (up
to isomorphism in $D^b(kG)$) to
be a bounded complex of projective modules that do not involve
edges containing the non-unipotent vertex.
The complex $kR$ is quasi-isomorphic
to the cone $D'$ of a map $Z[-1]\to b\Rgc(\rX(w),k)_{(q)}$, hence to the truncation
$\tau^{\le 14}(D')$, a complex $N$ with $N^i=0$ for $i<0$ and $i>14$ and
with $N^i$ a direct sum of projective modules corresponding to edges 
that do not contain the non-unipotent vertex
for $i\le 13$. Note that $E_8[\theta]$ and $E_8[\theta^2]$ are
not composition factors of $N^{13}$, hence $E_8[\theta]$ is not
a composition factor of $N^{14}$ while $E_8[\theta^2]$ is a
composition factor of $N^{14}$ with multiplicity $1$ (see Step 9 above).
Consider a non-zero
morphism $P_{E_8[\theta^2]}\to N^{14}$ and let $U$ be its image. Since
$E_8[\theta^2]$ is a direct summand of $\H^{14}(N)$, it follows that the
image of $U$ in $\H^{14}(N)$ is $E_8[\theta^2]$.
On the other hand, the simple modules corresponding to edges containing the non-unipotent
vertex but not $E_8[\theta]$ nor $E_8[\theta^2]$ are not quotients of $N^{13}$, hence
$U\simeq E_8[\theta^2]$ embeds in $\H^{14}(N)$.
It follows that $U[-14]$ is a direct summand of $N$.


\smallskip
$\bullet\ $Step 11: $\Rgc(\overline{\rX}(w),\OC)_{(q^{-2})}\simeq V[-14]$,
where $V$ is an $\OC G$-lattice such that the simple factors of 
$KV$ are in the $D_4$-series.

Let $R'=\Rgc(\overline{\rX}(w),\OC)_{(q^{-2})}$.
We have $K\H^i(R')=0$ for $i\neq 14$ and the simple
factors of $K\H^{14}(R')$ are in the $D_4$-series.
As in Step 4, one shows that
the torsion of $\Rgc(\overline{\rX}(w),\OC)_{(q^{-2})}$ is cuspidal.
We show as in Steps 5 and 6 that $E_8[-\theta]$ and $E_8[-\theta^2]$
are not composition factors of $\H^*(kR')$. Furthermore, 
$\H^{>14}(R')=0$ as in Step 8. Proceeding as in Step 8, one sees
that the canonical map $\Rgc(\rX(w),k)\to\Rgc(\overline{\rX}(w),k)$
induces an isomorphism
$$\RHom^\bullet_{kG}(M,\Rgc(\rX(w),k))\simeq
\RHom^\bullet_{kG}(M,\Rgc(\overline{\rX}(w),k)).$$
Let $i_0$ be minimal such that $\H^{i_0}(kR')\neq 0$, and suppose that $i_0<14$. Then $\H^{i_0}(kR')$ is cuspidal and
$$\Hom_{D^b(kG)}(\H^{i_0}(kR'),\Rgc(\rX(w),k)[i_0])\simeq
\RHom_{D^b(kG)}(\H^{i_0}(kR'),\Rgc(\overline{\rX}(w),k)[i_0])\neq 0.$$
This contradicts the fact that $\Rgc(\rX(w),k)$ has no cohomology in
degrees less than $14$. Thus, $\H^i(kR')=0$ for $i\neq 14$.

\smallskip
$\bullet\ $Step 12: conclusion.

Lemma \ref{le:shiftFrob} shows that $\Rgc(\rX(w),k)_{(q^{-2})}\simeq \Rgc(\rX(w),k)_{(q)}[6]$ in $kG\mstab$. By Step 11, we deduce that
$kV$ has a direct summand isomorphic to $\Omega^{-6}(E_8[\theta^2])$ in
$kG\mstab$. If the Brauer tree is not the one given in Figure \ref{18E8}
(i.e., $E_8[\theta]$ and $E_8[\theta^2]$ need to be swapped), then
$\Omega^{-6}(E_8[\theta^2])$ is the reduction of a lattice in
$\phi_{300,44}$, which cannot be a direct summand of $kV$.
We deduce that the planar-embedded tree is as shown in Figure \ref{18E8}. 

\begin{rmk}
\label{re:d=18ford=15}
We use the determination of the tree to obtain a character-theoretic
statement that will be needed in the study of the case $d=15$.

The Brauer tree of the principal $\Phi_{18}$-block of $G$ is given in \cite[Remark 3.11]{DR}.
In particular, $E_6[\theta^2]_1 \simeq \Omega^{24}k$. Since 
$\Omega^{24}\phi_{8,1}\simeq\ E_8[\theta^2]$, we deduce that
 $\phi_{8,1} \otimes E_6[\theta^2]_1$ is isomorphic to $E_8[\theta^2]$ plus
a projective $\OC G$-module $P$. If $E_8[\theta]$ occurs in the 
character of $P$, then the non-unipotent vertex occurs as well.
As the degree of the non-unipotent vertex is larger than the degree
of $\phi_{8,1} \otimes E_6[\theta^2]_1$, we obtain a contradiction.
So, the character of $E_8[\theta]$ is not a constituent of 
$\phi_{8,1} \otimes E_6[\theta^2]_1$.
\end{rmk}

\subsubsection{$d=15$}
The real stem is known and comprises the principal series characters in the principal $\ell$-block.
A (Hecke) argument also gives the two subtrees consisting of characters in the $E_6[\theta]$-series
and the $E_6[\theta^2]$-series as shown in Figure \ref{15E8}.

\smallskip
Except for the two characters $E_8[\theta]$ and $E_8[\theta^2]$, each Harish-Chandra series meeting the principal $\Phi_{15}$-block has a character which 
appears in the cohomology of the Coxeter variety. The generalized $(\lambda)$-eigenspaces on the cohomology of the Coxeter variety are given by
$$\begin{aligned}
  b\Hc^*(\rX(c),K)_{(q^8)} \, \simeq &\ E_6[\theta]_\varepsilon[-8] \oplus K[-16], \\
  b\Hc^*(\rX(c),K)_{(q^{10})} \, \simeq &\ (E_8[\zeta^2])[-8] \oplus E_6[\theta]_1[-10], \\
  b\Hc^*(\rX(c),K)_{(q^7)} \, \simeq &\ (E_8[\zeta])[-8] \oplus \phi_{8,1}[-15]. \\
\end{aligned}$$

Corollary \ref{co:Coxetertriv} applied to $\lambda=q^8$
shows that there is an edge between $\St$ and
$E_6[\theta]_\varepsilon$, and this edge comes between the one containing
$\phi_{84,64}$ and $\St_\ell$ in the cyclic ordering of edges around $\St$.

 Only cuspidal characters remain to be located, and since they have a larger degree than $E_6[\theta]_1$, we deduce that $E_6[\theta]_1$
 remains irreducible modulo $\ell$.

From Proposition \ref{prop:assumptionsforcoxeter} and  Theorem \ref{thm:coxeter} applied to $C= b\Rgc(\rX(c),\mathcal{O})_{(q^{10})}$, we deduce that there is
an edge between $E_8[\zeta^2]$ and $E_6[\theta]_{\varepsilon}$.

Similarly, using $C= b\Rgc(\rX(c),\mathcal{O})_{(q^{7})}$, we deduce that
 there is an edge between $\phi_{112,63}$ and
$E_8[\zeta]$, and this edge comes between the one containing
$\phi_{1400,37}$ and the one containing $\phi_{8,91}$
in the cyclic ordering of edges around $\phi_{112,63}$.

 Consequently, the trees in Figures \ref{subtree15right} and \ref{subtree15left} are subtrees of the
 Brauer tree $T$ (although as of yet we cannot fix the planar embedding around $E_6[\theta]_\varepsilon$).

\smallskip

We claim that $E_8[\theta]$ and $E_8[\theta^2]$ are not connected to the subtree shown in Figure \ref{subtree15right}. Let us assume otherwise. By a (Parity) argument, they are not connected to the non-unipotent node. Let $w\in W$ be an element of minimal length such that $E_8[\theta^2]$ appears in the cohomology of $\rX(w)$ (we have $\ell(w) = 14$). We have 
$$[b\Hc^*(\rX(w),K)_{q^2}]=[E_8[\theta^2]] + [\phi_{8,91}]=
([\phi_{8,91}] + [\chi_{\mathrm{exc}}])-([\St]+[\chi_{\mathrm{exc}}])+
\eta,$$
where $\eta=[KP]$ and $P$ is a projective $b\OC G$-module whose character does
not involve
$\chi_{\mathrm{exc}}$. Therefore there is
an odd integer $i$ such that 
$\Hom_{D^b(kG)}(\Rgc(\rX(w),k),\St_\ell[i])\not=0$. Since $i\neq 14$,
it follows from
Proposition \ref{prop:middledegree} that $\langle [\Hc^*(\rX(v),k)],\St_\ell
\rangle\neq 0$ for some $v<w$. 
One easily checks on the character of $\Hc^*(\rX(v),K)$ that this is impossible.
As a consequence, $E_8[\theta]$ and $E_8[\theta^2]$ are connected to the subtree shown in Figure \ref{subtree15left}.

\smallskip

We next return to the planar embedding of the edges around the node
$E_6[\theta]_\varepsilon$. If the embedding is not as in 
Figure \ref{subtree15right} then $\Omega^{-13}k$ would lift to an
$\OC G$-lattice of character $E_6[\theta^2]_1$, and as $\Omega^{30}k$ lifts to
$\phi_{8,1}$, we get that $\Omega^{30}k\otimes \Omega^{-13}k$ lifts to an
$\OC G$-lattice with character the sum of the non-unipotent character
plus a projective $\OC G$-module. The sum of the degrees of the
non-unipotent characters is $(q^6-1)(q^8-1)(q^{10}-1)(q^{12}-1)(q^{14}-1)(q^{18}-1)(q^{20}-1)(q^{24}-1)\Phi_{30}$. Since this is larger than the degree
of $E_6[\theta^2]_1\otimes \phi_{8,1}$, we deduce that $\Omega^{-13}k$ does not
lift to $E_6[\theta^2]_1$,
and we obtain the planar-embedded Brauer tree as in Figure \ref{subtree15right}.

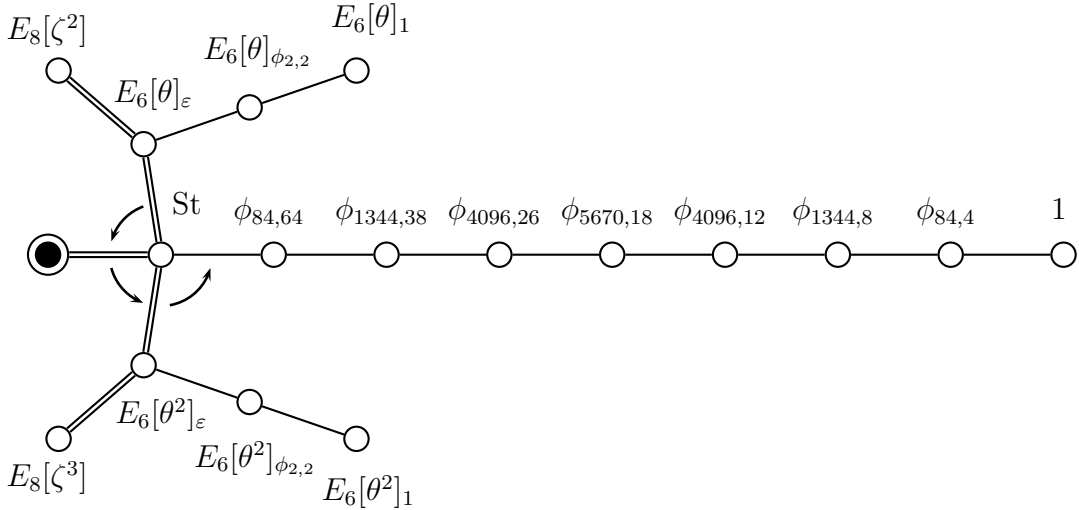
\begin{figure}[h!]
\begin{center}
\begin{pspicture}(13.2,6)

  \cnode[fillstyle=solid,fillcolor=black](0,3){5pt}{A2}
    \cnode(0,3){8pt}{A}
  \cnode(1.5,3){5pt}{B}
  \cnode(1.27,4.47){5pt}{X1}
  \cnode(0.14,5.45){5pt}{X2}
  \cnode(2.68,4.96){5pt}{X3}
  \cnode(4.10,5.45){5pt}{X4}
    \cnode(1.27,1.53){5pt}{Y1}
  \cnode(0.14,0.55){5pt}{Y2}
  \cnode(2.68,1.04){5pt}{Y3}
  \cnode(4.10,0.55){5pt}{Y4}
  \cnode(3,3){5pt}{C}
  \cnode(4.5,3){5pt}{D}
  \cnode(6,3){5pt}{E}
  \cnode(7.5,3){5pt}{F}
  \cnode(9,3){5pt}{G}
  \cnode(10.5,3){5pt}{H}
  \cnode(12,3){5pt}{K} 
  \cnode(13.5,3){5pt}{Q}

   \ncline[nodesep=0pt,doubleline=true]{A}{B}\naput[npos=1.5]{$\vphantom{\Big(} \mathrm{St}$}
  \ncline[nodesep=0pt]{B}{C}\naput[npos=1.1]{$\vphantom{\Big(}\phi_{84,64}$}
  \ncline[nodesep=0pt]{C}{D}\naput[npos=1.1]{$\vphantom{\Big(}  \phi_{1344,38}$}
  \ncline[nodesep=0pt]{D}{E}\naput[npos=1.1]{$\vphantom{\Big(}\phi_{4096,26}$}
  \ncline[nodesep=0pt]{E}{F}\naput[npos=1.1]{$\vphantom{\Big(} \phi_{5670,18}$}
  \ncline[nodesep=0pt]{F}{G}\naput[npos=1.1]{$\vphantom{\Big(} \phi_{4096,12}$}
  \ncline[nodesep=0pt]{G}{H}\naput[npos=1.1]{$\vphantom{\Big(} \phi_{1344,8}$}
  \ncline[nodesep=0pt]{H}{K}\naput[npos=1.1]{$\vphantom{\Big(} \phi_{84,4} $}
  \ncline[nodesep=0pt]{K}{Q}\naput[npos=1.1]{$\vphantom{\Big(} 1 $}
    \ncline[nodesep=0pt,doubleline=true]{B}{X1}\ncput[npos=1.75]{$\phantom{\Big(a} E_6[\theta]_\varepsilon $}
      \ncline[nodesep=0pt,doubleline=true]{B}{Y1}\ncput[npos=1.75]{$\phantom{\Big(aa} E_6[\theta^2]_\varepsilon $}
      \ncline[nodesep=0pt,doubleline=true]{X1}{X2}\ncput[npos=1.85]{$\phantom{\Big(aaa} E_8[\zeta^2]$}
      \ncline[nodesep=0pt,doubleline=true]{Y1}{Y2}\ncput[npos=1.85]{$\phantom{\Big(aaa} E_8[\zeta^3]$}
      \ncline[nodesep=0pt]{X1}{X3}\naput[npos=1.6]{$ E_6[\theta]_{\phi_{2,2}} $}
      \ncline[nodesep=0pt]{Y1}{Y3}\nbput[npos=1.6]{$ E_6[\theta^2]_{\phi_{2,2}}$}
      \ncline[nodesep=0pt]{X3}{X4}\naput[npos=1.6]{$E_6[\theta]_1$}
      \ncline[nodesep=0pt]{Y3}{Y4}\nbput[npos=1.6]{$E_6[\theta^2]_1$}
     \psellipticarc[linewidth=1pt]{->}(1.5,3)(0.7,0.7){110}{165}
  \psellipticarc[linewidth=1pt]{->}(1.5,3)(0.7,0.7){195}{250}
  \psellipticarc[linewidth=1pt]{->}(1.5,3)(0.7,0.7){280}{345}
   
\end{pspicture}
\end{center}
\caption{Subtree of the principal $\Phi_{15}$-block of $E_8(q)$}
\label{subtree15right}
\end{figure}

\begin{figure}[h!]
\begin{center}
\begin{pspicture}(13.5,4.5)

  \cnode[fillstyle=solid,fillcolor=black](13.5,2){5pt}{A2}
    \cnode(13.5,2){8pt}{A}  
    \cnode(7.5,2){5pt}{L}
  \cnode(9,2){5pt}{M}
  \cnode(10.5,2){5pt}{N}
  \cnode(12,2){5pt}{O}
  \cnode(6,2){5pt}{P}
    \cnode(4.5,2){5pt}{V}
     \cnode(3,2){5pt}{W}
  \cnode(1.5,2){5pt}{X}
  \cnode(0,2){5pt}{Y}
  \cnode(10.5,3.5){5pt}{N1}  
  \cnode(10.5,0.5){5pt}{N2}

\ncline[nodesep=0pt]{L}{M}\naput[npos=-0.1]{$\vphantom{\Big(} \phi_{4096,27} $}  
  \ncline[nodesep=0pt]{M}{N}\naput[npos=-0.1]{$\vphantom{\Big(} \phi_{1400,37}$}
  \ncline[nodesep=0pt]{N}{O}\naput[npos=0.4]{$\vphantom{\Big(}\phi_{112,63} $}
  \ncline[nodesep=0pt,doubleline=true]{O}{A}\naput[npos=0.3]{$\vphantom{\Big(}\phi_{8,91}  $}
  \ncline[nodesep=0pt]{P}{L}\naput[npos=-0.1]{$\vphantom{\Big(}\phi_{5600,19} $}
    \ncline[nodesep=0pt]{V}{P}\naput[npos=-0.1]{$\vphantom{\Big(}\phi_{4096,11} $}
  \ncline[nodesep=0pt]{W}{V}\naput[npos=-0.1]{$\vphantom{\Big(}\phi_{1400,7} $}
  \ncline[nodesep=0pt]{X}{W}\naput[npos=-0.1]{$\vphantom{\Big(}\phi_{112,3} $}
  \ncline[nodesep=0pt]{Y}{X}\naput[npos=-0.1]{$\vphantom{\Big(}\phi_{8,1} $}
      \ncline[nodesep=0pt,doubleline=true]{N}{N1}\ncput[npos=1.65]{$\vphantom{\Big(} E_8[\zeta^4] $}
  \ncline[nodesep=0pt,doubleline=true]{N}{N2}\ncput[npos=1.65]{$\vphantom{\Big(} E_8[\zeta] $}
    \psellipticarc[linewidth=1pt]{->}(10.5,2)(0.7,0.7){105}{165}
  \psellipticarc[linewidth=1pt]{->}(10.5,2)(0.7,0.7){195}{255}
  \psellipticarc[linewidth=1pt]{->}(10.5,2)(0.7,0.7){285}{345}
        
\end{pspicture}
\end{center}
\caption{Subtree of the principal $\Phi_{15}$-block of $E_8(q)$}
\label{subtree15left}
\end{figure}
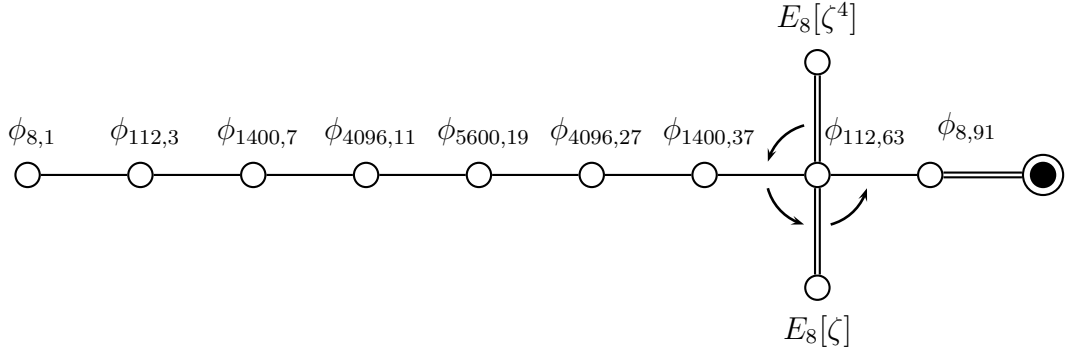
 
\smallskip

It remains to locate $E_8[\theta]$ and $E_8[\theta^2]$. If they were not connected to $\phi_{8,91}$, then  $\Omega^{19} k $ would lift to an $\mathcal{O}G$-lattice of character $\phi_{112,63}$, although $\Omega^{30} k \otimes  \Omega^{-11} k$
lifts to a lattice of character $\phi_{8,1} \otimes E_6[\theta^2]_1$ plus
a projective module. Since
that tensor product has a degree smaller than $\phi_{112,63}$, we obtain
a contradiction.
Consequently, we obtain the planar-embedded Brauer tree given in Figure \ref{15E8}, up to swapping $E_8[\theta]$ and $E_8[\theta^2]$. 
Assume the planar embedded tree shown in Figure \ref{15E8} is not correct.
Then $\Omega^{19} k$ lifts to a lattice of character $E_8[\theta]$. Since
$\Omega^{30} k \otimes  \Omega^{-11} k$ lifts to a lattice of character
 $\phi_{8,1} \otimes E_6[\theta^2]_1$, we deduce that $E_8[\theta]$
occurs as a constituent of that tensor product, contradicting Remark \ref{re:d=18ford=15}. Consequently, the tree in Figure \ref{15E8} is
correct.

\subsubsection{$d=20$}\label{se:E8d20}
The real stem of the tree is easily determined
(see Figure \ref{20E8}).
 The difficult part is to locate the six cuspidal
characters in the block.

We have
$$\begin{aligned}
  b\Hc^*(\overline{\rX}(c),K)_{(q^8)} &\, \simeq (E_8[\zeta])[-8] \oplus K[-16], \\
  b\Hc^*(\overline{\rX}(c),K)_{(q^{16})} &\, \simeq (E_8[\zeta^3])[-8] \oplus D_{4,1}[-12]. \\
\end{aligned}$$

Proposition \ref{prop:resultsxbar} and Corollary \ref{co:Coxetertriv}
show that there is
an edge connecting $E_8[\zeta]$ to $\St$ and that this edge comes between
the edge containing $\phi_{112,63}$ and the one containing
$\St_\ell$ in the cyclic
ordering of edges containing $\St$. Also, there is no cuspidal edge connected
to a principal series character other than $\St$ and we have
\begin{equation}
\label{eq:complexxbar}
b\Rgc(\overline{\rX}(c),k)_{(q^8)} \simeq
0 \to P_{E_8[\zeta]} \to P_7 \to \cdots  \to P_0 \to 0,
\end{equation}
where $P_0$ is in degree $16$ and $P_0,\ldots, P_7$ are projective
indecomposable modules labelling the principal series edges from $1$ to $\St$.

Proposition \ref{prop:resultsxbar} and Theorem \ref{thm:coxeter} show that
there is an edge connecting $E_8[\zeta^3]$ and $D_{4,\varepsilon}$.

\smallskip

We now want to locate the characters $E_8[\II]$ and $E_8[-\II]$.
A (Parity) argument shows that they are not connected to the non-unipotent
vertex.
 The smallest Deligne--Lusztig variety in which they appear is associated to a $24$-regular element $w$ of length $10$.
Note that $\ell \nmid |\bT^{vF}|$ for all $v\le w$.
In particular, the character $\eta=[b\Hc^*(\rX(w),K)_{(1)}]=[\St]+[E_8[-\II]]$ is
virtually projective.
It follows from Lemma \ref{le:minwforexc} that
$\chi_\mathrm{exc} + D_{4,\varepsilon}$ does not occur in the
decomposition of $\eta$ in the basis of projective indecomposable modules.
As a consequence, $E_8[-\II]$ is not connected by an edge to the $D_4$-series,
hence $E_8[\II]$ and $E_8[-\II]$ are connected to the
Steinberg character.

\smallskip

We are therefore left with determining the planar embedding around 
$D_{4,\varepsilon}$ and $\mathrm{St}$. Assume that we are in the case shown in Figure \ref{subtree20D4}. 
\begin{figure}[h!]

\begin{center} 
\begin{pspicture}(5,4)
\cnode[fillstyle=solid,fillcolor=black](4.5,2){5pt}{A2}
    \cnode(4.5,2){8pt}{A}
  \cnode(3,2){5pt}{B}
  \cnode(1.5,2){5pt}{C}
  \cnode(0.5,2){0pt}{D}
    \cnode(3,0.5){5pt}{I}
  \cnode(3,3.5){5pt}{J}

   \ncline[nodesep=0pt,doubleline=true]{A}{B}\nbput[npos=0.6]{$\vphantom{\Big(} D_{4,\varepsilon}$}
   \ncline[nodesep=0pt]{B}{C}\nbput[npos=1.15]{$\vphantom{\Big(} D_{4,\phi_{9,10}}$}
  \ncline[linestyle=dashed,nodesep=0pt]{C}{D}
  \ncline[nodesep=0pt,doubleline=true]{B}{I}\ncput[npos=1.65]{$\vphantom{\Big(} E_8[\zeta^2] $}
  \ncline[nodesep=0pt,doubleline=true]{B}{J}\ncput[npos=1.65]{$\vphantom{\Big(} E_8[\zeta^3] $}
     \psellipticarc[linewidth=1pt]{->}(3,2)(0.7,0.7){105}{165}
  \psellipticarc[linewidth=1pt]{->}(3,2)(0.7,0.7){195}{255}
  \psellipticarc[linewidth=1pt]{->}(3,2)(0.7,0.7){285}{345}
     
\end{pspicture}
\end{center}
\caption{Wrong planar embedding for the principal $\Phi_{20}$-block of $E_8(q)$}
\label{subtree20D4}
\end{figure}
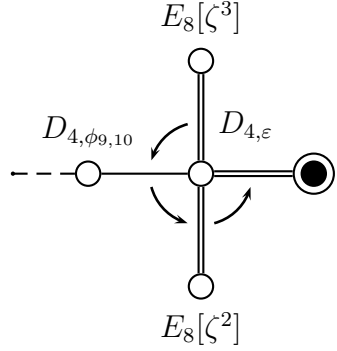
Let $S_0,\ldots,S_4$ be the simple modules labelling the edges
from $D_{4,1}$ to the non-unipotent node so that $[P_{S_4}] = \chi_{\mathrm{exc}} + D_{4,\varepsilon}$. A minimal
representative of $b\tRgc(\overline{\rX}(c),k)_{(q^{16})}$ is given by
$$D= 0 \longrightarrow \begin{array}{c} \textcolor{violet}{E_8[\zeta^3]} \\ \textcolor{violet}{S_3} \\ \fbox{\parbox{1cm}{$E_8[\zeta^2]$ \\ \phantom{i} $S_4$ \\ $E_8[\zeta^3]$}}  \end{array} \longrightarrow \begin{array}{c} \textcolor{purple}{S_3} \\ \fbox{\parbox[b]{1cm}{$E_8[\zeta^2]$  \\ \phantom{i} $S_4$ }} \ \ \textcolor{purple}{S_2} \\ \textcolor{violet}{E_8[\zeta^3]} \hphantom{AA}\\ \textcolor{violet}{S_3} \end{array}  \longrightarrow \begin{array}{c} \textcolor{violet}{S_2} \\\textcolor{purple}{S_3} \quad \textcolor{violet}{S_1}  \\ \textcolor{purple}{S_2} \end{array} \longrightarrow \begin{array}{c} \textcolor{purple}{S_1} \\ \textcolor{violet}{S_2} \quad \textcolor{purple}{S_0}  \\ \textcolor{violet}{S_1} \end{array}  \longrightarrow  \begin{array}{c} \fbox{$S_0$} \\ \textcolor{purple}{S_1} \\  \textcolor{purple}{S_0} \end{array} \longrightarrow 0,$$
where the cohomology groups (represented by the boxes) are non-zero in degrees $8$, $9$ and $12$ only.
A non-zero map 
$P_{E_8[\zeta^2]} \to P_{E_8[\zeta^3]}$ gives a non-zero element of
$\Hom_{D^b(kG)}(D^*[-16],D)$. Consequently,
$\H^{16}(D\otimes_{kG}D)\neq 0$.
We have
$$b\Rgc(\overline{\rX}(c),K)_{(q^{16})} \otimes_{KG}
b\Rgc(\overline{\rX}(c),K)_{(q^{16})} \simeq K[-24],$$
 and Proposition \ref{xbarxbar} shows that
the cohomology of $b\Rgc(\overline{\rX}(c),\mathcal{O})_{(q^{16})} 
\otimes_{\mathcal{O}G} b\Rgc(\overline{\rX}(c),\mathcal{O})_{(q^{16})}$ is
torsion-free, hence
$\H^i(b\Rgc(\overline{\rX}(c),k)_{(q^{16})} 
\otimes_{kG} b\Rgc(\overline{\rX}(c),k)_{(q^{16})})=0$ for $i\neq 24$: this
gives a contradiction.

\smallskip

We now turn to the four possibilities for the planar embedding around the node labelled by the Steinberg character. We need to rule out the three of them shown in Figure \ref{subtree20st}.
\begin{figure}[h!]

\begin{center} 
\begin{pspicture}(12.5,3.7)
\cnode[fillstyle=solid,fillcolor=black](0,2){5pt}{A2}
    \cnode(0,2){8pt}{A}
  \cnode(1.5,2){5pt}{B}
  \cnode(2.5,2){0pt}{C}
    \cnode(2.3,0.6){5pt}{I}
  \cnode(2.3,3.2){5pt}{J}
    \cnode(0.7,0.6){5pt}{R}
  \cnode(0.7,3.2){5pt}{S}
 
\cnode[fillstyle=solid,fillcolor=black](5,2){5pt}{bA2}
    \cnode(5,2){8pt}{bA}
  \cnode(6.5,2){5pt}{bB}
  \cnode(7.5,2){0pt}{bC}
    \cnode(7.3,0.6){5pt}{bI}
  \cnode(7.3,3.2){5pt}{bJ}
    \cnode(5.7,0.6){5pt}{bR}
  \cnode(5.7,3.2){5pt}{bS} 

\cnode[fillstyle=solid,fillcolor=black](10,2){5pt}{cA2}
    \cnode(10,2){8pt}{cA}
  \cnode(11.5,2){5pt}{cB}
  \cnode(12.5,2){0pt}{cC}
    \cnode(12.3,0.6){5pt}{cI}
  \cnode(12.3,3.2){5pt}{cJ}
    \cnode(10.7,0.6){5pt}{cR}
  \cnode(10.7,3.2){5pt}{cS} 
     
   \ncline[nodesep=0pt,doubleline=true]{A}{B}\naput[npos=1.15]{$\vphantom{\Big(} \mathrm{St}$}
  \ncline[linestyle=dashed,nodesep=0pt]{B}{C}
  \ncline[nodesep=0pt,doubleline=true]{B}{I}\ncput[npos=1.65]{$\vphantom{\Big(} E_8[\zeta^4] $}
  \ncline[nodesep=0pt,doubleline=true]{B}{J}\ncput[npos=1.65]{$\vphantom{\Big(} E_8[\zeta] $}
  \ncline[nodesep=0pt,doubleline=true]{B}{R}\ncput[npos=1.65]{$\vphantom{\Big(} E_8[\II] $}
  \ncline[nodesep=0pt,doubleline=true]{B}{S}\ncput[npos=1.65]{$\vphantom{\Big(} E_8[-\II] $}
  \psellipticarc[linewidth=1pt]{->}(1.5,2)(0.7,0.7){15}{45}
  \psellipticarc[linewidth=1pt]{->}(1.5,2)(0.7,0.7){135}{165}
  \psellipticarc[linewidth=1pt]{->}(1.5,2)(0.7,0.7){195}{225}
  \psellipticarc[linewidth=1pt]{->}(1.5,2)(0.7,0.7){255}{285}
  \psellipticarc[linewidth=1pt]{->}(1.5,2)(0.7,0.7){315}{345}
  
   \ncline[nodesep=0pt,doubleline=true]{bA}{bB}\naput[npos=1.15]{$\vphantom{\Big(} \mathrm{St}$}
  \ncline[linestyle=dashed,nodesep=0pt]{bB}{bC}
  \ncline[nodesep=0pt,doubleline=true]{bB}{bI}\ncput[npos=1.65]{$\vphantom{\Big(} E_8[-\II] $}
  \ncline[nodesep=0pt,doubleline=true]{bB}{bJ}\ncput[npos=1.65]{$\vphantom{\Big(} E_8[\II] $}
  \ncline[nodesep=0pt,doubleline=true]{bB}{bR}\ncput[npos=1.65]{$\vphantom{\Big(} E_8[\zeta^4] $}
  \ncline[nodesep=0pt,doubleline=true]{bB}{bS}\ncput[npos=1.65]{$\vphantom{\Big(} E_8[\zeta] $}
  \psellipticarc[linewidth=1pt]{->}(6.5,2)(0.7,0.7){15}{45}
  \psellipticarc[linewidth=1pt]{->}(6.5,2)(0.7,0.7){135}{165}
  \psellipticarc[linewidth=1pt]{->}(6.5,2)(0.7,0.7){195}{225}
  \psellipticarc[linewidth=1pt]{->}(6.5,2)(0.7,0.7){255}{285}
  \psellipticarc[linewidth=1pt]{->}(6.5,2)(0.7,0.7){315}{345}
    
     \ncline[nodesep=0pt,doubleline=true]{cA}{cB}\naput[npos=1.15]{$\vphantom{\Big(} \mathrm{St}$}
  \ncline[linestyle=dashed,nodesep=0pt]{cB}{cC}
  \ncline[nodesep=0pt,doubleline=true]{cB}{cI}\ncput[npos=1.65]{$\vphantom{\Big(} E_8[\II] $}
  \ncline[nodesep=0pt,doubleline=true]{cB}{cJ}\ncput[npos=1.65]{$\vphantom{\Big(} E_8[-\II] $}
  \ncline[nodesep=0pt,doubleline=true]{cB}{cR}\ncput[npos=1.65]{$\vphantom{\Big(} E_8[\zeta^4] $}
  \ncline[nodesep=0pt,doubleline=true]{cB}{cS}\ncput[npos=1.65]{$\vphantom{\Big(} E_8[\zeta] $}
  \psellipticarc[linewidth=1pt]{->}(11.5,2)(0.7,0.7){15}{45}
  \psellipticarc[linewidth=1pt]{->}(11.5,2)(0.7,0.7){135}{165}
  \psellipticarc[linewidth=1pt]{->}(11.5,2)(0.7,0.7){195}{225}
  \psellipticarc[linewidth=1pt]{->}(11.5,2)(0.7,0.7){255}{285}
  \psellipticarc[linewidth=1pt]{->}(11.5,2)(0.7,0.7){315}{345}
  
\end{pspicture}
\end{center}
\caption{Wrong planar embeddings for the principal $\Phi_{20}$-block of $E_8(q)$}
\label{subtree20st}
\end{figure}
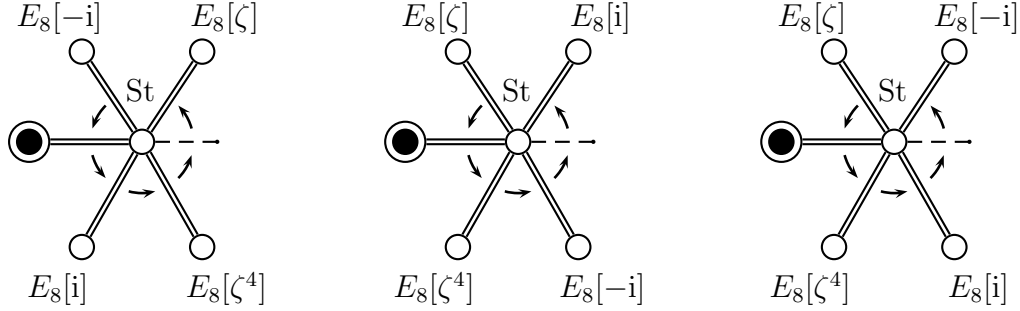
Recall that $w$ denotes a $24$-regular element. As in the case of $\overline{\rX}(c)$,  Proposition \ref{xbartorsion} ensures that the torsion part in the cohomology of $b\Rgc(\overline{\rX}(w),\mathcal{O})$ is cuspidal. Let 
$C=\bigl(b\tRgc(\overline{\rX}(w),\mathcal{O})_{(q^{10})}\bigr)^{\redu}$,
a complex with $0$ terms in degrees less than $0$ and greater than $20$. We have
$$KC  \, \simeq \, E_8[\II] [-10] \oplus K[-20].$$

We will describe completely the complex $C$, and rule out the wrong
planar embeddings. We will proceed in a number of steps.

\smallskip
$\bullet\ $Step 1: the only non-cuspidal simple module that
can appear as a composition factor of $\H^*(kC)$ is $K$, and it can only
appear in $\H^{20}(kC)$. The simple modules $\St_\ell$,
$E_8[\zeta^2]$ and $E_8[\zeta^3]$ do not occur as 
composition factors of $\H^*(kC)$.

The first statement follows from the discussion above.
As a consequence, we have $P_{S_i} \otimes_{kG} kC \simeq 0$
 for $i =0,\ldots,3$ and therefore 
$$P_{E_8[\zeta^3]} [-8]\otimes_{k G} k C \, \simeq \,  b\Rgc(\overline{\rX}(c),k)_{(q^{16})} \otimes_{k G} kC.$$
The latter is a direct summand of 
$\Rgc(\overline{\rX}(c) \times_G \overline{\rX}(w))$,
which by Proposition \ref{xbarxbar} has no torsion in its cohomology. We deduce that $P_{E_8[\zeta^3]} \otimes_{kG} kC$ is quasi-isomorphic to zero, which
means that $E_8[\zeta^2]$ does not occur as a composition factor in
$\H^*(kC)$. The same result can be shown to hold for $E_8[\zeta^3]$, by
replacing $b\Rgc(\overline{\rX}(c),k)_{(q^{16})}$ by
$(b\Rgc(\overline{\rX}(c),k)_{(q^{16})})^*[-16]\simeq 
b\Rgc(\overline{\rX}(c),k)_{(q^{4})}$.
The statement about $\St_\ell$ follows from Proposition \ref{gelfandgraevxbar}.

\smallskip
$\bullet\ $Step 2: 
$E_8[\zeta^4]$ does not occur as a composition factor of
$\H^*(kC)$ and
$E_8[\zeta]$ does not occur as a composition factor of
$\H^i(kC)$ for $i{\not\in}\{12,13\}$.

We have $b\Rgc(\overline{\rX}(c),k)_{(1)} \otimes_{kG} kC \simeq k[-20]$ and
$\Rgc(\overline{\rX}(c),k)_{(q^8)} \otimes_{kG} kC \simeq k[-36]$. Moreover,
$P_i \otimes_{kG} kC \simeq 0$  for $i =1,\ldots,7$ but $P_0 \otimes_{kG} kC \simeq k[-20]$, so we obtain from (\ref{eq:complexxbar}) a
 distinguished triangle
$$
P_{E_8[\zeta]} [-9] \otimes_{kG} kC \to k[-36] \to k[-36]  \rightsquigarrow$$
Using $\Rgc(\overline{\rX}(c),k)_{(q^{12})}\simeq 
\bigl(\Rgc(\overline{\rX}(c),k)_{(q^8)}\bigr)^*[-16]$ instead of
$\Rgc(\overline{\rX}(c),k)_{(q^8)}$, we obtain a distinguished triangle
$$k[-20] \to k[-20] \to P_{E_8[\zeta^4]} [-7] \otimes_{kG} kC \rightsquigarrow$$

\noindent The variety $\overline{\rX}(w)$ has dimension $10$, and therefore its cohomology vanishes outside the degrees $0,\ldots,20$. Therefore $P_{E_8[\zeta]} \otimes_{kG} kC\simeq 0$. We also deduce that 
$P_{E_8[\zeta^4]} \otimes_{kG} kC$ is quasi-isomorphic to  either $0$ or
 $k[-12] \oplus k[-13]$.

\smallskip
$\bullet\ $Step 3: $P_{S_4}$, $P_{\St_\ell}$ and
$P_{E_8[-\II]}$ and do not occur in $C$, while
$P_{E_8[\II]}$ occurs with multiplicity $1$ in $C$ (and this is in $C^{10}$).

The statements about $P_{E_8[\pm\II]}$ are clear using Proposition 
\ref{prop:middledegree}, while the other
two statements follow from Lemma \ref{le:minwforexc}.

\smallskip
We have now enough information to determine $C$
and rule out the planar embeddings given in Figure \ref{subtree20st}.

\smallskip
$\bullet\ $Step 4: $C^i=0$ for $i<10$.

Let $i$ be the smallest degree for which $\H^i(C)$ has non-zero
torsion. Assume that $i\le 10$.
The cohomology $\H^{i-1}(kC)$ is cuspidal with socle in
$\{S_4, E_8[\II],E_8[-\II]\}$.
On the other hand, $kC^{<(i-1)}=0$ and the injective hulls of $S_4$ and
$E_8[\pm\II]$ do not occur as direct summands of $kC^{i-1}$, a contradiction.
It follows that $\H^i(C)=0$ for $i<10$ and $\H^{10}(C)$ is torsion-free. So,
$\H^i(kC)=0$ for $i<10$, hence $(kC)^i=0$ for $i<10$.

\smallskip
$\bullet\ $Step 5: We have $\H^i(C)=0$ for $14\le i\le 19$ and $\H^{20}(C)=\OC$.

Lemma \ref{le:twononzerocoh} applied to the stupid truncation
$C^{13}\to C^{14}\to\cdots\to
C^{20}$ (viewed in degrees $-7,\ldots,0$) shows that 
$$C\simeq 0\to C^{10}\to C^{11}\to C^{12}\to C^{13}\to P_6\to P_5\to\cdots\to P_1\to
P_0\to 0,$$
$\H^i(C)=0$ for $14\le i\le 19$ and $\H^{20}(C)=\OC$.

\smallskip
$\bullet\ $Step 6: $\H^{10}(kC) = E_8[\II]$ and $\H^{11}(C)=0$.

By the universal coefficient theorem $\H^{10}(kC)$ is an extension of 
$L=\mathrm{Tor}_1^{\OC}(\H^{11}(C),k)$ by $k\H^{10}(C)=E_8[\II]$.
Since $\Ext^1(M,E_8[\II])=0$ for all $kG$-modules $M$ with composition factors
in $\{S_4, E_8[\II],E_8[-\II]\}$, it follows that
the $kG$-module $L$
is a direct summand of $\H^{10}(kC)$, hence $C^{10}$ has an injective hull of
$L$ as a direct summand of $kC^{10}$. This shows that $C^{10} =  P_{E_8[\II]}$
and $L=0$.

\smallskip
$\bullet\ $Step 7: $\Ext^1(\St_\ell,E_8[\II])=0$.

The differential $C^{10} \to C^{11}$ induces an injective map
$\Omega^{-1} E_8[\II] \hookrightarrow C^{11}$. Since $P_{\St_\ell}$ is
not a direct summand of $C^{11}$, it follows that $\St_\ell$ does not
occur in the socle of $C^{11}$, hence not in the socle of $\Omega^{-1} E_8[\II]$.

\smallskip
 This rules out the first possibility of the planar embedding around $\St$
in Figure \ref{subtree20st}.

\smallskip
$\bullet\ $Step 8: $\H^{12}(C)=0$.

Let $L=\mathrm{Tor}_1^{\OC}(\H^{12}(C),k)$.
The $kG$-module $\Omega^{-2} E_8[\II]$ has no composition factors isomorphic to
$S_4$,  $E_8[\II]$ or $E_8[-\II]$, hence
$\Hom(L,\Omega^{-2}E_8[\II])$. It follows
that
$\Ext^1(L,\Omega^{-1}E_8[\II]) =0$,
hence an injective hull of $L$ is a direct summand of $kC^{11}$, which 
forces $L=0$, hence $\H^{12}(C)=0$.

\smallskip
$\bullet\ $Step 9: $C^{13}\simeq P_7$.

We have $C^{13}\simeq P_7\oplus R$ for some projective $kG$-module $R$, whose
head is in $\H^{13}(kC)$. It follows from Steps 1-3 that $R\simeq P_{E_8[\zeta]}^{\oplus n}$
for some $n\ge 0$. Assume that $n>0$. Since $\St_\ell$ does not occur as a
composition factor
of $\H^{13}(kC)$, it follows that $E_8[\varepsilon i]$ must occur immediately after
$E_8[\zeta]$ in the cyclic ordering around $\St$ for some $\varepsilon\in\{+,-\}$
and $P_{E_8[\varepsilon i]}$ occurs as a direct summand of $C^{12}$: this is
a contradiction. We deduce that $C^{13}\simeq P_7$.

\smallskip
$\bullet\ $Step 10: conclusion.

Assume that the configuration around $\St$ is the second one in Figure
\ref{subtree20st}. Then $\Omega^{-3}E_8[\II]$ is an extension of $S_6$ by $S_5$.
Since $S_5$ does not occur as a composition factor of $\H^{12}(kC)$, it
follows that $P_5$ is a direct summand of $C^{13}$, a contradiction.
Assume now the configuration is the third one in Figure \ref{subtree20st}.
The socle of $\Omega^{-3}E_8[\II]$ is $\St_\ell$. Since $\St_\ell$ does
not occur as a composition factor of $\H^{12}(kC)$ and a projective
cover does not occur as a direct summand of $C^{13}$, we obtain
a contradiction. This concludes the determination of the Brauer tree.
Note that now $\Omega^{-2}E_8[\II]=E_8[\zeta]$, $C^{12}\simeq
P_{E_8[\zeta]}$ and $\H^{12}(kC)=0$. In particular, $\H^*(C)$ is
torsion-free and
$C$ is 
$$ 0 \to P_{E_8[\II]} \to P_{E_8[\zeta]}\to P_{E_8[\zeta]} \to P_7 \to P_6 \to
\cdots \to P_0 \to 0.$$

\subsubsection{$d=24$}
Several Harish-Chandra series lie in the principal  $\Phi_d$-block, and a (Hecke) argument gives
the corresponding subtrees, as well as the real stem,
as shown in Figure \ref{24E8}. 

\smallskip

$\bullet\ $Step 1: cuspidal modules $E_8[-\theta]$ and $E_8[-\theta^2]$.

The two cuspidal characters $E_8[-\theta]$ and $E_8[-\theta^2]$ appear in the cohomology of a
Coxeter variety $\rX(c)$. To locate them on the Brauer tree we shall look at the cohomology
of a compactification $\overline{\rX}(c)$ and proceed as in the beginning of
\S\ref{se:E8d20}. 
We have
$$
b\Rgc(\overline{\rX}(c),K)_{(q^8)} \simeq (E_8[-\theta^2])[-8] \oplus K[-16].
$$
we deduce from Corollary \ref{co:Coxetertriv} and Theorem \ref{thm:coxeter}
(see Proposition \ref{prop:resultsxbar}) that
\begin{equation}
\label{complexrep}
 b\Rgc(\overline{\rX}(c),\OC)_{(q^8)} \simeq
 0 \to P_{E_8[ -\theta^2]} \to P_7 \to
\cdots \to P_0\to 0,
\end{equation}
where $P_1, \ldots, P_7$ is the unique path of projective covers of
non-cuspidal simple modules corresponding to edges from $k$ to $\St$ in the
Brauer tree, and the tree in Figure \ref{subtree24right} is a subtree of $T$.
Furthermore, the only principal series vertex connected by an edge to
a non-principal series vertex is $\St$.

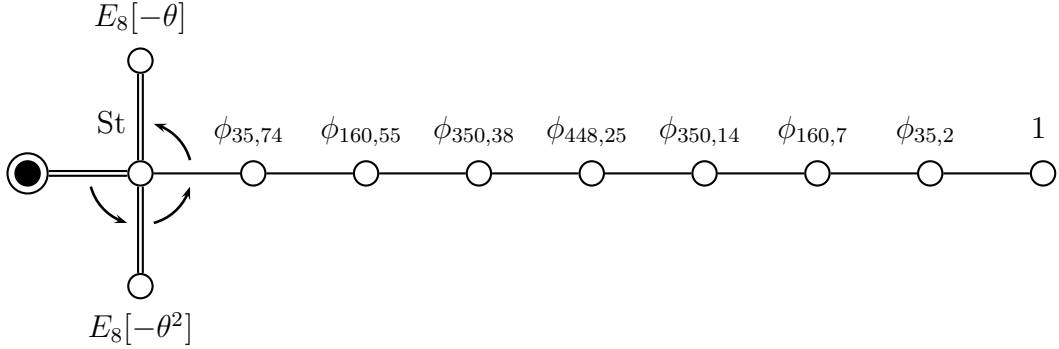
\begin{figure}[h!]
\begin{center}
\begin{pspicture}(13.5,4)

  \cnode[fillstyle=solid,fillcolor=black](0,2){5pt}{A2}
    \cnode(0,2){8pt}{A}
  \cnode(1.5,2){5pt}{B}
  \cnode(3,2){5pt}{C}
  \cnode(4.5,2){5pt}{D}
  \cnode(6,2){5pt}{E}
  \cnode(7.5,2){5pt}{F}
  \cnode(9,2){5pt}{G}
  \cnode(10.5,2){5pt}{H}
  \cnode(12,2){5pt}{K} 
   \cnode(13.5,2){5pt}{L}  
  \cnode(1.5,0.5){5pt}{I}
  \cnode(1.5,3.5){5pt}{J}

  \ncline[nodesep=0pt,doubleline=true]{A}{B}\naput[npos=0.8]{$\vphantom{\Big(} \mathrm{St}$}
  \ncline[nodesep=0pt]{B}{C}\naput[npos=1.1]{$\vphantom{\Big(}\phi_{35,74}$}
  \ncline[nodesep=0pt]{C}{D}\naput[npos=1.1]{$\vphantom{\Big(}  \phi_{160,55}$}
  \ncline[nodesep=0pt]{D}{E}\naput[npos=1.1]{$\vphantom{\Big(}\phi_{350,38}$}
  \ncline[nodesep=0pt]{E}{F}\naput[npos=1.1]{$\vphantom{\Big(} \phi_{448,25}$}
  \ncline[nodesep=0pt]{F}{G}\naput[npos=1.1]{$\vphantom{\Big(} \phi_{350,14}$}
  \ncline[nodesep=0pt]{G}{H}\naput[npos=1.1]{$\vphantom{\Big(} \phi_{160,7}$}
  \ncline[nodesep=0pt,doubleline=true]{B}{I}\ncput[npos=1.65]{$\vphantom{\Big(} E_8[-\theta^2] $}
  \ncline[nodesep=0pt,doubleline=true]{B}{J}\ncput[npos=1.65]{$\vphantom{\Big(} E_8[-\theta] $}
  \ncline[nodesep=0pt]{H}{K}\naput[npos=1.1]{$\vphantom{\Big(} \phi_{35,2}$}
  \ncline[nodesep=0pt]{K}{L}\naput[npos=1.1]{$\vphantom{\Big(} 1 $}
  \psellipticarc[linewidth=1pt]{->}(1.5,2)(0.7,0.7){15}{75}
  \psellipticarc[linewidth=1pt]{->}(1.5,2)(0.7,0.7){195}{255}
  \psellipticarc[linewidth=1pt]{->}(1.5,2)(0.7,0.7){285}{345}

\end{pspicture}
\end{center}
\caption{Subtree of the principal $\Phi_{24}$-block of $E_8(q)$}
\label{subtree24right}
\end{figure}

\smallskip

$\bullet\ $Step 2: $E_6$-series.

We now locate the $E_6$-series characters. By a (Degree) argument,
$E_6[\theta]_{\phi_{1,3}''}$ and $E_6[\theta^2]_{\phi_{1,3}''}$ are not
leaves in the tree, so, by a (Parity) argument, must be connected to one of
the non-unipotent vertex, $D_{4,\phi_{8,9}''}$ or $D_{4,\phi_{8,3}'}$, and
they are connected to the same node. Note that
$E_6[\theta^{\pm 1}]_{\phi_{1,3}''}$ is connected to exactly two characters:
$E_6[\theta^{\pm 1}]_{\phi_{2,2}}$ and the real character above (by a (Parity) argument,
it cannot be connected to $E_8[\pm\II]$). For all $q$,
the degree of
$$[D_{4,\phi_{8,9}''}]-\bigl([E_6[\theta]_{\phi_{1,3}''}]
-[E_6[\theta]_{\phi_{2,2}}] +[E_6[\theta^2]_{\phi_{1,3}''}]-
[E_6[\theta^2]_{\phi_{2,2}}]\bigr)$$
is negative, hence it cannot be the class of a $kG$-module. As a consequence,
$E_6[\theta^{\pm 1}]_{\phi_{1,3}''}$ is not connected to
$D_{4,\phi_{8,9}''}$. The same statement holds for $D_{4,\phi_{8,3}'}$,
hence $E_6[\theta^{\pm 1}]_{\phi_{1,3}''}$ is connected to
the non-unipotent node.

\smallskip
 Again, (Parity) and (Degree) arguments show that the characters $E_8[\pm \II]$
are connected to the non-unipotent node, or one of the nodes 
$D_{4,\phi_{8,9}''}$, $E_6[\theta]_{\phi_{2,2}}$ or
$E_6[\theta^2]_{\phi_{2,2}}$. 
Note that, from the subtree constructed so far and since $E_8[\pm\II]$ and $E_6[\theta^{\pm 1}]_{\phi_{1,3}'}$ have the same parity, they must both be leaves in the tree and so remain irreducible modulo $\ell$.

\smallskip
Let $w\in W$ be a regular element of order $24$ and length $10$.
 We have
  $$\begin{aligned}
  b\Rgc(\overline{\rX}(w),K)_{(q^{11})}  & \, \simeq E_8[\II] [-10], \\[8pt]
  b\Rgc(\overline{\rX}(w),K)_{(q^{14})} &\, \simeq E_6[\theta]_{\phi_{1,3}'} [-12].
  \end{aligned}$$

\smallskip

$\bullet\ $Step 3: $E_8[-\theta]$ and $E_8[-\theta^2]$ do not occur
in $\H_c^*(\overline{\rX}(w),k)_{(\lambda)}$.

Let $\lambda$ be either $q^{11}$ or  $q^{14}$. The torsion part in
$b\Rgc(\overline{\rX}(w),\mathcal{O})_{(\lambda)}$ is cuspidal by
Proposition \ref{xbartorsion}. Since its character has no composition factor
in the principal series we have
$\Rgc(\overline{\rX}(w),k)_{(\lambda)}\otimes_{kG} P_i = 0$ for 
$i\in\{0,\ldots,7\}$.
Using Proposition \ref{xbarxbar}   for the variety $\overline{\rX}(w)\times_{G} \overline{\rX}(c)$ together with (\ref{complexrep}) and the
dual description of $b\Rgc(\overline{\rX}(c),\OC)_{(1)}$,
we deduce that $\Rgc(\overline{\rX}(w),k)_{(\lambda)}\otimes_{kG} P_{E_8[-\theta]} = \Rgc(\overline{\rX}(w),k)_{(\lambda)}\otimes_{kG} P_{E_8[-\theta^2]} = 0$. This ensures that neither $E_8[-\theta]$ nor $E_8[-\theta^2]$ can occur as composition factors of the cohomology of  $b\Rgc(\overline{\rX}(w),k)_{(\lambda)}$.

\smallskip

$\bullet\ $Step 4: $ b\Rgc(\overline{\rX}(w),k)_{(q^{11})}$ and
position of $E_8[\pm\II]$.

Let $C = b\Rgc(\overline{\rX}(w),k)_{(q^{11})}$ and let $M = \H^i(C)$ be the
non-zero cohomology group with largest degree. Suppose that $i > 10$. The module $M$ is cuspidal and its composition factors are cuspidal
modules different from $E_8[-\theta]$ and $E_8[-\theta^2]$. 
Proposition \ref{prop:middledegree} shows that
$\RHom^\bullet_{kG}(\Rgc(\rX(v),k),M)=0$ for all $v < w$. By the construction of
the smooth compactifications, we obtain an isomorphism
$$ \RHom^\bullet_{kG}(\Rg(\rX(w),k),M) \simto \RHom^\bullet_{kG}(\Rgc(\overline{\rX}(w),k),M).$$
Since $\Rg(\rX(w),k)$ has a representative with terms in degrees $0,\ldots,\ell(w)=10$, we deduce that $\mathrm{Hom}_{D^b(kG)}(\Rgc(\overline{\rX}(w),k),M[-i]) =0$, which is impossible since $C$ is a direct summand of $\Rgc(\overline{\rX}(w),k)$ and the map $C \longrightarrow M[-i]=\H^i(C)[-i]$ is non-zero. This shows that $\H^j(C) = 0$ for $j>10$. Using the same argument with the isomorphism 
$$ \RHom^\bullet_{kG}(M,\Rgc(\rX(w),k)) \simto \RHom^\bullet_{kG}(M,\Rgc(\overline{\rX}(w),k))$$
and the fact that $\Rgc(\rX(w),k)$ has a representative with terms in degrees $10 =\ell(w),\ldots,2\ell(w) = 20$, we deduce that $\H^j(C) = 0$ for $j<10$. Therefore $C \simeq \H^{10}(C)[-10]$. 

Now, Proposition \ref{pr:OmegaDL} and Remark \ref{re:OmegaDLbar} show that
$E_8[\II] \simeq \Omega^{12}k$. We deduce that $E_8[\pm\II]$ are connected to the
non-unipotent node and this gives the whole tree as shown in Figure \ref{24E8},
up to swapping the $E_6[\theta]$ and the $E_6[\theta^2]$-series.

\smallskip

$\bullet\ $Step 5: $ b\Rgc(\overline{\rX}(w),k)_{(q^{14})}$ and conclusion.

The previous argument applied to the complex 
$D = b\Rgc(\overline{\rX}(w),k)_{(q^{14})}$ shows that the cohomology
of $D$ vanishes outside the degrees $10$, $11$ and $12$, and that
$\H^{12}(D)$ is a module with simple head isomorphic to
$E_6[\theta]_{\phi_{1,3}'}$. The radical of $\H^{12}(D)$ is cuspidal. Since
$E_6[\theta]_{\phi_{1,3}'}$ has no non-trivial extensions with simple
cuspidal modules, we deduce that $\H^{12}(D)=E_6[\theta]_{\phi_{1,3}'}$
and $\H^{12}_c(\overline{\rX}(w),\OC)_{(q^{14})}$ is torsion-free.

Let us denote the simple modules in the $E_6[\theta]$-series as in Figure \ref{subtree24left}. 
\begin{figure}[h!]
\begin{center}
\begin{pspicture}(7,2)
  \cnode[fillstyle=solid,fillcolor=black](1,1){5pt}{A2}
    \cnode(1,1){8pt}{A}
  \cnode(3,1){5pt}{B}
  \cnode(5,1){5pt}{C}
  \cnode(7,1){5pt}{D}
  \cnode(0,1){0pt}{E}
  \cnode(0.1,1.4){0pt}{E1}
  \cnode(0.1,0.6){0pt}{E2}
  \cnode(0.4,1.8){0pt}{E3}
  \cnode(0.4,0.2){0pt}{E4}

  \ncline[nodesep=0pt,doubleline=true]{A}{B}\naput[npos=1.1]{$\vphantom{\Big(} E_6[\theta]_{\phi_{1,3}''}$}\nbput{$S_3$}
  \ncline[nodesep=0pt]{B}{C}\nbput{$S_2$}\naput[npos=1.1]{$\vphantom{\Big(}E_6[\theta]_{\phi_{2,2}}$}
  \ncline[nodesep=0pt]{C}{D}\nbput{$S_1$}\naput[npos=1.1]{$\vphantom{\Big(}E_6[\theta]_{\phi_{1,3}'}$}
  \ncline[nodesep=0pt,linestyle=dashed]{E}{A}
  \ncline[nodesep=0pt,linestyle=dashed]{E1}{A}
  \ncline[nodesep=0pt,linestyle=dashed]{E2}{A}
  \ncline[nodesep=0pt,linestyle=dashed]{E3}{A}
  \ncline[nodesep=0pt,linestyle=dashed]{E4}{A}
\end{pspicture}
\end{center}
\caption{Subtree of the principal $\Phi_{24}$-block of $E_8(q)$}
\label{subtree24left}
\end{figure}
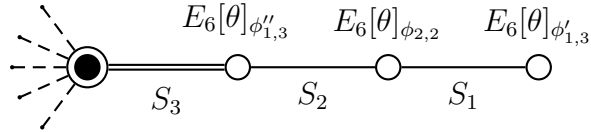
There exists a representative of $D$ of the form 
$$D=0 \to X \to P'\oplus P_{S_2} \to P_{S_1} \to 0,$$
where $P'$ is a projective module with no cuspidal simple quotient except
possibly
$E_8[-\theta]$ or $E_8[-\theta^2]$ (by Proposition \ref{prop:middledegree}).
Since $\H^{11}(D)$ is a cuspidal module with no composition factor isomorphic
to $E_8[-\theta]$ or $E_8[-\theta^2]$, we deduce that the representative
of $D$ can be chosen so that $P'=0$. 
By the universal coefficient theorem, we have $\H^{10}(D) \simeq \H^{11}(D)$.
We have $\H^{11}(D)=0$ or $\H^{11}(D)=S_3$. In both cases, we find that
$X$ is a module with composition factors $S_2$ and $S_3$.

Proposition \ref{pr:OmegaDL} and Remark \ref{re:OmegaDLbar} show that
$X \simeq \Omega^{18} k$ in $kG\mstab$. We deduce that 
 $\Omega^{18} k$ lifts to an $\mathcal{O}G$-lattice of character $E_6[\theta]_{\phi_{1,3}''}$, which gives the planar embedding.

\subsection{Other exceptional groups}
\label{sec:ree}
The Brauer trees of unipotent blocks for exceptional groups other than $E_7(q)$ and
$E_8(q)$ were determined in \cite{Bur,Sh1,Hi90,Ge,HL,HLM} (under an assumption
on $q$ for one of the blocks in ${}^2E_6(q)$), but only up to choice of field
of values in each block. This ambiguity can be removed using
Lusztig's parametrization of unipotent characters. We achieve this by choosing
carefully the roots of unity in $\overline{\mathbb{Q}}_\ell$ associated with
the cuspidal characters, as we did in the previous sections. 

\subsubsection{$E_6(q)$, ${}^2E_6(q)$, $F_4(q)$ and $G_2(q)$}

For each of the exceptional groups of type $E_6(q)$, ${}^2E_6(q)$, $F_4(q)$ and $G_2(q)$ there are only two blocks with cyclic defect groups whose Brauer trees are not lines. One of the blocks corresponds to the principal $\Phi_h$-block with $h$ the Coxeter number, and this case was solved in \cite{Du4}. For the other one, one proceeds exactly as in \S\ref{se:E7d14}, where only a pair of conjugate cuspidal characters lies outside the real stem  (these characters appear in the cohomology of a Coxeter variety). The planar-embedded Brauer trees can be found in \cite{Cra3}.

\subsubsection{${}^2B_2(q^2)$ and ${}^2G_2(q^2)$}

For the Suzuki groups ${}^2B_2(q^2)$ and the Ree groups ${}^2G_2(q^2)$, the
Frobenius eigenvalue corresponding to each unipotent character is known by \cite{Bru}. It is enough to locate a single non-real character to fix the planar
embedding. One can take this character to be a non-real cuspidal character
occurring in the cohomology of the Coxeter variety and proceed as before to
get the trees given in \cite{Cra3}. Note that for these groups the Coxeter
variety is $1$-dimensional, therefore its cohomology is torsion-free and $\Omega^2 k$ is isomophic in $kG\mstab$ to the generalized $(q^2)$-eigenspace of $F^2$ in $\Rgc(\rX(c),k)$ (when $d$ is not the Coxeter number). 

\subsubsection{${}^2F_4(q^2)$}

We now consider the Ree groups ${}^2F_4(q^2)$, whose Brauer trees have been
determined in \cite{Hi90} using the parametrization given in \cite{Ma},
but not using Lusztig's parametrization.

 Here, there are three trees that are not lines. One of them corresponds to the case solved in \cite{Du4}, and another one is similar to\
\S\ref{se:E7d14}. The only block which deserves a specific treatment is the principal $\ell$-block with $\ell \mid (q^4+\sqrt2q^3+q^2+\sqrt2q+1)$ (so,
$q$ is a $24$-th root of unity modulo $\ell$).
Let $\eta$, $i$ and $\theta$ be the roots of unity in $\OC$
having the same image as respectively $q^{15}$, $q^6$ and $q^{16}$
in the residue field $k$.

 A (Hecke) argument gives the
real stem of the Brauer tree as shown in Figure \ref{2F4} as well as the
two edges for the ${}^2B_2$-series.

 We consider the two generalized `mod- $\ell$-eigenspaces' of $F^2$ on the
cohomology of the Coxeter variety given by
$$ \begin{aligned}
\Rgc(\rX(c),K)_{(q^{-2})} & \, = \big({}^2B_2[\eta^3]_\varepsilon \oplus 
{}^2F_4[-\theta^2]\big)[-2], \\
\Rgc(\rX(c),K)_{(q^4)} & \, = {}^2B_2[\eta^5]_\varepsilon[-2] \oplus K[-4]. 
\end{aligned}$$

Lemma \ref{lem:onedegree} and Proposition \ref{pr:torsionCoxeter} 
show that ${}^2B_2[\eta^3]_\varepsilon + {}^2F_4[-\theta^2]$ is the character of a
projective module $P_{{}^2F_4[-\theta^2]}$, hence ${}^2B_2[\eta^3]_\varepsilon$ and
${}^2F_4[-\theta^2]$ are connected by an edge. Furthermore, 
$\Rgc(\rX(c),\OC)_{(q^{-2})}\simeq P_{{}^2F_4[-\theta^2]}[-2]$.

Corollary \ref{co:Coxetertriv} show that there is no non-real vertex connected
to $1$ or $\phi_{2,3}$, that there is an edge $S[\eta^5]$ connecting
$\St$ and ${}^2B_2[\eta^5]_\varepsilon$,
and, in the cyclic ordering of edges containing
$\St$, the edge $S[\eta^5]$
comes after the one containing $\phi_{2,3}$ and before $\St_\ell$.
Furthermore,
$$\Rgc(\rX(c),\OC)_{(q^{-2})}\simeq 0\to P_{S[\eta^5]}\to P_1\to
P_k\to 0,$$
where $P_k$ is in degree $4$ and
$P_1$ is projective with character $\phi_{2,3}+\St$.

We can now deduce the corresponding complexes of cohomology for
$\overline{\rX}(c)$. For $\lambda\in\{q^{-2},q^4\}$ and $I$ an
$F$-stable proper subset of $S$, we have 
$b\Rgc(\rX(c_I),\OC)_{(\lambda)}=0$ unless $(L_i,F)$ has type ${}^2B_2$, 
in which case the complex has cohomology concentrated in degree $1$. In addition, using duality for the case $\lambda\in\{q^6,1\}$, we find
$$\begin{aligned}
b\Rgc(\overline{\rX}(c),\OC)_{(q^{-2})} \simeq & \, 
0 \to \, 0 \ \to P_{{{}^2B_2}[\eta^3]_1} \to P_{{}^2F_4[-\theta^2]} \, \to \ \, 0 \ \, \to \, 0 \, \to 0, \\ 
b\Rgc(\overline{\rX}(c),\OC)_{(q^{6})} \simeq & \, 
0 \to \, 0 \ \to \, \, 0 \, \ \to P_{{}^2F_4[-\theta]} \to P_{{{}^2B_2}[\eta^5]_1}
\to \, 0\, \to 0, \\
b\Rgc(\overline{\rX}(c),\OC)_{(q^{4})} \simeq & \, 
0 \to \, 0 \ \to P_{{{}^2B_2}[\eta^5]_1} \to \, P_{S[\eta^5]} \ \to P_1  \, \to P_k \to 0, \\
b\Rgc(\overline{\rX}(c),\OC)_{(1)} \simeq & \, 
0 \to P_k \to \, P_1\, \to \, P_{S[\eta^3]} \ \to P_{{{}^2B_2}[\eta^3]_1}  \to \, 0 \,  \to 0, \end{aligned}$$
where $S[\eta^3]$ is the edge connecting $\St$ and 
${{}^2B_2}[\eta^3]_\varepsilon$.
Since $\Rgc(\overline{\rX}(c)\times_G \overline{\rX}(c),\OC)$ is torsion-free
(Proposition \ref{xbarxbar}),
we deduce that the differentials between non-zero terms of the complexes above
cannot be zero. This determines uniquely the four complexes above up to
isomorphism.

We have
$$b\Rgc(\overline{\rX}(c),\OC)_{(q^{-2})}\otimes_{\OC G}
b\Rgc(\overline{\rX}(c),\OC)_{(q^{4})} \simeq $$
$$\Hom^\bullet_{kG}(0\to  P_{S[\eta^3]}\to P_{{{}^2B_2}[\eta^3]_1}\to 0,
0 \to P_{{{}^2B_2}[\eta^3]_1} \to P_{{}^2F_4[-\theta^2]} \to 0)[-3].$$
By Proposition \ref{xbarxbar}, this complex $D$
has homology $\OC$ concentrated in degree $2$.
Assume that, in the cyclic ordering of edges containing 
${{{}^2B_2}[\eta^3]_\varepsilon}$, 
the edge containing ${{}^2F_4[-\theta^2]}$ comes after the edge
containing ${{{}^2B_2}[\eta^3]_1}$ but before the edge containing $\St$.
Then a non-zero map $kP_{S[\eta^3]}\to kP_{F_4[-\theta^2]}$ does not factor
through $kP_{{{}^2B_2}[\eta^3]_1}$: so, it gives rise to a non-zero
element of $\H^4(kD)$, a contradiction. It follows that the subtree obtained
by removing ${}^2F_4[\pm\II]$ is given by Figure \ref{2F4}.

\smallskip

Let $w\in W$ of length $6$ such that $wF$ has order $8$
and let $C = b\Rgc(\overline{\rX}(w),\mathcal{O})_{(-1)}$. There are 12
such elements and they are all $F$-conjugate. The complex $C$ is a perfect
complex; the torsion part of its cohomology is cuspidal by Proposition
\ref{xbartorsion} and it does not involve $\St_\ell$ by Proposition
\ref{gelfandgraevxbar}. In addition, there is a representative of $C$ that
involves neither $P_{{}^2F_4^{\mathrm{IV}}[-1]}$ nor $P_{\St_\ell}$
by Lemma \ref{le:minwforexc}. 
It follows that ${}^2F_4^{\mathrm{IV}}[-1]$ does not occur as a composition
factor of the cohomology of $kC$. Therefore the possible composition
factors in the torsion part of $\H^*(C)$ are the cuspidal simple
modules ${}^2F_4[\pm\II]$, ${}^2F_4[-\theta^j]$ and $S[\eta^m]$.

\smallskip
The cohomology of $KC$ is given by
$$ KC \simeq (F_4[\II])[-6] \oplus F_4[-\theta]^{\oplus 3}[-8] \oplus 
{}^2B_2[\eta^5]_1^{\oplus 5}[-9] \oplus K[-12].$$
Using Proposition \ref{xbarxbar} one can easily compute 
$kC \otimes^{\bbL}_{kG} b\Rgc(\overline{\rX}(c),k)_{(\lambda)}$ for the various eigenvalues $\lambda$ of $F^2$.
With the same method as in Steps 1 and 2 of \S\ref{se:E8d20}, the cases $\lambda = q^{-2}, q^6, q^4,1$ show that
\begin{itemize}
  \item ${}^2F_4[-\theta]$ can occur as a composition factor of $\H^*(kC)$ only in degrees $8$ or $9$, because $\Hom_{D^b(kG}(P_{{{}^2B_2}[\eta^5]_1},
kC[i])=0$ for $i\not=9$;
  \item ${}^2F_4[-\theta^2]$ does not occur as a composition factor of $\H^*(kC)$ because \linebreak$\Hom_{D^b(kG}(P_{{{}^2B_2}[\eta^3]_1},
kC[i])=0$ for all $i$;
  \item $S[\eta^3]$ does not occur as a composition factor of $\H^*(kC)$
because \linebreak$\Hom_{D^b(kG}(P_{{{}^2B_2}[\eta^3]_1},kC[i])=
\Hom_{D^b(kG}(P_1,kC[i])=0$ for all $i$;
  \item $S[\eta^5]$ can occur as a composition factor of $\H^*(kC)$ only in
degrees $9$, $10$ and $11$ because $\Hom_{D^b(kG}(P_1,kC[i])=0$ for all $i$ and
$\Hom_{D^b(kG}(P_{{{}^2B_2}[\eta^5]_1}, kC[i])=0$ for $i\not=9$.
\end{itemize}

\smallskip
There are five distinct possible planar trees other than the one in Figure
\ref{2F4}. One checks that, for each of those five bad embeddings, one of the
following holds:
\begin{itemize}
\item
$\Omega^{-3}({}^2 F_4[\II])$ and $\Omega^{-4}({}^2 F_4[\II])$ 
do not contain ${}^2F_4[-\theta]$ as a submodule,
\item 
$\Omega^{-4}({}^2 F_4[\II])$ 
does not contain ${{}^2B_2}[\eta^5]_1$ as a submodule,
\item 
$\Omega^{-4}({}^2 F_4[\II])$, $\Omega^{-5}({}^2 F_4[\II])$ 
and $\Omega^{-6}({}^2 F_4[\II])$ 
do not contain $S[\eta^5]$ as a submodule,
\item $\Omega^{-7}({}^2 F_4[\II])$ 
does not contain $k$ as a submodule, or
\item $\Omega^{-j}({}^2 F_4[\II])$ 
does not contain ${}^2 F_4[\II]$ nor ${}^2 F_4[-\II]$ as a submodule for
$1\le j\le 6$.
\end{itemize}
Since $k\H^6(C)\simeq {}^2F_4[\II]$, it
follows that $\Ext^{j+1}_{kG}(\H^{6+j}(kC),k\H^6(C))=0$ for $j\ge 1$
and $\Ext^1_{kG}(\mathrm{Tor}_1^\OC(k,\H^7(C)),k\H^6(C))=0$.
 Let $D$ be the cone of the canonical map $k\H^{6}(C)\to kC[6]$.
We have $\Hom_{D^b(kG)}(D,k\H^6(C)[1])=0$, hence $k\H^6(C)$ is isomorphic
to a direct summand of $C$. Since $C$ is perfect and 
${}^2F_4[\II]$ is not projective, we have a contradiction.
 This proves that the tree in Figure \ref{2F4} is correct. 

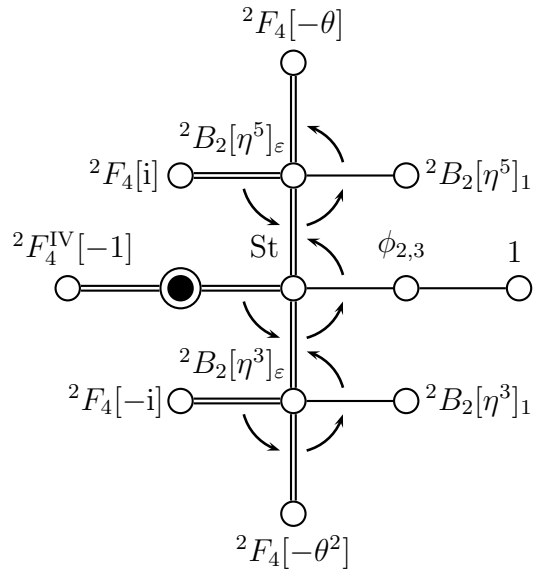
\begin{figure}[h!] 
\begin{center}
\begin{pspicture}(6,7)

  \cnode[fillstyle=solid,fillcolor=black](1.5,3.5){5pt}{A2}
  \cnode(1.5,3.5){8pt}{L}
  \cnode(3,3.5){5pt}{M}
  \cnode(3,2){5pt}{M1}
  \cnode(1.5,2){5pt}{X1}
  \cnode(4.5,2){5pt}{X2}
  \cnode(3,0.5){5pt}{M2}
  \cnode(3,5){5pt}{M3}
  \cnode(1.5,5){5pt}{Y1}
  \cnode(4.5,5){5pt}{Y2}
  \cnode(3,6.5){5pt}{M4}

  \cnode(4.5,3.5){5pt}{N}
  \cnode(6,3.5){5pt}{O}
  \cnode(0,3.5){5pt}{P}

  \ncline[nodesep=0pt]{N}{O}\naput[npos=1.1]{$\vphantom{\big(} 1 $}  
  \ncline[nodesep=0pt]{M}{N}\naput[npos=1.1]{$\vphantom{\Big(} \phi_{2,3} $}
  \ncline[nodesep=0pt,doubleline=true]{L}{M}\naput[npos=0.8]{$\vphantom{\big(}\mathrm{St}$}
  \ncline[nodesep=0pt,doubleline=true]{P}{L}\naput[npos=-0.1]{$\vphantom{\big(}{}^2F_4^{\mathrm{IV}}[-1] $}
  \ncline[nodesep=0pt,doubleline=true]{M}{M1}
  \ncline[nodesep=0pt,doubleline=true]{M}{M3}
  \ncline[nodesep=0pt,doubleline=true]{M1}{M2}\ncput[npos=1.6]{${}^2F_4[-\theta^2]$}
  \ncline[nodesep=0pt,doubleline=true]{M1}{X1}\nbput[npos=0.55]{${}^2B_2[\eta^3]_\varepsilon$}\ncput[npos=1.9]{${}^2F_4[-\II]$}

  \ncline[nodesep=0pt]{M1}{X2}\ncput[npos=2]{${}^2B_2[\eta^3]_1$}
  \ncline[nodesep=0pt,doubleline=true]{M3}{M4}\ncput[npos=1.6]{${}^2F_4[-\theta]$}
  \ncline[nodesep=0pt,doubleline=true]{M3}{Y1}\nbput[npos=0.55]{${}^2B_2[\eta^5]_\varepsilon$}\ncput[npos=1.8]{${}^2F_4[\II]$}

  \ncline[nodesep=0pt]{M3}{Y2}\ncput[npos=2]{${}^2B_2[\eta^5]_1$}  
  
  \psellipticarc[linewidth=1pt]{->}(3,3.5)(0.7,0.7){15}{75}
  \psellipticarc[linewidth=1pt]{->}(3,3.5)(0.7,0.7){195}{255}
  \psellipticarc[linewidth=1pt]{->}(3,3.5)(0.7,0.7){285}{345}

  \psellipticarc[linewidth=1pt]{->}(3,5)(0.7,0.7){15}{75}
  \psellipticarc[linewidth=1pt]{->}(3,5)(0.7,0.7){195}{255}
  \psellipticarc[linewidth=1pt]{->}(3,5)(0.7,0.7){285}{345}
 
  \psellipticarc[linewidth=1pt]{->}(3,2)(0.7,0.7){15}{75}
  \psellipticarc[linewidth=1pt]{->}(3,2)(0.7,0.7){195}{255}
  \psellipticarc[linewidth=1pt]{->}(3,2)(0.7,0.7){285}{345} 
\end{pspicture}
\end{center}
\caption{Principal $\ell$-block of ${}^2F_4(q^2)$ with $\ell \mid q^4+\sqrt2q^3+q^2+\sqrt2q+1$}
\label{2F4}
\end{figure}

\appendix

\begin{landscape}
\section{Brauer trees for $E_7(q)$ and $E_8(q)$}
\label{se:thetrees}

\begin{figure}[h!]
\begin{center}
\begin{pspicture}(21,4)

  \cnode[fillstyle=solid,fillcolor=black](9.5,2){5pt}{H2}
    \cnode(-1,2){5pt}{A}
  \cnode(0.5,2){5pt}{B}
  \cnode(2,2){5pt}{C}
  \cnode(3.5,2){5pt}{D}
  \cnode(5,2){5pt}{E}
  \cnode(6.5,2){5pt}{F}
  \cnode(8,2){5pt}{G}
   \cnode(8,0.5){5pt}{Gb} \cnode(8,3.5){5pt}{Gt}
  \cnode(9.5,2){8pt}{H}
  \cnode(11,2){5pt}{K} 
   \cnode(11,0.5){5pt}{Kb} \cnode(11,3.5){5pt}{Kt}
   \cnode(12.5,2){5pt}{L}  
  \cnode(14,2){5pt}{I}
  \cnode(15.5,2){5pt}{J}
  \cnode(17,2){5pt}{M}
  \cnode(18.5,2){5pt}{N}
  \cnode(20,2){5pt}{O}

  \ncline[nodesep=0pt]{A}{B}\naput[npos=-0.1]{$\vphantom{\Big(}\phi_{7,1}$}\naput[npos=1.1]{$\vphantom{\Big(}\phi_{56,3}$}
  \ncline[nodesep=0pt]{B}{C}\naput[npos=1.1]{$\vphantom{\Big(}\phi_{315,7}$}
  \ncline[nodesep=0pt]{C}{D}\naput[npos=1.1]{$\vphantom{\Big(}  \phi_{512,11}$}
  \ncline[nodesep=0pt]{D}{E}\naput[npos=1.1]{$\vphantom{\Big(}\phi_{280,17}$}
  \ncline[nodesep=0pt]{E}{F}\naput[npos=1.1]{$\vphantom{\Big(} \phi_{35,31}$}
  \ncline[nodesep=0pt]{F}{G}\naput[npos=1.5]{$\vphantom{\Big(} \mathrm{St}$}
  \ncline[nodesep=0pt]{G}{H}
  \ncline[nodesep=0pt]{H}{K}\naput[npos=0.6]{$\vphantom{\Big(}\phi_{7,46}$}
  \ncline[nodesep=0pt]{K}{L}\naput[npos=1.1]{$\vphantom{\Big(}\phi_{56,30}$}
  \ncline[nodesep=0pt]{L}{I}\naput[npos=1.1]{$\vphantom{\Big(}  \phi_{315,16}$}
  \ncline[nodesep=0pt]{I}{J}\naput[npos=1.1]{$\vphantom{\Big(}\phi_{512,12}$}
  \ncline[nodesep=0pt]{J}{M}\naput[npos=1.1]{$\vphantom{\Big(} \phi_{280,8}$}
  \ncline[nodesep=0pt]{M}{N}\naput[npos=1.1]{$\vphantom{\Big(}\phi_{35,4}$}
  \ncline[nodesep=0pt]{N}{O}\naput[npos=1.1]{$\vphantom{\Big(} 1$}
  \ncline[nodesep=0pt]{G}{Gb}\ncput[npos=1.6]{$E_6[\theta]_\varepsilon$}
  \ncline[nodesep=0pt]{G}{Gt}\ncput[npos=1.6]{$E_6[\theta^2]_\varepsilon$}
  \ncline[nodesep=0pt]{K}{Kb}\ncput[npos=1.6]{$E_6[\theta^2]_1$}
  \ncline[nodesep=0pt]{K}{Kt}\ncput[npos=1.6]{$E_6[\theta]_1$}

\psellipticarc[linewidth=1pt]{->}(11,2)(0.7,0.7){15}{75}
 \psellipticarc[linewidth=1pt]{->}(11,2)(0.7,0.7){195}{255}
 \psellipticarc[linewidth=1pt]{->}(11,2)(0.7,0.7){285}{345}
\psellipticarc[linewidth=1pt]{->}(8,2)(0.7,0.7){105}{165}
 \psellipticarc[linewidth=1pt]{->}(8,2)(0.7,0.7){195}{255}
 \psellipticarc[linewidth=1pt]{->}(8,2)(0.7,0.7){285}{345}

\end{pspicture}
\end{center}
\caption{Brauer tree of the principal $\Phi_9$-block of $E_7(q)$}
\label{9E7}
\end{figure}
The $\Phi_9$-blocks of $E_8(q)$ have isomorphic trees, with bijection of
vertices given as follows.
$$\begin{array}{|c|ccccccccc|}
\cline{1-10}
E_7(q) & \phi_{7,1} & \phi_{56,3} & \phi_{315,7} & \phi_{512,11} & \phi_{280,17}
& \phi_{35,31} & \St & E_6[\theta^2]_\epsilon &
E_6[\theta]_\epsilon \\
E_8(q), (A_2,\phi_3) & \phi_{160,7} & \phi_{1008,9} & \phi_{2800,13} & \phi_{5600,21} &
\phi_{4096,27} & \phi_{560,47} & \phi_{112,63} &
E_6[\theta^2]_{\phi''_{1,3}} &
E_6[\theta]_{\phi''_{1,3}} \\
E_8(q), (A_2,\phi_{21}) & \phi_{35,2} & \phi_{700,6} & \phi_{2240,10} & \phi_{3150,18} &
\phi_{2240,28} & \phi_{700,42} & \phi_{35,74} & E_6[\theta^2]_{\phi_{2,2}} &
E_6[\theta]_{\phi_{2,2}} \\
E_8(q), (A_2,\phi_{1^3}) & \phi_{112,3} & \phi_{560,5} & \phi_{4096,11} & \phi_{5600,15} & \phi_{2800,25}
& \phi_{1008,39} & \phi_{160,55} & E_6[\theta^2]_{\phi'_{1,3}} &
E_6[\theta]_{\phi'_{1,3}} \\
\cline{1-10}
\end{array}$$

$$\begin{array}{|c|ccccccccc|}
\cline{1-10}
E_7(q) & \phi_{7,46} & \phi_{56,30} & \phi_{315,16}
& \phi_{512,12} & \phi_{280,8} & \phi_{35,4} & 1 &
 E_6[\theta^2]_1 & E_6[\theta]_1  \\
E_8(q), (A_2,\phi_3) & \phi_{28,68} &
\phi_{1575,34} & \phi_{4096,26} & \phi_{3200,22} & \phi_{700,16} &
\phi_{50,8} & 1 &
E_6[\theta^2]_{\phi_{1,0}} & E_6[\theta]_{\phi_{1,0}} \\
E_8(q), (A_2,\phi_{21}) & \phi_{8,91} &
\phi_{400,43} & \phi_{1400,29} & \phi_{2016,19} & \phi_{1400,11} & 
\phi_{400,7} & \phi_{8,1}  &
E_6[\theta^2]_{\phi_{2,1}} & E_6[\theta]_{\phi_{2,1}}\\
E_8(q), (A_2,\phi_{1^3}) & \phi_{1,120} & \phi_{50,56} &
\phi_{700,28}& \phi_{3200,16} & \phi_{4096,12} & \phi_{1575,10} &
\phi_{28,8}  & E_6[\theta^2]_{\phi_{1,6}} & E_6[\theta]_{\phi_{1,6}} \\
\cline{1-10}
\end{array}$$

\begin{figure}[h!]
\begin{center}
\begin{pspicture}(12,4)

  \cnode[fillstyle=solid,fillcolor=black](0,2){5pt}{A2}
    \cnode(0,2){8pt}{A}
  \cnode(1.5,2){5pt}{B}
  \cnode(3,2){5pt}{C}
  \cnode(4.5,2){5pt}{D}
  \cnode(6,2){5pt}{E}
  \cnode(7.5,2){5pt}{F}
  \cnode(9,2){5pt}{G}
  \cnode(10.5,2){5pt}{H}
  \cnode(12,2){5pt}{K} 
   \cnode(3,0.5){5pt}{Cb} \cnode(3,3.5){5pt}{Ct}

  \ncline[nodesep=0pt]{A}{B}\naput[npos=1.1]{$\vphantom{\Big(}\phi_{7,46}$}
  \ncline[nodesep=0pt]{B}{C}\naput[npos=0.6]{$\vphantom{\Big(}\phi_{27,37}$}
  \ncline[nodesep=0pt]{C}{D}\naput[npos=1.1]{$\vphantom{\Big(}  \phi_{168,21}$}
  \ncline[nodesep=0pt]{D}{E}\naput[npos=1.1]{$\vphantom{\Big(}\phi_{378,14}$}
  \ncline[nodesep=0pt]{E}{F}\naput[npos=1.1]{$\vphantom{\Big(} \phi_{378,9}$}
  \ncline[nodesep=0pt]{F}{G}\naput[npos=1.1]{$\vphantom{\Big(} \phi_{168,6}$}
  \ncline[nodesep=0pt]{G}{H}\naput[npos=1.1]{$\vphantom{\Big(}\phi_{27,2}$}
  \ncline[nodesep=0pt]{H}{K}\naput[npos=1.1]{$\vphantom{\Big(}\phi_{7,1}$}
    \ncline[nodesep=0pt,doubleline=true]{C}{Cb}\ncput[npos=1.6]{$E_7[-\II]$}
  \ncline[nodesep=0pt,doubleline=true]{C}{Ct}\ncput[npos=1.6]{$E_7[\II]$}

\psellipticarc[linewidth=1pt]{->}(3,2)(0.7,0.7){15}{75}
 \psellipticarc[linewidth=1pt]{->}(3,2)(0.7,0.7){195}{255}
 \psellipticarc[linewidth=1pt]{->}(3,2)(0.7,0.7){285}{345}

\end{pspicture}
\end{center}
\caption{Brauer tree of the $\Phi_{10}$-block of $E_7(q)$ associated to
$({}^2A_2(q).(q^5+1),\phi_{21})$}
\label{10E7}
\end{figure}
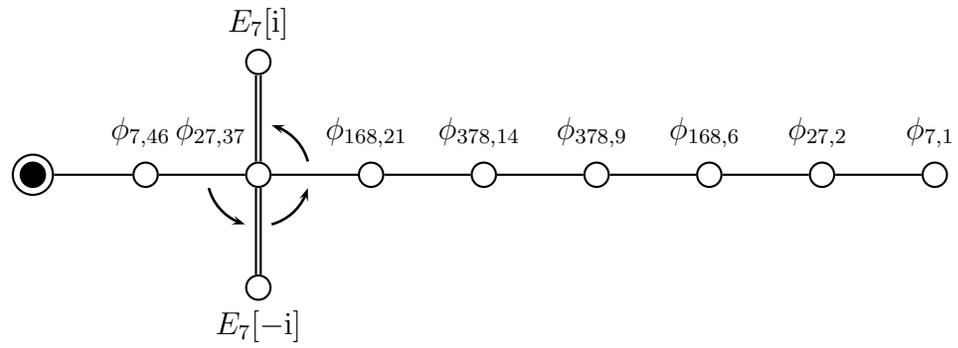

\end{landscape}

\pagebreak

\begin{landscape}

\begin{figure}[h!]
\begin{center}
\begin{pspicture}(18,4)

  \cnode[fillstyle=solid,fillcolor=black](6,2){5pt}{E2}
    \cnode(0,2){5pt}{A}
  \cnode(1.5,2){5pt}{B}
  \cnode(3,2){5pt}{C}
  \cnode(4.5,2){5pt}{D}
  \cnode(6,2){8pt}{E}
  \cnode(7.5,2){5pt}{F}
   \cnode(7.5,0.5){5pt}{Fb} \cnode(7.5,3.5){5pt}{Ft}
  \cnode(9,2){5pt}{G}
  \cnode(10.5,2){5pt}{H}
  \cnode(12,2){5pt}{K} 
   \cnode(13.5,2){5pt}{L}  
  \cnode(15,2){5pt}{I}
  \cnode(16.5,2){5pt}{J}
  \cnode(18,2){5pt}{M}

  \ncline[nodesep=0pt]{A}{B}\naput[npos=-0.1]{$\vphantom{\Big(}D_{4,\varepsilon_1}$}\naput[npos=1.1]{$\vphantom{\Big(}D_{4,r\varepsilon_1}$}
  \ncline[nodesep=0pt]{B}{C}\naput[npos=1.1]{$\vphantom{\Big(}D_{4,r\varepsilon_2}$}
  \ncline[nodesep=0pt]{C}{D}\naput[npos=1.1]{$\vphantom{\Big(}  D_{4,\varepsilon_2}$}
  \ncline[nodesep=0pt,doubleline=true]{D}{E}
  \ncline[nodesep=0pt,doubleline=true]{E}{F}\naput[npos=0.6]{$\vphantom{\Big(} \mathrm{St}$}
  \ncline[nodesep=0pt]{F}{G}\naput[npos=1.1]{$\vphantom{\Big(} \phi_{27,37}$}
  \ncline[nodesep=0pt]{G}{H}\naput[npos=1.1]{$\vphantom{\Big(}\phi_{105,26}$}
  \ncline[nodesep=0pt]{H}{K}\naput[npos=1.1]{$\vphantom{\Big(}\phi_{189,17}$}
  \ncline[nodesep=0pt]{K}{L}\naput[npos=1.1]{$\vphantom{\Big(}\phi_{189,10}$}
  \ncline[nodesep=0pt]{L}{I}\naput[npos=1.1]{$\vphantom{\Big(}  \phi_{105,5}$}
  \ncline[nodesep=0pt]{I}{J}\naput[npos=1.1]{$\vphantom{\Big(}\phi_{27,2}$}
  \ncline[nodesep=0pt]{J}{M}\naput[npos=1.1]{$\vphantom{\Big(} 1$}
    \ncline[nodesep=0pt,doubleline=true]{F}{Fb}\ncput[npos=1.6]{$E_7[-{\II}]$}
  \ncline[nodesep=0pt,doubleline=true]{F}{Ft}\ncput[npos=1.6]{$E_7[{\II}]$}

\psellipticarc[linewidth=1pt]{->}(7.5,2)(0.7,0.7){15}{75}
 \psellipticarc[linewidth=1pt]{->}(7.5,2)(0.7,0.7){195}{255}
 \psellipticarc[linewidth=1pt]{->}(7.5,2)(0.7,0.7){285}{345}

\end{pspicture}
\end{center}
\caption{Brauer tree of the principal $\Phi_{14}$-block of $E_7(q)$}
\label{14E7}
\end{figure}

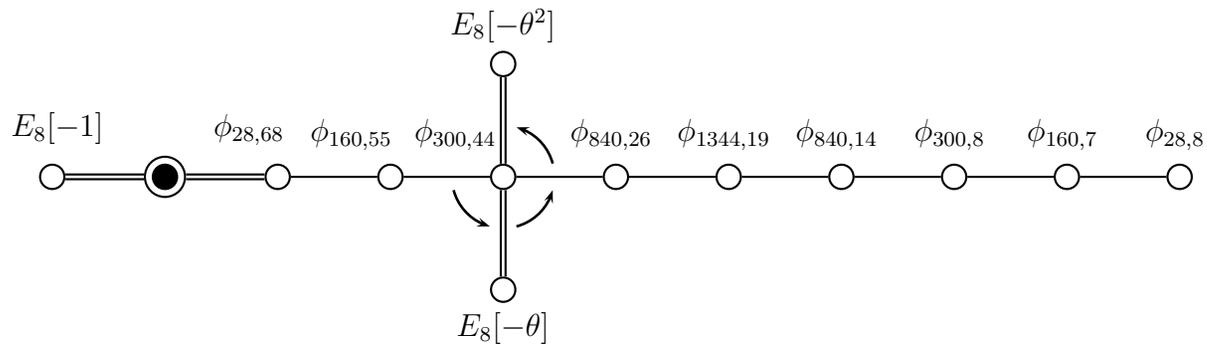
\begin{figure}[h!]
\begin{center}
\begin{pspicture}(15,4)

  \cnode[fillstyle=solid,fillcolor=black](1.5,2){5pt}{B2}
    \cnode(0,2){5pt}{A}
  \cnode(1.5,2){8pt}{B}
  \cnode(3,2){5pt}{C}
  \cnode(4.5,2){5pt}{D}
  \cnode(6,2){5pt}{E}
     \cnode(6,0.5){5pt}{Eb} \cnode(6,3.5){5pt}{Et}
  \cnode(7.5,2){5pt}{F}
  \cnode(9,2){5pt}{G}
  \cnode(10.5,2){5pt}{H}
  \cnode(12,2){5pt}{K} 
   \cnode(13.5,2){5pt}{L}  
  \cnode(15,2){5pt}{I}
  
  \ncline[nodesep=0pt,doubleline=true]{A}{B}\naput[npos=-0.1]{$\vphantom{\Big(}E_8[-1]$}
  \ncline[nodesep=0pt,doubleline=true]{B}{C}\naput[npos=0.8]{$\vphantom{\Big(}\phi_{28,68}$}
  \ncline[nodesep=0pt]{C}{D}\naput[npos=0.7]{$\vphantom{\Big(}  \phi_{160,55}$}
  \ncline[nodesep=0pt]{D}{E}\naput[npos=0.6]{$\vphantom{\Big(}  \phi_{300,44}$}
  \ncline[nodesep=0pt]{E}{F}\naput[npos=1.1]{$\vphantom{\Big(} \phi_{840,26}$}
  \ncline[nodesep=0pt]{F}{G}\naput[npos=1.1]{$\vphantom{\Big(} \phi_{1344,19}$}
  \ncline[nodesep=0pt]{G}{H}\naput[npos=1.1]{$\vphantom{\Big(}\phi_{840,14}$}
  \ncline[nodesep=0pt]{H}{K}\naput[npos=1.1]{$\vphantom{\Big(}\phi_{300,8}$}
  \ncline[nodesep=0pt]{K}{L}\naput[npos=1.1]{$\vphantom{\Big(}\phi_{160,7}$}
  \ncline[nodesep=0pt]{L}{I}\naput[npos=1.1]{$\vphantom{\Big(}  \phi_{28,8}$}
    \ncline[nodesep=0pt,doubleline=true]{E}{Eb}\ncput[npos=1.6]{$E_8[-\theta]$}
  \ncline[nodesep=0pt,doubleline=true]{E}{Et}\ncput[npos=1.6]{$E_8[-\theta^2]$}

\psellipticarc[linewidth=1pt]{->}(6,2)(0.7,0.7){15}{75}
 \psellipticarc[linewidth=1pt]{->}(6,2)(0.7,0.7){195}{255}
 \psellipticarc[linewidth=1pt]{->}(6,2)(0.7,0.7){285}{345}

\end{pspicture}
\end{center}
\caption{Brauer tree of the $\Phi_{12}$-block of $E_8(q)$ associated to  $(^3D_4(q),{}^3D_4[1])$ }
\label{12E8}
\end{figure}

\end{landscape}

\begin{landscape}
\begin{figure}[h!] 
\begin{center}
\begin{pspicture}(27,6)
\psset{unit=0.92cm}
  \cnode[fillstyle=solid,fillcolor=black](8.5,3){5pt}{A2}
    \cnode(8.5,3){8pt}{A}
  \cnode(10,3){5pt}{B}
  \cnode(9.77,4.47){5pt}{X1}
  \cnode(8.64,5.45){5pt}{X2}
  \cnode(11.18,4.96){5pt}{X3}
  \cnode(12.60,5.45){5pt}{X4}
    \cnode(9.77,1.53){5pt}{Y1}
  \cnode(8.64,0.55){5pt}{Y2}
  \cnode(11.18,1.04){5pt}{Y3}
  \cnode(12.60,0.55){5pt}{Y4}
  \cnode(11.5,3){5pt}{C}
  \cnode(13,3){5pt}{D}
  \cnode(14.5,3){5pt}{E}
  \cnode(16,3){5pt}{F}
  \cnode(17.5,3){5pt}{G}
  \cnode(19,3){5pt}{H}
  \cnode(20.5,3){5pt}{K} 
  \cnode(2.5,3){5pt}{L}
  \cnode(4,3){5pt}{M}
  \cnode(5.5,3){5pt}{N}
  \cnode(7,3){5pt}{O}
  \cnode(1,3){5pt}{P}
  \cnode(22,3){5pt}{Q}
   \cnode(7,1.5){5pt}{T}
  \cnode(7,4.5){5pt}{U}
    \cnode(-0.5,3){5pt}{V}
     \cnode(-2,3){5pt}{W}
  \cnode(-3.5,3){5pt}{X}
  \cnode(-5,3){5pt}{Y}
  \cnode(5.5,4.5){5pt}{N1}  \cnode(5.5,1.5){5pt}{N2}

   \ncline[nodesep=0pt,doubleline=true]{A}{B}\naput[npos=0.6]{$\vphantom{\Big(} \mathrm{St}$}
  \ncline[nodesep=0pt]{B}{C}\naput[npos=1]{$\vphantom{\Big(}\phi_{84,64}$}
  \ncline[nodesep=0pt]{C}{D}\naput[npos=0.9]{$\vphantom{\Big(}  \phi_{1344,38}$}
  \ncline[nodesep=0pt]{D}{E}\naput[npos=1]{$\vphantom{\Big(}\phi_{4096,26}$}
  \ncline[nodesep=0pt]{E}{F}\naput[npos=1.15]{$\vphantom{\Big(} \phi_{5670,18}$}
  \ncline[nodesep=0pt]{F}{G}\naput[npos=1.3]{$\vphantom{\Big(} \phi_{4096,12}$}
  \ncline[nodesep=0pt]{G}{H}\naput[npos=1.4]{$\vphantom{\Big(} \phi_{1344,8}$}
  \ncline[nodesep=0pt]{H}{K}\naput[npos=1.4]{$\vphantom{\Big(} \phi_{84,4} $}
  \ncline[nodesep=0pt]{L}{M}\naput[npos=0.2]{$\vphantom{\Big(} \phi_{4096,27} $}  
  \ncline[nodesep=0pt]{M}{N}\naput[npos=0.3]{$\vphantom{\Big(} \phi_{1400,37}$}
  \ncline[nodesep=0pt]{N}{O}\naput[npos=0.5]{$\vphantom{\Big(}\phi_{112,63} $}
  \ncline[nodesep=0pt,doubleline=true]{O}{A}\naput[npos=0.4]{$\vphantom{\Big(}\phi_{8,91}  $}
  \ncline[nodesep=0pt]{P}{L}\naput[npos=0.05]{$\vphantom{\Big(}\phi_{5600,19} $}
    \ncline[nodesep=0pt]{V}{P}\naput[npos=-0.1]{$\vphantom{\Big(}\phi_{4096,11} $}
  \ncline[nodesep=0pt]{W}{V}\naput[npos=-0.2]{$\vphantom{\Big(}\phi_{1400,7} $}
  \ncline[nodesep=0pt]{X}{W}\naput[npos=-0.3]{$\vphantom{\Big(}\phi_{112,3} $}
  \ncline[nodesep=0pt]{Y}{X}\naput[npos=-0.1]{$\vphantom{\Big(}\phi_{8,1} $}
  \ncline[nodesep=0pt]{K}{Q}\naput[npos=1.1]{$\vphantom{\Big(} 1 $}
    \ncline[nodesep=0pt,doubleline=true]{O}{T}\ncput[npos=1.65]{$\vphantom{\Big(} E_8[\theta] $}
  \ncline[nodesep=0pt,doubleline=true]{O}{U}\ncput[npos=1.65]{$\vphantom{\Big(} E_8[\theta^2] $}
      \ncline[nodesep=0pt,doubleline=true]{N}{N1}\ncput[npos=1.65]{$\vphantom{\Big(} E_8[\zeta^4] $}
  \ncline[nodesep=0pt,doubleline=true]{N}{N2}\ncput[npos=1.65]{$\vphantom{\Big(} E_8[\zeta] $}
    \ncline[nodesep=0pt,doubleline=true]{B}{X1}\ncput[npos=1.85]{$\phantom{\Big(e} E_6[\theta]_\varepsilon $}
      \ncline[nodesep=0pt,doubleline=true]{B}{Y1}\ncput[npos=1.85]{$\phantom{\Big(i} E_6[\theta^2]_\varepsilon $}
      \ncline[nodesep=0pt,doubleline=true]{X1}{X2}\ncput[npos=1.9]{$\phantom{\Big(} E_8[\zeta^2]$}
      \ncline[nodesep=0pt,doubleline=true]{Y1}{Y2}\ncput[npos=1.9]{$\phantom{\Big(} E_8[\zeta^3]$}
      \ncline[nodesep=0pt]{X1}{X3}\naput[npos=1.6]{$ E_6[\theta]_{\phi_{2,2}} $}
      \ncline[nodesep=0pt]{Y1}{Y3}\nbput[npos=1.6]{$ E_6[\theta^2]_{\phi_{2,2}}$}
      \ncline[nodesep=0pt]{X3}{X4}\naput[npos=1.6]{$E_6[\theta]_1$}
      \ncline[nodesep=0pt]{Y3}{Y4}\nbput[npos=1.6]{$E_6[\theta^2]_1$}
      
 \psellipticarc[linewidth=1pt]{->}(5.5,3)(0.7,0.7){195}{255}
 \psellipticarc[linewidth=1pt]{->}(5.5,3)(0.7,0.7){285}{345}
  \psellipticarc[linewidth=1pt]{->}(7,3)(0.7,0.7){195}{255}
 \psellipticarc[linewidth=1pt]{->}(7,3)(0.7,0.7){285}{345}
  \psellipticarc[linewidth=1pt]{->}(10,3)(0.7,0.7){195}{245}
 \psellipticarc[linewidth=1pt]{->}(10,3)(0.7,0.7){275}{345}
  \psellipticarc[linewidth=1pt]{->}(10,3)(0.7,0.7){15}{85}
 \psellipticarc[linewidth=1pt]{->}(9.77,4.47)(0.6,0.6){295}{10}
  \psellipticarc[linewidth=1pt]{->}(9.77,4.47)(0.6,0.6){150}{265}
   \psellipticarc[linewidth=1pt]{->}(9.77,1.53)(0.6,0.6){-10}{-295}
  \psellipticarc[linewidth=1pt]{->}(9.77,1.53)(0.6,0.6){-265}{-150}     
\end{pspicture}
\end{center}
\caption{Brauer tree of the principal $\Phi_{15}$-block of $E_8(q)$}
\label{15E8}
\end{figure}

\begin{figure}[h!]
\begin{center}
\begin{pspicture}(21,4)

  \cnode[fillstyle=solid,fillcolor=black](6.5,2){5pt}{F2}
    \cnode(-1,2){5pt}{A}
  \cnode(0.5,2){5pt}{B}
  \cnode(2,2){5pt}{C}
  \cnode(3.5,2){5pt}{D}
  \cnode(5,2){5pt}{E}
  \cnode(6.5,2){8pt}{F}
   \cnode(6.5,0.5){5pt}{Fb} \cnode(6.5,3.5){5pt}{Ft} 
  \cnode(8,2){5pt}{G}
  \cnode(9.5,2){5pt}{H}
     \cnode(9.5,0.5){5pt}{Hb} \cnode(9.5,3.5){5pt}{Ht}
  \cnode(11,2){5pt}{K} 
   \cnode(12.5,2){5pt}{L}  
  \cnode(14,2){5pt}{I}
  \cnode(15.5,2){5pt}{J}
  \cnode(17,2){5pt}{M}
  \cnode(18.5,2){5pt}{N}
  \cnode(20,2){5pt}{O}

  \ncline[nodesep=0pt]{A}{B}\naput[npos=-0.1]{$\vphantom{\Big(}D_{4,\phi_{1,12}'}$}\naput[npos=1.1]{$\vphantom{\Big(}D_{4,\phi_{4,7}'}$}
  \ncline[nodesep=0pt]{B}{C}\naput[npos=1.1]{$\vphantom{\Big(}D_{4,\phi_{6,6}'}$}
  \ncline[nodesep=0pt]{C}{D}\naput[npos=1.1]{$\vphantom{\Big(}  D_{4,\phi_{4,7}''}$}
  \ncline[nodesep=0pt]{D}{E}\naput[npos=1.1]{$\vphantom{\Big(}D_{4,\phi_{1,12}''}$}
  \ncline[nodesep=0pt,doubleline=true]{E}{F}  
  \ncline[nodesep=0pt,doubleline=true]{F}{G}\naput[npos=0.8]{$\vphantom{\Big(} \phi_{8,91}$}
  \ncline[nodesep=0pt]{G}{H}\naput[npos=0.6]{$\vphantom{\Big(} \phi_{35,74}$}
  \ncline[nodesep=0pt]{H}{K}\naput[npos=1.1]{$\vphantom{\Big(}\phi_{300,44}$}
  \ncline[nodesep=0pt]{K}{L}\naput[npos=1.1]{$\vphantom{\Big(}\phi_{840,31}$}
  \ncline[nodesep=0pt]{L}{I}\naput[npos=1.1]{$\vphantom{\Big(}  \phi_{1134,20}$}
  \ncline[nodesep=0pt]{I}{J}\naput[npos=1.1]{$\vphantom{\Big(}\phi_{840,13}$}
  \ncline[nodesep=0pt]{J}{M}\naput[npos=1.1]{$\vphantom{\Big(} \phi_{300,8}$}
  \ncline[nodesep=0pt]{M}{N}\naput[npos=1.1]{$\vphantom{\Big(}\phi_{35,2}$}
  \ncline[nodesep=0pt]{N}{O}\naput[npos=1.1]{$\vphantom{\Big(}\phi_{8,1}$}
  \ncline[nodesep=0pt,doubleline=true]{F}{Fb}\ncput[npos=1.6]{$E_8[\theta^2]$}
  \ncline[nodesep=0pt,doubleline=true]{F}{Ft}\ncput[npos=1.6]{$E_8[\theta]$}
  \ncline[nodesep=0pt,doubleline=true]{H}{Hb}\ncput[npos=1.6]{$E_8[-\theta]$}
  \ncline[nodesep=0pt,doubleline=true]{H}{Ht}\ncput[npos=1.6]{$E_8[-\theta^2]$}

\psellipticarc[linewidth=1pt]{->}(6.5,2)(0.7,0.7){15}{75}
 \psellipticarc[linewidth=1pt]{->}(6.5,2)(0.7,0.7){195}{255}
 \psellipticarc[linewidth=1pt]{->}(6.5,2)(0.7,0.7){285}{345}
  \psellipticarc[linewidth=1pt]{->}(6.5,2)(0.7,0.7){105}{165}
\psellipticarc[linewidth=1pt]{->}(9.5,2)(0.7,0.7){15}{75}
 \psellipticarc[linewidth=1pt]{->}(9.5,2)(0.7,0.7){195}{255}
 \psellipticarc[linewidth=1pt]{->}(9.5,2)(0.7,0.7){285}{345}

\end{pspicture}
\end{center}
\caption{Brauer tree of the $\Phi_{18}$-block of $E_8(q)$ associated to  $({}^2 A_2(q),\phi_{21})$}
\label{18E8}
\end{figure}

\end{landscape}

\pagebreak 

\begin{landscape}
\begin{figure}[h!] \vskip -1.2cm 
\begin{center}
\begin{pspicture}(17,5)

  \cnode[fillstyle=solid,fillcolor=black](5,2){5pt}{A2}
    \cnode(5,2){8pt}{A}
  \cnode(6.5,2){5pt}{B}
  \cnode(8,2){5pt}{C}
  \cnode(9.5,2){5pt}{D}
  \cnode(11,2){5pt}{E}
  \cnode(12.5,2){5pt}{F}
  \cnode(14,2){5pt}{G}
  \cnode(15.5,2){5pt}{H}
  \cnode(17,2){5pt}{K} 
  \cnode(-1,2){5pt}{L}
  \cnode(0.5,2){5pt}{M}
  \cnode(2,2){5pt}{N}
  \cnode(3.5,2){5pt}{O}
  \cnode(-2.5,2){5pt}{P}
  \cnode(18.5,2){5pt}{Q}
    \cnode(7.3,0.6){5pt}{I}
  \cnode(7.3,3.2){5pt}{J}
    \cnode(5.7,0.6){5pt}{R}
  \cnode(5.7,3.2){5pt}{S}
      \cnode(3.5,0.5){5pt}{T}
  \cnode(3.5,3.5){5pt}{U}
  
   \ncline[nodesep=0pt,doubleline=true]{A}{B}\naput[npos=1.15]{$\vphantom{\Big(} \mathrm{St}$}
  \ncline[nodesep=0pt]{B}{C}\naput[npos=1.1]{$\vphantom{\Big(}\phi_{112,63}$}
  \ncline[nodesep=0pt]{C}{D}\naput[npos=1.1]{$\vphantom{\Big(}  \phi_{567,46}$}
  \ncline[nodesep=0pt]{D}{E}\naput[npos=1.1]{$\vphantom{\Big(}\phi_{1296,33}$}
  \ncline[nodesep=0pt]{E}{F}\naput[npos=1.1]{$\vphantom{\Big(} \phi_{1680,22}$}
  \ncline[nodesep=0pt]{F}{G}\naput[npos=1.1]{$\vphantom{\Big(} \phi_{1296,13}$}
  \ncline[nodesep=0pt]{G}{H}\naput[npos=1.1]{$\vphantom{\Big(} \phi_{567,6}$}
  \ncline[nodesep=0pt]{H}{K}\naput[npos=1.1]{$\vphantom{\Big(} \phi_{112,3} $}
  \ncline[nodesep=0pt]{L}{M}\naput[npos=-0.1]{$\vphantom{\Big(} D_{4,\phi_{9,2}} $}  
  \ncline[nodesep=0pt]{M}{N}\naput[npos=-0.1]{$\vphantom{\Big(} D_{4,\phi_{16,5}} $}
  \ncline[nodesep=0pt]{N}{O}\naput[npos=-0.1]{$\vphantom{\Big(}D_{4,\phi_{9,10}} $}
  \ncline[nodesep=0pt,doubleline=true]{O}{A}\naput[npos=0.4]{$\vphantom{\Big(}D_{4,\varepsilon}  $}
  \ncline[nodesep=0pt]{P}{L}\naput[npos=-0.1]{$\vphantom{\Big(}D_{4,1}  $}
  \ncline[nodesep=0pt]{K}{Q}\naput[npos=1.1]{$\vphantom{\Big(} 1 $}
  \ncline[nodesep=0pt,doubleline=true]{B}{I}\ncput[npos=1.65]{$\vphantom{\Big(} E_8[\zeta^4] $}
  \ncline[nodesep=0pt,doubleline=true]{B}{J}\ncput[npos=1.65]{$\vphantom{\Big(} E_8[\zeta] $}
  \ncline[nodesep=0pt,doubleline=true]{B}{R}\ncput[npos=1.65]{$\vphantom{\Big(} E_8[-{\II}] $}
  \ncline[nodesep=0pt,doubleline=true]{B}{S}\ncput[npos=1.65]{$\vphantom{\Big(} E_8[{\II}] $}
    \ncline[nodesep=0pt,doubleline=true]{O}{T}\ncput[npos=1.65]{$\vphantom{\Big(} E_8[\zeta^3] $}
  \ncline[nodesep=0pt,doubleline=true]{O}{U}\ncput[npos=1.65]{$\vphantom{\Big(} E_8[\zeta^2] $}
  
    \psellipticarc[linewidth=1pt]{->}(3.5,2)(0.7,0.7){105}{165}
 \psellipticarc[linewidth=1pt]{->}(3.5,2)(0.7,0.7){195}{255}
 \psellipticarc[linewidth=1pt]{->}(3.5,2)(0.7,0.7){285}{345}
     \psellipticarc[linewidth=1pt]{->}(6.5,2)(0.7,0.7){15}{45}
 \psellipticarc[linewidth=1pt]{->}(6.5,2)(0.7,0.7){135}{165}
 \psellipticarc[linewidth=1pt]{->}(6.5,2)(0.7,0.7){255}{285}
  \psellipticarc[linewidth=1pt]{->}(6.5,2)(0.7,0.7){195}{225}
 \psellipticarc[linewidth=1pt]{->}(6.5,2)(0.7,0.7){315}{345}
 
\end{pspicture}
\end{center}

\caption{Brauer tree of the principal $\Phi_{20}$-block of $E_8(q)$}
\label{20E8}
\end{figure}

\begin{figure}[h!]  
\begin{center}
\begin{pspicture}(17,7.2)

  \cnode[fillstyle=solid,fillcolor=black](5,3.9){5pt}{A2}
      \cnode(5,3.9){8pt}{A}
  \cnode(6.5,3.9){5pt}{B}
  \cnode(8,3.9){5pt}{C}
  \cnode(9.5,3.9){5pt}{D}
  \cnode(11,3.9){5pt}{E}
  \cnode(12.5,3.9){5pt}{F}
  \cnode(14,3.9){5pt}{G}
  \cnode(15.5,3.9){5pt}{H}
  \cnode(17,3.9){5pt}{K}  
  \cnode(7.3,2.7){5pt}{I}
  \cnode(7.3,5.1){5pt}{J}
  \cnode(-1,3.9){5pt}{L}
  \cnode(0.5,3.9){5pt}{M}
  \cnode(2,3.9){5pt}{N}
  \cnode(3.5,3.9){5pt}{O}
  \cnode(-2.5,3.9){5pt}{P}
  \cnode(18.5,3.9){5pt}{Q}
  \cnode(5.8,5.1){5pt}{R}
  \cnode(5.8,2.7){5pt}{S}
  \cnode(3.9,5){5pt}{T}
  \cnode(3.9,2.8){5pt}{U}
  \cnode(2.8,1.7){5pt}{V}
  \cnode(2.8,6.1){5pt}{W}
  \cnode(1.7,0.6){5pt}{X}
  \cnode(1.7,7.2){5pt}{Y}

   \ncline[nodesep=0pt,doubleline=true]{A}{B}\naput[npos=0.8]{$\vphantom{\Big(} \mathrm{St}$}
  \ncline[nodesep=0pt]{B}{C}\naput[npos=1.1]{$\vphantom{\Big(}\phi_{35,74}$}
  \ncline[nodesep=0pt]{C}{D}\naput[npos=1.1]{$\vphantom{\Big(}  \phi_{160,55}$}
  \ncline[nodesep=0pt]{D}{E}\naput[npos=1.1]{$\vphantom{\Big(}\phi_{350,38}$}
  \ncline[nodesep=0pt]{E}{F}\naput[npos=1.1]{$\vphantom{\Big(} \phi_{448,25}$}
  \ncline[nodesep=0pt]{F}{G}\naput[npos=1.1]{$\vphantom{\Big(} \phi_{350,14}$}
  \ncline[nodesep=0pt]{G}{H}\naput[npos=1.1]{$\vphantom{\Big(} \phi_{160,7}$}
  \ncline[nodesep=0pt,doubleline=true]{B}{I}\ncput[npos=1.65]{$\phantom{\Big(mm} E_8[-\theta] $}
  \ncline[nodesep=0pt,doubleline=true]{B}{J}\ncput[npos=1.65]{$\phantom{\Big(mim} E_8[-\theta^2] $}
  \ncline[nodesep=0pt]{H}{K}\naput[npos=1.1]{$\vphantom{\Big(} \phi_{35,2} $}
  \ncline[nodesep=0pt]{L}{M}\naput[npos=-0.1]{$\vphantom{\Big(} D_{4,\phi_{8,3}'} $}  
  \ncline[nodesep=0pt]{M}{N}\naput[npos=-0.1]{$\vphantom{\Big(} D_{4,\phi_{12,4}} $}
  \ncline[nodesep=0pt]{N}{O}\naput[npos=-0.1]{$\vphantom{\Big(}D_{4,\phi_{8,9}''} $}
  \ncline[nodesep=0pt,doubleline=true]{O}{A}\naput[npos=-0.1]{$\vphantom{\Big(}D_{4,\phi_{2,16}''}  $}
  \ncline[nodesep=0pt]{P}{L}\naput[npos=-0.1]{$\vphantom{\Big(}D_{4,\phi_{2,4}'}  $}
  \ncline[nodesep=0pt]{K}{Q}\naput[npos=1.1]{$\vphantom{\Big(} 1 $}
  \ncline[nodesep=0pt,doubleline=true]{R}{A}\ncput[npos=-0.7]{$\phantom{\Big(im} E_8[{\II}] $}
  \ncline[nodesep=0pt,doubleline=true]{S}{A}\ncput[npos=-0.7]{$\phantom{\Big(mm} E_8[-{\II}] $}
  \ncline[nodesep=0pt,doubleline=true]{T}{A}\naput[npos=-0.1]{$ E_6[\theta]_{\phi_{1,3}''} $}
  \ncline[nodesep=0pt,doubleline=true]{U}{A}\nbput[npos=-0.1]{$ E_6[\theta^2]_{\phi_{1,3}''} $}
  \ncline[nodesep=0pt]{V}{U}\nbput[npos=-0.1]{$E_6[\theta^2]_{\phi_{2,2}} $}
  \ncline[nodesep=0pt]{W}{T}\naput[npos=-0.1]{$E_6[\theta]_{\phi_{2,2}}$}
  \ncline[nodesep=0pt]{X}{V}\nbput[npos=-0.1]{$ E_6[\theta^2]_{\phi_{1,3}'} $}
  \ncline[nodesep=0pt]{Y}{W}\naput[npos=0.1]{$E_6[\theta]_{\phi_{1,3}'} $}

    \psellipticarc[linewidth=1pt]{->}(6.5,3.9)(0.7,0.7){195}{285}
 \psellipticarc[linewidth=1pt]{->}(6.5,3.9)(0.7,0.7){315}{345}
 \psellipticarc[linewidth=1pt]{->}(6.5,3.9)(0.7,0.7){15}{45}
     \psellipticarc[linewidth=1pt]{->}(5,3.9)(0.7,0.7){15}{45}
          \psellipticarc[linewidth=1pt]{->}(5,3.9)(0.7,0.7){75}{120}
 \psellipticarc[linewidth=1pt]{->}(5,3.9)(0.7,0.7){240}{295}
 \psellipticarc[linewidth=1pt]{->}(5,3.9)(0.7,0.7){315}{345}

\end{pspicture}
\end{center} 
\caption{Brauer tree of the principal $\Phi_{24}$-block of $E_8(q)$}
\label{24E8}
\end{figure}


\end{landscape}

\def\cprime{$'$}


\begin{thebibliography}{10}

\bibitem{BDR}
C.~Bonnaf\'e, J.-F.Dat and R.~Rouquier,
\newblock Derived categories and Deligne-Lusztig varieties II.
\newblock {\em Ann. Math.} 185 (2017), 609--670.

\bibitem{BoMi}
C.~Bonnaf{\'e} and J.~Michel.
\newblock Computational proof of the Mackey formula for $q>2$.
\newblock {\em J. Algebra} 327 (2011), 506--526.

\bibitem{BR1}
C.~Bonnaf{\'e} and R.~Rouquier.
\newblock Cat\'egories d\'eriv\'ees et vari\'et\'es de {D}eligne-{L}usztig.
\newblock {\em Publ. Math. Inst. Hautes \'Etudes Sci.} 97 (2003), 1--59.

\bibitem{BR2}
C.~Bonnaf{\'e} and R.~Rouquier.
\newblock Coxeter orbits and modular representations.
\newblock {\em Nagoya Math. J.} 183 (2006), 1--34.

\bibitem{Bra}
R.~Brauer.
\newblock Investigations on group characters.
\newblock {\em Ann. of Math.} 42 (1941), 936--958.

\bibitem{Brperfect}
M.~Brou\'e.
\newblock Isom\'etries parfaites, types de blocs, cat\'egories d\'eriv\'ees.
\newblock {\em Ast\'erisque} 181--182 (1990), 61--92.

\bibitem{BrMa}  M.~Brou\'e and G.~Malle.
\newblock Th\'eor\`emes de Sylow g\'en\'eriques pour les groupes r\'eductifs sur les corps finis.
\newblock {\em Math. Ann.} 292 (1992), 241--262.

\bibitem{BMM}  M.~Brou\'e, G.~Malle and J.~Michel,
\newblock Generic blocks of finite reductive groups.
\newblock {\em Ast\'erisque} 212 (1993), 7--92.

\bibitem{Spets}  M.~Brou\'e, G.~Malle and J.~Michel.
\newblock Towards spetses. I.
\newblock {\em Transform. Groups} 4 (1999), 157--218.
\bibitem{BrMi}
M.~Brou\'e and J.~Michel.
\newblock Blocs et s\'eries de Lusztig dans un groupe r\'eductif fini.
\newblock {\em J. Reine Angew. Math.} 395 (1989), 56--67.

\bibitem{BMi1}
M.~Brou\'e and J.~Michel.
\newblock Blocs \`a groupe de d\'efaut ab\'elien.
\newblock {\em Ast\'erisque} 212 (1993), 93--117.


\bibitem{Bru}
O.~Brunat.
\newblock The Shintani descents of Suzuki groups and their consequences.
\newblock {\em J. Algebra} 303 (2006), 869--890.

\bibitem{Bur}
R.~Burkhardt.
\newblock \"Uber die Zerlegungszahlen der Suzukigruppen $\mathrm{Sz}(q)$.
\newblock {\em J. Algebra} 59 (1979), 421--433.

\bibitem{CaEn93}
M.~Cabanes and M.~Enguehard.
\newblock Unipotent blocks of finite reductive groups of a given type.
\newblock {\em Math. Zeitschrift} 213 (1993), 479--490.

\bibitem{CaEn94}
M.~Cabanes and M.~Enguehard.
\newblock On unipotent blocks and their ordinary characters.
\newblock {\em Invent. Math.} 117 (1994), 149--164.

\bibitem{CaEn}
M.~Cabanes and M.~Enguehard.
\newblock Representation theory of finite reductive groups.
\newblock Cambridge Univ. Press, 2004.

\bibitem{ChRou}
J.~Chuang and R.~Rouquier.
\newblock Perverse equivalences and Calabi-Yau algebras.
\newblock In preparation.

\bibitem{CHLM}
G.~Cooperman, G.~Hi\ss, K.~Lux and J.~M{\"u}ller.
\newblock The Brauer tree of the principal $19$-block of the sporadic simple Thompson group.
\newblock {\em Experiment. Math.} 6 (1997), 293--300.

\bibitem{Cra3}
D.~Craven.
\newblock Perverse equivalences and Brou\'e's conjecture II: The cyclic case.
\newblock submitted.

\bibitem{CuRei}
C.W.~Curtis and I.~Reiner.
\newblock Methods of representation theory, vol. II.
\newblock John Wiley, 1987.

\bibitem{Dade}
E.~C. Dade.
\newblock Blocks with cyclic defect groups.
\newblock {\em Ann. of Math.} 84 (1966), 20--48.

\bibitem{DeLu83}
P.~Deligne and G.~Lusztig. 
\newblock Duality for representations of a reductive group over a finite field, II.
\newblock \emph{J. Algebra} 81 (1983), 540--545.

\bibitem{DiMi}
F.~Digne and J.~Michel.
\newblock Representations of finite groups of Lie type.
\newblock Cambridge University Press, 1991.

\bibitem{DiMinc}
F.~Digne and J.~Michel.
\newblock Groupes r\'eductifs non connexes.
\newblock \emph{Ann. Sci. ENS} 27 (1994), 345--406.

\bibitem{DMR}
F.~Digne, J.~Michel and R.~Rouquier.
\newblock Cohomologie des vari\'et\'es de {D}eligne-{L}usztig.
\newblock {\em Adv. Math.} 209 (2007), 749--822.

\bibitem{Di90}
R.~Dipper.
\newblock On quotient of Hom-functors and representations of finite general linear groups. I.
\newblock {\em J. Algebra} 130 (1990), 235--239.

\bibitem{Du1}
O.~Dudas.
\newblock Deligne-Lusztig restriction of a Gelfand-Graev module.
\newblock {\em Ann. Sci. \'Ec. Norm. Sup\'er.} 42 (2009), 653--674.

\bibitem{Du3}
O.~Dudas.
\newblock Coxeter {O}rbits and {B}rauer trees.
\newblock {\em Adv. Math.} 229 (2012), 3398--3435.

\bibitem{Du4}
O.~Dudas.
\newblock Coxeter {O}rbits and {B}rauer trees {II}.
\newblock {\em Int. Math. Res. Not.} 15 (2014), 4100--4123.

\bibitem{DR}
O.~Dudas and R.~Rouquier.
\newblock Coxeter {O}rbits and {B}rauer trees {III}.
\newblock {\em J. Amer. Math. Soc.} 27 (2014), 1117--1145.

\bibitem{En00}
M.~Enguehard.
\newblock Sur les $\ell$-blocs des groupes r\'eductifs finis quand $\ell$ est mauvais.
\newblock {\em Journal of Algebra} 230 (2000), 334--377.

\bibitem{Fe2}
W.~Feit.
\newblock Possible {B}rauer trees.
\newblock {\em Illinois J. Math.} 28 (1984), 43--56.

\bibitem{Feitbook}
W.~Feit.
\newblock The representation theory of finite groups.
\newblock North-Holland, 1982.

\bibitem{FoHa}
P. Fong and M. Harris.
\newblock On perfect isometries and isotypies in finite groups.
\newblock {\em Invent. Math.} 114 (1993), 139--191.

\bibitem{FS1}
P. Fong and B. Srinivasan.
\newblock Brauer trees in $\mathrm{GL}_n(q)$.
\newblock {\em Math. Z.} 187 (1984), 81--88.

\bibitem{FS2}
P. Fong and B. Srinivasan.
\newblock Brauer trees in classical groups.
\newblock {\em J. Algebra} 131 (1990), 179--225.

\bibitem{Ge90}
M.~Geck.
\newblock Verallgemeinerte Gelfand--Graev Charaktere und Zerlegungszahlen
endlicher Gruppen.
\newblock \emph{Dissertation}, Aachen (1990).

\bibitem{Ge}
M.~Geck.
\newblock Generalized Gelfand--Graev characters for Steinberg's triality groups and their applications.
\newblock \emph{Comm. Algebra} 19 (1991), 3249--3269.

\bibitem{Ge92}
M.~Geck.
\newblock Brauer trees of Hecke algebras.
\newblock \emph{Comm. Algebra} 20 (1992), 2937--2973.

\bibitem{Ge98}
M.~Geck.
\newblock Kazhdan-Lusztig cells and decomposition numbers.
\newblock {\em Represent. Theory} 2 (1998), 264--277.

\bibitem{Ge03}
M.~Geck.
\newblock Character values, Schur indices and character sheaves.
\newblock {\em Represent. Theory} 7 (2003), 19--55.

\bibitem{GeHiMa94}
M.~Geck, G.~Hiss and G.~Malle.
\newblock Cuspidal unipotent Brauer characters.
\newblock {\em J. Algebra} 168 (1994), 182--220.

\bibitem{GePf0}
M.~Geck and G.~Pfeiffer.
\newblock On the irreducible characters of Hecke algebras. 
\newblock {\em Adv. Math.} 102 (1993), 79--94. 

\bibitem{GePf}
M.~Geck and G.~Pfeiffer.
\newblock {\em Characters of finite {C}oxeter groups and {I}wahori--{H}ecke
  algebras}. 
\newblock The Clarendon Press Oxford University Press, 2000.

\bibitem{GePfKi}
M.~Geck and G.~Pfeiffer and S.~Kim.
\newblock Minimal length elements in twisted conjugacy classes of finite
 Coxeter groups.
\newblock {\em J. Algebra} 229 (2000), 570--600.

\bibitem{Hi}
G.~Hiss.
\newblock Regular and semisimple blocks of finite reductive groups.
\newblock {\em J. London Math. Soc.} 41 (1990), 63--68.

\bibitem{Hi90}
G.~Hiss.
\newblock The Brauer trees of the Ree groups.
\newblock {\em Comm. Algebra} 19 (1991), 871--888. 

\bibitem{HL}
G.~Hiss and F.~L{\"u}beck.
\newblock The {B}rauer trees of the exceptional {C}hevalley groups of types
  {$F\sb 4$} and {$\sp 2\!E\sb 6$}.
\newblock {\em Arch. Math. (Basel)} 70 (1998), 16--21.

\bibitem{HLM}
G.~Hiss, F.~L{\"u}beck and G.~Malle.
\newblock The {B}rauer trees of the exceptional {C}hevalley groups of type {$E\sb 6$}.
\newblock {\em Manuscripta Math.} 87 (1995), 131--144.

\bibitem{HiLu}
G.~Hiss and K.~Lux.
\newblock {\em Brauer trees of sporadic groups}.
\newblock Oxford University Press, 1989.

\bibitem{Hu}
J.~E.~Humphreys.
\newblock {\em Reflection groups and Coxeter groups}.
\newblock Cambridge Univ. Press, 1990.

\bibitem{Licentre}
M.~Linckelmann.
\newblock Le centre d'un bloc \`a groupes de d\'efaut cycliques.
\newblock {\em Comptes Rendus de l'Acad\'emie des Sciences} 306 (1988), 727--730.

\bibitem{Li2}
M.~Linckelmann.
\newblock Trivial source bimodule rings for blocks and $p$-permutation
equivalences.
\newblock {\em Transactions Amer. Math. Soc.} 361 (2009), 1279--1316.

\bibitem{LuCBMS}
G.~Lusztig.
\newblock Representations of finite Chevalley groups.
\newblock {\em Regional Conf. Series in Math.} 39, Amer. Math. Soc. 1978.

\bibitem{Lu}
G.~Lusztig.
\newblock Coxeter orbits and eigenspaces of {F}robenius.
\newblock {\em Invent. Math.} 38 (1976), 101--159.

\bibitem{Lu4}
G.~Lusztig.
\newblock Homology bases arising from reductive groups over a finite field.
\newblock In {\em Algebraic groups and their representations ({C}ambridge,
  1997)}, volume 517 of {\em NATO Adv. Sci. Inst. Ser. C Math. Phys. Sci.},
  pp.\ 53--72.

\bibitem{LuSchur}
G.~Lusztig.
\newblock Rationality properties of unipotent representations.
\newblock {\em J. Algebra} 258 (2002), 1--22.

\bibitem{LuSp}
G.~Lusztig and N.~Spaltenstein.
\newblock Induced unipotent classes.
\newblock {\em J. London Math. Soc.} 19 (1979), 41--52.

\bibitem{Ma}
G.~Malle.
\newblock Die unipotenten Charaktere von ${}^2F_4(q^2)$.
\newblock {\em Comm. Algebra} 18 (1990), 2361--2381.

\bibitem{MaTe}
G.~Malle and D.~Testerman
\newblock Linear algebraic groups and finite groups of Lie type.
\newblock Cambridge University Press, 2011.

\bibitem{Mi15}
J.~Michel.
\newblock The development version of the {CHEVIE} package of {GAP3}.
\newblock {\em J. Algebra} 435 (2015), 308--336.

\bibitem{Mul}
J.~M{\"u}ller.
\newblock Brauer trees for the {S}chur cover of the symmetric group.
\newblock {\em J. Algebra} 266 (2003), 427--445.

\bibitem{Rou}
R.~Rouquier.
\newblock Complexes de cha\^{\i}nes \'etales et courbes de Deligne-Lusztig.
\newblock {\em J. Algebra} 257 (2002), 482--508.

\bibitem{Sh1}
J.~Shamash.
\newblock Brauer trees for blocks of cyclic defect in the groups $G_2(q)$ for primes dividing $q^2 \pm q + 1$.
\newblock {\em J. Algebra} 123 (1989), 378--396.



\bibitem{Ward}
H.~N. Ward.
\newblock On {R}ee's series of simple groups.
\newblock {\em Trans. Amer. Math. Soc.} 121 (1966), 62--89.

\bibitem{Wi}
E.~Wings.
\newblock \"Uber die unipotenten Charaktere der Gruppen $F_4(q)$.
\newblock Dissertation, Lehrstuhl D f\"ur Mathematik, RWTH Aachen, 1995.


\end{thebibliography}
\end{document}